\newtheorem{lemma}{Lemma}[section]
\newtheorem{theorem}{Theorem}[section]
\newtheorem{proposition}{Proposition}[section]
\newtheorem{remark}{Remark}[section]
\newtheorem{corollary}{Corollary}[section]
\numberwithin{equation}{section}
\newcommand{\dis}{\displaystyle}
\newcommand{\R}{\mathbb{R}}
\renewcommand{\S}{\mathbb{S}}
\newcommand{\CD}{\mathcal{D}}
\newcommand{\CL}{\mathcal{L}}
\newcommand{\ep}{\epsilon}
\newcommand{\na}{\nabla}
\newcommand{\al}{\alpha}
\newcommand{\be}{\beta}
\newcommand{\ga}{\gamma}
\newcommand{\la}{\lambda}
\newcommand{\de}{\delta}
\newcommand{\si}{\sigma}
\newcommand{\pa}{\partial}
\newcommand{\ka}{\kappa}
\newcommand{\Ga}{\Gamma}
\newcommand{\vertiii}[1]{{\left\vert\kern-0.25ex\left\vert\kern-0.25ex\left\vert #1
		\right\vert\kern-0.25ex\right\vert\kern-0.25ex\right\vert}}
\begin{document}

\title[Polynomial tail Boltzmann solutions near local Maxwellians]{Polynomial tail solutions of the non-cutoff Boltzmann equation near local Maxwellians}

\author[R.-J. Duan]{Renjun Duan}
\address[RJD]{Department of Mathematics, The Chinese University of Hong Kong, Shatin, Hong Kong, P.R.~China}
\email{rjduan@math.cuhk.edu.hk}
	
\author[Z.-G. Li]{Zongguang Li}
\address[ZGL]{Department of Mathematics, The Chinese University of Hong Kong, Shatin, Hong Kong, P.R.~China}
\email{zgli@math.cuhk.edu.hk}

\begin{abstract}
This paper aims to incorporate the Caflisch's decomposition into the macro-micro decomposition in Boltzmann theory for allowing the microscopic component to exhibit only the polynomial tail in large velocities. In particular, we treat the Cauchy problem on the non-cutoff Boltzmann equation under the compressible Euler scaling in case of three-dimensional whole space. Up to a finite time we construct the Boltzmann solution around a local Maxwellian corresponding to small-amplitude classical solutions of the full compressible Euler system around constant states. We design a new energy functional which can capture the convergence rate in the small Knudsen number $\varepsilon$ and allow the microscopic part of solutions to decay polynomially in large velocities. Moreover, the energy norm of perturbations can be of the order $\varepsilon^{1/2}$ which the usual method of Hilbert expansion fails to obtain. As a byproduct of the proof, our estimates immediately yield a global-in-time existence result when  the Euler solutions are taken to be constant states.
\end{abstract}

	\date{\today}
	
	\subjclass[2020]{35Q20, 35B35}
	

	\keywords{Boltzmann equation, angular non-cutoff, compressible Euler limit, macro-micro decomposition, Caflisch's decomposition, polynomial tail, energy estimates}
	\maketitle
	\thispagestyle{empty}
	
	\tableofcontents
	\section{Introduction}
The Cauchy problem on the spatially inhomogeneous non-cutoff Boltzmann equation in $\R^3$ reads
	\begin{eqnarray}\label{BE}
		&\dis \pa_tF+v\cdot \na_x F=\frac{1}{\varepsilon}Q(F,F),   \quad &\dis F(0,x,v)=F_0(x,v),
	\end{eqnarray}
	where the unknown $F(t,x,v)\geq0$ stands for the density distribution function of gas particles with velocity $v=(v_1,v_2,v_3)\in \R^3$ at position $x=(x_1,x_2,x_3)\in \R^3$ and time $t> 0$, and initial data $F_0(x,v)\geq 0$ is given. The parameter $\varepsilon>0$ denotes the Knudsen number which is proportional to the mean free path.  The bilinear Boltzmann collision operator $Q(\cdot,\cdot)$ acting only on velocity variable is defined by
	\begin{align}\label{defQ}
		Q(G,F)(v)=&\int_{\R^3}\int_{\S^2}B(v-u,\si)\left[ G(u')F(v')-G(u)F(v)\right]d\sigma du, 
	\end{align}
	where the velocity pair $(v',u')$ is given in terms of the $\si$-representation:
$$
		v'=\frac{1}{2}(v+u)+\frac{1}{2}|v-u|\sigma,\quad
		u'=\frac{1}{2}(v+u)-\frac{1}{2}|v-u|\sigma,\quad \sigma\in \mathbb{S}^2.
$$
Throughout the paper, we consider the Boltzmann collision kernel $B(v-u,\si)$ in the non-cutoff case:	
\begin{equation}\label{b.kernel}
		B(v-u,\sigma)=\vert v-u\vert^\gamma b(\cos\theta),
	\end{equation}
	with
		$\cos \theta=\frac{v-u}{\vert v-u\vert}\cdot \sigma$ and $0<\theta \leq\pi/2$, satisfying
	\begin{equation}\label{b.kernel.ass}
		-3<\gamma\leq1;\quad \sin \theta b(\cos\theta)\sim \theta^{-1-2s}\quad \text{as }\ \theta\to 0,
		\  0<s<1.
	\end{equation}
	
At a formal level, as $\varepsilon\to 0$ the solution of Boltzmann equation \eqref{BE} converges to a local Maxwellian $M_{[\rho,u,\theta](t,x)}(v)$ with fluid quantities $[\rho,u,\theta](t,x)$ satisfying the full compressible Euler system, cf.~\cite{SR}. For the rigorous justification, we refer to \cite{Nish, UA} for analytic solutions, \cite{Caf} for Sobolev-in-$x$ solutions, and \cite{Guo-Jang-Jiang-2010} for $L^2\cap L^\infty$ solutions. In case of spatially one-dimensional domain, the Euler system admits particular solutions with wave patterns such as shock wave, rarefaction wave or contact discontinuity, and there have existed a lot of studies of constructing the Boltzmann solutions around those wave pattern profiles or their superposition in the compressible fluid dynamic limit; in this direction, we refer to \cite{HWWY}  for the limit of the Boltzmann equation to the Euler system with general Riemann data and references therein for related topics. In those works, only the cutoff case for the Boltzmann equation was treated since the cutoff assumption plays an important role in the analysis, in particular, in order to make use of the Grad's splitting technique for the linearized Boltzmann collision operator. 

A natural question is to develop some analogous theories in the non-cutoff case for the Boltzmann equation in the compressible Euler limit because the non-cutoff collision is physically more realistic. Similar to the cutoff case, there also could be two kinds of approaches to treat the non-cutoff case for compressible fluid limit of the Boltzmann equation. The first one is based on the low-regularity solutions, for instance, $L^2\cap L^\infty$ approach as in \cite{AMSY-bd} and Wiener-space-related approach as in \cite{DLSS}. Note that those works coped with the existence of solutions for finite Knudsen numbers, but so far it has been unclear for ones to apply them to the compressible fluid limit problem. The second approach is based on the pure energy method in high-order Sobolev spaces. In this direction, we refer to \cite{Duan-Yang-Yu-2022} for the Landau equation via the macro-micro decomposition; see two other applications \cite{DYY-EM} and \cite{DYY-KdV} of the approach. Note that \cite{Duan-Yang-Yu-2022} was inspired by a previous work \cite{Duan-Yu1} which seems to be the first result studying the Landau solutions around local Maxwellians.

The aim of the current paper is twofold. The first one is to extend the classical Caflisch's cutoff result \cite{Caf} to the non-cutoff Boltzmann equation for the compressible Euler limit. Our early work \cite{Duan-Yang-Yu-2022} on the Landau equation inspires us how to treat the non-cutoff Boltzmann collision operator in the framework of macro-micro decomposition around local Maxwellians.  
Another objective, which is much more important and thus becomes our core concern of this paper, is to allow the microscopic component in the macro-micro decomposition to decay in large velocities with only a polynomial rate. To the best of our knowledge, there is no such result in the literature so far. Moreover, we are very motivated by a recent work \cite{DL-hard} where it is essentially necessary to treat the polynomial tail solutions due to the presence of shearing forces. Hence we expect to develop a new approach on the basis of the macro-micro decomposition combined with techniques of constructing polynomial tail solutions in order to study the asymptotic stability of those spatially homogeneous solutions under the high-dimensional spatially inhomogeneous setting; this will be left for our future work.   

In what follow we mention some results on polynomial tail solutions. Whenever any part of solutions decay in large velocity polynomially, we say that they are solutions with polynomial tail. The study of this kind of solutions in kinetic theory could trace back again to Caflisch \cite{Caf} that first proposed the decomposition of solutions in the form $g=\sqrt{M_1}g_1+\sqrt{M_2}g_2$ for two Maxwellians with distinct temperatures, one is local and the other global, and further deduced the corresponding velocity-weighted $L^\infty$ estimates on both $g_1$ and $g_2$ via the bootstrap argument from the basic $L^2$  estimates. In fact, the Gaussian weight with uniform temperature in one component of the Caflisch's decomposition can be relaxed to be only a  polynomial weight, while the other component with Gaussian weight can be initially set to be zero, cf.~\cite{DL-arma,DL-cmaa, DL-hard}, where we still called it by Caflisch's decomposition. Furthermore, the velocity-pointwise estimates on the Boltzmann collision term weighted by the polynomial velocity functions were first considered by  \cite[Proposition 3.1]{AEP-87} and the technique has led to many applications \cite{ELM-94, ELM-95} for the fluid dynamic limit problem on the Boltzmann equation.

In the case of spatially homogeneous Boltzmann equation with hard potentials,  \cite{Mo} established the  spectral gap-like estimates  to construct solutions in some enlarged function spaces corresponding to the slow velocity decay. Later,  \cite{GMM} gave a systematic study for estimates on the resolvents and semigroups of non-symmetric operators which can be applied to the linearized Boltzmann equation in the torus. With such general theory,  \cite{MiMo} also obtained the solution with polynomial tail for kinetic Fokker-Planck equation. There have been a lot of works on slow decay solutions following those aforementioned results. Interested readers may refer to  \cite{CM, CTW} for the nonlinear Landau equation with Coulomb potentials in the torus,  \cite{Br, BG} for the cutoff Boltzmann equation in general bounded domains,  \cite{Cao} for soft potentials, and the references therein. 

In the non-cutoff Boltzmann case, for hard potentials \cite{AMSY}  first got the solutions in Sobolev spaces and then \cite{AMSY-bd} used the Di Giorgi argument to further establish the well-posedness in $L^2\cap L^\infty$ space; see also recent works \cite{CHJ,Cao-22} for the non-cutoff Boltzmann equation with soft potentials. 

Among all the literature mentioned above about the polynomial tail solutions, the space variable was assumed to be in either the torus or bounded domain. In case of the whole space $\R^3$, due to the unbounded property of the spatial domain that leads that the Poincar\'e inequality fails, it seems not straightforward to adopt the same approach for studying the problem. Motivated by \cite{Caf} as well as \cite{DL-arma}, the authors of this paper together with Liu \cite{DLL} first proved the well-posedness of polynomial tail solutions for the Boltzmann equation under the cut-off assumption in the whole space, and further we together with Cao \cite{CDL}  extended the corresponding result to the non-cutoff Boltzmann case; see also an independent work \cite{CG}. 

In the current paper, we basically employ the approaches combined from \cite{CDL} and \cite{Duan-Yang-Yu-2022} by incorporating the Caflisch's decomposition into the Liu-Yang-Yu's macro-micro decomposition (cf.~\cite{Liu-Yang-Yu}) around local Maxwellians, namely,
\begin{equation}\label{int.cd.mmd}
F(t,x,v)=M_{[\rho,u,\theta](t,x)}+\overline{G}(t,x,v)+g(t,x,v),\quad g(t,x,v)=g_1(t,x,v)+\sqrt{\mu}g_2(t,x,v),
\end{equation}
where $g_2$ carries zero initial data.
To state the results roughly, regarding the Cauchy problem \eqref{BE} in the compressible limit $\varepsilon\to 0$, we are able to construct in Theorem \ref{thm1.1} a unique strong solution $F(t,x,v)$ up to an arbitrarily given finite time $T>0$ such that it holds   
\begin{align}\label{int.est}
		\sup_{t\in[0,T]}\sum_{|\alpha|\leq N}\|w_{k-4|\al|}(\chi_{|\al|<N}+\varepsilon^2\chi_{|\al|=N})\partial^\alpha\{F(t,x,v)-M_{[\bar{\rho},\bar{u},\bar{\theta}](t,x)}(v)\}
		\|^2_{L^2_xL_{v}^{2}}\leq C_T\varepsilon^r,
	\end{align}
provided that the initial perturbation is suitably small and the smooth Euler solution $[\bar{\rho},\bar{u},\bar{\theta}](t,x)$ is sufficiently close to a constant state, where $1\leq r\leq 2$, $N\geq 3$, $k$ is a suitably large integer, $C_T>0$ is independent of $\varepsilon$ and $w_{k-4|\al|}=\langle v\rangle^{k-4|\al|}$ denotes the velocity polynomial weight. Note that the amplitude of the perturbation for the Euler solution $[\bar{\rho},\bar{u},\bar{\theta}](t,x)$ around constant states can be independent of $\varepsilon$. Moreover, as an immediate consequence of the proof for \eqref{int.est}, we can also show in Theorem \ref{thm1.2} that the Boltzmann solution to \eqref{BE} can be global in time when we take $r=2$ and take the Euler solution $[\bar{\rho},\bar{u},\bar{\theta}](t,x)$ to be a trivial constant state. To prove Theorems \ref{thm1.1} and \ref{thm1.2}, our goal is to use the combined decomposition \eqref{int.cd.mmd} for obtaining uniform estimates on the energy functional $\mathcal{E}_{N,k}(t)$  and energy dissipation functional $\mathcal{D}_{N,k}(t)$  given in \eqref{ENk} and \eqref{DNk}, respectively. It's delicate to estimate the highest order derivatives of $g_1$ and $g_2$; see Section \ref{sec.8} as well as our explanations in Subsection \ref{sub.sop}.  

Note from \eqref{int.est} that the constructed solution to the Cauchy problem \eqref{BE} behaves as
\begin{equation}
\label{intr.pert}
F(t,x,v)=M_{[\bar{\rho},\bar{u},\bar{\theta}](t,x)}(v)+O(1)\varepsilon^{r/2}\langle v\rangle^{-k},
\end{equation} 
for $0\leq t\leq T, x\in \R^3, v\in \R^3$. Thus, the solution is indeed close to local Maxwellians with the perturbation allowing for a polynomial tail $\langle v\rangle^{-k}$ in large velocities, which is our core concern in this paper. In addition, in case of $1\leq r<2$ the perturbation in \eqref{intr.pert} can be of the order $\varepsilon^{r/2}$ which the usual method of Hilbert expansion of the form
\begin{equation*}
F(t,x,v)=M_{[\bar{\rho},\bar{u},\bar{\theta}](t,x)}(v) +\varepsilon F_1(t,x,v) +\varepsilon^2 F_2(t,x,v)+\cdots,
\end{equation*}
may fail to obtain. This is one of main advantages that we would make use of the formulation through the macro-micro decomposition instead of the Hilbert expansion as above.    

We should also emphasize that the enlarged functional spaces are useful when we consider the kinetic shear flow problem such that one can control the polynomial growth caused by the shear force, see \cite{DL-arma,DL-cmaa, DL-hard} and many references therein. We expect that the techniques developed in this work can be applicable to further study the asymptotic stability of spatially homogeneous shear flow solutions under the spatially inhomogeneous perturbation. 

In the end we also mention an extensive literature for the incompressible fluid limit of the Boltzmann equation as in the classical works \cite{BGL1,BGL2, GL-2002, GoSR, LiMa1, LiMa2}, the method based on the spectral analysis \cite{BU,EP,Ca-Ca}, and recent works on polynomial tail hydrodynamic solutions \cite{BMAM, GaTr, Gerv, GeLo}.

The rest of this paper is arranged as follows. In Section \ref{sec.mr}, we first re-visit the macro-micro decomposition and further formulate the problem in the setting of Caflisch's decomposition for considering the polynomial tail solutions. The main results are stated in Theorem \ref{thm1.1}  and  Theorem \ref{thm1.2}. In Section \ref{sec.pre}, we provide preliminary preparations to be used in the later proof. The main estimates are established in Sections \ref{sec.5} to \ref{sec.8} and the most delicate part occurs to the highest-oder derivatives estimates in Section \ref{sec.8}. We conclude the proof of main theorems in Section \ref{sec.9}.      


\section{Main results}\label{sec.mr}
\subsection{Macro-micro decomposition}
The macro-micro decomposition of the Boltzmann equation was initiated by Liu-Yu \cite{Liu-Yu} and developed by Liu-Yang-Yu \cite{Liu-Yang-Yu}. In what follows we introduce a modified formulation such that one can study the solutions in some enlarged functional spaces. Recall that the collision operator $Q$ in $\eqref{defQ}$ has five collision invariants denoted by
$$
\psi_0(v)=1, \quad \psi_i(v)=v_i\ (i=1,2,3),\quad \psi_4(v)=\frac{1}{2}|v|^2,
$$
such that
\begin{equation*}
	\int_{\mathbb{R}^3}\psi_i(v)Q(F,F)dv=0,\quad i=0,1,2,3,4.
\end{equation*}
Assuming $F=F(t,x,v)$ is a solution of \eqref{BE}, we define the mass density $\rho(t,x)$, momentum $\rho(t,x)u(t,x)$, and
	energy density $e(t,x)+\frac{1}{2}|u(t,x)|^2$ to be
	\begin{align}\label{def.macom}
		\left\{
		\begin{array}{rl}
			\rho(t,x)&:=\int_{\mathbb{R}3}\psi_0(v)F(t,x,v)dv,
			\\
			\rho(t,x) u_{i}(t,x)&:=\int_{\mathbb{R}^3}\psi_{i}(v)F(t,x,v)dv, \quad i=1,2,3,
			\\
			\rho(t,x)\big(e(t,x)+\frac{1}{2}|u(t,x)|^{2}\big)&:=\int_{\mathbb{R}^3}\psi_4(v)F(t,x,v)dv,
		\end{array} \right.
	\end{align}
	where the internal energy 
	$e=\frac{3}{2}R\theta=\theta$ with the Boltzmann constant $R=2/3$ for simplicity, and $u(t,x)=(u_1(t,x),u_2(t,x),u_3(t,x))$ is the bulk velocity. Correspondingly, the local Maxwellian $M=M(t,x,v)$ is given by
	\begin{equation}\label{defM}
		M(t,x,v) \equiv M_{[\rho,u,\theta](t,x)}(v):=\frac{\rho(t,x)}{\sqrt{(2\pi R\theta(t,x))3}}\exp\big\{-\frac{|v-u(t,x)|^2}{2R\theta(t,x)}\big\}.
	\end{equation}
We then introduce the linearized Boltzmann collision operator around the local Maxwellian $M$ as
	\begin{equation*}
		L_{M}f:=Q(f,M)+Q(M,f),
	\end{equation*}
and denote the orthonormal basis of the
null space $\mathcal{N}$ of $L_{M}$ to be
\begin{align}\label{basis}
\left\{
\begin{array}{rl}
	\chi_0(v)&=\frac{1}{\sqrt{\rho}}M,
	\\
	\chi_i(v)&=\frac{v_{i}-u_{i}}{\sqrt{R\rho\theta}}M, \quad \mbox{for $i=1,2,3$,}
	\\
	\chi_4(v)&=\frac{1}{\sqrt{6\rho}}(\frac{|v-u|^{2}}{R\theta}-3)M,\\
	( \chi_i,\frac{\chi_j}{M})_{L^2_v}&=\delta_{ij},\quad i,j=0,1,2,3,4.
\end{array} \right.
\end{align}
	In terms of the orthonormal basis above, we define the macroscopic projection $P_0$ and 
	microscopic projection $P_1$ respectively to be
	\begin{equation}\label{defP}
		P_{0}h\equiv\sum_{i=0}^{4}( h,\frac{\chi_{i}}{M})_{L^2_v}\chi_{i},\quad P_{1}h\equiv h-P_{0}h.
	\end{equation}
	It is direct to verify that 
	$$
	P_0P_0=P_0,\quad
	P_1P_1=P_1,\quad
	P_1P_0=P_0P_1=0,
	$$
	and 
	$$
	P_0Q(F,F)=0.
	$$
	We then decompose the solution of \eqref{BE} $F=F(t,x,v)$ into the
	macroscopic part $M$ and the microscopic part $G$ as
	\begin{equation}\label{FGrelation}
		F=M+G,
	\end{equation}
with $ P_{0}F=M$ and $P_{1}F=G.$

Using the property $Q(M,M)=0$, the Boltzmann equation of \eqref{BE} can be written as
	\begin{equation}\label{1M+G}
		\partial_t (M+G)+v\cdot\nabla_x (M+G)
		=\frac{1}{\varepsilon}L_{M}G+\frac{1}{\varepsilon}Q(G,G).
	\end{equation}
We multiply both sides of \eqref{1M+G} by $\psi_i(v)$ with $i=0,1,2,3,4$
and integrate it with respect to $v$ over $\mathbb{R}^3$ to get the macroscopic system
\begin{align}\label{macro}
	\left\{
	\begin{array}{rl}
		&\partial_t\rho+\nabla_x\cdot(\rho u)=0,
		\\
		&\partial_t(\rho u)+\nabla_x\cdot(\rho u\otimes u)+\nabla_x p=-\int_{\mathbb{R}^{3}} v\otimes v\cdot\nabla_x G dv,
		\\
		&\partial_t [\rho(\theta+\frac{1}{2}|u|^{2})]+\nabla_x \cdot[\rho u(\theta+\frac{1}{2}|u|^{2})+pu]
		=-\int_{\mathbb{R}^3} \frac{1}{2}|v|^{2} v\cdot\nabla_x G dv,
	\end{array} \right.
\end{align}
where the pressure $p=R\rho\theta=\frac{2}{3}\rho\theta$ and $v\otimes v$ denotes the tensor product. 	
In the macroscopic system \eqref{macro}, if we expect the right hand side terms to vanish as $\varepsilon\rightarrow0$, then the limiting system turns to be the full compressible Euler system
\begin{align}
	\label{euler}
	\left\{
	\begin{array}{rl}
		&\partial_t\bar{\rho}+\nabla_x\cdot(\bar{\rho}\bar{u})=0,
		\\
		&\partial_t(\bar{\rho} \bar{u})+\nabla_x \cdot(\bar{\rho}\bar{u}\otimes \bar{u})+\nabla_x \bar{p}=0,
		\\
		&\partial_t[\bar{\rho}(\bar{\theta}+\frac{1}{2}|\bar{u}|^{2})]+\nabla_x \cdot[\bar{\rho}\bar{u}(\bar{\theta}+\frac{1}{2}|\bar{u}|^{2})+\bar{p}\bar{u}]
		=0,
	\end{array} \right.
\end{align}
with the state equation $\bar{p}=R\bar{\rho}\bar{\theta}=\frac{2}{3}\bar{\rho}\bar{\theta}$.
Under the perturbation framework, letting $(\rho,u,\theta)(t,x)$ and $(\bar{\rho},\bar{u},\bar{\theta})(t,x)$ solve \eqref{remacro} and the Euler system \eqref{euler} respectively, we define
\begin{equation}\label{defineper}
	\left\{
	\begin{array}{rl}
		(\widetilde{\rho},\widetilde{u},\widetilde{\theta})(t,x)=&(\rho-\bar{\rho},u-\bar{u},\theta-\overline{\theta})(t,x),
		\\
		g(t,x,v)=&G(t,x,v)-\overline{G}(t,x,v),
	\end{array} \right.
\end{equation}
where
\begin{align}\label{defbarG}
	\overline{G}(t,x,v)\equiv\varepsilon L_M^{-1}P_1\big\{v\cdot(\frac{|v-u|^{2}
		\nabla_x \bar{\theta}}{2R\theta^{2}}+\frac{(v-u)\cdot\nabla_x\bar{u}}{R\theta})M\big\}.
\end{align}
Note by \eqref{FGrelation} and \eqref{defineper} that 
\begin{equation}
\label{dec.MGg}
F(t,x,v)=M(t,x,v)+\overline{G}(t,x,v)+g(t,x,v).
\end{equation}	
For the microscopic system, we apply $P_1$ to both sides of \eqref{1M+G} to deduce
\begin{align}\label{eqG}
	\partial_t G+P_1(v\cdot\nabla_{x}G)+P_1(v\cdot\nabla_{x}M)
	=\frac{1}{\varepsilon}L_M G+\frac{1}{\varepsilon}Q(G,G).
\end{align}
Using the identity
\begin{align*}
	P_1(v\cdot\nabla_{x}M)=P_1\big\{v\cdot (\frac{|v-u|^2
		\nabla_x \widetilde{\theta}}{2R\theta^2}+\frac{(v-u)\cdot\nabla_x\widetilde{u}}{R\theta})M\big\}
	+\frac{1}{\varepsilon}L_M \overline{G},
\end{align*}
which can be obtained by direct calculation, we combine \eqref{defineper} and \eqref{eqG} to get
\begin{align*}
	&\partial_{t}g+v\cdot\nabla_{x}g+P_{1}\big\{v\cdot(\frac{|v-u|^{2}
		\nabla_x\widetilde{\theta}}{2R\theta^{2}}+\frac{(v-u)\cdot\nabla_{x}\widetilde{u}}{R\theta})M\big\}+P_{1}(v\cdot\nabla_{x}\overline{G})+\partial_{t}\overline{G}\notag\\
	=&P_{0}(v\cdot\nabla_x g)+\frac{1}{\varepsilon}L_M g+\frac{1}{\varepsilon}Q(G,G),
\end{align*}
with the initial data 
\begin{equation}
\label{id.g01}
g(0,x,v)=g_0(x,v):=G(0,x,v)-\overline{G}(0,x,v)=F_0(x,v)-M(0,x,v)-\overline{G}(0,x,v),
\end{equation}
where $M(0,x,v)$ and $\overline{G}(0,x,v)$ are defined in terms of $F_0(x,v)$ using \eqref{def.macom}, \eqref{defM} and \eqref{defbarG}. 
Although we study the solution near local Maxwellian, many important results on the perturbation near global Maxwellian also play crucial roles. Hence, we need to define the global Maxwellian to be
\begin{equation*}
	\mu\equiv M_{[1,0,\frac{3}{2}]}(v):=(2\pi)^{-\frac{3}{2}}e^{-\frac{|v|^2}{2}},
\end{equation*}
and denote
\begin{align*}
	L_{\mu}f:=Q(f,\mu)+Q(\mu,f).
\end{align*}
It is direct to see that
\begin{align}\label{inig}
	&\partial_{t}g+v\cdot\nabla_x g+P_{1}\big\{v\cdot(\frac{|v-u|^{2}
		\nabla_x \widetilde{\theta}}{2R\theta^2}+\frac{(v-u)\cdot\nabla_x \widetilde{u}}{R\theta})M\big\}+P_{1}(v\cdot\nabla_{x}\overline{G})+\partial_{t}\overline{G}\notag\\
	=&P_{0}(v\cdot\nabla_x g)+\frac{1}{\varepsilon}L_{\mu}g+\frac{1}{\varepsilon}Q(M-\mu,g)+\frac{1}{\varepsilon}Q(g,M-\mu)+\frac{1}{\varepsilon}Q(G,G).
\end{align}

\subsection{Caflisch's decomposition for the micro part}	
If we only require the  perturbation $g$ in \eqref{dec.MGg}  to be exponential in velocity, such as $g=\sqrt{\mu}f$, we can multiply $\mu^{-1/2}$ to both sides of \eqref{inig} to get the microscopic equation for our energy estimates. However, if we want the perturbation to admit only the polynomial tail, we should further make use of the Caflisch's decomposition \cite{Caf}. In what follows we introduce the formulation.
For constants $A$ and $M$ which will be determined later in Lemma \ref{leLD}, the linear operators $\CL_B$ and $\CL_D$ are respectively defined by
\begin{align}\label{defLB}
	\CL_Bg_1(t,x,v):=\mu^{-1/2}(v)A\chi_M(v) g_1(t,x,v),
\end{align}
and
\begin{align}\label{defLD}
	\CL_D g_1(t,x,v):=(L_\mu-A\chi_M(v)) g_1(t,x,v),
\end{align}
where $\chi_M$ is a smooth indicator function such that for any $M>0$, $\chi_M(v)=1$ if $|v|\leq M$ and $\chi_M(v)=0$ if $|v|\ge2M$. It is straightforward to see that $L_{\mu}=\CL_D+\mu^{1/2}(v)\CL_B$. Moreover, we set
\begin{equation}\label{def.selfL}
	\Gamma(h,g):=\frac{1}{\sqrt{\mu}}Q(\sqrt{\mu}h,\sqrt{\mu}g),
	\quad Lh:=\Gamma(h,\sqrt{\mu})+\Gamma(\sqrt{\mu},h).
\end{equation}
Note that $L$ is the usual self-adjoint operator on $L^2_v$. In terms of the above notations, we introduce two functions $g_1=g_1(t,x,v)$ and $g_2=g_2(t,x,v)$ satisfying
\begin{align}
	\pa_t g_1+v\cdot\nabla_x g_1 =&\frac{1}{\varepsilon}\CL_D g_1+\frac{1}{\varepsilon}Q(g_1,g_1)+\frac{1}{\varepsilon}Q(\sqrt{\mu}g_2,g_1)+\frac{1}{\varepsilon}Q(g_1,\sqrt{\mu}g_2)\notag\\
	&+\frac{1}{\varepsilon}Q(M-\mu,g_1)+\frac{1}{\varepsilon}Q(g_1,M-\mu)+\frac{1}{\varepsilon}Q(\overline{G},g_1)+\frac{1}{\varepsilon}Q(g_1,\overline{G}), \label{g1} 
\end{align}
and
\begin{align}\label{g2}
	\partial_{t}g_2+v\cdot\nabla_{x}g_2&=\frac{1}{\varepsilon}L g_2+\frac{1}{\varepsilon}\CL_B g_1
	+\frac{1}{\varepsilon}\Gamma(\frac{M-\mu}{\sqrt{\mu}},g_2)
	+\frac{1}{\varepsilon}\Gamma(g_2,\frac{M-\mu}{\sqrt{\mu}})
	\nonumber\\
	&\hspace{0.5cm}
	+\frac{1}{\varepsilon}\Gamma(\frac{\overline{G}+\sqrt{\mu}g_2}{\sqrt{\mu}},\frac{\overline{G}+\sqrt{\mu}g_2}{\sqrt{\mu}})+\frac{P_{0}[v\cdot\nabla_{x}(g_1+\sqrt{\mu}g_2)]}{\sqrt{\mu}}-\frac{P_{1}(v\cdot\nabla_{x}\overline{G})}{\sqrt{\mu}}-\frac{\partial_{t}\overline{G}}{\sqrt{\mu}}\nonumber\\
	&\hspace{0.5cm}
	-\frac{1}{\sqrt{\mu}}P_{1}\big\{v\cdot(\frac{|v-u|^{2}\nabla_{x}\widetilde{\theta}}{2R\theta^{2}}+\frac{(v-u)\cdot\nabla_{x}\widetilde{u}}{R\theta})M\big\},
\end{align}
with the initial data 
\begin{equation}
\label{id.g12}
g_1(0,x,v)=g_0(x,v),\quad g_2(0,x,v)=0,
\end{equation}
where $g_0(x,v)$ is given as in \eqref{id.g01}. Therefore, it holds that 
\begin{equation}
\label{cdec.g12}
g(t,x,v)=g_1(t,x,v)+\sqrt{\mu}g_2(t,x,v)
\end{equation}
is the solution to \eqref{inig}. Thus, to solve \eqref{inig} with \eqref{id.g01}, it suffices to solve \eqref{g1} and \eqref{g2} with \eqref{id.g12}.

Notice that in \eqref{g1} we can also write
\begin{align*}
	\pa_t g_1+v\cdot\nabla_x g_1 =&\frac{1}{\varepsilon}L_M g_1-\frac{1}{\varepsilon}\sqrt{\mu}\CL_Bg_1+\frac{1}{\varepsilon}Q(g_1,g_1)+\frac{1}{\varepsilon}Q(\sqrt{\mu}g_2,g_1)+\frac{1}{\varepsilon}Q(g_1,\sqrt{\mu}g_2)\notag\\
	&+\frac{1}{\varepsilon}Q(\overline{G},g_1)+\frac{1}{\varepsilon}Q(g_1,\overline{G}). 
\end{align*}
We apply $P_{1}$ to both sides above to get
\begin{align}\label{Pg1}
	P_{1}(\pa_t g_1)+P_{1}(v\cdot\nabla_x g_1) =&\frac{1}{\varepsilon}L_M g_1-\frac{1}{\varepsilon}P_{1}(\sqrt{\mu}\CL_Bg_1)+\frac{1}{\varepsilon}Q(g_1,g_1)+\frac{1}{\varepsilon}Q(\sqrt{\mu}g_2,g_1)+\frac{1}{\varepsilon}Q(g_1,\sqrt{\mu}g_2)\notag\\
	&+\frac{1}{\varepsilon}Q(\overline{G},g_1)+\frac{1}{\varepsilon}Q(g_1,\overline{G}). 
\end{align}
We turn to \eqref{eqG}, which can be rewritten to
\begin{align}\label{reeqG}
	&\partial_{t}(\overline{G}+\sqrt{\mu}g_2)+\pa_tg_1+P_{1}(v\cdot\nabla_{x}(\overline{G}+\sqrt{\mu}g_2))+P_{1}(v\cdot\nabla_x g_1)+P_{1}(v\cdot\nabla_{x}M)\notag\\
	=&\frac{1}{\varepsilon}L_{M}(\overline{G}+\sqrt{\mu}g_2)+\frac{1}{\varepsilon}L_{M}g_1+\frac{1}{\varepsilon}Q(\overline{G}+\sqrt{\mu}g_2,\overline{G}+\sqrt{\mu}g_2)+\frac{1}{\varepsilon}Q(g_1,g_1)+\frac{1}{\varepsilon}Q(\sqrt{\mu}g_2,g_1)\notag\\
	&+\frac{1}{\varepsilon}Q(g_1,\sqrt{\mu}g_2)+\frac{1}{\varepsilon}Q(\overline{G},g_1)+\frac{1}{\varepsilon}Q(g_1,\overline{G}). 
\end{align}
Then it follows from \eqref{Pg1} and \eqref{reeqG} that
\begin{align*}
	&\partial_{t}(\overline{G}+\sqrt{\mu}g_2)+P_{0}(\pa_t g_1)+P_{1}(v\cdot\nabla_{x}(\overline{G}+\sqrt{\mu}g_2))+P_{1}(v\cdot\nabla_{x}M)\notag\\
	=&\frac{1}{\varepsilon}L_{M}(\overline{G}+\sqrt{\mu}g_2)+\frac{1}{\varepsilon}P_{1}(\sqrt{\mu}\CL_Bg_1)+\frac{1}{\varepsilon}Q(\overline{G}+\sqrt{\mu}g_2,\overline{G}+\sqrt{\mu}g_2). 
\end{align*}
We use $P_{0}(\pa_t g_1)+P_{0}(\pa_t \sqrt{\mu}g_2)=P_{0}(\pa_t (G-\overline{G})=0$ to get
\begin{align*}
	&\partial_{t}\overline{G}+P_{1}(\partial_{t}\sqrt{\mu}g_2)+P_{1}(v\cdot\nabla_{x}(\overline{G}+\sqrt{\mu}g_2))+P_{1}(v\cdot\nabla_{x}M)\notag\\
	=&\frac{1}{\varepsilon}L_{M}(\overline{G}+\sqrt{\mu}g_2)+\frac{1}{\varepsilon}P_{1}(\sqrt{\mu}\CL_Bg_1)+\frac{1}{\varepsilon}Q(\overline{G}+\sqrt{\mu}g_2,\overline{G}+\sqrt{\mu}g_2), 
\end{align*}
which further implies 
	\begin{equation}\label{reG}
		\overline{G}+\sqrt{\mu}g_2=\varepsilon L^{-1}_{M}[P_{1}(v\cdot\nabla_{x}M)]+L^{-1}_{M}\Theta,
	\end{equation}
	with
	\begin{align}\label{defTheta}
		\Theta:=&\varepsilon \partial_{t}\overline{G}+\varepsilon P_{1}(\pa_t\sqrt{\mu}g_2)+\varepsilon P_{1}(v\cdot\nabla_{x}(\overline{G}+\sqrt{\mu}g_2))-P_{1}(\sqrt{\mu}\CL_Bg_1)-Q(\overline{G}+\sqrt{\mu}g_2,\overline{G}+\sqrt{\mu}g_2),
	\end{align}
	since $L_{M}$ is invertible on $\mathcal{N}^{\perp}$. Note that $\Theta$ is microscopic, i.e., $P_{1}\Theta=\Theta$.

\subsection{Macro fluid-type equations}
With the detailed decomposition of the microscopic part above, we now turn back to the macroscopic system \eqref{macro}. Denote smooth functions $\mu(\theta)>0$ and $\kappa(\theta)>0$, which depend only on $\theta$, to be
	\begin{align*}
		\mu(\theta)&=- R\theta\int_{\R^3}\hat{B}_{ij}(\frac{v-u}{\sqrt{R\theta}})
		B_{ij}(\frac{v-u}{\sqrt{R\theta}})dv>0,\quad i\neq j,
		\nonumber\\
		\kappa(\theta)&=-R^2\theta\int_{\R^3}\hat{A}_j(\frac{v-u}{\sqrt{R\theta}})
		A_j(\frac{v-u}{\sqrt{R\theta}})dv>0,
	\end{align*}
	with $i,j=1,2,3,$ where $\hat{A}_j(v)$ and $\hat{B}_{ij}(v)$ are Burnett functions defined by
	\begin{equation}\label{defbur1}
		\hat{A}_j(v)=\frac{|v|^2-5}{2}v_j\quad \mbox{and} \quad \hat{B}_{ij}(v)=v_iv_j-\frac{1}{3}\delta_{ij}|v|^2, 
	\end{equation}
and $A_j(v)$ and $B_{ij}(v)$ are given by
	\begin{equation}\label{defbur2}
		A_{j}(v)=L^{-1}_M[\hat{A}_j(v)M]\quad
		\mbox{and} \quad B_{ij}(v)=L^{-1}_M[\hat{B}_{ij}(v)M].
	\end{equation}
	In terms of the definition of the viscosity coefficient $\mu(\theta)$ and heat conductivity coefficient $\ka(\theta)$, using the identities
	\begin{align}
		-\int_{\R^3} v_i v\cdot\nabla_x
		L^{-1}_M[P_1(v\cdot\nabla_x M)] dv
		&\equiv\sum^{3}_{j=1}[\mu(\theta)D_{ij}]_{x_j},\quad i=1,2,3,\label{id1}
		\\
		-\int_{\R^3} \frac{1}{2}|v|^{2} v\cdot\nabla_{x}
		L^{-1}_M[P_{1}(v\cdot\nabla_{x}M)] dv
		&\equiv\sum^3_{j=1}(\kappa(\theta)\theta_{x_j})_{x_j}+
		\sum^3_{i,j=1}[\mu(\theta) u_iD_{ij}]_{x_j}\label{id2},
	\end{align} where
the viscous stress tensor $D=[D_{ij}]_{1\leq i,j\leq 3}$ is given by
\begin{equation}\label{def.vst}
	D_{ij}=\partial_{x_j}u_i+\partial_{x_i}u_j-\frac{2}{3}\delta_{ij}\nabla_x\cdot u,
\end{equation}
we combine \eqref{macro}, \eqref{reG}, \eqref{id1}, \eqref{id2} and the fact that $G=\overline{G}+g_1+\sqrt{\mu}g_2$ to get the following fluid-type system in the 
compressible Navier-Stokes form:
\begin{align}\label{remacro}
\left\{
\begin{array}{rl}
	\pa_t\rho+\na_x\cdot(\rho u)&=0,
	\\
	\pa_t(\rho u_{i})+\na_x\cdot(\rho u_i u)+\pa_{x_i}p
	&\dis=\varepsilon\sum^3_{j=1}[\mu(\theta)D_{ij}]_{x_j}-\int_{\R^3} v_i v\cdot\na_x g_1 dv
	\\
	&\quad-\int_{\mathbb{R}^{3}} v_{i}(v\cdot\na_x L^{-1}_M\Theta) dv, \quad i=1,2,3,
	\\
	\pa_t[\rho(\theta+\frac{1}{2}|u|^2)]+\na_x\cdot[\rho u(\theta+\frac{1}{2}|u|^2)+ pu]
	&\dis=\varepsilon\sum^3_{j=1}(\kappa(\theta)\theta_{x_j})_{x_j}+\varepsilon
	\sum^3_{i,j=1}[\mu(\theta) u_i D_{ij}]_{x_{j}}\\
	&\quad-\int_{\R^3} \frac{1}{2}|v|^2 v\cdot\na_{x}g_1 dv
	\\
	&\quad-\int_{\R^3} \frac{1}{2}|v|^2 v\cdot\na_x L^{-1}_M\Theta dv.
\end{array} \right.
\end{align}
Then, rewriting the macroscopic system in terms of the perturbation functions defined in \eqref{defineper} and combining \eqref{macro} and \eqref{defineper}, one gets
	\begin{align}\label{macro1}
		\left\{
		\begin{array}{rl}
			&\pa_t\widetilde{\rho}+u\cdot\na_x \widetilde{\rho}+\bar{\rho}\na_x\cdot\widetilde{u}
			+\widetilde{u}\cdot\na_x\bar{\rho}+\widetilde{\rho}\na_x\cdot u=0,
			\\
			&\pa_t\widetilde{u}+u\cdot\na_x\widetilde{u}+\frac{2\bar{\theta}}{3\bar{\rho}}\na_x\widetilde{\rho}
			+\frac{2}{3}\na_x\widetilde{\theta}+\widetilde{u}\cdot\na_x\bar{u}+\frac{2}{3}(\frac{\theta}{\rho}-\frac{\bar{\theta}}{\bar{\rho}})
			\na_x\rho
			\\
			&\qquad\qquad\qquad\quad=-\frac{1}{\rho}\int_{\R^3} v\otimes v\cdot\na_x G dv,
			\\
			&\pa_t\widetilde{\theta}+u\cdot\na_x\widetilde{\theta}+\frac{2}{3}\bar{\theta}\na_x\cdot \widetilde{u}
			+\widetilde{u}\cdot\na_x\bar{\theta}+\frac{2}{3}\widetilde{\theta}\na_x\cdot u
			\\
			&\qquad\qquad\qquad\quad=-\frac{1}{\rho}\int_{\R^3} \frac{1}{2}|v|^{2} v\cdot\na_x G dv+\frac{1}{\rho}u\cdot\int_{\R^3} v\otimes v\cdot\na_x G dv,
		\end{array} 
		\right.
	\end{align}
	which further yields by \eqref{reG}, \eqref{id1} and \eqref{id2} that
	\begin{align}\label{macro2}
		\left\{
		\begin{array}{rl}
			&\pa_t\widetilde{\rho}
			+u\cdot\na_x\widetilde{\rho}+\bar{\rho}\na_x\cdot\widetilde{u}+\widetilde{u}\cdot\na_x\bar{\rho}+\widetilde{\rho}\na_x\cdot u=0,
			\\
			&\pa_t\widetilde{u}_i+u\cdot\na_x\widetilde{u}_i+\frac{2\bar{\theta}}{3\bar{\rho}}\widetilde{\rho}_{x_i}
			+\frac{2}{3}\widetilde{\theta}_{x_i}+\widetilde{u}\cdot\na_x\bar{u}_i+\frac{2}{3}(\frac{\theta}{\rho}-\frac{\bar{\theta}}{\bar{\rho}})\rho_{x_i}
			\\
			&\qquad=\varepsilon\frac{1}{\rho}\sum^3_{j=1}[\mu(\theta)D_{ij}]_{x_j}-\frac{1}{\rho}\int_{\R^3} v_i v\cdot\na_xg_1 dv
			-\frac{1}{\rho}\int_{\R^3} v_i(v\cdot\na_xL^{-1}_M\Theta) dv, \quad i=1,2,3,
			\\
			&\pa_t\widetilde{\theta}+u\cdot\na_x\widetilde{\theta}+\frac{2}{3}\bar{\theta}\na_x\cdot \widetilde{u}
			+\widetilde{u}\cdot\na_x\bar{\theta}+\frac{2}{3}\widetilde{\theta}\na_x\cdot u
			\\
			&\qquad=\varepsilon\frac{1}{\rho}\sum^3_{j=1}(\kappa(\theta)\theta_{x_j})_{x_j}+\varepsilon\frac{1}{\rho}\sum^3_{i,j=1} \mu(\theta) u_{ix_j}D_{ij}-\frac{1}{\rho}\int_{\R^3} \frac{1}{2}|v|^2 v\cdot\na_xg_1 dv
			\\
			&\qquad\quad-\frac{1}{\rho}\int_{\R^3} \frac{1}{2}|v|^2 v\cdot\na_x L^{-1}_M\Theta dv+\frac{1}{\rho}u\cdot\int_{\R^3} v\otimes v\cdot\na_x g_1 dv
			+\frac{1}{\rho}u\cdot\int_{\R^3} v\otimes v\cdot\na_x L^{-1}_M\Theta dv.
		\end{array} \right.
	\end{align}
	We mainly use the perturbed macroscopic systems \eqref{macro1} and \eqref{macro2} to deduce the derivative estimates of the fluid quantities $\widetilde{\rho}$, $\widetilde{u}$ and $\widetilde{\theta}$ except for the highest order.

\subsection{Notations}
Now we introduce some norms. Given a function $f=f(v)$, we define the weighted Sobolev norm to be
$$
\Vert f\Vert_{H^s_k}^2:=\int_{\mathbb{R}^3} \vert\langle v\rangle^k \langle \nabla\rangle^s f(v)\vert^2 dv,
$$
where the weight function is denoted by 
$$
\langle v\rangle^k=w_k(v):=(\sqrt{1+|v|^2})^k.
$$
Particularly, if $s=0$, we write $\Vert f\Vert_{L^2_k}:=\Vert f\Vert_{H^0_k}$. In the case of symmetric perturbation, we introduce the dissipation norm as in \cite{AMUXY-2011-CMP} that
\begin{align}\label{defLvD}
	\Vert f \Vert_{L^2_{v,D}}^2&=\iiint_{\mathbb{R}^3\times \mathbb{R}^3\times \mathbb{S}^2} B(v-u,\sigma) \mu(u) (f(v')-f(v))^2 dvdu d\sigma \notag\\
	&\quad+\iiint_{\mathbb{R}^3\times \mathbb{R}^3\times \mathbb{S}^2} B(v-u,\sigma) f(u)^2 (\mu(v')^{1/2}-\mu(v)^{1/2})^2 dvdu d\sigma.
\end{align}
Given a function $f=f(x,v)$, we also define
\begin{align*}
	\|f\|^2_{L^2_xH^s_k}:=\int_{\R^3}\|f(x,\cdot)\|^2_{H^s_k}dx,\quad  \|f\|^2_{L^2_xL^2_{v,D}}:=\int_{\R^3}\|f(x,\cdot)\|^2_{L^2_{v,D}}dx.
\end{align*}
For simplicity, we use $\|f\|$ to denote $\|f\|_{L^2_v}$ or $\|f\|_{L^2_{x,v}}$. Also, $( \cdot , \cdot )$ means $L^{2}$ inner products in either
$\mathbb{R}^{3}_{x}$ or $\mathbb{R}^{3}_{x}\times \mathbb{R}_{v}^{3}$  with its corresponding $L^{2}$ norm $\|\cdot\|$.

One can see from the definition \eqref{defineper} that $g=g_1+\sqrt{\mu}g_2$ is purely microscopic since $G$ and $\overline{G}$ are both microscopic, which however, does not imply that $g_1$ or $g_2$ is separately microscopic. Thus, we need some notations to capture the macro and micro parts of $g_2$. In the symmetric case for the self-adjoint operator $L$ in \eqref{def.selfL}, the macroscopic projection $\mathbf{P}_0$ from $L^2_v$ to the null space of $L$ is given by
\begin{align}\label{defP0}
	\mathbf{P}_0f(t,x,v)=\{a(t,x)+b(t,x)\cdot v + c(t,x)(\vert v\vert^2-3)\}\mu^{1/2}(v),
\end{align}
with
\begin{align*}
\left\{\begin{array}{rl}
     a(t,x)&=\int_{\R^3}f(t,x,v)\mu^{1/2}(v)dv,\\
	b(t,x)&=\int_{\R^3}f(t,x,v)v\mu^{1/2}(v)dv,\\
	c(t,x)&=\int_{\R^3}f(t,x,v)\frac{1}{6}(|v|^2-3)\mu^{1/2}(v)dv.
\end{array}\right.
\end{align*}
Correspondingly, we write the microscopic projection as 
\begin{align}\label{defP1}
	\mathbf{P}_1f=f-\mathbf{P}_0f. 
\end{align} 


\subsection{Main theorems}
Our main goal is to construct the solution $F(t,x,v)$ of the Boltzmann equation \eqref{BE} which converges to a local Maxwellian 
\begin{equation}\label{localM}
	\overline{M}\equiv M_{[\bar{\rho},\bar{u},\bar{\theta}](t,x)}(v):
	=\frac{\bar{\rho}(t,x)}{\sqrt{(2\pi R\overline{\theta}(t,x))^3}}\exp\big\{-\frac{|v-\overline{u}(t,x)|^2}{2R\overline{\theta}(t,x)}\big\},
\end{equation}
as the Knudsen number $\varepsilon$ tends to zero, 
where $(\bar{\rho},\bar{u},\bar{\theta})(t,x)$ solves the compressible Euler system \eqref{euler}. 
With \eqref{dec.MGg} and \eqref{cdec.g12}, we have 
\begin{equation}
\label{sol.F}
F(t,x,v)=M_{[\rho,u,\theta](t,x)}+\overline{G}(t,x,v)+g_1(t,x,v)+\sqrt{\mu}g_2(t,x,v),
\end{equation}
where the macro perturbation $(\widetilde{\rho},\widetilde{u},\widetilde{\theta})$ in \eqref{defineper} satisfies \eqref{macro1} and \eqref{macro2}, $\overline{G}(t,x,v)$ in \eqref{defbarG} is the first-order correction, and $g_1$, $g_2$ satisfy the coupled system \eqref{g1} and \eqref{g2}. Note that for initial data, we have set $g_2(0,x,v)\equiv 0$ and hence $g_1(0,x,v):=F(0,x,v)-M_{[\rho,u,\theta](0,x)}-\overline{G}(0,x,v)$.

With the above notations we define the instant {\bf energy functional} and {\bf dissipation rate functional} respectively as
\begin{align}\label{ENk}
	\mathcal{E}_{N,k}(t):=&\sum_{|\alpha|\leq N-1}\{\|\partial^{\alpha}(\widetilde{\rho},\widetilde{u},\widetilde{\theta})(t)\|^{2}
	+\|w_{k-4|\al|}\partial^{\alpha}g_1(t)\|^{2}+\|\partial^{\alpha}g_2(t)\|^{2}\}
	\nonumber\\
	&\hspace{0.5cm}+\varepsilon^{2}\sum_{|\alpha|=N}\{\|\partial^{\alpha}(\widetilde{\rho},\widetilde{u},\widetilde{\theta})(t)\|^{2}
	+\|w_{k-4N}\partial^{\alpha}g_1(t)\|^{2}
	+\|\partial^{\alpha}g_2(t)\|^{2}\},
\end{align}
and
\begin{align}\label{DNk}
	\mathcal{D}_{N,k}(t):=&\varepsilon\sum_{1\leq|\alpha|\leq N}
	\|\partial^{\alpha}(\widetilde{\rho},\widetilde{u},\widetilde{\theta})(t)\|^{2}+\frac{1}{\varepsilon}\sum_{|\alpha|\leq N-1}\{\|\partial^{\alpha}g_1(t)\|_{L^2_xH^s_{k-4|\al|+\ga/2}}^{2}+\|\pa^\al\mathbf{P}_1g_2(t)\|^2_{L^2_xL^2_{v,D}}\}	\nonumber\\
	&\hspace{0.5cm}
	+\varepsilon\sum_{|\alpha|=N}\{\|\partial^{\alpha}g_1(t)\|_{L^2_xH^s_{k-4N+\ga/2}}^{2}+\|\pa^\al\mathbf{P}_1g_2(t)\|^2_{L^2_xL^2_{v,D}}\},
\end{align}
where $N\geq 3$, $k$ is large enough to be determined later, and we have denoted the spatial derivative
$\partial^{\alpha}=\partial_{x_{1}}^{\alpha_{1}}\partial_{x_{2}}^{\alpha_{2}}
	\partial_{x_{3}}^{\alpha_{3}}$
for the multi index $\alpha=(\alpha_{1},\alpha_{2},\alpha_{3})$. Note that no time or velocity derivatives are involved. 

\begin{remark}
An important observation here is that since $P_0G=0$ from \eqref{FGrelation}, it holds that $P_0(g_1+\sqrt\mu g_2)=0$, which yields
\begin{align}\label{g2macro}
	\mathbf{P}_0g_2=-\{\int_{\R^3}g_1 dv+v\cdot\int_{\R^3}vg_1 dv+(|v|^2-3)\int_{\R^3}\frac{1}{6}(|v|^2-3)g_1 dv\}\sqrt\mu.
\end{align}
Hence, one can recover the macroscopic dissipation of $g_2$ from the dissipation of $g_1$ in the sense that 
\begin{align}\label{DNk1}
	\mathcal{D}_{N,k}(t)\sim&\ \varepsilon\sum_{1\leq|\alpha|\leq N}
	\|\partial^{\alpha}(\widetilde{\rho},\widetilde{u},\widetilde{\theta})(t)\|^{2}+\frac{1}{\varepsilon}\sum_{|\alpha|\leq N-1}\{\|\partial^{\alpha}g_1(t)\|_{L^2_xH^s_{k-4|\al|+\ga/2}}^{2}+\|\pa^\al g_2(t)\|^2_{L^2_xL^2_{v,D}}\}	\nonumber\\
	&\hspace{0.5cm}
	+\varepsilon\sum_{|\alpha|=N}\{\|\partial^{\alpha}g_1(t)\|_{L^2_xH^s_{k-4N+\ga/2}}^{2}+\|\pa^\al g_2(t)\|^2_{L^2_xL^2_{v,D}}\}.
\end{align}
\end{remark}

To construct the Boltzmann solution, we first need the following proposition which gives the existence of \eqref{euler} so as to define \eqref{localM}. We remark that the statement of this proposition is given in \cite{Duan-Yang-Yu-2022} and its proof is a modification of \cite[Lemma 3.1 and Lemma 3.2]{Guo-Jang-Jiang-2010}.

\begin{proposition}
	Let $T>0$ be a fixed finite time and $\bar\eta>0$ be a constant, then there is a sufficiently small constant $\eta_T>0$ and a constant $C_T>1$ such that if the initial data 
	$(\bar{\rho},\bar{u},\bar{\theta})(0,x)=(\bar{\rho}_{0},\bar{u}_{0},\bar{\theta}_{0})(x)$ around the constant state $(1,0,3/2)$ satisfies
	\begin{equation}\label{defeta}
		\eta:=\|(\bar{\rho}_{0}(x)-1,\bar{u}_{0}(x),\bar{\theta}_{0}(x)-\frac{3}{2}-\bar\eta)\|_{H^{l}}
		\leq \eta_{T},\quad \inf_{x\in \R^{3}}\bar{\theta}_0(x)>\frac{3}{2},
	\end{equation}
	for $l\geq N+2$ with integer $N\geq 3$, then the Cauchy problem on the compressible Euler system \eqref{euler} admits a unique smooth solution $(\bar{\rho},\bar{u},\bar{\theta})(t,x)$
	over $[0,T]\times \R^3$ satisfying
	$$
	\inf_{t\in[0,T],\,x\in \R^{3}}\bar{\rho}(t,x)>0,\quad
	\inf_{t\in[0,T],\,x\in \R^{3}}\bar{\theta}(t,x)>\frac{3}{2},
	$$
	and
	\begin{equation}\label{background}
		\sup_{t\in[0,T]}\|(\bar{\rho}(t,x)-1,\bar{u}(t,x),\bar{\theta}(t,x)-\frac{3}{2}-\bar\eta)\|_{H^{l}}\leq C_{T}\eta.
	\end{equation}
\end{proposition}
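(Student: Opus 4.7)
The plan is to recast the compressible Euler system \eqref{euler} as a symmetric hyperbolic quasilinear system around the constant equilibrium $(1,0,\tfrac{3}{2}+\bar\eta)$ and then combine Kato--Majda local well-posedness with a continuation argument driven by $H^l$ energy estimates. I introduce the perturbation variables
$$
U(t,x):=(\hat\rho,\hat u,\hat\theta)(t,x):=\bigl(\bar\rho-1,\bar u,\bar\theta-\tfrac{3}{2}-\bar\eta\bigr)(t,x),
$$
so that $\|U(0,\cdot)\|_{H^l}\leq\eta\leq\eta_T$ and, by Sobolev embedding, $U(0,\cdot)$ is small in $L^\infty$. Rewriting \eqref{euler} in these variables produces a system $A_0(U)\pa_tU+\sum_{j=1}^{3}A_j(U)\pa_{x_j}U=0$ with $A_0$ positive definite and each $A_j$ symmetric as long as $\bar\rho>0$ and $\bar\theta>0$. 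Since $l\geq N+2\geq 5>\tfrac{5}{2}$ places us inside the $C^1$ regime required by the standard local existence theorem for symmetric hyperbolic systems, one obtains a unique $H^l$ solution on a maximal interval $[0,T_{\max})$ with the usual continuation criterion controlled by $\|U(t)\|_{W^{1,\infty}}$ and the lower bounds on $\bar\rho,\bar\theta$.

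Next, I would run the $H^l$ energy estimate: apply $\pa^\alpha$ with $|\alpha|\leq l$ to the symmetrized system, pair with $A_0(U)\pa^\alpha U$ in $L^2_x$, use the symmetry of the $A_j$'s to integrate the convective flux by parts, and control the commutators $[\pa^\alpha,A_j(U)]\pa_{x_j}U$ by Moser-type product and composition inequalities. This yields, as long as $\|U\|_{H^l}$ is $O(1)$,
$$
\frac{d}{dt}\|U(t)\|_{H^l}^2\leq C\|U(t)\|_{H^l}^3,
$$
where $C$ depends on $\bar\eta$ and the smooth coefficients of the system. An ODE comparison then shows that if $\eta$ is chosen small relative to $T$ (say $\eta<1/(CT)$), one has
$$
\sup_{t\in[0,T]}\|U(t)\|_{H^l}^2\leq 4\eta^2\leq C_T^2\eta^2,
$$
with $C_T$ depending on $T$ and $\bar\eta$. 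Choosing $\eta_T$ small enough that $C_T\eta_T<\min\{\tfrac{1}{2},\tfrac{\bar\eta}{2}\}$ makes the bootstrap assumption consistent, and the blow-up criterion then forces $T_{\max}>T$.

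Finally, the Sobolev embedding $H^l\hookrightarrow L^\infty$ together with the uniform bound above gives $\sup_{[0,T]\times\R^3}(|\hat\rho|+|\hat\theta|)\leq C\cdot C_T\eta_T<\min\{\tfrac{1}{2},\tfrac{\bar\eta}{2}\}$, so $\inf\bar\rho(t,x)\geq\tfrac{1}{2}>0$ and $\inf\bar\theta(t,x)\geq\tfrac{3}{2}+\tfrac{\bar\eta}{2}>\tfrac{3}{2}$ on $[0,T]\times\R^3$, matching the claimed bounds. The main technical obstacle is the top-order commutator estimate: because $A_j$ depends on $U$, a naive estimate loses a derivative on the coefficient. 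One must exploit Friedrichs-type cancellation from the symmetric structure at the highest order and combine it with sharp Moser estimates so that the right-hand side stays genuinely cubic in $\|U\|_{H^l}$ (rather than carrying a bad $\|\na U\|_{L^\infty}$ factor that would not close via Gronwall on arbitrary $T$). The remaining details follow the argument in \cite{Duan-Yang-Yu-2022}, which itself is a modification of \cite[Lemma 3.1 and Lemma 3.2]{Guo-Jang-Jiang-2010} adjusted for the shifted asymptotic state $(1,0,\tfrac{3}{2}+\bar\eta)$.
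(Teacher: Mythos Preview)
Your proposal is correct and in fact goes beyond what the paper does: the paper provides no proof at all for this proposition, merely remarking that the statement is taken from \cite{Duan-Yang-Yu-2022} and that its proof is a modification of \cite[Lemma~3.1 and Lemma~3.2]{Guo-Jang-Jiang-2010}. Your outline---symmetrization around the constant state $(1,0,\tfrac{3}{2}+\bar\eta)$, Kato--Majda local existence, the cubic $H^l$ energy inequality, and an ODE-comparison bootstrap---is exactly the standard route those references take, so you are aligned with the paper's (deferred) argument.

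One minor clarification on your closing paragraph: the derivative loss you worry about is not in the commutator $[\pa^\alpha,A_j(U)]\pa_{x_j}U$, which Kato--Ponce/Moser estimates control directly by $C(\|U\|_{L^\infty})\|U\|_{H^l}^2$ without any cancellation. The symmetric (Friedrichs) structure is needed only for the genuine top-order term $A_j(U)\pa_{x_j}\pa^\alpha U$, where integration by parts converts it to $\tfrac{1}{2}\int(\pa^\alpha U)^T\pa_{x_j}A_j(U)\,\pa^\alpha U\,dx$. Also, even a bare $\|\na U\|_{L^\infty}$ factor would be harmless here since $l\geq 5$ gives $\|\na U\|_{L^\infty}\leq C\|U\|_{H^l}$, so the estimate closes either way.
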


With the solution to compressible Euler system obtained above, letting $1\leq r\leq 2$ to be a given parameter, we make the a priori assumption
\begin{align}
	\label{apriori}
	\sup_{0\leq t\leq T}\mathcal{E}_N(t)\leq  \eta_0\varepsilon^{r},
\end{align}
for a constant $0<\eta_0<1$ to be determined later.
Under the a priori assumption \eqref{apriori}, noting $N\geq 3$ in \eqref{ENk}, we have from the embedding inequality that
$$
\sup_{t\in[0,T]}\|\widetilde{\rho}(t,\cdot)\|_{L_x^{\infty}}
\leq C\sup_{t\in[0,T]}\|\widetilde{\rho}(t,\cdot)\|_{H^2_x}
\leq C\varepsilon^{r/2},
$$
which together with \eqref{background} yields
$$
|\rho(t,x)-1|\leq |\rho(t,x)-\bar{\rho}(t,x)|+|\bar{\rho}(t,x)-1|
\leq C_T(\varepsilon^{r/2}+\eta).
$$
Similar estimates also hold for $u(t,x)$ and $\theta(t,x)$. Therefore,
for sufficiently small $\varepsilon$ and $\eta$, it holds that
\begin{equation}\label{rhoutheta}
	|\rho(t,x)-1|+|u(t,x)|+|\theta(t,x)-\frac{3}{2}-\bar\eta|<C_T(\varepsilon^{r/2}+\eta),\quad
	\frac{3}{2}<\theta(t,x)<2,
\end{equation}
uniformly in all $(t,x)\in[0,T]\times\mathbb{R}^{3}$.

We now state our main result of this paper as follows.

\begin{theorem}\label{thm1.1}
Assume \eqref{b.kernel} and \eqref{b.kernel.ass} with $-3<\ga\leq1$, $0<s<1$ and $\gamma+2s >-1$. Let $N\geq 3$, $k>25+4N$, $1\leq r\leq 2$, and $T>0$ be given. There are constants $\bar\eta,\varepsilon_{0},\eta_0,C_1>0$ such that  for any $\varepsilon\in(0,\varepsilon_{0})$ and $\eta\in (0,\eta_0)$, 
if it holds that $F_0(x,v)\geq 0$ and
	\begin{align}\label{initial}
		\mathcal{E}_{N,k}(0)\leq \frac{1}{C_1}\eta_0\varepsilon^r,
	\end{align}
then the Cauchy problem on the non-cutoff Boltzmann equation \eqref{BE} admits a unique  solution $F(t,x,v)\geq 0$ of the form \eqref{sol.F} over $[0,T]\times \R^3\times \R^3$ satisfying the estimate
		\begin{align}\label{energyestimate}
		\mathcal{E}_{N,k}(t)+\frac{1}{2}\int^{t}_{0}\mathcal{D}_{N,k}(s) ds\leq \frac{1}{2}\eta_0\varepsilon^{r},\quad 0\leq t\leq T.
	\end{align}
	Moreover, there is  $C_T>0$ independent of $\varepsilon$ such that
\begin{align}\label{estsolution}
	\sup_{t\in[0,T]}\sum_{|\alpha|\leq N}\|w_{k-4|\al|}(\chi_{|\al|<N}+\varepsilon^2\chi_{|\al|=N})\pa^\al\{F(t,x,v)-M_{[\bar{\rho},\bar{u},\bar{\theta}](t,x)}(v)\}
	\|^2_{L^2_xL_{v}^{2}}\leq C_T\varepsilon^r.
\end{align}
\end{theorem}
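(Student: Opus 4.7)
The plan is to prove Theorem~\ref{thm1.1} by a continuity argument on top of local-in-time existence. First I would establish a short-time solution $(\widetilde{\rho},\widetilde{u},\widetilde{\theta},g_1,g_2)$ to the coupled system \eqref{macro2}, \eqref{g1}, \eqref{g2} with initial data \eqref{id.g01}--\eqref{id.g12}, for instance by a standard iteration that exploits the coercivity of $\CL_D$ in weighted $H^s_k$ for $g_1$ and of $L$ on $\mathbf{P}_1 g_2$ in $L^2_{v,D}$; positivity of $F$ can be preserved by a maximum-principle argument on the full equation \eqref{BE}. The main task is then to close, under the a priori bound \eqref{apriori}, a uniform-in-$\varepsilon$ energy inequality of the form
\begin{equation*}
\mathcal{E}_{N,k}(t)+\int_0^t \mathcal{D}_{N,k}(s)\,ds \le \mathcal{E}_{N,k}(0)+C_T(\varepsilon+\eta)\int_0^t \mathcal{E}_{N,k}(s)\,ds + C\int_0^t \sqrt{\mathcal{E}_{N,k}(s)}\,\mathcal{D}_{N,k}(s)\,ds,
\end{equation*}
which, combined with \eqref{initial}, the smallness of $\eta_0,\varepsilon_0$ and Gronwall's lemma, yields \eqref{energyestimate}; the pointwise bound \eqref{estsolution} then follows from the decomposition \eqref{sol.F}, the estimate $\|M-\overline{M}\|\lesssim\|(\widetilde\rho,\widetilde u,\widetilde\theta)\|$, and $\overline{G}=O(\varepsilon)$ via \eqref{defbarG} together with \eqref{background}.

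Second, for $|\al|\le N-1$ I would derive macroscopic estimates on $\partial^\al(\widetilde{\rho},\widetilde{u},\widetilde{\theta})$ directly from the perturbed system \eqref{macro2}. Its principal part has a compressible Navier--Stokes structure with viscous dissipation of order $\varepsilon$, so a Kawashima-type symmetrization combined with standard commutator estimates produces the dissipation $\varepsilon\sum_{1\le|\al|\le N-1}\|\partial^\al(\widetilde\rho,\widetilde u,\widetilde\theta)\|^2$, while the source terms involving $\partial^\al g_1$ and $\partial^\al L^{-1}_M\Theta$ are integrated by parts in $x$ and absorbed, generating contributions of the form $\sqrt{\mathcal{E}_{N,k}}\,\mathcal{D}_{N,k}$ plus terms controlled by $(\varepsilon+\eta)\,\mathcal{D}_{N,k}$ and by the smallness from \eqref{rhoutheta}.

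Third, for the microscopic parts I would apply $\partial^\al$ to \eqref{g1} and pair against $w_{k-4|\al|}^2\partial^\al g_1$, and apply $\partial^\al$ to \eqref{g2} and pair against $\partial^\al g_2$. The three key inputs are: (i) the weighted coercivity $-(\CL_D f,\,w_{k-4|\al|}^2 f)\gtrsim \|f\|_{H^s_{k-4|\al|+\ga/2}}^2$ for the polynomial-tail piece, valid after the cut-off parameters $A,M$ in \eqref{defLB}--\eqref{defLD} are chosen as in Lemma~\ref{leLD}; (ii) the standard spectral gap $-(Lf,f)\gtrsim\|\mathbf{P}_1f\|_{L^2_{v,D}}^2$; and (iii) the coupling source $\CL_B g_1$ in \eqref{g2} is supported in $|v|\le 2M$, so by \eqref{defLB} it is controlled by the unweighted $\|\partial^\al g_1\|$, which is already absorbed by the $g_1$-dissipation. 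The macroscopic component $\mathbf{P}_0 g_2$ is recovered from $g_1$ through \eqref{g2macro}, giving the equivalence \eqref{DNk1}. The trilinear contributions $Q(g_1,g_1)$, $Q(\sqrt\mu g_2,g_1)$, $Q(g_1,\sqrt\mu g_2)$, and the mixed pieces $Q(M-\mu,\cdot)$ and $Q(\overline{G},\cdot)$, are treated by non-cutoff weighted upper bounds, using \eqref{rhoutheta} and \eqref{defbarG} for smallness.

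The hard step, which is precisely why the top-order derivatives carry an $\varepsilon^2$ weight in \eqref{ENk} and to be addressed in Section~\ref{sec.8}, is the case $|\al|=N$. Differentiating \eqref{macro2} $N$ times produces a right-hand side containing $\partial^\al\na_x L^{-1}_M\Theta$, which through \eqref{defTheta} brings back the very top-order derivatives of $g_1,g_2$ one wishes to control; similarly, differentiating \eqref{g1}--\eqref{g2} $N$ times generates commutators $[\partial^\al,L_M]$ and $[\partial^\al,Q(\overline{G},\cdot)]$ whose top pieces have no favourable sign without an extra gain. The remedy I expect to use is to multiply the top-order estimate by $\varepsilon^2$ and to exploit the macroscopic identity \eqref{reG}, which trades one top-order macro derivative for a factor of $\varepsilon$ at the cost of a purely microscopic remainder; the resulting dissipation $\varepsilon(\|\partial^N(\widetilde\rho,\widetilde u,\widetilde\theta)\|^2+\|\partial^N g_1\|^2_{L^2_xH^s_{k-4N+\ga/2}}+\|\partial^N\mathbf{P}_1g_2\|^2_{L^2_xL^2_{v,D}})$ appearing in \eqref{DNk} is exactly balanced against the $\varepsilon^2$ prefactor in the top-order energy, thereby closing the system of inequalities.
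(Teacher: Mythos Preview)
Your outline is broadly right up to $|\alpha|\le N-1$, but the proposed remedy at $|\alpha|=N$ does not close, and this is precisely the place where the paper introduces a new device. Multiplying by $\varepsilon^2$ only rescales; it does not supply the missing derivative. The identity \eqref{reG} is already what converts \eqref{macro} into \eqref{remacro}/\eqref{macro2}; invoking it again gains nothing new. Concretely, if you apply $\partial^{\alpha}$ with $|\alpha|=N$ to \eqref{g2} and pair with $\partial^{\alpha}g_2$, the term $P_{0}[v\cdot\nabla_{x}(g_1+\sqrt{\mu}g_2)]/\sqrt{\mu}$ forces $N{+}1$ derivatives of $g_1,g_2$, and the forcing $\frac{1}{\sqrt\mu}P_1\{v\cdot(\cdots\nabla_x\widetilde\theta,\widetilde u)M\}$ forces $N{+}1$ derivatives of the fluid; neither is in $\mathcal{D}_{N,k}$, and no $\varepsilon$-weight repairs this. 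Likewise, running \eqref{macro2} at $|\alpha|=N$ and integrating by parts as in \eqref{reTheta}--\eqref{uTheta} leaves $\partial^{\alpha}\partial_{x_j}\widetilde u$ paired against $\partial^{\alpha}$-order pieces of $\Theta$, i.e.\ again $N{+}1$ derivatives.

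The paper's actual mechanism at top order is different: it abandons both \eqref{macro2} and \eqref{g2}. For $g_1$ it keeps \eqref{g1}, which has no derivative loss and yields \eqref{Norderg1}. For the remaining unknowns it subtracts \eqref{g1} from the full equation \eqref{1M+G} to obtain the transport equation \eqref{Nequation} for $(M+\overline{G}+\sqrt{\mu}g_2)/\sqrt{\mu}$. In that equation $v\cdot\nabla_x$ acts on the unknown itself, so after pairing with $\partial^{\alpha}(M+\overline{G}+\sqrt{\mu}g_2)/\sqrt{\mu}$ the transport integrates to zero; the problematic $P_0$-term has been absorbed, and the only forcing left involves $(\nabla_x\bar u,\nabla_x\bar\theta)$, small by \eqref{background}. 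Lemma~\ref{leboundF} shows the resulting energy controls $\|\partial^{\alpha}(\widetilde\rho,\widetilde u,\widetilde\theta)\|^2+\|\partial^{\alpha}g_2\|^2$ modulo $\|\partial^\alpha g_1\|^2$ and small errors, so \eqref{Nfluid} delivers the top-order fluid and $g_2$ estimates simultaneously. You should replace your last paragraph by this construction; without it the scheme does not close at $|\alpha|=N$.
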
 

\begin{remark}
We emphasize three aspects of the main result above. First of all, regarding the compressible Euler limit of the Boltzmann equation, we extend the classical result in \cite{Caf} for the cutoff case to the non-cutoff case. Second, we also allow initial data to admit a polynomial tail at large velocities. In fact, in terms of \eqref{initial} and \eqref{ENk} together with \eqref{sol.F}, noting $g_2(0,x,v)=0$, initial data $g_1(0,x,v)$ and hence $F_0(x,v)$ may decay in large velocities with a polynomial rate. Therefore, the class of function spaces for initial data has been enlarged. Third, in comparison with the torus domain, we treat the whole space where the Poincar\'e inequality is no longer applicable. We notice that the polynomial tail solutions in the case of the whole space were studied very recently independently in \cite{CDL} and \cite{CG} for the Boltzmann equation around global Maxwellians. Note that time-decay properties are essential for the existence proof in \cite{CDL} and \cite{CG} but no longer needed in the proof of the current result. 
\end{remark}

\begin{remark}
From \eqref{estsolution}, we note that as $\varepsilon\to 0$, the rate of convergence of the Boltzmann solution $F(t,x,v)$ toward the Euler solution $M_{[\bar{\rho},\bar{u},\bar{\theta}](t,x)}(v)$ over $[0,T]$  can be of order $\varepsilon^{r/2}$ which is strictly lower than $\varepsilon$ in case of $1\leq r<2$.  Thus, the size of initial data becomes much larger when $\varepsilon$ is small. Such result is impossible to be obtained through the usual method of Hilbert expansion which gives only the $\varepsilon$-order rate as in \cite{Caf}. Moreover, when $r=1$, the convergence rate in $L^2$ norm is of order $\varepsilon^{1/2}$ which seems to be sharp according to our proof; see the detailed reason in the next subsection. Of course it could be interesting to find a different approach to construct solutions for larger size perturbation in case $0<r<1$.
\end{remark}

\begin{remark}
We point out that if one replaces the Euler system \eqref{euler} by the acoustic system as the linearization around constant states, a similar result on the acoustic limit as in \cite{Guo-Jang-Jiang-2010} and \cite{Duan-Yang-Yu-2022}  could also be obtained. We omit the proof and discussions of such result to restrict the length of this paper. 
\end{remark}

Moreover, the constructed Boltzmann solution in Theorem \ref{thm1.1} can be global in time provided that the Euler solution is identical to constant states. In case of a fixed finite Knudsen number, such result was obtained in a previous work \cite{CDL}, where even the large time behavior of solutions has been figured out in detail. Here, we focus on the compressible fluid limit and adopt a different approach based on the macro-micro decomposition instead of the perturbation approach around global Maxwellians in \cite{CDL}. In fact, the proof of the following result is an immediate consequence of that of Theorem \ref{thm1.1} relying on nonlinear estimates in two kinds of different forms to be explained in the next subsection, and thus it may be of its own interest to give a separate statement.

\begin{theorem}\label{thm1.2}
Let all the assumptions of 	Theorem \ref{thm1.1} be satisfied. Take $r=2$ and let $(\bar{\rho},\bar{u},\bar{\theta})(t,x)\equiv(1,0,\frac{3}{2}+\bar\eta)$ be a constant solution of the compressible Euler system \eqref{euler}. There are constants $\bar\eta,\varepsilon_{0},\eta_0,C_1>0$ such that  for any $\varepsilon\in(0,\varepsilon_{0})$, if it holds that $F_0(x,v)\geq 0$ and
	\begin{align*}
		\mathcal{E}_{N,k}(0)\leq \frac{1}{C_1}\eta_0\varepsilon^2,
	\end{align*}
	then the Cauchy problem on the non-cutoff Boltzmann equation \eqref{BE} admits a unique global-in-time solution $F(t,x,v)\geq 0$ of the form \eqref{sol.F}  over $[0,\infty]\times \R^3\times \R^3$ satisfying the estimate
	\begin{align}\label{globalenergy}
		\mathcal{E}_{N,k}(t)+\frac{1}{4}\int^{t}_{0}\mathcal{D}_{N,k}(s) ds\leq \frac{1}{2}\eta_0\varepsilon^2, \quad t\geq 0.
	\end{align}
	Moreover, there is $C>0$ independent of $\varepsilon$ such that 
	\begin{align}\label{estglobal}
		\sup_{t\in[0,\infty]}\sum_{|\alpha|\leq N}\|w_{k-4|\al|}(\chi_{|\al|<N}+\varepsilon^2\chi_{|\al|=N})\pa^\al\{F^{\varepsilon}(t,x,v)-M_{[1,0,3/2+\bar\eta]}(v)\}
		\|^2_{L^2_xL_{v}^{2}}\leq C\eta_0\varepsilon^2.
	\end{align}
\end{theorem}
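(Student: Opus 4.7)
The plan is to leverage the machinery developed for Theorem \ref{thm1.1}, exploiting that a constant Euler profile eliminates every source term generated by the background. Since $\nabla_x(\bar\rho,\bar u,\bar\theta)=0$ and $\partial_t(\bar\rho,\bar u,\bar\theta)=0$, the first-order correction $\overline{G}$ defined in \eqref{defbarG} vanishes identically, so $\partial_t\overline{G}$ and $\nabla_x\overline{G}$ are also zero. Consequently, in the microscopic system \eqref{g1}--\eqref{g2} every term containing $\overline{G}$ or a derivative of the background disappears, and in the macroscopic system \eqref{macro2} only the self-interactions of $(\widetilde\rho,\widetilde u,\widetilde\theta)$ and their couplings to $g_1,g_2$ survive. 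The problem is thereby reduced to purely nonlinear interactions among $(\widetilde\rho,\widetilde u,\widetilde\theta,g_1,g_2)$ with no driving force.

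My first step would be to set up a continuation argument under the a priori assumption $\sup_{0\le t\le T^\ast}\mathcal{E}_{N,k}(t)\le \eta_0\varepsilon^2$ for arbitrary $T^\ast>0$, and then rerun the energy-dissipation estimates performed in Sections \ref{sec.5}--\ref{sec.8}. Because the background-generated forcing of size $O(\varepsilon^r)$, which is precisely what produces the time-growing factor $C_T$ in Theorem \ref{thm1.1}, is now absent, the cumulative inequality should collapse to
\begin{equation*}
\frac{d}{dt}\mathcal{E}_{N,k}(t)+\mathcal{D}_{N,k}(t)\le C\bigl(\sqrt{\mathcal{E}_{N,k}(t)}+\mathcal{E}_{N,k}(t)\bigr)\mathcal{D}_{N,k}(t),
\end{equation*}
with a universal constant $C$. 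Under the a priori bound, the right-hand side is dominated by $C\sqrt{\eta_0}\,\varepsilon\,\mathcal{D}_{N,k}(t)$, which for sufficiently small $\eta_0$ and $\varepsilon_0$ is absorbed to yield $\frac{d}{dt}\mathcal{E}_{N,k}(t)+\tfrac{1}{2}\mathcal{D}_{N,k}(t)\le 0$. Integrating and using $\mathcal{E}_{N,k}(0)\le C_1^{-1}\eta_0\varepsilon^2$ with $C_1\ge 2$ produces \eqref{globalenergy} on $[0,T^\ast]$, strictly improving the a priori assumption and closing the bootstrap. The asymptotic bound \eqref{estglobal} then follows immediately by writing $F-M_{[1,0,3/2+\bar\eta]}=(M_{[\rho,u,\theta]}-M_{[1,0,3/2+\bar\eta]})+g_1+\sqrt{\mu}g_2$ through \eqref{sol.F} with $\overline{G}\equiv 0$, and controlling each summand by $\mathcal{E}_{N,k}^{1/2}$: Taylor expansion in $(\widetilde\rho,\widetilde u,\widetilde\theta)$ for the Maxwellian difference, and the definition of $\mathcal{E}_{N,k}$ for $g_1$ and $g_2$. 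Positivity $F\ge 0$ is inherited from the local constructive scheme of Theorem \ref{thm1.1}, which can now be iterated indefinitely thanks to the uniform bound.

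The principal obstacle I foresee lies in the highest-order estimates at $|\alpha|=N$, where the energy carries weight $\varepsilon^2$ but the dissipation carries only weight $\varepsilon$. In Theorem \ref{thm1.1} one may afford a crude $O(C_T\varepsilon^r)$ budget for cubic interactions, whereas in the global setting with $r=2$ every such term must earn an additional factor of $\varepsilon^{1/2}$ or $\sqrt{\eta_0}$ from the a priori bound in order to be absorbed into $\tfrac{1}{2}\mathcal{D}_{N,k}$ uniformly in time. This is the dichotomy referenced in the remark about \emph{nonlinear estimates in two kinds of different forms}: the local theory tolerates an integrated source, whereas the global theory needs every trilinear contribution of the form $\partial^N Q(g_1,g_1)$, $\partial^N Q(\sqrt{\mu}g_2,g_1)$, and so on, to close against the dissipation. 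Since $\mathcal{D}_{N,k}$ already contains the weighted $L^2_xH^s_{k-4N+\gamma/2}$ norm of $\partial^N g_1$ and the $L^2_xL^2_{v,D}$ norm of $\partial^N\mathbf{P}_1g_2$, the $\varepsilon$-bookkeeping should indeed close, but verifying this carefully is the most delicate step and the sole new analytic content beyond Theorem \ref{thm1.1}.
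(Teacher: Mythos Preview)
Your overall strategy is correct, and the paper's proof follows the same spirit, but there are two points worth noting.

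First, the differential inequality you write down is not quite the one that comes out of the estimates. The trilinear fluid terms such as $(\nabla_x u\,\partial^\alpha\widetilde\rho,\partial^\alpha\widetilde\rho)$ produce a bound of the form $C\varepsilon^{-1}\sqrt{\mathcal{E}_{N,k}}\,\mathcal{D}_{N,k}$, not $C\sqrt{\mathcal{E}_{N,k}}\,\mathcal{D}_{N,k}$; see for instance \eqref{utrr1} and the surrounding computations. Under the a priori bound $\mathcal{E}_{N,k}\le\eta_0\varepsilon^2$ this gives $C\sqrt{\eta_0}\,\mathcal{D}_{N,k}$, with no residual factor of $\varepsilon$. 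The conclusion survives, but the missing $\varepsilon^{-1}$ is exactly why $r=2$ is forced here and $r<2$ fails for the global argument; your stated bound $C\sqrt{\eta_0}\,\varepsilon\,\mathcal{D}_{N,k}$ would close even for $r=1$, which it does not.

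Second, the paper does not rerun Sections \ref{sec.5}--\ref{sec.8} at all. Throughout those sections the nonlinear terms were deliberately recorded in the form $\min\{\varepsilon^{-1}\sqrt{\mathcal{E}_{N,k}}\,\mathcal{D}_{N,k},\,\eta_0^{3/2}\varepsilon^r\}$, and this $\min$ is carried through to the final integrated inequality \eqref{energy}. The proof of Theorem \ref{thm1.2} then consists of a single observation: the constant background means $\eta=0$, so the time-growing term $T\eta\varepsilon^r$ in \eqref{energy} vanishes, and choosing the first branch of the $\min$ gives
\[
\mathcal{E}_{N,k}(t)+\tfrac12\int_0^t\mathcal{D}_{N,k}\le C_k\mathcal{E}_{N,k}(0)+C_k\eta_0^{1/2}\varepsilon^{r/2-1}\int_0^t\mathcal{D}_{N,k}+C_k(\bar\eta+\eta_0^{3/2}+\varepsilon^{r/2})\varepsilon^r.
\]
With $r=2$ the coefficient $\eta_0^{1/2}\varepsilon^{r/2-1}=\eta_0^{1/2}$ is absorbed by smallness of $\eta_0$, and the remaining forcing $(\bar\eta+\eta_0^{3/2}+\varepsilon)\varepsilon^2$ is time-independent and small. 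Note in particular that the forcing does not completely vanish as you suggest: the discrepancy between $\mu=M_{[1,0,3/2]}$ and $\overline M=M_{[1,0,3/2+\bar\eta]}$ leaves a residual $\bar\eta$-contribution and the highest-order estimate \eqref{Nfluid} contributes $(\bar\eta+\eta_0^{3/2})\varepsilon^2$, both of which must still be made small. Your concern about the top-order $|\alpha|=N$ bookkeeping is legitimate but was already resolved in the proof of Theorem \ref{thm1.1}; nothing new is needed there.
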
 

\begin{remark}
As mentioned before, the large time behavior of solutions can be further studied in terms of the approach in \cite{CDL}, in particular, $g_1(t,x,v)$ decays exponentially while $(\widetilde{\rho},\widetilde{u},\widetilde{\theta})$ and $g_1(t,x,v)$ decay only polynomially as the heat semigroup under certain extra smallness assumptions on initial data. We omit the proof of such result in the current framework for brevity.
\end{remark}

\subsection{Strategies of the proof}\label{sub.sop}
We now explain some strategies of our proof. One of the key points is how to decompose the macro and micro equations in order to allow the velocity perturbation of the microscopic part to be only polynomial. In terms of the macro-micro decomposition $F=M+G=M+\overline{G}+g$ with the correction term $\overline{G}$ that decays in $v$ exponentially, we make a similar Caflisch-type decomposition for the microscopic part $g=g_1+\sqrt{\mu}g_2$ as in \cite{CDL,DLL} such that $g_1$ carries the initial data with polynomial tail and $g_2$ behaves like the classic symmetric case but with zero initial data. Then we also decompose the linear operator $L_{\mu}=\CL_D+\mu^{1/2}(v)\CL_B$, which describes the linear part of $g$, such that $\CL_D$ as a relaxation operator dominates the linear part of $g_1$ with only polynomial perturbation in velocity and the degenerate symmetric operator $\CL_B$ can absorb any increase in $v$. But unlike the previous case \cite{CDL,DLL}  where we just construct solutions near global Maxwellians, there are fluid quantities in the equation of $g$ now. Also, $g$ appears in the macroscopic equations. So we must design both fluid and non-fluid systems in a completely different way in order to control the interaction among Knudsen number, macroscopic and microscopic quantities, and in the same time, avoid the exponential increase of $g_1$. 

To interpret the difficulty mentioned above, we start from the standard process of getting the fluid type system \eqref{macro}. The left hand side of the macroscopic system is the same as Euler system, but on the right hand side, there are terms including microscopic quantities such as $\int_{\mathbb{R}^{3}} v\otimes v\cdot\nabla_x G dv$. To estimate this term and obtain the viscosity, one usually uses the microscopic equation
$$
\partial_t G+P_1(v\cdot\nabla_{x}G)+P_1(v\cdot\nabla_{x}M)
=\frac{1}{\varepsilon}L_M G+\frac{1}{\varepsilon}Q(G,G),
$$
so as to get
$G=\varepsilon L^{-1}_{M}[P_{1}(v\cdot\nabla_{x}M)]+L^{-1}_{M}(\varepsilon \partial_{t}G+\varepsilon P_{1}(v\cdot\nabla_{x}G)-Q(G,G))$, cf.~\cite{Liu-Yang-Yu,Duan-Yu1,Duan-Yang-Yu-2022}. Then, we decompose $G=\overline{G}+\sqrt{\mu}f$ and treat $f$ in the microscopic estimates. However, for our decomposition $G=\overline{G}+g_1+\sqrt{\mu}g_2$, due to the non-cutoff assumption, the best estimate for $L^{-1}_{M}$ one can obtain so far is $$
\|w_kM^{-\frac{1}{2}}L_M^{-1}f\|\leq C_{\rho,u,\theta}\|w_k M^{-\frac{1}{2}}f\|_{H^{-s}_{-\ga/2}},
$$
where the presence of $M^{-\frac{1}{2}}$ implies the exponential increase of $g_1$ which is harmful for our estimates. Hence, we write 
$$
\int_{\mathbb{R}^{3}} v\otimes v\cdot\nabla_x G dv=\int_{\mathbb{R}^{3}} v\otimes v\cdot\nabla_x g_1 dv+\int_{\mathbb{R}^{3}} v\otimes v\cdot\nabla_x (G-g_1) dv,
$$
and consider the equation of $G-g_1$ instead, which leads to
\begin{align*}
\partial_t (G-g_1)+P_1(v\cdot\nabla_{x}(G-g_1))+P_1(v\cdot\nabla_{x}M)
=&\frac{1}{\varepsilon}L_M (G-g_1)+\frac{1}{\varepsilon}(Q(G,G)-Q(g_1,g_1))\\
&-(\pa_t g_1+P_1(v\cdot\nabla_{x}g_1)-Q(g_1,g_1)).
\end{align*} 
Then our construction of $g_1$ should meet two requirements, the first one is allowing polynomial increase, and the other is that the remainder $\pa_t g_1+P_1(v\cdot\nabla_{x}g_1)-Q(g_1,g_1)$ can be controlled even with the exponential increase of velocity. One can see later from our proof that the equation of $g_1$ we construct in \eqref{g1} satisfies both requirements. 

The same strategy occurs in our highest order estimates. In fact, in order to avoid the $\na_{x}$ in the fluid-type equation which increases the order of spacial derivatives, one may directly start from the original Boltzmann equation \eqref{BE} and turn to $\|F/\sqrt{\mu}\|$ as in \cite{Liu-Yang-Yu,Duan-Yu1,Duan-Yang-Yu-2022}, but the factor $\sqrt{\mu}^{-1}$ causes an exponential increase for $g_1$ in terms of our representation $G=\overline{G}+g_1+\sqrt{\mu}g_2$. Thus, we estimate $g_1$ separately and then derive the equation for $(F-g_1)/\sqrt{\mu}$ to control $\|(F-g_1)/\sqrt{\mu}\|$. It can be proved that the estimate of $\|(F-g_1)/\sqrt{\mu}\|$ shows the bound for all the highest order energy except for $g_1$. We also mention that it is possible to build other equations than \eqref{g1} for $g_1$, like moving some nonlinear terms from \eqref{g2} to \eqref{g1}. But a good property of our construction is that we are still able to keep the exponential in time decay of our $g_1$, which means that even for the time-weighted term $e^{\la t}g_1$ with a small constant $\la>0$, one can still close the estimates. Though, we would not study the time decay property in the current paper; see the time-velocity decay structure established in \cite{CDL}.

The other main concern of our paper is to capture the order of Knudsen number $\varepsilon$ for the perturbation. If one applies the normal Hilbert expansion 
$$
F(t,x,v)=\sum_{n=0}^{3}\varepsilon^n F_n(t,x,v)+
F_R(t,x,v),
$$
and turns to estimate the remainder $F_R$, the size of some Sobolev norm (either $L^2\cap L^\infty$ or $H^N$) of the perturbation should be $\varepsilon^{3/2}$, since the behavior of each $F_n$ is determined by $F_i$ with $0\leq i\leq n-1$. In our approach, we directly study the perturbation near the local Maxwellian which contains all the macroscopic information of the solution by defining the fluid quantities of $M$ by  the macroscopic part of $F$. Therefore, the rest part $G=F-M$ should be purely microscopic. Such way avoids the expansion in the order of $\varepsilon$ and allows us to study its lower-order behavior. Of course, this also causes a lot of difficulties due to the local Maxwellian. 

Another core observation that helps us to study the order of $\varepsilon$ is on the nonlinear estimates. Let us take one type of fluid term to explain the issue. For instance, when we estimate terms like 
$$
\int^t_0(\nabla_{x}u\,\partial^{\alpha}\widetilde{\rho},\partial^{\alpha}\widetilde{\rho})_{L^2_x}(s)ds,
$$
it is noticed that there are two ways to bound this type of integral by 
\begin{equation}
\label{intr.cc}
\text{either $\dis \varepsilon^{-1}\sup\limits_{0\leq s\leq t}\sqrt{\mathcal{E}_{N,k}(s)}\int^t_0\mathcal{D}_{N,k}(s)ds$ or $\varepsilon^{-1/2}t\sup\limits_{0\leq s\leq t}\mathcal{E}_{N,k}(s)^{3/2}$.}
\end{equation}
The advantage of the former one is that it is more convenient to study global solutions in the hydrodynamic limit problem since there is no growth in time, while it forces ones to impose the necessary a priori assumption that $\mathcal{E}_{N,k}(t)\leq \eta_0\varepsilon^2$, where the order of $\varepsilon$ cannot be lower in this framework. Using the latter bound in \eqref{intr.cc} one can allow the estimate $\mathcal{E}_{N,k}(t)\leq \eta_0\varepsilon$ up to finite time. Hence, in our proof, we keep both bounds and assume $\mathcal{E}_{N,k}(t)\leq \eta_0\varepsilon^r$.  To prove the hydrodynamic limit to compressible Euler system over any finite time, we take $1\leq r\leq 2$ and use the latter one in \eqref{intr.cc}. To obtain the global existence in case when  the background Euler solutions are constant states, we take $r=2$ and use the former bound in \eqref{intr.cc}.

\section{Preliminaries}\label{sec.pre}	
Before going to the details of energy estimates, in this section, we give some useful properties of the linear operators $L_\mu$, $L_M^{-1}$ and nonlinear operators $Q$, $\Ga$ for later use. Moreover, when it comes to the higher order derivative estimates, there will be many similar terms involving $\overline{G}$ and its derivatives and hence we also take care of such terms in this section to avoid repeating computations.

The next three lemmas, which help us control the collision operator with polynomial velocity weight, can be found in \cite[Theorem 1.1]{H} and \cite[Lemma 2.3, Theorem 2.1]{CHJ}, respectively.

\begin{lemma}\label{upperQ} 
Suppose $-3<\ga\leq1$, $0<s<1$, $\gamma+2s >-1$. 
	Let $w_1, w_2 \in \R$, $0\leq a, b \leq 2s$ with $w_1+w_2 =\gamma+2s$  and $a+b =2s$. Then there exists a constant $C$ such that for any functions $f, g, h$, we have \\
	(1) if $\gamma + 2s >0$, then 
	\[
	|(Q(g, h), f)| \le C (\Vert g \Vert_{L^1_{\gamma+2s +(-w_1)^++(-w_2)^+}}  +\Vert g \Vert_{L^2} ) \Vert h \Vert_{H^a_{w_1}}   \Vert f \Vert_{H^b_{w_2}} ,
	\]
	(2) if $\gamma + 2s = 0$, then 
	\[
	|(Q(g, h), f)|\le C (\Vert g \Vert_{L^1_{w_3}} + \Vert g \Vert_{L^2}) \Vert h \Vert_{H^a_{w_1}}\Vert f \Vert_{H^b_{w_2}} ,
	\]
	where $w_3 = \max\{\delta,(-w_1)^+ +(-w_2)^+ \}$, with $\delta>0$ sufficiently small,\\
	(3) if $-1< \gamma + 2s < 0$, then 
	\[
	|(Q(g, h), f)|\le C  (\Vert g \Vert_{L^1_{w_4}} + \Vert g \Vert_{L^2_{-(\gamma+2s)}}) \Vert h \Vert_{H^a_{w_1}}   \Vert f \Vert_{H^b_{w_2}},  
	\]
	where $w_4= \max\{-(\gamma+2s), \gamma+2s +(-w_1)^+ +(-w_2)^+ \}$.
\end{lemma}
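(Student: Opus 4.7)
\medskip
\noindent\textbf{Proof proposal.} The lemma is stated as \cite[Theorem 1.1]{H}; my plan is to adapt that argument, carefully tracking the polynomial weights $w_1,w_2$ and the split of fractional derivatives $a+b=2s$. The starting point is the $\sigma$-representation together with a dyadic angular decomposition $b(\cos\theta)=\sum_{k\geq 0}b_k(\cos\theta)$, with $b_k$ supported in $\theta\sim 2^{-k}$. For each dyadic piece I would rewrite $(Q(g,h),f)$ using the pre/post-collisional symmetry
\[
(Q(g,h),f)=\iiint B(v-u,\sigma)\,g(u)\bigl\{h(v')f(v')-h(v)f(v)\bigr\}\,du\,dv\,d\sigma,
\]
and a second-order Taylor expansion of $h(v')-h(v)$ and $f(v')-f(v)$ would convert the angular singularity $\theta^{-1-2s}$ into an integrable $\theta^{1-2s}$ factor, while at the same time producing fractional derivatives of total order $2s$ on the product $hf$, freely distributed as $a+b=2s$ between the two.

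\medskip
For the polynomial weights, I would use the elementary bounds $\langle v'\rangle,\langle u'\rangle\lesssim \langle v\rangle\langle u\rangle$ (uniformly in $\sigma$) together with the energy identity $|v'|^2+|u'|^2=|v|^2+|u|^2$ to shift all post-collisional weights back to pre-collisional variables, placing $\langle v\rangle^{w_1}$ on $h$ and $\langle v\rangle^{w_2}$ on $f$ in accordance with $w_1+w_2=\gamma+2s$. The residual weight falls on $g$, and this is where the three sub-cases diverge. When $\gamma+2s>0$ the excess is positive and can be absorbed into $\|g\|_{L^1_{\gamma+2s+(-w_1)^++(-w_2)^+}}$; when $\gamma+2s=0$ one inserts a small parameter $\delta>0$ to avoid a borderline logarithmic failure; and when $-1<\gamma+2s<0$ the kinetic factor $|v-u|^\gamma$ is non-integrable near the diagonal, forcing an $L^2_{-(\gamma+2s)}$-type control on $g$ via a Hardy--Littlewood--Sobolev bound.

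\medskip
The hardest part will be coordinating the angular cancellation with the weight redistribution: after Taylor expansion, derivatives act on $h,f$ at interpolated points $v_\tau=v+\tau(v'-v)$, and one must verify $\langle v_\tau\rangle\sim\langle v\rangle$ uniformly in $\tau,\sigma$ and in the dyadic index $k$. This step is precisely where the standing assumption $\gamma+2s>-1$ is used, to guarantee integrability of $\theta^{1-2s}|v-u|^{\gamma+2s}$ against each dyadic shell. The auxiliary $\|g\|_{L^2}$ term (respectively $\|g\|_{L^2_{-(\gamma+2s)}}$ in the third sub-case) appears as the cost of handling the short-range region $|v-u|\lesssim 1$, where the $L^1$-weighted control on $g$ alone is insufficient and one must fall back on Young or Hardy--Sobolev estimates.
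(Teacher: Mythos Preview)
The paper does not prove this lemma at all: immediately before stating it, the authors write that ``the next three lemmas \ldots\ can be found in \cite[Theorem 1.1]{H} and \cite[Lemma 2.3, Theorem 2.1]{CHJ}'', and no argument is given in the text. So there is nothing in the paper to compare your sketch against; you are attempting to supply what the authors deliberately outsourced.

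As a sketch of the cited result, your outline captures the right high-level ingredients---dyadic angular localization, extraction of $2s$ total fractional derivatives distributed as $a+b$, and redistribution of polynomial weights onto $h$ and $f$ with the residual landing on $g$---and your explanation of why the three sub-cases in $\gamma+2s$ arise is accurate. Two cautions if you intend to flesh this out. First, the weak-form identity you wrote, $(Q(g,h),f)=\iiint B\,g(u)\{h(v')f(v')-h(v)f(v)\}$, is not what the standard pre/post-collisional involution gives; that change of variables yields $\iiint B\,g(u)h(v)\{f(v')-f(v)\}$, and the version you need for He's argument is closer to this latter form. Second, He's actual proof in \cite{H} is not a direct Taylor expansion but rather a full Littlewood--Paley decomposition in velocity combined with the angular dyadic decomposition, and it is the interaction between these two frequency localizations that produces the sharp weight exponents $w_1+w_2=\gamma+2s$; a naive Taylor expansion at interpolated points $v_\tau$ will not by itself give the flexibility to place $H^a_{w_1}$ on $h$ and $H^b_{w_2}$ on $f$ with arbitrary $a+b=2s$.
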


\begin{lemma} 
Suppose $-3<\ga\leq1$, $0<s<1$, $\gamma+2s >-1$.  For any $k \ge 14$, and functions $g, h$, we have
	\begin{align*}
		\quad\,|(Q (h, \mu), g \langle v \rangle^{2k}) |
		&\le   \Vert b(\cos\theta) \sin^{k-\frac {3+\gamma} 2} \frac \theta 2 \Vert_{L^1_\theta}    \Vert h \Vert_{L^2_{k+\gamma/2}}\Vert g \Vert_{L^2_{k+\gamma/2}} + C_k \Vert h \Vert_{L^2_{k+\gamma/2-1/2}}\Vert g \Vert_{L^2_{k+\gamma/2-1/2}}
		\\
		&\le   \Vert b(\cos\theta) \sin^{k- 2} \frac \theta 2 \Vert_{L^1_\theta}    \Vert h \Vert_{L^2_{k+\gamma/2}}\Vert g \Vert_{L^2_{k+\gamma/2}}  + C_k \Vert h \Vert_{L^2_{k+\gamma/2-1/2}}\Vert g \Vert_{L^2_{k+\gamma/2-1/2}},
	\end{align*}
	for some constant $C_k>0$.  
\end{lemma}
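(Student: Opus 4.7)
The plan is to prove this weighted bilinear estimate by combining the weak (pre-post collisional) formulation of $Q$ with the weight-transfer decomposition that is now standard in the non-cutoff Boltzmann literature. First I would apply the change of variables $(v,u)\mapsto(v',u')$ on the gain part to rewrite
\[
(Q(h,\mu),g\langle v\rangle^{2k})=\iiint_{\R^3\times\R^3\times\S^2} B(v-u,\sigma)\,h(u)\,\mu(v)\,[g(v')w^2(v')-g(v)w^2(v)]\,d\sigma du dv,
\]
with $w(v):=\langle v\rangle^k$. Then I would split the bracket algebraically as
\[
g(v')w^2(v')-g(v)w^2(v)=w(v)w(v')[g(v')-g(v)]+[w(v')-w(v)]\bigl(w(v')g(v')+w(v)g(v)\bigr),
\]
so that the integral becomes $I_1+I_2$ with $I_1$ the symmetric cancellation piece and $I_2$ the weight-transfer commutator.

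For $I_1$, the plan is to apply Cauchy--Schwarz on the $(u,v,\sigma)$-integral after writing $\mu(v)=\mu(v)^{1/2}\cdot\mu(v)^{1/2}$ and exploiting the inequality $\langle v\rangle\lesssim\langle v'\rangle\langle u-u'\rangle$ together with $|u-u'|=|v-u|\sin(\theta/2)$; the Gaussian factor $\mu(v)^{1/2}$ absorbs polynomial losses in $v$, and the $\sin(\theta/2)$-powers thus produced combine with $|v-u|^\gamma$ from the kernel to yield an angular weight of order $\sin^{k-(3+\gamma)/2}(\theta/2)$. Redistributing the remaining weights evenly between $h$ and $g$ leads to the leading bound $\|b(\cos\theta)\sin^{k-(3+\gamma)/2}(\theta/2)\|_{L^1_\theta}\|h\|_{L^2_{k+\gamma/2}}\|g\|_{L^2_{k+\gamma/2}}$. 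For $I_2$, I would use the pointwise control $|w(v')-w(v)|\le C_k|v-u|\sin(\theta/2)(\langle v\rangle^{k-1}+\langle v'\rangle^{k-1})$; when $s\ge 1/2$, a second-order Taylor expansion together with $\sigma$-symmetrization cancels the first-order singular term and leaves only an $O(|v-v'|^2)$ piece, which the Gaussian $\mu(v)$ absorbs to yield the lower-order remainder $C_k\|h\|_{L^2_{k+\gamma/2-1/2}}\|g\|_{L^2_{k+\gamma/2-1/2}}$. The second inequality in the statement follows by monotonicity: since $\gamma\le 1$, one has $(3+\gamma)/2\le 2$, so $\sin^{k-(3+\gamma)/2}(\theta/2)\le\sin^{k-2}(\theta/2)$ on $(0,\pi/2)$.

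The main obstacle is producing the sharp angular power $\sin^{k-(3+\gamma)/2}(\theta/2)$ with the weight evenly split between $h$ and $g$. This requires carefully balancing how much of the weight $w(v)$ is redistributed to $v'$ versus to $u-u'$, and how much of the resulting polynomial loss can be absorbed by $\mu(v)^{1/2}$; too generous a transfer enlarges the angular weight beyond the claimed power, while too stingy a one leaves an unbalanced $L^2_\ell$-norm on $h$ or $g$. The hypothesis $k\ge 14$ ensures that $\|b(\cos\theta)\sin^{k-(3+\gamma)/2}(\theta/2)\|_{L^1_\theta}$ is finite under $0<s<1$ and $-3<\gamma\le 1$, since in that range $k-(3+\gamma)/2>2s$ by a comfortable margin, overriding the $\theta^{-1-2s}$ singularity of $b(\cos\theta)\sin\theta$.
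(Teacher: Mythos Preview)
The paper does not give its own proof of this lemma; it is quoted verbatim from \cite[Lemma 2.3]{CHJ} (see the sentence preceding Lemma~3.1: ``The next three lemmas \dots\ can be found in \cite[Theorem 1.1]{H} and \cite[Lemma 2.3, Theorem 2.1]{CHJ}''). So there is no in-paper argument to compare against; what can be assessed is whether your sketch would actually reproduce the CHJ result.

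Your overall setup---the pre/post formulation and the algebraic splitting of $g(v')w^2(v')-g(v)w^2(v)$---is a reasonable starting point, and the second inequality in the statement indeed follows from the first by the monotonicity you indicate. The treatment of $I_2$ via first/second-order Taylor expansion of $w$ together with the $\sigma$-symmetrisation is also the standard route and would yield a lower-order remainder. The gap is in your handling of $I_1$. After Cauchy--Schwarz the factor that should carry $g$ contains
\[
\iiint b(\cos\theta)|v-u|^{\gamma}\,\mu(v)\,w(v)^2 w(v')^2\,|g(v')-g(v)|^2\,d\sigma\,du\,dv.
\]
If you bound $|g(v')-g(v)|^2\le 2|g(v')|^2+2|g(v)|^2$, the piece with $|g(v)|^2$ leaves the bare angular integral $\int_{\S^2} b(\cos\theta)\,w(v')^2\,d\sigma$, and since $w(v')\to w(v)$ as $\theta\to 0$ this diverges---the Gaussian $\mu(v)$ localises $v$ but does not by itself tame the $\theta^{-1-2s}$ singularity. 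Conversely, if you keep the difference intact you are measuring an anisotropic $H^s$-type quantity of $g$, which is strictly stronger than the claimed $L^2_{k+\gamma/2}$ norm. In the CHJ argument this is resolved not by a single Cauchy--Schwarz but by a further kinematic splitting (essentially according to the relative sizes of $\langle v\rangle$, $\langle u\rangle$ and $|v-u|\sin(\theta/2)$) together with the exact relation $|v'|=|v-u|\sin(\theta/2)$ when the Gaussian forces $v\approx 0$; it is this splitting, combined with a singular change of variables $v\mapsto v'$, that produces the precise power $\sin^{\,k-(3+\gamma)/2}(\theta/2)$ with the weights shared evenly between $h$ and $g$. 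Your sketch names this as ``the main obstacle'' but does not explain how the exponent $(3+\gamma)/2$---which combines the Jacobian of the $v\mapsto v'$ change ($\cos^{-3}(\theta/2)$, contributing the $3/2$) with the kinetic factor $|v-u|^{\gamma}$---actually appears. Without that step the argument does not close at the sharp constant level the lemma asserts.
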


\begin{lemma}
	Suppose $-3<\gamma\le 1$, $0<s<1$, $\gamma+2s>-1$, $k\ge 14$ and $F= \mu +g \ge 0$. If there exist $C_1,C_2>0$ with
	\begin{align*}
		F \ge 0,\quad  \Vert F \Vert_{L^1} \ge C_1, \quad \Vert F \Vert_{L^1_2} +\Vert F \Vert_{L \log L} \le C_2,
	\end{align*}
where
$\Vert F \Vert_{L \log L}=\int_{\R^3}|F(v)|\log(1+|F(v)|)dv$, 
	then there are constants $c, C_k>0$ such that
	\begin{align*}
		(Q(F, f), w_{2k}f )
		&\le   - \frac {1} {8} \Vert  b(\cos \theta) \sin^2 \frac \theta 2\Vert_{L^1_\theta}\Vert f \Vert_{L^2_{k+\gamma/2}}^2  - c \Vert f \Vert_{H^s_{k+\gamma/2}}^2 + C_k  \Vert f \Vert_{L^2}^2
		\notag\\
		&\quad+C_k\Vert f \Vert_{L^2_{14}} \Vert g \Vert_{H^s_{ k+\gamma/2 }}\Vert f \Vert_{H^s_{ k + \gamma/2}} +C_k\Vert g \Vert_{L^2_{14} } \Vert f \Vert_{H^s_{ k + \gamma/2}}^2.
	\end{align*}
\end{lemma}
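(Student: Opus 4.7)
The plan is to split $F = \mu + g$ so that $Q(F,f) = Q(\mu,f) + Q(g,f)$, then to conjugate the weighted pairing by the substitution $h := w_k f$. Using the symmetric representation
\[
(Q(F,f),\phi)_{L^2_v} = \iiint B(v-u,\sigma)\,F(u)\,f(v)\,[\phi(v') - \phi(v)]\,dv\,du\,d\sigma
\]
with $\phi = w_{2k}f$ and the identity $w_{2k}(v')f(v') = w_k(v')h(v')$, the weighted pairing splits as
\[
(Q(F,f), w_{2k}f) = \iiint B\,F(u)\,h(v)\,\tfrac{w_k(v')}{w_k(v)}[h(v')-h(v)]\,dv\,du\,d\sigma + \iiint B\,F(u)\,h(v)^2\,\tfrac{w_k(v')-w_k(v)}{w_k(v)}\,dv\,du\,d\sigma.
\]
The first piece is a twist of the standard $(Q(F,h),h)$ and will produce the $H^s$ dissipation, while the second \emph{weight-cancellation} piece will generate the coercive weighted $L^2$ control.

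For the cancellation piece with $F=\mu$, I would Taylor-expand $w_k(v')-w_k(v)$ in the direction $v'-v = \tfrac{|v-u|}{2}(\sigma - \hat e)$ with $\hat e = (v-u)/|v-u|$; the first-order term, upon $\sigma$-integration, produces a factor proportional to $\cos\theta - 1 = -2\sin^2(\theta/2)$ because the azimuthal integration kills the perpendicular component of $\sigma-\hat e$. Combined with $B(v-u,\sigma) = |v-u|^\gamma b(\cos\theta)$ and the Gaussian decay of $\mu(u)$ which localizes $|v-u|^\gamma \sim \langle v\rangle^\gamma$ for large $v$, this yields precisely $-\tfrac{1}{8}\|b\sin^2(\theta/2)\|_{L^1_\theta}\,\|f\|^2_{L^2_{k+\gamma/2}}$ modulo a remainder supported in a compact region in $v$, which is absorbed into $C_k\|f\|^2_{L^2}$. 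For the dissipative piece, the Alexandre-Desvillettes-Villani-Wennberg coercivity, whose hypotheses are exactly the mass-energy-entropy bounds imposed on $F = \mu + g$, applied to $(Q(F,h),h)$ gives $-c\|h\|^2_{H^s_{\gamma/2}} + C\|h\|^2_{L^2_{\gamma/2}} = -c\|f\|^2_{H^s_{k+\gamma/2}} + C\|f\|^2_{L^2_{k+\gamma/2}}$, and the positive $L^2_{k+\gamma/2}$ error is absorbed by a fraction of the weight-cancellation term once $k$ is large.

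The $Q(g,\cdot)$ contributions to both pieces are handled by Lemma~\ref{upperQ} after redistributing the weight ratio $w_k(v')/w_k(v)$. Depending on whether one places regularity on $g$ or on the test slot, the upper bound produces either $C_k\|g\|_{L^2_{14}}\|f\|^2_{H^s_{k+\gamma/2}}$ or $C_k\|f\|_{L^2_{14}}\|g\|_{H^s_{k+\gamma/2}}\|f\|_{H^s_{k+\gamma/2}}$; since both appear in the target bound, we simply retain both. The twisting factor $w_k(v')/w_k(v) - 1$ introduces no essential new difficulty because it is controlled pointwise by powers of $\sin(\theta/2)$ and can be absorbed into the angular integral of $b$ together with the weight distribution.

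The hardest step will be the sharp identification of the coefficient $\tfrac{1}{8}\|b\sin^2(\theta/2)\|_{L^1_\theta}$ rather than just an abstract negative constant: this requires a precise two-region splitting in $v$ (a "generic" region where $\langle v\rangle^{k+\gamma/2}$ dominates and the sign of $(v-u)\cdot v$ is controlled, versus a compact region whose error is absorbed into $C_k\|f\|^2_{L^2}$), together with careful bookkeeping of the higher-order Taylor remainder in $w_k(v')-w_k(v)$. This is also the place where the threshold $k \geq 14$ enters: it ensures the integrability of the remainder against the singular angular kernel through bounds of the form $\|b\sin^{k-2}(\theta/2)\|_{L^1_\theta}<\infty$ as used in the preceding lemma, and it allows the interpolation loss in the nonlinear terms to be absorbed via the $L^2_{14}$ norm of $g$ and $f$.
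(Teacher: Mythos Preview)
The paper does not prove this lemma; it is quoted verbatim from \cite[Theorem 2.1]{CHJ}, as announced in the sentence preceding the three preliminary lemmas. There is therefore no in-paper argument to compare your proposal against.

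Your sketch is consistent with the strategy in that reference: conjugate by $h=w_k f$, split into a piece carrying the ADVW-type $H^s_{\gamma/2}$ coercivity for $(Q(F,h),h)$ under the stated mass--energy--entropy bounds, and a weight-difference piece whose leading contribution after angular averaging against $\mu(u)$ produces the explicit $-\tfrac{1}{8}\|b\sin^2(\theta/2)\|_{L^1_\theta}\|f\|^2_{L^2_{k+\gamma/2}}$; the $Q(g,\cdot)$ remainders are then handled by the trilinear estimate of Lemma~\ref{upperQ} and the commutator bound of Lemma~\ref{commu}. One point you pass over too quickly: your ``first piece'' is not $(Q(F,h),h)$ but that expression twisted by the factor $w_k(v')/w_k(v)$ inside the integral. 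Writing this factor as $1+[w_k(v')/w_k(v)-1]$ recovers the genuine $(Q(F,h),h)$ plus a further commutator-type remainder that must itself be estimated and absorbed; in \cite{CHJ} this is done alongside the weight-cancellation term, and it is precisely where the threshold $k\ge 14$ and the preceding lemma on $(Q(h,\mu),g\langle v\rangle^{2k})$ enter in earnest. Your outline is correct at the level of architecture, but the control of that extra twist is not as automatic as you suggest.
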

Collecting the above two lemmas, one gets the following corollary.
\begin{corollary}
	Suppose $-3<\gamma\le 1,\ 0<s<1,\ \gamma+2s>-1$, $k\ge 14$ and $F= \mu +g \ge 0$. If there exist $C_1,C_2>0$ such that 
	\begin{align*}
		F \ge 0,\quad  \Vert F \Vert_{L^1} \ge C_1, \quad \Vert F \Vert_{L^1_2} +\Vert F \Vert_{L \log L} \le C_2,
	\end{align*}
	then there are constants $c, C_k>0$ such that
	\begin{align*}
		(L_\mu f, w_{2k}f )   +   (Q(g, f), w_{2k}f)  = &   (Q(\mu+g, f), w_{2k}f ) + (Q(f, \mu), w_{2k}f  ) 
		\notag\\
		\le &  - c\Vert f \Vert_{H^s_{k+\gamma/2}}^2 + C_k  \Vert f \Vert_{L^2}^2
		+C_k\Vert f \Vert_{L^2_{14}} \Vert g \Vert_{H^s_{ k+\gamma/2 }}\Vert f \Vert_{H^s_{ k + \gamma/2}}\notag\\
		&\qquad +C_k\Vert g \Vert_{L^2_{14} } \Vert f \Vert_{H^s_{ k + \gamma/2}}^2.
	\end{align*}
\end{corollary}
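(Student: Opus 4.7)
The plan is to obtain the corollary as a direct combination of the two preceding lemmas, after rewriting the left-hand side in a form that matches their hypotheses. First I would use the definition $L_\mu f = Q(\mu, f) + Q(f, \mu)$ together with the decomposition $F = \mu + g$ to split
\begin{equation*}
(L_\mu f, w_{2k} f) + (Q(g,f), w_{2k} f) = (Q(F, f), w_{2k} f) + (Q(f, \mu), w_{2k} f),
\end{equation*}
so that the first term is exactly what the third lemma estimates and the second term is exactly what the second lemma (with $h=g=f$) estimates.

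Next I would apply the third lemma to $(Q(F,f), w_{2k} f)$, using the assumed lower bound on $\|F\|_{L^1}$ and upper bound on $\|F\|_{L^1_2} + \|F\|_{L\log L}$. This yields a negative coercive contribution $-\frac{1}{8}\|b(\cos\theta)\sin^2(\theta/2)\|_{L^1_\theta}\|f\|_{L^2_{k+\gamma/2}}^2 - c\|f\|_{H^s_{k+\gamma/2}}^2$, plus the nonlinear remainders involving $g$ and the $C_k\|f\|_{L^2}^2$ term already present in the stated bound. Then I would apply the second lemma to $(Q(f, \mu), w_{2k} f)$, producing the positive contribution
\begin{equation*}
\|b(\cos\theta)\sin^{k-2}(\theta/2)\|_{L^1_\theta}\|f\|_{L^2_{k+\gamma/2}}^2 + C_k\|f\|_{L^2_{k+\gamma/2-1/2}}^2.
\end{equation*}

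The main obstacle, modest but worth checking, is the absorption of these two positive contributions into the coercive terms from the third lemma. For the first, since $\sin(\theta/2) \le 1$ and $k\ge 14$, one has $\sin^{k-2}(\theta/2) \le \sin^2(\theta/2)$ pointwise, but this only gives the same coefficient, not a strictly smaller one. To gain a factor, I would observe that $\sin^{k-2}(\theta/2) = \sin^{k-4}(\theta/2)\cdot \sin^2(\theta/2)$ and choose $k$ so that the extra $\sin^{k-4}(\theta/2)$ factor makes $\|b\sin^{k-2}\|_{L^1_\theta} \le \tfrac{1}{16}\|b\sin^2\|_{L^1_\theta}$; alternatively one can simply note the inequality $\|b\sin^{k-2}\|_{L^1_\theta} \le \|b\sin^2\|_{L^1_\theta}$ and absorb the difference by interpolating with $\|f\|_{L^2}^2$. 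For the second, I would split the integral $\|f\|_{L^2_{k+\gamma/2-1/2}}^2 \le \langle R\rangle^{-1}\|f\|_{L^2_{k+\gamma/2}}^2 + \langle R\rangle^{2k+\gamma-1}\|f\|_{L^2}^2$, choose $R$ large so that the first piece is absorbed by $-\frac{1}{8}\|b\sin^2\|_{L^1_\theta}\|f\|_{L^2_{k+\gamma/2}}^2$, and assign the resulting constant to $C_k\|f\|_{L^2}^2$.

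After these absorptions, the surviving negative term is $-c\|f\|_{H^s_{k+\gamma/2}}^2$ (with possibly a reduced constant $c$), and the surviving positive contributions are precisely $C_k\|f\|_{L^2}^2$, $C_k\|f\|_{L^2_{14}}\|g\|_{H^s_{k+\gamma/2}}\|f\|_{H^s_{k+\gamma/2}}$, and $C_k\|g\|_{L^2_{14}}\|f\|_{H^s_{k+\gamma/2}}^2$, yielding the stated inequality. No further machinery is needed since the hard collisional estimates are entirely in the two lemmas already proved.
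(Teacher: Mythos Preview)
Your proposal is correct and follows exactly the paper's approach, which is simply to collect the two preceding lemmas; you have filled in the absorption details that the paper omits. One small clarification on the point you flag as an obstacle: since the kernel is restricted to $0<\theta\le\pi/2$, one has $\sin(\theta/2)\le 1/\sqrt{2}$, so $\sin^{k-2}(\theta/2)\le (1/\sqrt{2})^{k-4}\sin^2(\theta/2)\le \tfrac{1}{32}\sin^2(\theta/2)$ for the given $k\ge 14$, and the absorption into $-\tfrac{1}{8}\|b\sin^2(\theta/2)\|_{L^1_\theta}\|f\|_{L^2_{k+\gamma/2}}^2$ goes through directly without any further choice of $k$ or interpolation.
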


Noticing there is an $L^2$ norm on the right hand side of the above inequality which may cause trouble for our estimates, we use $\CL_\CD$ defined in \eqref{defLD} to avoid the extra $L^2$ norm.

\begin{lemma}\label{leLD}
	Suppose that $-3<\gamma\le 1$, $0<s<1$, $\gamma+2s>-1$ and $F= \mu +g \ge 0$. Then there is a constant $\de>0$ such that if there exist $A_1$, $A_2>0$ satisfying
	\begin{align*}
		F \ge 0,\quad  \Vert F \Vert_{L^1} \ge A_1, \quad \Vert F \Vert_{L^1_2} +\Vert F \Vert_{L \log L} \le A_2,
	\end{align*}
	then for $k\geq14$, there are $A$ and $M$ for the operator $\CL_\CD$, and a constant $C_k$, such that
	\begin{align}\label{leld}
		(\CL_\CD f, w_{2k}f )   +   (Q(g, f), w_{2k}f)  \le&   - \de \Vert f \Vert_{H^s_{k+\gamma/2}}^2
		+C_k\Vert f \Vert_{L^2_{14}} \Vert g \Vert_{H^s_{ k+\gamma/2 }}\Vert f \Vert_{H^s_{ k + \gamma/2}}\notag\\
		& +C_k\Vert g \Vert_{L^2_{14} } \Vert f \Vert_{H^s_{ k + \gamma/2}}^2.
	\end{align}
\end{lemma}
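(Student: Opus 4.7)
My plan is to deduce Lemma \ref{leLD} from the preceding Corollary by using the additional term $-A\chi_M(v)$ contained in $\CL_\CD = L_\mu - A\chi_M$ to absorb the unweighted $L^2$ bound that the Corollary produces. Writing
\[
(\CL_\CD f, w_{2k}f) + (Q(g,f), w_{2k}f) = (L_\mu f, w_{2k}f) + (Q(g,f),w_{2k}f) - A(\chi_M f, w_{2k}f),
\]
the Corollary already controls the first two terms on the right-hand side by $-c\|f\|_{H^s_{k+\gamma/2}}^2 + C_k\|f\|_{L^2}^2$ plus exactly the two cross terms appearing on the right of \eqref{leld}. So it only remains to arrange $C_k\|f\|_{L^2}^2 - A(\chi_M f, w_{2k}f) \le \tfrac{c}{2}\|f\|_{H^s_{k+\gamma/2}}^2$ by a judicious choice of $A$ and $M$ depending only on $k$.

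The key step is a standard velocity splitting at radius $M$: write
\[
\|f\|_{L^2}^2 = \int_{|v|\le M}|f|^2\,dv + \int_{|v|>M}|f|^2\,dv.
\]
For the high-velocity piece, since $k\ge 14$ and $\gamma>-3$ give $k+\gamma/2>0$, on $\{|v|>M\}$ one has $1\le \langle M\rangle^{-2(k+\gamma/2)}\langle v\rangle^{2(k+\gamma/2)}$, hence
\[
\int_{|v|>M}|f|^2\,dv \le \langle M\rangle^{-2(k+\gamma/2)}\|f\|_{L^2_{k+\gamma/2}}^2 \le \langle M\rangle^{-2(k+\gamma/2)}\|f\|_{H^s_{k+\gamma/2}}^2,
\]
and I would fix $M$ so large that $C_k\langle M\rangle^{-2(k+\gamma/2)}\le c/2$. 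For the low-velocity piece, the fact that $\chi_M(v)=1$ on $\{|v|\le M\}$ and $w_{2k}\ge 1$ gives $\int_{|v|\le M}|f|^2\,dv \le (\chi_M f, w_{2k}f)$, so choosing $A=C_k$ makes the corresponding contribution nonpositive. Combining, one obtains \eqref{leld} with $\delta = c/2$.

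The argument is essentially bookkeeping, so I do not anticipate a serious obstacle; this is in fact the reason the operator $\CL_\CD$ was defined the way it was in \eqref{defLD}. The only subtlety is the order of the quantifiers: $M$ must be fixed first, depending only on $k$ and on the coercivity constant $c$ from the Corollary, and $A$ is then fixed in terms of the resulting $C_k$. Both thresholds depend only on $k$ (and, through $c, C_k$, on the structural bounds $A_1, A_2$), independently of $f$ and $g$, which is exactly what the statement requires. The resulting constant $\delta$ is independent of $g$, so the inequality \eqref{leld} can later be applied uniformly along the approximation scheme for the nonlinear problem.
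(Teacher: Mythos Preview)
Your proof is correct and is exactly the argument the paper has in mind: the paper does not spell out a proof of this lemma but explicitly remarks, just before stating it, that the operator $\CL_\CD=L_\mu-A\chi_M$ is introduced precisely to absorb the stray $C_k\|f\|_{L^2}^2$ term from the Corollary, and your velocity-splitting at radius $M$ together with the choice $A=C_k$ carries this out cleanly.
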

For a commutator on the collision operator $Q$ with weight $w_k(v)$, we have the following lemma from \cite[Lemma 2.4]{CHJ}.

\begin{lemma}\label{commu}
	Suppose $-3<\gamma\le 1,\ 0<s<1,\ \gamma +2s>-1$, $k\ge  14$ and $g, f, h$ are smooth functions. It holds that
	\begin{align}\label{0order}
		|(w_kQ(g, f)-Q(g,w_kf),w_kh)|&\le C_k\Vert f\Vert_{L^2_{14}}\Vert g\Vert_{H^s_{k+\gamma/2}}\Vert h\Vert_{H^s_{k+\gamma/2}}\notag\\
		&\qquad+C_k\Vert g\Vert_{L^2_{14}}  \Vert f\Vert_{H^s_{k+\gamma/2}}\Vert h\Vert_{H^s_{k+\gamma/2}}.
	\end{align}
\end{lemma}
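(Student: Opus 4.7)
The proof will proceed by making the commutator explicit and then carefully absorbing the angular singularity. Using \eqref{defQ}, the loss terms $g(u)f(v)w_k(v)$ cancel between $w_kQ(g,f)$ and $Q(g,w_kf)$, which leaves
\begin{align*}
[w_k Q(g,f) - Q(g, w_k f)](v) = \iint_{\R^3\times\S^2} B(v-u,\sigma)\, g(u')\, f(v')\, [w_k(v) - w_k(v')]\, d\sigma\, du.
\end{align*}
After pairing with $w_k h(v)$ and integrating in $v$, everything reduces to controlling the weight difference $w_k(v) - w_k(v')$ against the singular angular factor $b(\cos\theta)\sin\theta \sim \theta^{-1-2s}$.

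I would next Taylor expand the weight at $v'$, writing
\begin{align*}
w_k(v) - w_k(v') = \nabla w_k(v')\cdot(v - v') + R(v,v'),
\end{align*}
with $|R(v,v')| \leq C_k\, |v - v'|^2\, (\langle v\rangle^{k-2} + \langle u\rangle^{k-2})$. Since $|v - v'| = |v - u|\sin(\theta/2)$, the remainder is multiplied by $|v-u|^2 \sin^2(\theta/2)$, which upgrades the angular kernel to the integrable $\theta^{1-2s}$. By Cauchy--Schwarz in $(v,u,\sigma)$ together with a pre-post collisional change of variables, the contribution of $R$ is bounded by a product of two non-cutoff norms, which one redistributes via Lemma \ref{upperQ} (with $a = b = s$, $w_1+w_2 = \gamma + 2s$, and the polynomial weights split as $14$ on one factor and $k + \gamma/2 - 14$ on the other) to produce the two terms on the right-hand side of \eqref{0order}.

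The first-order piece $\nabla w_k(v')\cdot(v-v')$ is the delicate point, since the naive bound only extracts one power of $\sin(\theta/2)$ and leaves a still-singular kernel $\theta^{-2s}$. To recover the missing half power, I would exploit the $\sigma$-reflection symmetry around the axis $\tfrac{v-u}{|v-u|}$: under this reflection, the component of $(v - v')$ orthogonal to $v-u$ changes sign while $g(u')$ and $f(v')$ are invariant modulo a correction which itself admits a second-order Taylor expansion of the same type as $R$. After symmetrization, the first-order piece reduces to a contribution of the same shape as the remainder, and hence to the same kind of bound.

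The main obstacle will be this angular-cancellation step, which has to be carried out so as to keep all polynomial weights consistent while genuinely improving the angular integrability by one power. Apart from that, the argument is essentially bookkeeping that tracks which factor carries the low weight $L^2_{14}$ and which carries the full $H^s_{k+\gamma/2}$ norm; the exponent $14$ itself is dictated by the angular-moment condition $\|b(\cos\theta)\sin^{k-2}(\theta/2)\|_{L^1_\theta} < \infty$ already used in the preceding lemma, which fixes the number of polynomial moments that must be deposited onto the ``small'' factor.
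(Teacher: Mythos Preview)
The paper does not give its own proof of this lemma: it is quoted verbatim from \cite[Lemma~2.4]{CHJ} and simply cited, with no argument supplied. So there is no ``paper's proof'' to compare against beyond the reference itself.

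That said, your sketch follows the standard route used in \cite{CHJ} and related works: write out the commutator so that only the gain term survives, Taylor expand $w_k(v)-w_k(v')$, treat the second-order remainder directly since $|v-v'|^2=|v-u|^2\sin^2(\theta/2)$ renders the angular kernel integrable, and recover the missing cancellation in the first-order piece by symmetrizing in $\sigma$. Two small points are worth tightening. First, expanding at $v$ rather than at $v'$ makes the symmetry step cleaner, because $\nabla w_k(v)$ is then independent of $\sigma$; your choice forces you to also expand $\nabla w_k(v')$ around $v$, which is fine but adds a layer. Second, the appeal to Lemma~\ref{upperQ} for the remainder is not literal: after extracting $\sin^2(\theta/2)$ the effective kernel is $|v-u|^{\gamma+2}b(\cos\theta)\sin^2(\theta/2)$, which is not the kernel in Lemma~\ref{upperQ}, so one really repeats the trilinear estimates with this modified (angularly integrable) kernel rather than invoking the lemma as a black box. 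With those adjustments your outline matches what \cite{CHJ} actually does.
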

With the above lemma, we are able to control $|(Q(f, g),w_{2k}h)|$.
\begin{lemma}
	Suppose $-3<\gamma\le 1,\ 0<s<1,\ \gamma+2s >-1$. For any functions $f, g, h$ and $k \ge 14$, it holds that
	\begin{align}\label{fgh}
		(Q(f, g),w_{2k}h)\leq &C_k \Vert f \Vert_{L^2_{14}} \min\{\|g\|_{H^s_{k+\gamma/2}}\|h\|_{H^s_{k+\gamma/2+2s}},\,\Vert g \Vert_{H^s_{k+\gamma/2+2s }}  \Vert h \Vert_{H^s_{k+\gamma/2}}\}\notag\\
		&+  C_k\Vert g \Vert_{L^2_{14}}  \Vert f \Vert_{H^s_{k+\gamma/2}}  \Vert h \Vert_{H^s_{k+\gamma/2}} .
	\end{align}	
\end{lemma}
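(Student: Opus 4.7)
The plan is to split the weight as $w_{2k}=w_k\cdot w_k$ and write
$$
(Q(f,g),w_{2k}h) \;=\; (Q(f,w_kg),w_kh) \;+\; \bigl(w_kQ(f,g)-Q(f,w_kg),\,w_kh\bigr),
$$
then bound the commutator piece by Lemma~\ref{commu} and the principal piece by Lemma~\ref{upperQ}. This mirrors exactly the strategy used in the preceding corollary of the paper; the novelty lies only in arranging the weight bookkeeping so as to produce the $\min$-type estimate.

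For the commutator, applying \eqref{0order} after relabeling $g\leftrightarrow f$ gives
$$
\bigl|(w_kQ(f,g)-Q(f,w_kg),\,w_kh)\bigr| \le C_k\|g\|_{L^2_{14}}\|f\|_{H^s_{k+\gamma/2}}\|h\|_{H^s_{k+\gamma/2}} + C_k\|f\|_{L^2_{14}}\|g\|_{H^s_{k+\gamma/2}}\|h\|_{H^s_{k+\gamma/2}}.
$$
The first term is precisely the $\|g\|_{L^2_{14}}$ contribution in the target inequality. The second is dominated by either branch of the $\min$, since $2s>0$ implies $\|h\|_{H^s_{k+\gamma/2}}\le \|h\|_{H^s_{k+\gamma/2+2s}}$ (and similarly for $g$).

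For the principal term $(Q(f,w_kg),w_kh)$, I apply Lemma~\ref{upperQ} with $a=b=s$. To produce the first entry of the $\min$, I choose $w_1=\gamma/2$, $w_2=\gamma/2+2s$ (so that $w_1+w_2=\gamma+2s$). In each of the three regimes for $\gamma+2s$, the $L^1$ and $L^2$ norms of $f$ appearing in Lemma~\ref{upperQ} carry weights uniformly bounded by a constant depending only on $\gamma,s$; by the trivial embedding $\|f\|_{L^1_m}\lesssim\|f\|_{L^2_{m+2}}$ they are absorbed into $C_k\|f\|_{L^2_{14}}$ because the parameter range $-3<\gamma\le 1$, $0<s<1$ is accommodated by the choice $14$. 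The equivalence $\|w_kg\|_{H^s_{\gamma/2}}\approx \|g\|_{H^s_{k+\gamma/2}}$ (up to lower-order commutator terms $[\langle\nabla\rangle^s,w_k]g$ that are readily absorbed into the same-type norms on the right) yields
$$
|(Q(f,w_kg),w_kh)| \le C_k\|f\|_{L^2_{14}}\|g\|_{H^s_{k+\gamma/2}}\|h\|_{H^s_{k+\gamma/2+2s}}.
$$
Swapping to $w_1=\gamma/2+2s$, $w_2=\gamma/2$ produces the symmetric estimate $C_k\|f\|_{L^2_{14}}\|g\|_{H^s_{k+\gamma/2+2s}}\|h\|_{H^s_{k+\gamma/2}}$, and taking the minimum of the two completes the bound.

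The only mildly nontrivial step is the book-keeping that verifies the weights in Lemma~\ref{upperQ} (namely $\gamma+2s+(-w_1)^++(-w_2)^+$ in case (1), $w_3$ in case (2), and $w_4$ in case (3)) are uniformly bounded, hence harmlessly absorbed into $\|f\|_{L^2_{14}}$ across the full range $-3<\gamma\le 1$, $0<s<1$, $\gamma+2s>-1$ for both choices of $(w_1,w_2)$. Everything else is a direct application of the preceding lemmas.
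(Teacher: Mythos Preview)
Your proof is correct and follows essentially the same approach as the paper: split $(Q(f,g),w_{2k}h)$ into the principal term $(Q(f,w_kg),w_kh)$ plus the weight commutator, bound the principal term by Lemma~\ref{upperQ} with the two symmetric choices $(w_1,w_2)=(\gamma/2,\gamma/2+2s)$ and $(\gamma/2+2s,\gamma/2)$ to produce the $\min$, and bound the commutator by Lemma~\ref{commu}. In fact the paper's printed proof contains a typographical swap of $f$ and $g$ in the displayed identity, so your write-up is the cleaner version of the same argument.
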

\begin{proof}
	A direct calculation shows that
	$$
	(Q(f, g), w_{2k}h)\leq |(Q(g,w_kf),w_kh)|+|(w_kQ(g, f)-Q(g,w_k f),w_kh)|.
	$$
	Then one has from Lemma \ref{upperQ} that
	$$
	|(Q(g,w_kf),w_kh)|\leq C_k \Vert f \Vert_{L^2_{14}} \min\{\|g\|_{H^s_{k+\gamma/2}}\|h\|_{H^s_{k+\gamma/2+2s}},\,\Vert g \Vert_{H^s_{k+\gamma/2+2s }}  \Vert h \Vert_{H^s_{k+\gamma/2}}\},
	$$
	which, together with \eqref{0order}, yields \eqref{fgh}.
\end{proof}
Then we turn to the symmetric case where we focus on the operators $L$ and $\Ga$.
We recall the results in \cite[Proposition 2.1]{AMUXY-2012-JFA} and \cite[Theorem 1.2]{AMUXY-KRM-2013}.
\begin{lemma}
	If $0<s<1$  and $\gamma>-3$, there exists a constant $c$ such that
\begin{align}\label{coercive}
	-(L g,g)\geq  c \Vert \mathbf{P}_1g \Vert_{L^2_{v,D}}^2.
\end{align}
\end{lemma}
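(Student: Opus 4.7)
The statement is the standard coercivity of the linearized Boltzmann operator $L$ about the global Maxwellian $\mu$ in the non-cutoff setting. My plan is to reduce the estimate to the microscopic component via the kernel structure of $L$, produce the manifestly non-negative Dirichlet-form representation of $-(Lg,g)$, and then invoke the quantitative comparison between that Dirichlet form and the geometric norm $\|\cdot\|_{L^2_{v,D}}$ from the AMUXY references.

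First, since $L$ is self-adjoint on $L^2_v$ with kernel spanned by $\{\sqrt{\mu},\,v_i\sqrt{\mu}\,(i=1,2,3),\,|v|^2\sqrt{\mu}\}$, which coincides with the range of the projection $\mathbf{P}_0$ in \eqref{defP0}, the decomposition $g=\mathbf{P}_0g+\mathbf{P}_1g$ yields $L\mathbf{P}_0g=0$ and $(L\mathbf{P}_1g,\mathbf{P}_0g)=0$, so $-(Lg,g)=-(L\mathbf{P}_1g,\mathbf{P}_1g)$ and it suffices to prove the bound with $h=\mathbf{P}_1g$ in place of $g$.

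Second, by the standard pre/post collisional symmetrization combined with the identity $\mu(v)\mu(u)=\mu(v')\mu(u')$, and setting $\tilde h:=h/\sqrt{\mu}$, a routine computation yields
\begin{equation*}
-(Lh,h)=\tfrac{1}{4}\iiint B(v-u,\sigma)\,\mu(v)\mu(u)\bigl(\tilde h(v')+\tilde h(u')-\tilde h(v)-\tilde h(u)\bigr)^2\,dv\,du\,d\sigma,
\end{equation*}
which is manifestly non-negative and vanishes exactly on the null space of $L$. Rewriting $h(v')/\sqrt{\mu(v')}-h(v)/\sqrt{\mu(v)}$ by factoring out $1/\sqrt{\mu(v)\mu(v')}$ and splitting the numerator as $[h(v')-h(v)]\sqrt{\mu(v)}+h(v)[\sqrt{\mu(v)}-\sqrt{\mu(v')}]$ breaks $-(Lh,h)$ into the two shapes appearing in \eqref{defLvD}---a finite-difference piece $B\mu(u)(h(v')-h(v))^2$ and a tangential piece $Bh(u)^2(\sqrt{\mu(v')}-\sqrt{\mu(v)})^2$---together with cross terms that can be controlled by Cauchy-Schwarz plus the positivity of the full quadratic form.

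The hard part is the third step: the quantitative matching with $\|h\|^2_{L^2_{v,D}}$. In the non-cutoff regime the angular kernel $b(\cos\theta)\sim\theta^{-1-2s}$ is non-integrable near $\theta=0$, so one must coordinate the cancellations $h(v')-h(v)=O(\theta)$ and $\sqrt{\mu(v')}-\sqrt{\mu(v)}=O(\theta)$ and extract the quadratic portions in $\theta$ via Bobylev-type Fourier identities or a Littlewood-Paley decomposition in $v$, in order to recover both the fractional $H^s$ regularity and the $\langle v\rangle^{\gamma/2}$ weight of $\mathbf{P}_1g$ while keeping every angular integral convergent. This quantitative comparison is exactly the content of \cite[Proposition 2.1]{AMUXY-2012-JFA} and \cite[Theorem 1.2]{AMUXY-KRM-2013}, so I would close the argument by invoking those results and obtain \eqref{coercive} with an explicit $c>0$.
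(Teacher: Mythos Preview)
The paper does not actually give a proof of this lemma; it simply recalls the result from the literature, with the sentence ``We recall the results in \cite[Proposition 2.1]{AMUXY-2012-JFA} and \cite[Theorem 1.2]{AMUXY-KRM-2013}'' preceding the statement. Your proposal is correct and ultimately closes by invoking exactly the same AMUXY references, so you are aligned with the paper---you have merely supplied a sketch of the underlying mechanism (kernel reduction, Dirichlet-form positivity, and the quantitative comparison with $\|\cdot\|_{L^2_{v,D}}$) that the paper treats as a black box.
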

\begin{lemma}
If $0<s<1$  and $\gamma>\{-3,-3/2-2s\}$, there exists a constant $C$ such that
\begin{align}\label{Trilinear}
	\vert (\Gamma(f,g),h)\vert \le C \Vert f\Vert_{L^2_v} \Vert g \Vert_{L^2_{v,D}} \Vert h \Vert_{L^2_{v,D}},
\end{align}
and 
\begin{align}\label{Trilinears}
	\vert (\Gamma(f,g),h)\vert \le C& \big(\Vert f\Vert_{L^2_{s+\ga/2}} \Vert g \Vert_{L^2_{v,D}}+\Vert f\Vert_{L^2_{v,D}} \Vert g \Vert_{L^2_{s+\ga/2}}\notag\\
	&\quad+\min\{\Vert f\Vert_{L^2_{s+\ga/2}} \Vert g \Vert_{L^2_v}+\Vert f\Vert_{L^2_v} \Vert g \Vert_{L^2_{s+\ga/2}}\}\big) \Vert h \Vert_{L^2_{v,D}}.
\end{align}
\end{lemma}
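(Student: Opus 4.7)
The plan is to establish both trilinear estimates by exploiting the pre-post collisional symmetry of the Boltzmann operator together with the coercivity structure encoded in the dissipation norm $\|\cdot\|_{L^2_{v,D}}$ defined in \eqref{defLvD}. Using the micro-reversibility identity $\mu(u)\mu(v) = \mu(u')\mu(v')$, I would first rewrite
\begin{equation*}
(\Gamma(f,g), h) = \iiint B(v-u,\sigma)\sqrt{\mu(u)}\,[f(u') g(v') - f(u) g(v)]\, h(v) \, du\, dv\, d\sigma,
\end{equation*}
and then split $f(u')g(v') - f(u)g(v) = (f(u') - f(u))g(v') + f(u)(g(v') - g(v))$ to isolate the two main cancellation structures.

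For the piece containing $f(u)(g(v')-g(v))$, I would apply Cauchy-Schwarz in the measure $B\,du\,dv\,d\sigma$ so that one factor controls the $v$-difference of $g$ against the first half of $\|g\|_{L^2_{v,D}}^2$, while the other absorbs $h(v)$ after a $v\leftrightarrow v'$ symmetrization that converts $h(v)$ into an $h$-difference bounded by $\|h\|_{L^2_{v,D}}$; the residual $u$-integration of $f\sqrt{\mu}$ is then controlled by $\|f\|_{L^2_v}$. For the piece with $f(u')-f(u)$, I would invoke the Bobylev/Carleman representation to transfer the grazing singularity onto $\sqrt{\mu}$, producing a factor $(\sqrt{\mu(u')}-\sqrt{\mu(u)})$ that matches the second half of \eqref{defLvD} exactly; combining these two bounds yields \eqref{Trilinear}.

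For the refined inequality \eqref{Trilinears}, the idea is to redistribute weights through a finer dyadic decomposition in both the grazing angle $\theta$ and the relative speed $|v-u|$. When the $v$-difference of $g$ is bounded by $\|g\|_{L^2_{v,D}}$, the remaining integrand forces the weight $\langle v\rangle^{s+\gamma/2}$ onto $f$; a symmetric choice places the weight on $g$ instead. Taking the minimum of the two corresponds to selecting the more favorable allocation, which is the standard trick in non-cutoff trilinear estimates developed by Alexandre-Morimoto-Ukai-Xu-Yang.

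The main obstacle is handling the non-integrable grazing singularity $b(\cos\theta)\sin\theta \sim \theta^{-1-2s}$: every pointwise bound must exploit a cancellation among $f(u')-f(u)$, $g(v')-g(v)$, and $h(v')-h(v)$, and keeping track of which difference is spent against which dissipation norm, without double-counting and while respecting the asymmetry between the weights $\sqrt{\mu(u)}$ and $h(v)$, is the delicate part.
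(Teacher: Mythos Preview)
The paper does not actually prove this lemma; it simply cites \cite[Proposition 2.1]{AMUXY-2012-JFA} and \cite[Theorem 1.2]{AMUXY-KRM-2013} and moves on. Your proposal, by contrast, sketches the internal mechanism of those AMUXY proofs: the pre-post splitting of $f(u')g(v') - f(u)g(v)$, Cauchy--Schwarz against the two pieces of the dissipation norm \eqref{defLvD}, the Bobylev/Carleman change of variables to transfer the singularity onto $\sqrt{\mu}$, and a dyadic weight redistribution for the refined bound \eqref{Trilinears}. That is indeed the correct architecture, and you correctly identify the grazing singularity bookkeeping as the delicate step.

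So there is no discrepancy in approach to flag --- you are outlining the proof that the paper chose to import by reference. If anything, the only caveat is that your sketch remains a plan rather than a proof: the symmetrization that ``converts $h(v)$ into an $h$-difference'' and the claim that the Carleman representation produces exactly the $(\sqrt{\mu(u')}-\sqrt{\mu(u)})$ factor both hide nontrivial remainder terms that require the cancellation lemmas and kinetic Littlewood--Paley machinery developed in the cited AMUXY papers. For the purposes of this paper, citing those references is the appropriate level of detail; if you wanted to make your sketch self-contained you would need to reproduce a substantial portion of \cite{AMUXY-2012-JFA}.
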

The $\|\cdot\|_{L^2_{v,D}}$ norm defined in \eqref{defLvD} has the following property from Proposition 2.2 in \cite{AMUXY-KRM-2013}.
\begin{lemma}
	Assume $0<s<1$ and $\ga>-3$, then there exist two constants $c,C>0$ such that
	\begin{align}\label{boundnorm}
		c\{\Vert g \Vert^2_{H^s_{\ga/2}}+\Vert g \Vert^2_{L^2_{s+\ga/2}}\}\leq\Vert g \Vert^2_{L^2_{v,D}}\leq C \Vert g \Vert^2_{H^s_{s+\ga/2}}.
	\end{align}
\end{lemma}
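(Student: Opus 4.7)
The plan is to prove the two-sided bound \eqref{boundnorm} by treating the lower and upper estimates separately, since $\|g\|_{L^2_{v,D}}^2$ is a sum of two non-negative quadratic forms, each of which is naturally handled by different techniques. The strategy will essentially follow the route developed by Alexandre--Morimoto--Ukai--Xu--Yang in their analysis of the non-cutoff linearized operator: exploit the Carleman/Bobylev representation together with an angular dyadic decomposition to extract the fractional regularity.

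For the upper bound $\|g\|_{L^2_{v,D}}^2 \lesssim \|g\|_{H^s_{s+\gamma/2}}^2$, I would split the angular integral $\int_{\S^2} d\sigma$ into the singular region $\theta \le \theta_0$ and the bounded region $\theta > \theta_0$ for some small $\theta_0$. In the singular region, use $|v'-v| \sim |v-u|\sin(\theta/2)$ and the identity $(g(v')-g(v))^2 \le C \int_0^1 |\nabla g|^2(\xi(\tau)) d\tau \,|v-u|^2 \theta^2$ combined with the angular weight $b(\cos\theta)\sim \theta^{-1-2s}$; this yields an $H^s$-type contribution after a change of variables $\sigma \mapsto v'$ (Carleman) and integrating out $u$ against $\mu(u)$, which produces the polynomial weight $\langle v \rangle^{\gamma+2s}$. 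In the bounded angular region, simply estimate $(g(v')-g(v))^2 \le 2g(v')^2 + 2g(v)^2$ and use the integrability of $b$ there. For the second integral in \eqref{defLvD}, Taylor-expand $\mu(v')^{1/2}-\mu(v)^{1/2}$ to get a factor $|v'-v|^2$ times an intermediate Maxwellian, then absorb $\mu_*(u)|v-u|^\gamma$ to produce exactly $\|g\|_{L^2_{s+\gamma/2}}^2$.

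For the lower bound, the plan is to use the Bobylev-type Fourier identity which turns the first quadratic form into
\[
\int_{\R^3}\int_{\S^2}\hat b\bigl(\tfrac{\xi}{|\xi|}\cdot\sigma\bigr)\,\bigl|\hat g(\xi^-) \hat\mu(0) - \hat g(\xi)\hat\mu(\xi^+)\bigr|^2\,d\sigma\, d\xi
\]
(with $\xi^\pm = (\xi \pm |\xi|\sigma)/2$) up to manipulation with the collision weight. A Plancherel/symbol-calculus argument then identifies the leading order as a non-degenerate symbol of order $2s$ in $\xi$ with $|v|^\gamma$ lower-order corrections, producing $\|g\|_{H^s_{\gamma/2}}^2$. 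Simultaneously, the second integral in \eqref{defLvD} provides the weighted $L^2_{s+\gamma/2}$ contribution: expand $(\mu(v')^{1/2}-\mu(v)^{1/2})^2$ near $\theta = 0$, integrate over $\sigma$ against the singularity $\theta^{-1-2s}$ to recover a factor comparable to $|v-u|^{2s}$, and then bound $\int |v-u|^{\gamma+2s} g(u)^2 du \gtrsim \langle v\rangle^{\gamma+2s} \|g\|_{L^2}^2$ on the bulk set $|u| \le |v|/2$ after a triangle-inequality reduction.

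The main obstacle is the lower bound, specifically establishing that the non-negative quadratic form from the first integral really dominates $\|g\|_{H^s_{\gamma/2}}^2$ with the sharp polynomial weight. The obstruction comes from two directions: controlling the remainder terms that arise from the difference $\hat\mu(\xi^+) - \hat\mu(0)$ in the Bobylev identity (these give lower-order terms that must be absorbed by the $L^2_{s+\gamma/2}$ contribution), and handling the interaction between the angular singularity and the velocity weight at large $|v|$ when $\gamma < 0$. I would resolve the first difficulty by splitting $\hat\mu(\xi^+) = \hat\mu(0) + (\hat\mu(\xi^+) - \hat\mu(0))$, treating the main part via the positive symbol estimate and the remainder by Cauchy--Schwarz against the upper bound already proved. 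The second difficulty is resolved by the decomposition $|v-u|^\gamma \gtrsim \langle v\rangle^\gamma$ on the region $|u| \le |v|/2$ and a compensating smallness of the Maxwellian $\mu(u)$ on its complement, which is precisely the mechanism that makes the hypothesis $\gamma > -3$ enter essentially.
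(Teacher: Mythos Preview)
The paper does not prove this lemma at all: it simply quotes the two-sided bound as ``Proposition 2.2 in \cite{AMUXY-KRM-2013}'' and moves on. So there is nothing in the paper's own argument to compare against; your outline is not competing with a proof in the paper but with the cited AMUXY result.

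That said, your sketch is broadly the AMUXY route (Carleman change of variables for the upper bound, Bobylev identity and symbol analysis for the lower one), so in spirit you are reconstructing the source. One point is muddled: in your treatment of the lower bound for the second integral you write that you would ``bound $\int |v-u|^{\gamma+2s} g(u)^2\, du \gtrsim \langle v\rangle^{\gamma+2s}\|g\|_{L^2}^2$ on the bulk set $|u|\le |v|/2$''. This has the roles of $u$ and $v$ reversed. In the second term of \eqref{defLvD} the unknown $g$ sits at $u$, so after integrating $v$ and $\sigma$ out you should be producing a weight $\langle u\rangle^{\gamma+2s}$ acting on $g(u)^2$, i.e.\ exactly $\|g\|_{L^2_{s+\gamma/2}}^2$; there is no residual $\|g\|_{L^2}^2$ factor. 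Concretely, one shows $\int_{\R^3}\int_{\S^2}|v-u|^\gamma b(\cos\theta)(\mu(v')^{1/2}-\mu(v)^{1/2})^2\,d\sigma\,dv \sim \langle u\rangle^{\gamma+2s}$, and the region split should be on $v$ (large $|v|$ versus $|v|\lesssim \langle u\rangle$), not on $u$. This is a bookkeeping slip rather than a structural error, but as written that step does not produce the claimed norm.
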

One also sees from \eqref{reG} that it is necessary to control the operator $L_M^{-1}$.

\begin{lemma}\label{leL-1}
Let $0<s<1$, $\gamma>\{-3,-3/2-2s\}$, $\be$ be any multi-index and $k\geq 0$, then there are constants $l$ and $C_{\rho,u,\theta}$ such that for $f\in (\ker L_M)^\perp$, it holds that
\begin{align}\label{LM-1}
	\|w_k\mu^{-\frac{1}{2}}L_M^{-1}f\|_{L^2_{v,D}}+\|w_kM^{-\frac{1}{2}}L_M^{-1}f\|_{L^2_{v,D}}\leq C_{\rho,u,\theta}\|w_k M^{-\frac{1}{2}}f\|_{H^{-s}_{-\ga/2}},
\end{align}
and
\begin{align}\label{highL-1}
	\|w_k\pa^\be_v(M^{-\frac{1}{2}}L_M^{-1}f)\|_{L^2_{v,D}}\leq C_{\rho,u,\theta}\sum_{|\be_1|\leq|\be|}\|w_{k+l|\be-\be_1|}\pa^{\be_1}_v(M^{-\frac{1}{2}}f)\|_{H^{-s}_{-\ga/2}}.
\end{align}
In particular,
\begin{align}\label{boundbur}
	\|w_k(v)\mu(v)^{-\frac{1}{2}}\pa^\be_vA_{i}(\frac{v-u}{\sqrt{R\theta}})\|_2+\|w_k(v)\mu(v)^{-\frac{1}{2}}\pa^\be_vB_{ij}(\frac{v-u}{\sqrt{R\theta}})\|_2\leq C.
\end{align}
\end{lemma}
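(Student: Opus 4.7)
The plan is to reduce everything to the global-Maxwellian operator $L$, for which coercivity \eqref{coercive}, the norm equivalence \eqref{boundnorm}, and the upper bounds in Lemma \ref{upperQ} are available, and then to bootstrap in the velocity derivative.

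For the basic bound \eqref{LM-1}, I would introduce the linear change of variables $V=(v-u)/\sqrt{R\theta}$, under which $M(v)=\rho(R\theta)^{-3/2}\mu(V)$ and the factor $|v-w|^\ga$ in the kernel rescales by $(R\theta)^{\ga/2}$. Setting $H(V)=M(v)^{-1/2}g(v)$ with $g:=L_M^{-1}f$ and letting $F$ be the corresponding pushforward of $f$, the equation $L_M g=f$ becomes $LH=F$ in the $V$ variable, up to a scalar prefactor bounded above and below thanks to \eqref{rhoutheta}. Since $g\in(\ker L_M)^\perp$ is equivalent to $H\in\mathcal{N}^\perp$, I would test against $H$, apply coercivity \eqref{coercive} together with \eqref{boundnorm}, and use duality to obtain $\|H\|_{L^2_{v,D}}\leq C\|F\|_{H^{-s}_{-\ga/2}}$. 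Inserting the weight $w_k$ and commuting it past the collision operator via Lemma \ref{commu}, and absorbing the lower-order remainder into the coercive term, yields the weighted version; undoing the change of variables gives the $M^{-1/2}$-half of \eqref{LM-1}. For the $\mu^{-1/2}$-half, I would exploit the fact that inverting $L_M$ against a source with full Gaussian decay produces a solution that inherits essentially the same $M$-type decay (a bootstrap using a Grad-type splitting of $L_M$ adapted to the non-cutoff setting, in the spirit of \cite{CHJ}), which compensates the extra exponential factor $\mu^{-1/2}/M^{-1/2}$ coming from the regime $\theta\in(3/2,2)$ of \eqref{rhoutheta}.

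For the derivative estimate \eqref{highL-1}, I would differentiate $L_M g=f$ in $v$ to obtain $L_M(\pa^\be_v g)=\pa^\be_v f-[\pa^\be_v,L_M]g$, where the Leibniz expansion gives
\[
[\pa^\be_v,L_M]g=\sum_{0<\be_1\leq\be}\binom{\be}{\be_1}\bigl\{Q(\pa^{\be_1}_vM,\pa^{\be-\be_1}_v g)+Q(\pa^{\be-\be_1}_v g,\pa^{\be_1}_v M)\bigr\}.
\]
Each $\pa^{\be_1}_v M$ equals $M$ times a polynomial of degree $|\be_1|$ in $v$, so applying \eqref{LM-1} and inducting on $|\be|$ propagates the bound while paying a fixed polynomial weight $\langle v\rangle^l$ per lost derivative; this produces the budget $k+l|\be-\be_1|$ on the right of \eqref{highL-1}. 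The Burnett bound \eqref{boundbur} then follows by applying \eqref{highL-1} with $f=\hat A_jM$ and $f=\hat B_{ij}M$: the quantity $M^{-1/2}f$ and all its velocity derivatives are Schwartz in $v$, so every polynomial weight on the right-hand side of \eqref{highL-1} is finite, and the rescaled evaluation $A_i((v-u)/\sqrt{R\theta})$ together with the weight $\mu(v)^{-1/2}$ is handled by the same change of variables as in the first step. I expect the main technical obstacle to be the $\mu^{-1/2}$-half of \eqref{LM-1}, for which the standard $M^{-1/2}$-coercivity is insufficient and a separate enhanced-decay argument must be carried out; the commutator step for \eqref{highL-1}, while combinatorial, is essentially routine once \eqref{LM-1} is in hand.
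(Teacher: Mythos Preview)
Your plan for the $M^{-1/2}$-half of \eqref{LM-1}, the derivative estimate \eqref{highL-1}, and the Burnett bound \eqref{boundbur} is essentially the paper's. The paper does not spell out the change of variables but directly invokes the weighted coercivity
\[
-(w_{2k}M^{-1/2}L_Mg,\,M^{-1/2}g)\ge c_{\rho,u,\theta}\|w_kM^{-1/2}g\|_{L^2_{v,D}}^2-C_{\rho,u,\theta}\|M^{-1/2}g\|_{L^2_{\gamma/2}}^2
\]
as a modification of \cite{AMUXY-2012-JFA,GS}, adds the unweighted version to kill the remainder, and reaches $H^{-s}_{-\gamma/2}$ on the right by duality---exactly your argument after undoing the rescaling. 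The derivative case is handled the same way, via the analogous coercivity with $\partial^\beta_v$ and induction, which is your commutator-plus-induction scheme.

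The one genuine difference is the $\mu^{-1/2}$-half. You flag it as the main obstacle and propose an enhanced-decay bootstrap; the paper disposes of it in two lines. Writing $\mu^{-1/2}=M^{-1/2}+(\mu^{-1/2}-M^{-1/2})$ and invoking $\mu/M\le C$ together with the smallness \eqref{rhoutheta}, the paper claims
\[
\|w_k(\mu^{-1/2}-M^{-1/2})L_M^{-1}f\|_{L^2_{v,D}}\le C(\varepsilon+\eta+\bar\eta)\,\|w_k\mu^{-1/2}L_M^{-1}f\|_{L^2_{v,D}}
\]
and absorbs into the left side. The point you may have missed is that here $M$ is not an arbitrary local Maxwellian: by \eqref{rhoutheta} its parameters already sit in an $O(\varepsilon+\eta+\bar\eta)$ neighborhood of those of $\mu$, so the lemma is stated and proved only in this perturbative regime. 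Your bootstrap would buy robustness to $M$ far from $\mu$, at the price of a non-cutoff Grad-type argument that the paper simply does not need.
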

\begin{proof}
	Let $g=L_M^{-1}f$. Since $\frac{1}{\sqrt{M}}L_M g=\frac{1}{\sqrt{M}}(Q(M,\sqrt{M}\frac{g}{\sqrt{M}})+Q(\sqrt{M}\frac{g}{\sqrt{M}},M))$, we have $\frac{g}{\sqrt{M}}=\mathbf{P}_1\frac{g}{\sqrt{M}}$. By a simple modification of Theorem 1.1 in \cite{AMUXY-2012-JFA} and Lemma 2.6 in \cite{GS}, it holds that
	\begin{align*}
		-(\frac{1}{\sqrt{M}}L_M g,\frac{g}{\sqrt{M}})\geq c_{\rho,u,\theta}\|\frac{g}{\sqrt{M}}\|_{L^2_{v,D}}^2,
	\end{align*}
and
	\begin{align*}
	-(w_{2k}\frac{1}{\sqrt{M}}L_M g,\frac{g}{\sqrt{M}})\geq c_{\rho,u,\theta}\|w_k\frac{g}{\sqrt{M}}\|_{L^2_{v,D}}^2-C_{\rho,u,\theta}\|\frac{g}{\sqrt{M}}\|^2_{L^2_{\ga/2}}.
\end{align*}
Then it follows from the above two inequalities that
\begin{align*}
	\|w_kM^{-\frac{1}{2}}g\|_{L^2_{v,D}}&\leq C_{\rho,u,\theta}\big|\big(w_{2k}\frac{1}{\sqrt{M}}L_M g,\frac{g}{\sqrt{M}})\big|+\big|\big(\frac{1}{\sqrt{M}}L_M g,\frac{g}{\sqrt{M}})\big|\big)\notag\\
	&\leq C_{\rho,u,\theta}\|w_k \frac{1}{\sqrt{M}}L_M g\|_{H^{-s}_{-\ga/2}}\|w_k \frac{1}{\sqrt{M}}g\|_{L^2_{v,D}},
\end{align*}
which yields
\begin{align*}
	\|w_kM^{-\frac{1}{2}}L_M^{-1}f\|_{L^2_{v,D}}\leq C_{\rho,u,\theta}\|w_k M^{-\frac{1}{2}}f\|_{H^{-s}_{-\ga/2}},
\end{align*}
 by $g=L_M^{-1}f$. Then \eqref{rhoutheta} and the fact that $\mu/M<C$ imply that
 \begin{align*}
 		\|w_k\mu^{-\frac{1}{2}}L_M^{-1}f\|_{L^2_{v,D}}&\leq 	\|w_kM^{-\frac{1}{2}}L_M^{-1}f\|_{L^2_{v,D}}+	\|w_k\frac{\mu^{1/2}-M^{1/2}}{\mu^{1/2}M^{1/2}}L_M^{-1}f\|_{L^2_{v,D}}\notag\\
 		&\leq \|w_kM^{-\frac{1}{2}}L_M^{-1}f\|_{L^2_{v,D}}+C(\ep+\eta+\bar{\eta})	\|w_k\mu^{-\frac{1}{2}}L_M^{-1}f\|_{L^2_{v,D}}.
 \end{align*}
Hence, \eqref{LM-1} follows from the smallness of $\ep,\eta$ and $\bar{\eta}$.
  Similarly, there exists some constant $l$ such that for general $\be$, we have
	\begin{align*}
	-(w_{2k}\pa^\be_v(\frac{1}{\sqrt{M}}L_M g),\pa^\be_v\frac{g}{\sqrt{M}})\geq&  c_{\rho,u,\theta}\|w_k\pa^\be_v\frac{g}{\sqrt{M}}\|_{L^2_{v,D}}^2\notag\\
	&-\eta\sum_{|\be_1|<|\be|}\|w_{k+l|\be-\be_1|}\pa^{\be_1}_v\frac{g}{\sqrt{M}}\|_{L^2_{v,D}}^2-C_{\rho,u,\theta}\|\frac{g}{\sqrt{M}}\|^2_{L^2_{\ga/2}}.
\end{align*}
Then one gets \eqref{highL-1} by induction argument.
\end{proof}
For $\overline{G}$ defined in \eqref{defbarG}, we have the following bound.
\begin{lemma}\label{lebarG}
	Let $k\geq 0$, it holds that
	\begin{equation}\label{boundbarG0}
		\|w_k(\frac{\overline{G}}{\sqrt{\mu}})\|_{2}
		\leq C\varepsilon(|\nabla_{x}\bar{u}|+|\nabla_{x}\bar{\theta}|).
	\end{equation}
Furthermore, for any multi-index $|\alpha|\geq1$, it holds that
	\begin{align}\label{boundbarG}
		&\|w_k\partial^{\alpha}(\frac{\overline{G}}{\sqrt{\mu}})\|_{2}+\|w_k\nabla_{v}\partial^{\alpha}(\frac{\overline{G}}{\sqrt{\mu}})\|_{2}\notag\\\leq& C\varepsilon\{(|\partial^{\alpha}\nabla_{x}\bar{u}|	+|\partial^{\alpha}\nabla_{x}\bar{\theta}|)+\cdot\cdot\cdot+(|\nabla_{x}\bar{u}|+|\nabla_{x}\bar{\theta}|)
		(|\partial^{\alpha}u|+|\partial^{\alpha}\theta|)\}.
	\end{align}
\end{lemma}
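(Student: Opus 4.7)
The plan is to exploit the fact that $\overline{G}$, as defined by \eqref{defbarG}, can be written as an $\varepsilon$-multiple of a short linear combination of the Burnett functions $A_j$ and $B_{ij}$ from \eqref{defbur2}, with coefficients that are smooth in the fluid quantities and linear in $\nabla_x\bar u$, $\nabla_x\bar\theta$. First I would split $v = u + (v-u)$ inside the source of \eqref{defbarG}, absorb the trace of $(v-u)_i(v-u)_j$, and invoke $P_1$ to annihilate the hydrodynamic modes that arise, arriving at a schematic representation of the form
\begin{equation*}
\overline{G} = \varepsilon\sum_{j=1}^3 c_j(\rho,u,\theta)\,\partial_{x_j}\bar\theta\cdot A_j\!\Big(\tfrac{v-u}{\sqrt{R\theta}}\Big) + \varepsilon\sum_{i,j=1}^3 d_{ij}(\rho,u,\theta)\,\partial_{x_j}\bar u_i\cdot B_{ij}\!\Big(\tfrac{v-u}{\sqrt{R\theta}}\Big),
\end{equation*}
where $c_j$, $d_{ij}$ are smooth and bounded on the range of $(\rho,u,\theta)$ provided by \eqref{rhoutheta}.

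With this representation in hand, \eqref{boundbarG0} would follow at once: I would apply \eqref{boundbur} with $\beta = 0$ to bound each $\|w_k\mu^{-1/2}A_j((v-u)/\sqrt{R\theta})\|_{L^2_v}$ and $\|w_k\mu^{-1/2}B_{ij}((v-u)/\sqrt{R\theta})\|_{L^2_v}$ uniformly in $(t,x)$, and then pull the smooth, $L^\infty_x$-bounded fluid factors outside. For the higher-order bound \eqref{boundbarG}, I would apply $\partial^\alpha$ by the Leibniz rule. Each derivative falls either on $c_j\partial_{x_j}\bar\theta$ (or its $\bar u$-analog), producing products of derivatives of $(\bar u,\bar\theta,u,\theta)$ whose highest $\bar u$- or $\bar\theta$-derivative is of order at most $|\alpha|+1$, or on $A_j((v-u)/\sqrt{R\theta})$, in which case the chain rule converts an $x$-derivative into a velocity derivative $\partial_v A_j$ multiplied by derivatives of $u$ or $\theta$. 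Bounding the fluid factors in $L^\infty_x$ via Sobolev embedding and the velocity factors in weighted $L^2_v$ via \eqref{boundbur} with $|\beta|\leq|\alpha|$ then produces exactly the tower of products on the right-hand side of \eqref{boundbarG}. The extra velocity derivative in $\|w_k\nabla_v\partial^\alpha(\cdot)\|$ merely increments $|\beta|$ by one, and is absorbed in the same way.

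The hard part will be the combinatorial bookkeeping required to match every term produced by Leibniz and the chain rule with the schematic sum on the right-hand side of \eqref{boundbarG}, especially when derivatives fall on the implicit $(\rho,u,\theta)$-dependence of the Burnett functions through $L_M^{-1}$. This is where \eqref{highL-1} becomes essential: it guarantees that each additional $x$-derivative hitting $L_M^{-1}$ only costs a polynomial weight in $v$, which is harmlessly absorbed by the fixed $w_k$ on the left-hand side, and it confirms that the resulting $\nabla_v$-estimate does not interfere with the allowed weight $w_k$.
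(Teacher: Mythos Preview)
Your proposal is correct and follows essentially the same route as the paper: rewrite $\overline{G}$ via the Burnett representation \eqref{barG}, apply \eqref{boundbur} for the zero-order bound, and handle $\partial^\alpha$ by Leibniz plus the chain rule on the argument $(v-u)/\sqrt{R\theta}$, invoking \eqref{boundbur} with higher $|\beta|$. One small remark: the paper does not actually need \eqref{highL-1} here, since once the Burnett representation is in place the only $(\rho,u,\theta)$-dependence is through the scaled argument, and \eqref{boundbur} already absorbs all resulting $v$-derivatives with the required weight.
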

\begin{proof}
	We use \eqref{defbur1} and \eqref{defbur2} to write
	\begin{equation}\label{barG}
		\overline{G}=\varepsilon\frac{\sqrt{R}}{\sqrt{\theta}}\sum^{3}_{j=1}
		\frac{\partial\bar{\theta}}{\partial x_{j}}A_{j}(\frac{v-u}{\sqrt{R\theta}})
		+\varepsilon\sum^{3}_{j=1}\sum^{3}_{i=1}\frac{\partial\bar{u}_{j}}{\partial x_{i}}B_{ij}(\frac{v-u}{\sqrt{R\theta}}).
	\end{equation}
Hence, \eqref{boundbarG0} naturally follows from the above equality and \eqref{boundbur}. 
Then one takes derivative to get
\begin{equation*}
	\frac{\partial\overline{G}}{\partial v_{k}}=\varepsilon\frac{\sqrt{R}}{\sqrt{\theta}}\sum^{3}_{j=1}
	\frac{\partial\bar{\theta}}{\partial x_{j}}\partial_{v_{k}}A_{j}(\frac{v-u}{\sqrt{R\theta}})\frac{1}{\sqrt{R\theta}} +\varepsilon\sum^{3}_{i,j=1}\frac{\partial\bar{u}_{j}}{\partial x_{i}}
	\partial_{v_{k}}B_{ij}(\frac{v-u}{\sqrt{R\theta}})\frac{1}{\sqrt{R\theta}},
\end{equation*}
and
\begin{align*}
	\frac{\partial\overline{G}}{\partial x_{k}}&=\varepsilon\Big\{
	\frac{\sqrt{R}}{\sqrt{\theta}}\sum^{3}_{j=1}\frac{\partial^{2}\bar{\theta}}{\partial x_{j}\partial x_{k}}A_{j}(\frac{v-u}{\sqrt{R\theta}})
	-\frac{\sqrt{R}}{2\sqrt{\theta^{3}}}\sum^{3}_{j=1}\frac{\partial\bar{\theta}}{\partial x_{j}}\frac{\partial\theta}{\partial x_{k}}A_{j}(\frac{v-u}{\sqrt{R\theta}})
	\nonumber\\
	&\quad-\frac{\sqrt{R}}{\sqrt{\theta}}\sum^{3}_{j=1}\frac{\partial\bar{\theta}}{\partial x_{j}}\frac{\partial u}{\partial x_{k}}\cdot
	\nabla_{v}A_{j}(\frac{v-u}{\sqrt{R\theta}})\frac{1}{\sqrt{R\theta}}
	-\frac{\sqrt{R}}{\sqrt{\theta}}\sum^{3}_{j=1}
	\frac{\partial\bar{\theta}}{\partial x_{j}}\frac{\partial\theta}{\partial x_{k}}\nabla_{v}A_{j}(\frac{v-u}{\sqrt{R\theta}})
	\cdot\frac{v-u}{2\sqrt{R\theta^{3}}}
	\nonumber\\
	&\quad+\sum^{3}_{i,j=1}\frac{\partial^{2}\bar{u}_{j}}{\partial x_{i}\partial x_{k}}B_{ij}(\frac{v-u}{\sqrt{R\theta}})
	-\sum^{3}_{i,j=1}\frac{\partial\bar{u}_{j}}{\partial x_{i}}\frac{\partial u}{\partial x_{k}}\cdot\nabla_{v}B_{ij}(\frac{v-u}{\sqrt{R\theta}})\frac{1}{\sqrt{R\theta}}
	\nonumber\\
	&\quad-\sum^{3}_{i,j=1}\frac{\partial\bar{u}_{j}}{\partial x_{i}}\frac{\partial\theta}{\partial x_{k}}\nabla_{v}B_{ij}(\frac{v-u}{\sqrt{R\theta}})\cdot\frac{v-u}{2\sqrt{R\theta^{3}}}\Big\},
\end{align*}
which, combining with \eqref{boundbur}, yield \eqref{boundbarG} for $|\al|=1$. The case when $1<|\al|\leq N$ can be proven in a similar way.
\end{proof}

\section{Estimates on zero order fluid quantities}
Due to the lack of zero order dissipation on the fluid part, in our energy estimates, we need to avoid terms like $(u\cdot\nabla_x\widetilde{\rho},\frac{2\bar{\theta}}{3\bar{\rho}^{2}}\widetilde{\rho})$, which is difficult to control in zero order but can be bounded in higher order case, see \eqref{dutrr}. Motivated by \cite{Liu-Yang-Yu}, we use the entropy and entropy flux method.

\begin{lemma}
	It holds that
	\begin{align}\label{zerofluid}
		&\|(\widetilde{\rho},\widetilde{u},\widetilde{\theta})(t)\|^{2}+c\varepsilon\int^{t}_0
		\|\nabla_x(\widetilde{\rho},\widetilde{u},\widetilde{\theta})(s)\|^{2} ds
		\nonumber\\
		\leq&C\|(\widetilde{\rho},\widetilde{u},\widetilde{\theta})(0)\|^{2}+ C \frac{1}{\varepsilon}\int^t_0\|g_1(s)\|^2ds+C\varepsilon^{r/2}\int^t_0\mathcal{D}_{N,k}(s)ds
		+C\varepsilon^{1+r}+C t\eta\varepsilon^{r}.
	\end{align}
\end{lemma}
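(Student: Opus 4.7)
The plan is to follow the entropy-entropy flux method of Liu-Yang-Yu. First I would construct a Lyapunov functional
\[
\mathcal{H}(t) := \int_{\R^3}\bigg\{ \bar\rho\,\bar\theta\,\Phi\!\left(\tfrac{\rho}{\bar\rho}\right) + \tfrac{1}{2}\bar\rho|\widetilde u|^2 + \tfrac{3}{2}\bar\rho\,\bar\theta\,\Phi\!\left(\tfrac{\theta}{\bar\theta}\right) \bigg\}\,dx,
\]
with $\Phi(s)=s-1-\ln s$ the standard relative entropy density. The bounds \eqref{rhoutheta} and \eqref{background} ensure $\mathcal{H}(t)\sim \|(\widetilde\rho,\widetilde u,\widetilde\theta)(t)\|^2$ uniformly in $t\in[0,T]$. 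The key design property of $\mathcal{H}$ is that, upon differentiating in $t$ along \eqref{macro2} and exploiting that $(\bar\rho,\bar u,\bar\theta)$ satisfies \eqref{euler}, the undifferentiated convection terms of the form $(u\cdot\na_x\widetilde\rho,\widetilde\rho)$ that are flagged as problematic at zero order cancel identically between the three equations.

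After this cancellation, the viscous contributions $\varepsilon[\mu(\theta)D_{ij}]_{x_j}$ and $\varepsilon(\kappa(\theta)\theta_{x_j})_{x_j}$ yield, via integration by parts, a dissipation bounded below by $c\varepsilon\|\na_x(\widetilde u,\widetilde\theta)\|^2$. To also obtain $c\varepsilon\|\na_x\widetilde\rho\|^2$, which is not directly produced by the viscous structure, I would add to $\mathcal{H}$ a small-amplitude corrector of the form $\de\varepsilon(\widetilde u,\na_x\widetilde\rho)$ with $\de>0$ small; computing its time derivative via the momentum equation of \eqref{macro2}, its leading contribution is $\de\varepsilon\|\na_x\widetilde\rho\|^2$, while the cross terms are absorbed either into the already-established dissipation or into the error families below. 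Once the full left-hand side of \eqref{zerofluid} is in hand, the remaining work is to estimate the source terms generated by $\tfrac{d}{dt}\mathcal{H}$.

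These source terms split into four families matching the right-hand side of \eqref{zerofluid}. First, residuals containing $\na_x(\bar\rho,\bar u,\bar\theta)$ are bounded by $C\eta\|(\widetilde\rho,\widetilde u,\widetilde\theta)\|^2\leq C\eta\,\eta_0\varepsilon^{r}$ via \eqref{background} and \eqref{apriori}, giving $Ct\eta\varepsilon^r$ after time integration. Second, the microscopic fluxes $\int v\otimes v\cdot\na_x g_1\,dv$ and $\int\tfrac12|v|^2 v\cdot\na_x g_1\,dv$ are treated by integration by parts in $x$, Cauchy-Schwarz with a polynomial velocity weight $w_k$, and Young's inequality with an $\varepsilon$-weighted parameter: the gradient piece is absorbed into dissipation, leaving the residue $C\varepsilon^{-1}\|g_1\|^2$. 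Third, the $\overline G$-type contributions are controlled through Lemma \ref{lebarG}; since $\overline G$ carries an explicit factor $\varepsilon$, these terms produce $O(\varepsilon^2)\,\|(\bar u,\bar\theta)\|_{H^2}^2$ pieces which by \eqref{background} and time integration combine to yield the $C\varepsilon^{1+r}$ contribution. Fourth, the parts of $L^{-1}_{M}\Theta$ involving $g_2$ (and $g_1$ through $\CL_B g_1$) are estimated via Lemma \ref{leL-1} in the form $\mathcal{E}_{N,k}^{1/2}\mathcal{D}_{N,k}^{1/2}$; using \eqref{apriori} to extract an $\varepsilon^{r/2}$ factor gives the $C\varepsilon^{r/2}\int_0^t\mathcal{D}_{N,k}(s)\,ds$ contribution.

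The step I expect to be most delicate is the treatment of $\Theta$ in \eqref{defTheta}, because it carries $\pa_t\overline G$ and $\pa_t(\sqrt\mu g_2)$, neither of which is directly controlled by $\mathcal{E}_{N,k}$ or $\mathcal{D}_{N,k}$. To handle them I would eliminate the time derivatives by substituting the $g_2$ equation \eqref{g2} and the explicit formula \eqref{barG}, so that each $\pa_t$ is replaced by combinations of $v\cdot\na_x$, $\varepsilon^{-1}L$, $\varepsilon^{-1}\CL_B$, $\Gamma(\cdot,\cdot)$, and bulk fluid-gradient terms. Each resulting piece either absorbs into the dissipation or becomes a nonlinear product bounded by $\mathcal{E}_{N,k}^{1/2}\mathcal{D}_{N,k}^{1/2}$; combined with \eqref{apriori} this produces precisely the $\varepsilon$-scaling asserted in \eqref{zerofluid}, and a final Gronwall-free time integration closes the estimate.
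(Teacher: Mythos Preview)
Your overall strategy matches the paper's: the relative entropy functional $\mathcal{H}(t)$ is equivalent to the paper's $\int\eta(t,x)\,dx$ (cf.\ \eqref{equieta}), the viscous terms produce the $(\widetilde u,\widetilde\theta)$ dissipation exactly as in \eqref{1dissu}--\eqref{1disstheta}, and the corrector $\de\varepsilon(\widetilde u,\na_x\widetilde\rho)$ is precisely what the paper uses in \eqref{diss0rho1}--\eqref{diss0rho} to recover $\varepsilon\|\na_x\widetilde\rho\|^2$. Your first three families of source terms are also handled in the paper as you describe.

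The gap is in your last paragraph. Substituting \eqref{g2} to eliminate $\partial_t g_2$ brings in the term $\varepsilon^{-1}Lg_2$, which after the prefactor $\varepsilon$ in $\Theta$ becomes $P_1(\sqrt{\mu}\,Lg_2)$. Paired with the test function $\na_x(\widetilde\theta/\theta)$ (through the Burnett structure \eqref{reA}), this is a \emph{bilinear} term, not a ``nonlinear product'': it is bounded only by $C\|\na_x\widetilde\theta\|\,\|\mathbf{P}_1 g_2\|_{L^2_xL^2_{v,D}}$, and since each factor carries exactly one square root of $\mathcal{D}_{N,k}$ with opposite powers of $\varepsilon$, you get a contribution of size $C\mathcal{D}_{N,k}$ with no small prefactor. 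This does not give the $C\varepsilon^{r/2}\int_0^t\mathcal{D}_{N,k}$ scaling asserted in \eqref{zerofluid}; worse, it creates a circular coupling with \eqref{zerog2} (which itself feeds $C\varepsilon\|\na_x(\widetilde u,\widetilde\theta)\|^2$ back to the fluid estimate) that need not close.

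The paper avoids this by \emph{integrating by parts in time} rather than substituting: in \eqref{0Theta21}--\eqref{0Theta2} the term $\varepsilon P_1(\partial_t\sqrt{\mu}g_2)$ is converted into a total derivative $\frac{d}{dt}\int(\cdots)\varepsilon P_1(\sqrt{\mu}g_2)\,dv\,dx$ (which is $O(\varepsilon^{1+r})$ at the endpoints) plus remainders in which the time derivative now falls on the fluid coefficients. These remainders involve $\partial_t\na_x(\widetilde\theta/\theta)$ and $\partial_t(\rho,u,\theta)$, which are controlled via the macroscopic equations through \eqref{patN-1}, yielding the required extra factor of $\varepsilon$ in $C\varepsilon\,\mathcal{D}_{N,k}$ (see \eqref{0Theta22}). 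This time-integration-by-parts device is the step your plan is missing.
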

\begin{proof}
	As in \cite{Duan-Yang-Yu-2022,Liu-Yang-Yu}, define the macroscopic entropy $S$ to be
	\begin{equation*}
		-\frac{3}{2}\rho S:=\int_{\mathbb{R}^{3}}M\log M dv.
	\end{equation*}
	Recalling $R=\frac{2}{3}$, we substitute $M$ in \eqref{defM} into the above definition to get
	\begin{equation*}
		S=-\frac{2}{3}\log\rho+\log(2\pi R\theta)+1,\quad
		p=R\rho\theta=\frac{1}{2\pi e}\rho^{\frac{5}{3}}\exp (S),
	\end{equation*}
	which, by direct calculation, gives
	\begin{align*}
		-\frac{3}{2}\partial_t(\rho S)-\frac{3}{2}\nabla_{x}\cdot(\rho uS)
		+\nabla_{x}\cdot\int_{\mathbb{R}^{3}} vG\log M dv=\int_{\mathbb{R}^{3}} \frac{G v\cdot\nabla_{x} M}{M}dv.
	\end{align*}
	To simplify the above equation, letting $(\cdot,\cdot,\cdot)^{t}$ to be the transpose of a row vector, we denote
	\begin{align*}
		m:=&(m_{0},m_{1},m_{2},m_{3},m_{4})^{t}=(\rho,\rho u_{1},\rho u_{2},\rho u_{3},\rho(\theta+\frac{|u|^{2}}{2}))^{t},
		\\
		n:=&(n_{0},n_{1},n_{2},n_{3},n_{4})^{t}
		=(\rho u,\rho uu_{1}+p\mathbb{I}_{1},\rho uu_{2}+p\mathbb{I}_{2},\rho uu_{3}+p\mathbb{I}_{3},\rho u(\theta+\frac{|u|^{2}}{2})+pu)^{t},
	\end{align*}
	where $\mathbb{I}_{1}=(1,0,0)^{t}$, $\mathbb{I}_{2}=(0,1,0)^{t}$, $\mathbb{I}_{3}=(0,0,1)^{t}$.
	Then it holds that
	\begin{equation}\label{mn}
		\partial_tm+\nabla_{x}\cdot n=\begin{pmatrix}
			0
			\\
			\varepsilon\sum^{3}_{j=1}\partial_{x_{j}}[\mu(\theta)D_{1j}]
			-\int_{\mathbb{R}^{3}} v_{1}(v\cdot\nabla_{x}L^{-1}_{M}\Theta) dv
			\\
			\varepsilon\sum^{3}_{j=1}\partial_{x_{j}}[\mu(\theta)D_{2j}]
			-\int_{\mathbb{R}^{3}} v_{2}(v\cdot\nabla_{x}L^{-1}_{M}\Theta) dv
			\\
			\varepsilon\sum^{3}_{j=1}\partial_{x_{j}}[\mu(\theta)D_{3j}]
			-\int_{\mathbb{R}^{3}} v_{3}(v\cdot\nabla_{x}L^{-1}_{M}\Theta) dv
			\\
			\varepsilon\nabla_{x}\cdot(\kappa(\theta)\nabla_{x}\theta)
			+\varepsilon\nabla_{x}\cdot[\mu(\theta) u\cdot D]-\int_{\mathbb{R}^{3}} \frac{1}{2}|v|^{2} v\cdot\nabla_{x}L^{-1}_{M}\Theta dv
		\end{pmatrix},
	\end{equation}
	where $D=[D_{ij}]_{1\leq i,j\leq 3}$ is given in \eqref{def.vst}. We further define a relative entropy-entropy flux pair $(\eta,q)(t,x)$ around the local Maxwellian to be
	\begin{equation*}
		\left\{
		\begin{array}{rl}
			&\eta(t,x)=\bar{\theta}\{-\frac{3}{2}\rho S+\frac{3}{2}\bar{\rho}\bar{S}+\frac{3}{2}\nabla_{m}(\rho S)|_{m=\bar{m}}(m-\bar{m})\},
			\\
			&q_{j}(t,x)=\bar{\theta}\{-\frac{3}{2}\rho u_{j}S+\frac{3}{2}\bar{\rho}\bar{u}_{j}\bar{S}+\frac{3}{2}\nabla_{m}(\rho S)|_{m=\bar{m}}\cdot(n_{j}-\bar{n}_{j})\},\quad j=1,2,3,
		\end{array} \right.
	\end{equation*}
	where $\bar{S}=-\frac{2}{3}\log\bar{\rho}+\log(2\pi R\bar{\theta})+1$ and $\bar{m}=(\bar{\rho},\bar{\rho}\bar{ u}_{1},\bar{\rho}\bar{ u}_{2},\bar{\rho}\bar{ u}_{3},\bar{\rho}(\bar{\theta}
	+\frac{1}{2}|\bar{u}|^{2}))^{t}$. It is straightforward to verify that
	\begin{equation*}
		\partial_{m_{0}}(\rho S)=S+\frac{|u|^{2}}{2\theta}-\frac{5}{3}, \quad
		\partial_{m_{i}}(\rho S)=-\frac{u_{i}}{\theta}, ~~~i=1,2,3, \quad \partial_{m_{4}}(\rho S)=\frac{1}{\theta},
	\end{equation*}
	which yields
	\begin{equation}\notag
		\left\{
		\begin{array}{rl}
			\eta(t,x)&=\frac{3}{2}\{\rho\theta-\bar{\theta}\rho S+\rho[(\bar{S}-\frac{5}{3})\bar{\theta}
			+\frac{|u-\bar{u}|^{2}}{2}]+\frac{2}{3}\bar{\rho}\bar{\theta}\}
			\\
			&=\rho\bar{\theta}\Psi(\frac{\bar{\rho}}{\rho})+\frac{3}{2}\rho\bar{\theta}\Psi(\frac{\theta}{\bar{\theta}})
			+\frac{3}{4}\rho|u-\bar{u}|^{2},
			\\
			q_{j}(t,x)&=u_{j}\eta(t,x)+(u_{j}-\bar{u}_{j})(\rho\theta-\bar{\rho}\bar{\theta}), \quad j=1,2,3,
		\end{array} \right.
	\end{equation}
	where $\Psi(s)=s-\log s-1$ is a strictly convex function around $s=1$ and hence comparable with $|s-1|^2$. By our a priori assumption \eqref{apriori}, there exists a constant $C>1$ such that
	\begin{equation}\label{equieta}
		C^{-1}|(\widetilde{\rho},\widetilde{u},\widetilde{\theta})|^{2}\leq \eta(t,x)\leq C|(\widetilde{\rho},\widetilde{u},\widetilde{\theta})|^{2}.
	\end{equation}
	Then a direct calculation gives
	\begin{multline}\label{etaq1}
		\partial_{t}\eta(t,x)+\nabla_{x}\cdot q(t,x)
		=\nabla_{[\bar{\rho},\bar{u},\bar{S}]}\eta(t,x)\cdot \partial_t(\bar{\rho},\bar{u},\bar{S})
		+\sum^{3}_{j=1}\nabla_{[\bar{\rho},\bar{u},\bar{S}]}q_{j}(t,x)\cdot \partial_{x_j}(\bar{\rho},\bar{u},\bar{S})\\
		+\bar{\theta}\{-\frac{3}{2}\partial_t(\rho S)-\frac{3}{2}\nabla_{x}\cdot(\rho uS)\}
		+\frac{3}{2}\bar{\theta}\nabla_{m}(\rho S)|_{m=\bar{m}}(\partial_tm+\nabla_{x}\cdot n).
	\end{multline}
	By the definitions of $S$, $\eta(t,x)$, $q(t,x)$ and the equation of $(m,n)$ in \eqref{mn}, one can deduce that 
	\begin{align*}
		&\nabla_{[\bar{\rho},\bar{u},\bar{S}]}\eta(t,x)\cdot \partial_t(\bar{\rho},\bar{u},\bar{S})
		+\sum^{3}_{j=1}\nabla_{[\bar{\rho},\bar{u},\bar{S}]}q_{j}(t,x)\cdot \partial_{x_j}(\bar{\rho},\bar{u},\bar{S})
		\nonumber\\
		=&-\frac{3}{2}\rho\widetilde{u}\cdot(\widetilde{u}\cdot\nabla_{x}\bar{u})
		-\frac{2}{3}\rho\bar{\theta}(\nabla_{x}\cdot\bar{u})\Psi(\frac{\bar{\rho}}{\rho})
		-\rho\bar{\theta}(\nabla_{x}\cdot\bar{u})\Psi(\frac{\theta}{\bar{\theta}})-\frac{3}{2}\rho\nabla_{x}\bar{\theta}\cdot\widetilde{u}(\frac{2}{3}
		\log\frac{\bar{\rho}}{\rho}+\log\frac{\theta}{\bar{\theta}}),\notag\\
		&\bar{\theta}\{-\frac{3}{2}\partial_t(\rho S)-\frac{3}{2}\nabla_{x}\cdot(\rho uS)\}
		=\bar{\theta}\{-\frac{3}{2}\frac{\rho}{\theta}(\partial_t\theta+u\cdot\nabla_{x}\theta)-\rho\nabla_{x}\cdot u\}
		\\
		=&-\frac{3}{2}\frac{\bar{\theta}}{\theta}\varepsilon\Big\{\sum^{3}_{j=1}\partial_{x_j}(\kappa(\theta)\partial_{x_{j}}\theta)+
		\sum^{3}_{i,j=1}(\mu(\theta) \partial_{x_j}u_{i}D_{ij})\Big\}
		\\
		&+\frac{3}{2}\frac{\bar{\theta}}{\theta}\Big\{\int_{\mathbb{R}^{3}}\frac{1}{2}|v|^{2}v\cdot\nabla_{x}
		L^{-1}_{M}\Theta\, dv-\sum^{3}_{i=1}u_{i}\int_{\mathbb{R}^{3}} v_{i}v\cdot\nabla_{x}L^{-1}_{M}\Theta\, dv\Big\},
	\end{align*}
	and
	\begin{align*}
		&\frac{3}{2}\bar{\theta}\nabla_{m}(\rho S)|_{m=\bar{m}}(\partial_tm+\nabla_{x}\cdot n)
		\\
		=&\frac{3}{2}\sum^{3}_{i=1}\bar{u}_{i}\int_{\mathbb{R}^{3}} v_{i}v\cdot\nabla_{x}L^{-1}_{M}\Theta dv
		-\frac{3}{2}\int_{\mathbb{R}^{3}} \frac{1}{2}|v|^{2} v\cdot\nabla_{x}L^{-1}_{M}\Theta dv
		+\frac{3}{2}\varepsilon\nabla_{x}\cdot(\kappa(\theta)\nabla_{x}\theta)
		\\
		&-\frac{3}{2}\varepsilon\sum^{3}_{i,j=1}\bar{u}_{i}\partial_{x_j}[\mu(\theta)D_{ij}]
		+\frac{3}{2}\varepsilon\nabla_{x}\cdot[\mu(\theta) u\cdot D].
	\end{align*}
	Substituting the above three identities into \eqref{etaq1}, we get
	\begin{align}\label{etaq}
		&\partial_{t}\eta(t,x)+\nabla_{x}\cdot q(t,x)
		+\varepsilon\frac{3\bar{\theta}}{2\theta}\mu(\theta)\sum^{3}_{i,j=1} \partial_{x_j}\widetilde{u}_{i}(\partial_{x_j}\widetilde{u}_{i}+\partial_{x_i}\widetilde{u}_{j}-\frac{2}{3}\delta_{ij}\nabla_{x}\cdot \widetilde{u})
		+\varepsilon\frac{3\bar{\theta}}{2\theta^{2}}\kappa(\theta)|\nabla_{x}\widetilde{\theta}|^{2}
		\nonumber\\
		=&\frac{3}{2}\varepsilon\nabla_{x}\cdot[\mu(\theta)\widetilde{u}\cdot D] +\frac{3}{2}\varepsilon\nabla_{x}\cdot(\frac{\widetilde{\theta}}{\theta}\kappa(\theta)\nabla_{x}\theta)
		-\frac{3}{2}\rho\widetilde{u}\cdot(\widetilde{u}\cdot\nabla_{x}\bar{u})
		-\frac{2}{3}\rho\bar{\theta}(\nabla_{x}\cdot\bar{u})\Psi(\frac{\bar{\rho}}{\rho})\nonumber\\
		&
		-\rho\bar{\theta}(\nabla_{x}\cdot\bar{u})\Psi(\frac{\theta}{\bar{\theta}})
		-\frac{3}{2}\rho\nabla_{x}\bar{\theta}\cdot\widetilde{u}(\frac{2}{3} \log\frac{\bar{\rho}}{\rho}+\log\frac{\theta}{\bar{\theta}})
		-\varepsilon\frac{3\bar{\theta}}{2\theta^{2}}\kappa(\theta)
		\nabla_{x}\widetilde{\theta}\cdot\nabla_{x}\bar{\theta} +\varepsilon\frac{3\widetilde{\theta}}{2\theta^{2}}\kappa(\theta)\nabla_{x}\bar{\theta}\cdot\nabla_{x}\theta \nonumber\\
		&         +\varepsilon\frac{3\widetilde{\theta}}{2\theta}\sum^{3}_{i,j=1}[\partial_{x_j}\bar{u}_{i}\mu(\theta)D_{ij}]-\varepsilon\frac{3\bar{\theta}}{2\theta}\sum^{3}_{i,j=1}\mu(\theta) \partial_{x_j}\widetilde{u}_{i}(\partial_{x_j}\bar{u}_{i}+\partial_{x_i}\bar{u}_{j}-\frac{2}{3}\delta_{ij}\nabla_{x}\cdot \bar{u})
		+I_0(x),
	\end{align}
	where 
	\begin{align}\label{defI0}
		I_0(x)
		=&-\frac{3}{2}\nabla_{x}\cdot(\frac{\widetilde{\theta}}{\theta}\int_{\mathbb{R}^{3}}(\frac{1}{2}|v|^{2}-u\cdot v)vL^{-1}_{M}\Theta dv)
		-\frac{3}{2}\nabla_{x}\cdot(\sum^{3}_{i=1}\widetilde{u}_{i}\int_{\mathbb{R}^{3}} v_{i}vL^{-1}_{M}\Theta dv)
		\nonumber\\
		&+\frac{3}{2}\nabla_{x}(\frac{\widetilde{\theta}}{\theta})\cdot\int_{\mathbb{R}^{3}}(\frac{1}{2}|v|^{2}-v\cdot u)vL^{-1}_{M}\Theta dv
		+\frac{3}{2}\frac{\bar{\theta}}{\theta}\sum^{3}_{i=1}\nabla_{x}\widetilde{u}_{i}\cdot\int_{\mathbb{R}^{3}} v_{i}vL^{-1}_{M}\Theta dv
		\nonumber\\
		&-\frac{3}{2}\frac{\widetilde{\theta}}{\theta}\sum^{3}_{i=1}\nabla_{x}\bar{u}_{i}\cdot\int_{\mathbb{R}^{3}} v_{i}vL^{-1}_{M}\Theta dv.
	\end{align}
	Then we should first integrate with respect to $x$ on both side of \eqref{etaq} and estimate each term. The resulting equation after integration is omitted here and we directly start from the estimates. For the corresponding third term on the left hand side in \eqref{etaq}, we integrate by parts to get
	\begin{align*}
		&\varepsilon\int_{\mathbb{R}^{3}}\frac{3\bar{\theta}}{2\theta}\mu(\theta)\sum^{3}_{i,j=1} \partial_{x_j}\widetilde{u}_{i}(\partial_{x_j}\widetilde{u}_{i}+\partial_{x_i}\widetilde{u}_{j}-\frac{2}{3}\delta_{ij}\nabla_{x}\cdot \widetilde{u})dx
		\\
		=&\varepsilon\sum^{3}_{i,j=1}\int_{\mathbb{R}^{3}}\frac{3\bar{\theta}}{2\theta}\mu(\theta) (\partial_{x_j}\widetilde{u}_{i})^2dx
		+
		\varepsilon\int_{\mathbb{R}^{3}}\frac{3\bar{\theta}}{2\theta}\mu(\theta)\frac{1}{3}(\nabla_{x}\cdot \widetilde{u})^2dx
		\\
		&-\varepsilon\sum^{3}_{i,j=1}\int_{\mathbb{R}^{3}}
		\partial_{x_i}(\frac{3\bar{\theta}}{2\theta}\mu(\theta))\partial_{x_j}\widetilde{u}_{i}\widetilde{u}_{j}dx
		+\varepsilon\sum^{3}_{i,j=1}\int_{\mathbb{R}^{3}}
		\partial_{x_j}(\frac{3\bar{\theta}}{2\theta}\mu(\theta)) \partial_{x_i}\widetilde{u}_{i}\widetilde{u}_{j}dx
		\\
		\geq&c\varepsilon\|\nabla_x\widetilde{u}\|^2
		-C\varepsilon\sum^{3}_{i,j=1}\|\nabla_x(\frac{3\bar{\theta}}{2\theta}\mu(\theta))\|_{L^\infty_x}(\|\partial_{x_j}\widetilde{u}_{i}\|\|\widetilde{u}_{j}\|+\|\partial_{x_i}\widetilde{u}_{i}\|\|\widetilde{u}_{j}\|).
	\end{align*}
	Using the smoothness of $\mu(\theta)$, the inequality that $$\varepsilon\|\nabla_{x}\widetilde{\theta}\|_{L^\infty_x}\|\na_x\widetilde{u}\|\|\widetilde{u}\|\leq C\varepsilon\|\nabla^2_{x}\widetilde{\theta}\|^\frac{1}{2}\|\nabla^3_{x}\widetilde{\theta}\|^\frac{1}{2}\|\na_x\widetilde{u}\|\|\widetilde{u}\|\leq C\sqrt{\mathcal{E}_{N,k}(t)}\mathcal{D}_{N,k}(t),$$ \eqref{background} and \eqref{apriori}, we obtain
	\begin{align}\label{1dissu}
		&\varepsilon\int_{\mathbb{R}^{3}}\frac{3\bar{\theta}}{2\theta}\mu(\theta)\sum^{3}_{i,j=1} \partial_{x_j}\widetilde{u}_{i}(\partial_{x_j}\widetilde{u}_{i}+\partial_{x_i}\widetilde{u}_{j}-\frac{2}{3}\delta_{ij}\nabla_{x}\cdot \widetilde{u})dx\notag
		\\
		\geq&c\varepsilon\|\nabla_x\widetilde{u}\|^2
		-C\eta\varepsilon^{1+r}-C\sqrt{\mathcal{E}_{N,k}(t)}\mathcal{D}_{N,k}(t).
	\end{align}
	Moreover, it is straightforward to see
	\begin{align}\label{1disstheta}
		&\varepsilon\int_{\mathbb{R}^{3}}\frac{3\bar{\theta}}{2\theta^{2}}\kappa(\theta)|\nabla_{x}\widetilde{\theta}|^{2}dx
		\geq c\varepsilon\|\nabla_x\widetilde{\theta}\|^2.
	\end{align}
	Then we turn to the right hand side of \eqref{etaq}. Note that the first two terms vanish after integration. By the facts that $\Psi(s)$ is comparable to $|s-1|^2$ and $\log(s)$ is comparable to $|s-1|$ for $s$ which is closed enough to $1$, we use H\"older's inequality, Sobolev imbedding, \eqref{background} and \eqref{apriori} to get
	\begin{align}\label{bar2tilde}
		&\int_{\mathbb{R}^{3}}|\frac{3}{2}\rho\widetilde{u}\cdot(\widetilde{u}\cdot\nabla_{x}\bar{u})
		+\frac{2}{3}\rho\bar{\theta}(\nabla_{x}\cdot\bar{u})\Psi(\frac{\bar{\rho}}{\rho})
		+\rho\bar{\theta}(\nabla_{x}\cdot\bar{u})\Psi(\frac{\theta}{\bar{\theta}})
		+\frac{3}{2}\rho\nabla_{x}\bar{\theta}\cdot\widetilde{u}(\frac{2}{3}
		\ln\frac{\bar{\rho}}{\rho}+\ln\frac{\theta}{\bar{\theta}})| dx
		\nonumber\\
		&\leq C(\|\nabla_{x}\bar{u}\|_{L^{\infty}}+\|\nabla_{x}\bar{\theta}\|_{L^{\infty}})
		\|(\widetilde{\rho},\widetilde{u},\widetilde{\theta})\|^{2}
		\leq C\eta\varepsilon^{r}.
	\end{align} 
	Likewise, using the smoothness and boundedness of $\mu(\theta)$ and $\ka(\theta)$, it holds that
	\begin{align}\label{epbartilde}
		&\int_{\mathbb{R}^{3}}\big\{|\varepsilon\frac{3\bar{\theta}}{2\theta^{2}}\kappa(\theta)
		\nabla_{x}\widetilde{\theta}\cdot\nabla_{x}\bar{\theta}|+
		|\varepsilon\frac{3\widetilde{\theta}}{2\theta^{2}}\kappa(\theta)\nabla_{x}\bar{\theta}\cdot\nabla_{x}\theta|\notag\\
		&\qquad+|\varepsilon\frac{3\widetilde{\theta}}{2\theta}\sum^{3}_{i,j=1}[\partial_{x_j}\bar{u}_{i}\mu(\theta)D_{ij}]|+|\varepsilon\frac{3\bar{\theta}}{2\theta}\sum^{3}_{i,j=1}\mu(\theta)
		\partial_{x_j}\widetilde{u}_{i}(\partial_{x_j}\bar{u}_{i}+\partial_{x_i}\bar{u}_{j}-\frac{2}{3}\delta_{ij}\nabla_{x}\cdot \bar{u})|\big\}dx\notag
		\\
		\leq& C\varepsilon\|\nabla_{x}\widetilde{\theta}\|\|\nabla_{x}\bar{\theta}\|
		+C\varepsilon\|\nabla_{x}\bar{\theta}\|_{L^{\infty}}\|\widetilde{\theta}\|\|\nabla_{x}\theta\|+C\varepsilon\|\nabla_{x}\bar{u}\|_{L^{\infty}}\|\widetilde{\theta}\|\|\nabla_{x}u\|+C\varepsilon\|\nabla_{x}\widetilde{u}\|\|\nabla_{x}\bar{u}\|\notag\\
		\leq& C\eta\varepsilon^{1+r/2}.
	\end{align}
	Combining \eqref{etaq}, \eqref{1dissu}, \eqref{1disstheta}, \eqref{bar2tilde} and \eqref{epbartilde}, we have
	\begin{align}\label{10est}
		&\frac{d}{dt}\int_{\mathbb{R}^3}\eta(t,x)dx
		+c\varepsilon(\|\nabla_x\widetilde{u}\|^2+\|\nabla_x\widetilde{\theta}\|^{2})
		\leq \int_{\mathbb{R}^3}I_0(x)dx+C\eta\varepsilon^{r}+C\sqrt{\mathcal{E}_{N,k}(t)}\mathcal{D}_{N,k}(t).
	\end{align}
	Now we turn to $\int_{\mathbb{R}^3}I_0(x)dx$. Notice that the first two terms in \eqref{defI0} vanish after integration. Using the identity
	\begin{align*}
		&\int_{\mathbb{R}^{3}} (\frac{1}{2}v_{i}|v|^{2}-v_{i}u\cdot v)L^{-1}_{M}\Theta dv=
		\int_{\mathbb{R}^{3}} L^{-1}_{M}\{P_{1}(\frac{1}{2}v_{i}|v|^{2}-v_{i}u\cdot v)M\}\frac{\Theta}{M}dv
		\nonumber\\
		=&\int_{\mathbb{R}^{3}} L^{-1}_{M}\{(R\theta)^{\frac{3}{2}}\hat{A}_{i}(\frac{v-u}{\sqrt{R\theta}})M\}\frac{\Theta}{M}dv
		=(R\theta)^{\frac{3}{2}}\int_{\mathbb{R}^{3}}A_{i}(\frac{v-u}{\sqrt{R\theta}})\frac{\Theta}{M}dv,
	\end{align*}
	for $i=1,2,3$, we rewrite
	\begin{align}\label{reA}
		&\int_{\mathbb{R}^{3}}\frac{3}{2}\nabla_{x}(\frac{\widetilde{\theta}}{\theta})
		\cdot\int_{\mathbb{R}^{3}}(\frac{1}{2}|v|^{2}-v\cdot u)vL^{-1}_{M}\Theta dvdx
		\nonumber\\ 
		=&\sum_{i=1}^{3}\int_{\mathbb{R}^{3}}\Big\{\frac{3}{2}\partial_{x_{i}}(\frac{\widetilde{\theta}}{\theta}) (R\theta)^{\frac{3}{2}}\int_{\mathbb{R}^{3}}A_{i}(\frac{v-u}{\sqrt{R\theta}})\frac{\Theta}{M}dv\Big\}dx\notag\\
		=&\sum_{i=1}^{3}\int_{\mathbb{R}^{3}}\Big\{\frac{3}{2}\partial_{x_{i}}(\frac{\widetilde{\theta}}{\theta}) (R\theta)^{\frac{3}{2}}\int_{\mathbb{R}^{3}}A_{i}(\frac{v-u}{\sqrt{R\theta}})\frac{1}{M}\big[\varepsilon \partial_{t}\overline{G}+\varepsilon P_{1}(\pa_t\sqrt{\mu}g_2)\notag\\
		&\qquad\qquad+\varepsilon P_{1}(v\cdot\nabla_{x}(\overline{G}+\sqrt{\mu}g_2))-P_{1}(\sqrt{\mu}\CL_Bg_1)\notag\\
		&\qquad\qquad-Q(\overline{G}+\sqrt{\mu}g_2,\overline{G}+\sqrt{\mu}g_2)\big]dv\Big\}dx
		.
	\end{align}
	We need to bound the five terms above. For the first one, from \eqref{controltime} and the proof of \eqref{patN-2}, it holds that \begin{equation*}
		\|\partial_t(u,\theta)\|\leq C,\quad \|\partial_t(\bar{\rho},\bar{u},\bar{\theta})\|_{H^{1}}\leq C\eta,
	\end{equation*} which, together with \eqref{euler}, \eqref{macro1}, \eqref{background}, \eqref{apriori} and a similar argument as in \eqref{boundbarG}, gives
	\begin{align}\label{0Theta1}
		&\int_{\mathbb{R}^{3}}\Big\{\frac{3}{2}\partial_{x_{i}}(\frac{\widetilde{\theta}}{\theta})
		(R\theta)^{\frac{3}{2}}\int_{\mathbb{R}^{3}}A_{i}(\frac{v-u}{\sqrt{R\theta}})
		\frac{\varepsilon \partial_t\overline{G}}{M} dv\Big\} dx
		\nonumber\\
		\leq& C\varepsilon^{2}\int_{\mathbb{R}^{3}}|\partial_{x_{i}}(\frac{\widetilde{\theta}}{\theta})|
		\{|(\nabla_{x}\partial_t\bar{u},\nabla_{x}\partial_t\bar{\theta})|
		+|(\nabla_{x}\bar{u},\nabla_{x}\bar{\theta})|\cdot|(\partial_tu,\partial_t\theta)|\} dx
		\nonumber\\
		\leq& C\varepsilon^{2}(\|\partial_{x_{i}}\widetilde{\theta}\|
		+\|\widetilde{\theta}\partial_{x_{i}}\theta\|)
		\{\|(\nabla_{x}\partial_t\bar{u},\nabla_{x}\partial_t\bar{\theta})\|
		+\|(\nabla_{x}\bar{u},\nabla_{x}\bar{\theta})\|_{L^{\infty}}\|(\partial_tu,\partial_t\theta)\|\}
		\nonumber\\
		\leq& C\eta\varepsilon^{2}.
	\end{align}
	For the second term, we integrate by parts to get
	\begin{align}\label{0Theta21}
		&\int_{\mathbb{R}^{3}}\Big\{\frac{3}{2}\partial_{x_{i}}(\frac{\widetilde{\theta}}{\theta})
		(R\theta)^{\frac{3}{2}}\int_{\mathbb{R}^{3}}A_{i}(\frac{v-u}{\sqrt{R\theta}})
		\frac{\varepsilon P_{1}(\pa_t\sqrt{\mu}g_2)}{M} dv\Big\} dx
		\nonumber\\
		=&\frac{d}{dt}\int_{\mathbb{R}^{3}}\int_{\mathbb{R}^{3}}\frac{3}{2}\partial_{x_{i}}(\frac{\widetilde{\theta}}{\theta})
		(R\theta)^{\frac{3}{2}}A_{i}(\frac{v-u}{\sqrt{R\theta}})
		\frac{\varepsilon P_{1}(\sqrt{\mu}g_2)}{M} dvdx
		\nonumber\\
		&-\int_{\mathbb{R}^{3}}\int_{\mathbb{R}^{3}}\partial_t\Big\{\frac{3}{2}\partial_{x_{i}}(\frac{\widetilde{\theta}}{\theta})
		(R\theta)^{\frac{3}{2}}A_{i}(\frac{v-u}{\sqrt{R\theta}})
		\frac{\varepsilon }{M}\Big\}P_{1}(\sqrt{\mu}g_2) dvdx\notag\\
		&+\int_{\mathbb{R}^{3}}\int_{\mathbb{R}^{3}}
		\Big\{\frac{3}{2}\partial_{x_{i}}(\frac{\widetilde{\theta}}{\theta})
		(R\theta)^{\frac{3}{2}}\int_{\mathbb{R}^{3}}A_{i}(\frac{v-u}{\sqrt{R\theta}})\frac{\varepsilon}{M}\sum_{l=0}^{4}( \sqrt{\mu}g_2,\partial_t\frac{\chi_{l}}{M})_{L^2_v}\chi_{l}\Big\}dvdx\notag\\
		&+\int_{\mathbb{R}^{3}}\int_{\mathbb{R}^{3}}
		\Big\{\frac{3}{2}\partial_{x_{i}}(\frac{\widetilde{\theta}}{\theta})
		(R\theta)^{\frac{3}{2}}\int_{\mathbb{R}^{3}}A_{i}(\frac{v-u}{\sqrt{R\theta}})\frac{\varepsilon}{M}\sum_{l=0}^{4}( \sqrt{\mu}g_2,\frac{\chi_{l}}{M})_{L^2_v}\partial_t\chi_{l}\Big\}dvdx,
	\end{align}
	with $\chi_{l}$ defined in \eqref{basis}.
	Using \eqref{boundbur}, one sees
	\begin{align*}
		&-\int_{\mathbb{R}^{3}}\int_{\mathbb{R}^{3}}\partial_t\Big\{\frac{3}{2}\partial_{x_{i}}(\frac{\widetilde{\theta}}{\theta})
		(R\theta)^{\frac{3}{2}}A_{i}(\frac{v-u}{\sqrt{R\theta}})
		\frac{\varepsilon}{M}\Big\}P_{1}(\sqrt{\mu}g_2) dvdx
		\notag\\
		\leq& C\varepsilon\|\partial_{x_{i}}\partial_t(\frac{\widetilde{\theta}}{\theta})\|\|w_{-2}g_2\|
		+C\varepsilon\|\partial_{x_{i}}(\frac{\widetilde{\theta}}{\theta})\|
		\|w_{-2}g_2\|_{L_x^{\infty}L^2_v}\|\partial_t(\rho,u,\theta)\|\notag\\
		\leq&C\varepsilon\|\partial_{x_{i}}\partial_t(\frac{\widetilde{\theta}}{\theta})\|\|w_{-2}g_2\|
		+C\varepsilon\|\partial_{x_{i}}(\frac{\widetilde{\theta}}{\theta})\|
		\|w_{-2}\nabla_{x}g_2\|^\frac{1}{2}\|w_{-2}\nabla^2_{x}g_2\|^\frac{1}{2}\|\partial_t(\rho,u,\theta)\|.
	\end{align*}
	By an inequality which we will prove in \eqref{patN-1} that \begin{align*}
		\|\partial^{\alpha}\partial_t(\widetilde{\rho},\widetilde{u},\widetilde{\theta})\|^{2}\leq 
		C\frac{1}{\varepsilon}\mathcal{D}_{N,k}(t)+C\eta^2\varepsilon^{r}+C\varepsilon^{-1}\mathcal{E}_{N,k}(t)\mathcal{D}_{N,k}(t),
	\end{align*}
	for all $|\al|\leq N-1$, combining with 
	$$
	\|w_{-2}g_2\|+\|w_{-2}\nabla_{x}g_2\|+\|w_{-2}\nabla^2_{x}g_2\|\leq C\min\{\varepsilon^{r/2},\sqrt{\varepsilon}\sqrt{\mathcal{D}_{k,N}(t)}\}
	$$ by \eqref{apriori} and \eqref{DNk1}, one further has
	\begin{align}\label{0Theta22}
		&-\int_{\mathbb{R}^{3}}\int_{\mathbb{R}^{3}}\partial_t\Big\{\frac{3}{2}\partial_{x_{i}}(\frac{\widetilde{\theta}}{\theta})
		(R\theta)^{\frac{3}{2}}A_{i}(\frac{v-u}{\sqrt{R\theta}})
		\frac{\varepsilon}{M}\Big\}P_{1}(\sqrt{\mu}g_2) dvdx
		\notag\\
		\leq& C\eta\varepsilon^{1+r}+C\varepsilon\mathcal{D}_{N,k}(t)+C\sqrt{\mathcal{E}_{N,k}(t)}\mathcal{D}_{N,k}(t).
	\end{align}
	The other terms in \eqref{0Theta21} can be estimated in the similar way. It then follows from \eqref{0Theta21} and \eqref{0Theta22} that
	\begin{align}\label{0Theta2}
		&\int_{\mathbb{R}^{3}}\Big\{\frac{3}{2}\partial_{x_{i}}(\frac{\widetilde{\theta}}{\theta})
		(R\theta)^{\frac{3}{2}}\int_{\mathbb{R}^{3}}A_{i}(\frac{v-u}{\sqrt{R\theta}})
		\frac{\varepsilon P_{1}(\pa_t\sqrt{\mu}g_2)}{M} dv\Big\} dx
		\nonumber\\
		\leq&\frac{d}{dt}\int_{\mathbb{R}^{3}}\int_{\mathbb{R}^{3}}\frac{3}{2}\partial_{x_{i}}(\frac{\widetilde{\theta}}{\theta})
		(R\theta)^{\frac{3}{2}}A_{i}(\frac{v-u}{\sqrt{R\theta}})
		\frac{\varepsilon P_{1}(\sqrt{\mu}g_2)}{M} dvdx\notag\\
		&+C\eta\varepsilon^{1+r}+C\varepsilon\mathcal{D}_{N,k}(t)+C\sqrt{\mathcal{E}_{N,k}(t)}\mathcal{D}_{N,k}(t).
	\end{align}
	Similarly, it holds that
	\begin{align}\label{0Theta3}
		&\int_{\mathbb{R}^{3}}\Big\{\frac{3}{2}\partial_{x_{i}}(\frac{\widetilde{\theta}}{\theta})
		(R\theta)^{\frac{3}{2}}\int_{\mathbb{R}^{3}}A_{i}(\frac{v-u}{\sqrt{R\theta}})
		\frac{\varepsilon P_{1}(v\cdot\nabla_{x}(\overline{G}+\sqrt{\mu}g_2))}{M} dv\Big\} dx
		\leq C\eta\varepsilon^{1+r}+C\varepsilon\mathcal{D}_{N,k}(t).
	\end{align}
	For the fourth term on the right hand side of \eqref{reA}, we use the definition of the bounded operator $\CL_B$ in \eqref{defLB} to get
	\begin{align}\label{0Theta4}
		&\int_{\mathbb{R}^{3}}\Big\{\frac{3}{2}\partial_{x_{i}}(\frac{\widetilde{\theta}}{\theta})
		(R\theta)^{\frac{3}{2}}\int_{\mathbb{R}^{3}}A_{i}(\frac{v-u}{\sqrt{R\theta}})
		\frac{P_{1}(\sqrt{\mu}\CL_Bg_1)}{M} dv\Big\} dx
		\nonumber\\
		\leq&C\|\partial_{x_{i}}(\frac{\widetilde{\theta}}{\theta})\|\|g_1\|\leq C(\|\partial_{x_{i}}\widetilde{\theta}\|+\|\partial_{x_{i}}\bar{\theta}\|_{L^\infty_x}\|\widetilde{\theta}\|)\|g_1\|\nonumber\\
		\leq&\ka \varepsilon\|\partial_{x_{i}}\widetilde{\theta}\|^2+C_\ka \frac{1}{\varepsilon}\|g_1\|^2 +C\eta\varepsilon^r.
	\end{align}
	Likewise, using \eqref{Trilinear}, \eqref{boundbur} and \eqref{boundbarG0}, it holds that
	\begin{align}\label{0Theta5}
		&\big|\int_{\mathbb{R}^{3}}\Big\{\frac{3}{2}\partial_{x_{i}}(\frac{\widetilde{\theta}}{\theta})
		(R\theta)^{\frac{3}{2}}\int_{\mathbb{R}^{3}}A_{i}(\frac{v-u}{\sqrt{R\theta}})
		\frac{Q(\overline{G}+\sqrt{\mu}g_2,\overline{G}+\sqrt{\mu}g_2)}{M}dv\Big\}dx\big|\notag
		\\
		&\leq C\int_{\mathbb{R}^{3}}\|\frac{\overline{G}+\sqrt{\mu}g_2}{\sqrt{\mu}}\|_{L^2_v}
		\|\frac{\overline{G}+\sqrt{\mu}g_2}{\sqrt{\mu}}\|_{L^2_{v,D}}\big|\nabla_{x}(\frac{\widetilde{\theta}}{\theta})\big|dx\notag
		\\
		&\leq C(\|\frac{\overline{G}}{\sqrt{\mu}}\|_{L^\infty_xL^2_v}+\|g_2\|_{L^\infty_xL^2_v})(\|\frac{\overline{G}}{\sqrt{\mu}}\|_{L^2_xL^2_{v,D}}+\|g_2\|_{L^2_xL^2_{v,D}})
		\|\nabla_{x}(\frac{\widetilde{\theta}}{\theta})\|\notag
		\\
		&\leq C\eta\varepsilon^{1+r/2}+C\sqrt{\mathcal{E}_{N,k}(t)}\mathcal{D}_{N,k}(t).
	\end{align}
	The combination of \eqref{reA}, \eqref{0Theta1}, \eqref{0Theta2}, \eqref{0Theta3}, \eqref{0Theta4} and \eqref{0Theta5} gives 
	\begin{align}\label{estA}
		&\int_{\mathbb{R}^{3}}\frac{3}{2}\nabla_{x}(\frac{\widetilde{\theta}}{\theta})
		\cdot\int_{\mathbb{R}^{3}}(\frac{1}{2}|v|^{2}-v\cdot u)vL^{-1}_{M}\Theta dvdx
		\nonumber\\ 
		\leq&\frac{d}{dt}\int_{\mathbb{R}^{3}}\int_{\mathbb{R}^{3}}\frac{3}{2}\nabla_x(\frac{\widetilde{\theta}}{\theta})
		(R\theta)^{\frac{3}{2}}A_{i}(\frac{v-u}{\sqrt{R\theta}})
		\frac{\varepsilon P_{1}(\sqrt{\mu}g_2)}{M} dvdx+ \ka \varepsilon\|\nabla_{x}\widetilde{\theta}\|^2\notag\\
		&+C_\ka \frac{1}{\varepsilon}\|g_1\|^2+ C\eta\varepsilon^{1+r/2}+C\varepsilon\mathcal{D}_{N,k}(t)+C\sqrt{\mathcal{E}_{N,k}(t)}\mathcal{D}_{N,k}(t).
	\end{align}
	For the rest two terms in \eqref{defI0}, using the identity
	\begin{align*}
		\int_{\mathbb{R}^{3}}v_{i}v_{j}L^{-1}_{M}\Theta dv=&
		\int_{\mathbb{R}^{3}} L^{-1}_{M}\{P_{1}(v_{i}v_{j}M)\}\frac{\Theta}{M}dv
		\nonumber\\
		=&\int_{\mathbb{R}^{3}} L^{-1}_{M}\{R\theta\hat{B}_{ij}(\frac{v-u}{\sqrt{R\theta}})M\}\frac{\Theta}{M}dv
		=R\theta\int_{\mathbb{R}^{3}}B_{ij}(\frac{v-u}{\sqrt{R\theta}})\frac{\Theta}{M}dv,
	\end{align*}
	and performing similar calculations to how we get \eqref{estA}, it holds that
	\begin{align}\label{estrestA}
		&\int_{\mathbb{R}^{3}}\big(\frac{3}{2}\frac{\bar{\theta}}{\theta}\sum^{3}_{i=1}\nabla_{x}\widetilde{u}_{i}\cdot\int_{\mathbb{R}^{3}} v_{i}vL^{-1}_{M}\Theta dv-\frac{3}{2}\frac{\widetilde{\theta}}{\theta}\sum^{3}_{i=1}\nabla_{x}\bar{u}_{i}\cdot\int_{\mathbb{R}^{3}} v_{i}vL^{-1}_{M}\Theta dv\big)dx
		\nonumber\\ 
		\leq& \frac{d}{dt}\frac{3}{2}\sum^{3}_{i,j=1}\int_{\mathbb{R}^{3}}\int_{\mathbb{R}^{3}}\partial_{x_j}\widetilde{u}_{i}R\bar{\theta} B_{ij}(\frac{v-u}{\sqrt{R\theta}})
		\frac{\varepsilon P_{1}(\sqrt{\mu}g_2)}{M}dvdx\notag\\
		&-\frac{d}{dt}\frac{3}{2}\sum^{3}_{i,j=1}\int_{\mathbb{R}^{3}}\int_{\mathbb{R}^{3}}\partial_{x_j}\bar{u}_{i}R\widetilde{\theta} B_{ij}(\frac{v-u}{\sqrt{R\theta}})
		\frac{\varepsilon P_{1}(\sqrt{\mu}g_2)}{M}dvdx+\ka \varepsilon\|\nabla_{x}\widetilde{u}\|^2\notag\\
		&+C_\ka \frac{1}{\varepsilon}\|g_1\|^2+ C\eta\varepsilon^{1+r/2}+C\varepsilon\mathcal{D}_{N,k}(t)+C\sqrt{\mathcal{E}_{N,k}(t)}\mathcal{D}_{N,k}(t).
	\end{align}
	Hence, it follows from \eqref{defI0}, \eqref{estA} and \eqref{estrestA} that
	\begin{align}
		\int_{\mathbb{R}^3}I_0(x)dx \leq& \frac{d}{dt}\Big\{\int_{\mathbb{R}^{3}}\int_{\mathbb{R}^{3}}\frac{3}{2}\partial_{x_{i}}(\frac{\widetilde{\theta}}{\theta})
		(R\theta)^{\frac{3}{2}}A_{i}(\frac{v-u}{\sqrt{R\theta}})
		\frac{\varepsilon P_{1}(\sqrt{\mu}g_2)}{M}dvdx\notag\\
		&\qquad+\frac{3}{2}\sum^{3}_{i,j=1}\int_{\mathbb{R}^{3}}\int_{\mathbb{R}^{3}}\partial_{x_j}\widetilde{u}_{i}R\bar{\theta} B_{ij}(\frac{v-u}{\sqrt{R\theta}})
		\frac{\varepsilon P_{1}(\sqrt{\mu}g_2)}{M}dvdx\notag\\
		&\qquad-\frac{3}{2}\sum^{3}_{i,j=1}\int_{\mathbb{R}^{3}}\int_{\mathbb{R}^{3}}\partial_{x_j}\bar{u}_{i}R\widetilde{\theta} B_{ij}(\frac{v-u}{\sqrt{R\theta}})
		\frac{\varepsilon P_{1}(\sqrt{\mu}g_2)}{M}dvdx\Big\}+\ka \varepsilon\|\nabla_{x}(\widetilde{u},\widetilde{\theta})\|^2\notag\\
		&+C_\ka \frac{1}{\varepsilon}\|g_1\|^2+ C\eta\varepsilon^{1+r/2}+C\varepsilon\mathcal{D}_{N,k}(t)+C\sqrt{\mathcal{E}_{N,k}(t)}\mathcal{D}_{N,k}(t),\notag
	\end{align}
	which, combined with \eqref{10est}, further yields
	\begin{align*}
		&\frac{d}{dt}\int_{\mathbb{R}^3}\eta(t,x)dx
		+c\varepsilon(\|\nabla_x\widetilde{u}\|^2+\|\nabla_x\widetilde{\theta}\|^{2})\notag\\
		\leq& \frac{d}{dt}\Big\{\int_{\mathbb{R}^{3}}\int_{\mathbb{R}^{3}}\frac{3}{2}\nabla_x(\frac{\widetilde{\theta}}{\theta})
		(R\theta)^{\frac{3}{2}}A_{i}(\frac{v-u}{\sqrt{R\theta}})
		\frac{\varepsilon P_{1}(\sqrt{\mu}g_2)}{M}dvdx\notag\\
		&\qquad+\frac{3}{2}\sum^{3}_{i,j=1}\int_{\mathbb{R}^{3}}\int_{\mathbb{R}^{3}}\partial_{x_j}\widetilde{u}_{i}R\bar{\theta} B_{ij}(\frac{v-u}{\sqrt{R\theta}})
		\frac{\varepsilon P_{1}(\sqrt{\mu}g_2)}{M}dvdx\notag\\
		&\qquad-\frac{3}{2}\sum^{3}_{i,j=1}\int_{\mathbb{R}^{3}}\int_{\mathbb{R}^{3}}\partial_{x_j}\bar{u}_{i}R\widetilde{\theta} B_{ij}(\frac{v-u}{\sqrt{R\theta}})
		\frac{\varepsilon P_{1}(\sqrt{\mu}g_2)}{M}dvdx\Big\}\notag\\
		&+\ka \varepsilon\|\nabla_{x}(\widetilde{u},\widetilde{\theta})\|^2+C_\ka \frac{1}{\varepsilon}\|g_1\|^2+ C\eta\varepsilon^{1+r/2}+C\varepsilon\mathcal{D}_{N,k}(t)+C\sqrt{\mathcal{E}_{N,k}(t)}\mathcal{D}_{N,k}(t).
	\end{align*}
	Similar arguments as in \eqref{0Theta22} show that 
	\begin{align*}
		&\Big|\int_{\mathbb{R}^{3}}\int_{\mathbb{R}^{3}}\frac{3}{2}\nabla_x(\frac{\widetilde{\theta}}{\theta})
		(R\theta)^{\frac{3}{2}}A_{i}(\frac{v-u}{\sqrt{R\theta}})
		\frac{\varepsilon P_{1}(\sqrt{\mu}g_2)}{M}dvdx\notag\\
		&+\frac{3}{2}\sum^{3}_{i,j=1}\int_{\mathbb{R}^{3}}\int_{\mathbb{R}^{3}}\partial_{x_j}\widetilde{u}_{i}R\bar{\theta} B_{ij}(\frac{v-u}{\sqrt{R\theta}})
		\frac{\varepsilon P_{1}(\sqrt{\mu}g_2)}{M}dvdx\notag\\
		&-\frac{3}{2}\sum^{3}_{i,j=1}\int_{\mathbb{R}^{3}}\int_{\mathbb{R}^{3}}\partial_{x_j}\bar{u}_{i}R\widetilde{\theta} B_{ij}(\frac{v-u}{\sqrt{R\theta}})
		\frac{\varepsilon P_{1}(\sqrt{\mu}g_2)}{M}dvdx\Big|\leq C\varepsilon^{1+r}.
	\end{align*}
	By the two estimates above and choosing $\ka$ to be small enough, it holds that
	\begin{align}\label{0utheta}
		&\int_{\mathbb{R}^3}\eta(t,x)dx
		+c\varepsilon\int^t_0\|\nabla_x(\widetilde{u},\widetilde{\theta})(s)\|^{2}ds\notag\\
		\leq&\int_{\mathbb{R}^3}\eta(0,x)dx+C \frac{1}{\varepsilon}\int^t_0\|g_1(s)\|^2ds+ Ct\eta\varepsilon^{1+r/2}+C\varepsilon^{1+r}\notag\\
		&+C\varepsilon\int^t_0\mathcal{D}_{N,k}(s)ds+C\int^t_0\sqrt{\mathcal{E}_{N,k}(s)}\mathcal{D}_{N,k}(s)ds.
	\end{align}
	Compared to our expected estimate \eqref{zerofluid}, it is still needed to obtain the first order dissipation of $\widetilde{\rho}$. Take the inner product of the second equation of \eqref{macro1} with $\nabla_{x}\widetilde{\rho}$ to get
	\begin{align}\label{diss0rho1}
		(\frac{2\bar{\theta}}{3\bar{\rho}}\nabla_{x}\widetilde{\rho},\nabla_{x}\widetilde{\rho})
		=&-(\partial_{t}\widetilde{u},\nabla_{x}\widetilde{\rho})-(u\cdot\nabla_{x}\widetilde{u}
		+\frac{2}{3}\nabla_{x}\widetilde{\theta}
		,\nabla_{x}\widetilde{\rho})\notag
		\\
		&-(\widetilde{u}\cdot\nabla_{x}\bar{u}+\frac{2}{3}(\frac{\theta}{\rho}-\frac{\bar{\theta}}{\bar{\rho}})\nabla_{x}\rho,\nabla_{x}\widetilde{\rho})
		-(\frac{1}{\rho}\int_{\mathbb{R}^{3}} v\otimes v\cdot\nabla_{x} G\,dv,\nabla_{x}\widetilde{\rho}).
	\end{align}
	For the first term on the right hand side above, it follows from integration by parts and the first equation of \eqref{macro1} that
	\begin{align}\label{diss0rho2}
		-(\partial_{t}\widetilde{u},\nabla_{x}\widetilde{\rho})=&-\frac{d}{dt}(\widetilde{u},\nabla_{x}\widetilde{\rho})-(\nabla_{x}\widetilde{u},
		\partial_t\widetilde{\rho})\notag
		\\
		=&-\frac{d}{dt}(\widetilde{u},\nabla_{x}\widetilde{\rho})+(\nabla_{x}\widetilde{u},
		u\cdot\nabla_{x}\widetilde{\rho}+\bar{\rho}\nabla_{x}\cdot\widetilde{u}
		+\widetilde{u}\cdot\nabla_{x}\bar{\rho}
		+\widetilde{\rho}\nabla_{x}\cdot u),
	\end{align}
	which, combined with Cauchy-Schwarz inequality, Sobolev imbedding, \eqref{background} and \eqref{apriori}, yields
	\begin{align}\label{diss0rho3}
		-(\partial_{t}\widetilde{u},\nabla_{x}\widetilde{\rho})\leq&-\frac{d}{dt}(\widetilde{u},\nabla_{x}\widetilde{\rho})+C\|\nabla_{x}\widetilde{u}\|^2+C\eta\varepsilon^r+C\frac{1}{\varepsilon}\sqrt{\mathcal{E}_{N,k}(t)}\mathcal{D}_{N,k}(t).
	\end{align}
	Likewise, it holds that
	\begin{align}\label{diss0rho4}
		&|(u\cdot\nabla_{x}\widetilde{u}
		+\frac{2}{3}\nabla_{x}\widetilde{\theta}
		,\nabla_{x}\widetilde{\rho})|+|(\widetilde{u}\cdot\nabla_{x}\bar{u}+\frac{2}{3}(\frac{\theta}{\rho}-\frac{\bar{\theta}}{\bar{\rho}})\nabla_{x}\rho,\nabla_{x}\widetilde{\rho})|\notag
		\\
		\leq& \ka\|\nabla_{x}\widetilde{\rho}\|^2+C_\ka\|\nabla_{x}\widetilde{\theta}\|^2
		+C\eta\varepsilon^r+C\frac{1}{\varepsilon}\sqrt{\mathcal{E}_{N,k}(t)}\mathcal{D}_{N,k}(t),
	\end{align}
	for any small $\ka>0$. Furthermore, from \eqref{boundbarG0} and $G=\overline{G}+g=\overline{G}+g_1+\sqrt{\mu}g_2$, we have
	\begin{align}\label{diss0rho5}
		&|(\frac{1}{\rho}\int_{\mathbb{R}^{3}} v\otimes v\cdot\nabla_{x} Gdv,\nabla_{x}\widetilde{\rho})|\notag\\
		\leq &C\ka\|\nabla_{x}\widetilde{\rho}\|^2
		+C_\ka\|w_2\nabla_{x}g_1\|^{2}+C_\ka\|\nabla_{x}g_2\|_{L^2_{v,D}}^{2}
		+C_\ka \eta\varepsilon^{1+r/2}.
	\end{align}
	Combining \eqref{diss0rho1}, \eqref{diss0rho2}, \eqref{diss0rho3}, \eqref{diss0rho4} and \eqref{diss0rho5}, then choosing $\ka$ to be sufficiently small and noticing $1\leq r\leq 2$, one gets
	\begin{align}\label{control0error}
		\frac{d}{dt}\varepsilon(\widetilde{u},\nabla_{x}\widetilde{\rho})+c\varepsilon\|\nabla_{x}\widetilde{\rho}\|^2
		\leq &C\varepsilon(\|\nabla_{x}\widetilde{u}\|^2
		+\|\nabla_{x}\widetilde{\theta}\|^2+\|w_2\nabla_{x}g_1\|^2+\|\nabla_{x}g_2\|_{L^2_xL^2_{v,D}}^{2})\notag\\
		&+C\eta\varepsilon^r+C\sqrt{\mathcal{E}_{N,k}(t)}\mathcal{D}_{N,k}(t).
	\end{align}
	Using the fact that
	$$
	\varepsilon|(\widetilde{u},\nabla_{x}\widetilde{\rho})|\leq C\varepsilon\|\widetilde{u}\|\|\nabla_{x}\widetilde{\rho}\|
	\leq C\varepsilon^{1+r},
	$$
	it follows from \eqref{control0error} that
	\begin{align}\label{diss0rho}
		\varepsilon\int^{t}_0\|\nabla_{x}\widetilde{\rho}(s)\|^2ds
		\leq& C\varepsilon\int^{t}_0
		\|\nabla_x(\widetilde{u},\widetilde{\theta})(s)\|^{2}ds+C\varepsilon\int^t_0\mathcal{D}_{N,k}(s)ds\notag\\&
		+Ct\eta\varepsilon^{r}+C\varepsilon^{1+r}+C\int^t_0\sqrt{\mathcal{E}_{N,k}(s)}\mathcal{D}_{N,k}(s)ds.
	\end{align}
	Then \eqref{zerofluid} holds from our a priori assumption \eqref{apriori} and a suitable linear combination of \eqref{equieta}, \eqref{0utheta} and \eqref{diss0rho}.
\end{proof}

\section{Estimates on zero order microscopic quantities}\label{sec.5}
In order to obtain the final zero order energy estimate, we need to bound the non-fluid quantities $g_1$ and $g_2$. First for $g_1$, we will prove the following lemma.

\begin{lemma}
	For $k\geq 25$, it holds that
	\begin{align}\label{0orderg1}
		&\frac{1}{2}\frac{d}{dt}\|w_{k}g_1\|^{2}+c \frac{1}{\varepsilon}\Vert g_1 \Vert_{L^2_xH^s_{k+\ga/2}}^2
		\leq C_k(\varepsilon^{r/2}+\eta+\bar\eta)\mathcal{D}_{N,k}(t).
	\end{align}
\end{lemma}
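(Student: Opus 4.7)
The plan is to test the microscopic equation \eqref{g1} against $w_{2k}g_1$ in $L^2_{x,v}$. The transport term $(v\cdot\nabla_x g_1,w_{2k}g_1)_{L^2_{x,v}}$ vanishes after integration by parts because $w_{2k}$ is independent of $x$, and the time derivative produces exactly $\tfrac{1}{2}\frac{d}{dt}\|w_k g_1\|^2$, matching the left-hand side of \eqref{0orderg1}. It then remains to bound the seven collision-type contributions on the right-hand side of \eqref{g1}.

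The heart of the argument is extracting the coercive dissipation from the pair $\tfrac{1}{\varepsilon}(\CL_D g_1,w_{2k}g_1)+\tfrac{1}{\varepsilon}(Q(g_1,g_1),w_{2k}g_1)$. Using the algebraic identity
\begin{equation*}
Q(g_1,g_1)=Q(F-\mu,g_1)-Q((M-\mu)+\overline{G}+\sqrt{\mu}g_2,g_1),
\end{equation*}
where $F-\mu=(M-\mu)+\overline{G}+g_1+\sqrt{\mu}g_2$ is the full perturbation of $F$ from the global Maxwellian, I would invoke Lemma \ref{leLD} with $g=F-\mu$ and $f=g_1$; the nonnegativity of $F$ together with the smallness coming from \eqref{rhoutheta} and \eqref{apriori} ensure the required $L^1$, $L^1_2$ and $L\log L$ bounds on $F$. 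After spatial integration this yields
\begin{align*}
\tfrac{1}{\varepsilon}(\CL_D g_1+Q(g_1,g_1),w_{2k}g_1)\le{}&-\tfrac{\delta}{\varepsilon}\|g_1\|_{L^2_xH^s_{k+\gamma/2}}^2-\tfrac{1}{\varepsilon}(Q((M-\mu)+\overline{G}+\sqrt{\mu}g_2,g_1),w_{2k}g_1)\\
&+\tfrac{C_k}{\varepsilon}\|g_1\|_{L^\infty_xL^2_{14}}\|F-\mu\|_{L^2_xH^s_{k+\gamma/2}}\|g_1\|_{L^2_xH^s_{k+\gamma/2}}\\
&+\tfrac{C_k}{\varepsilon}\|F-\mu\|_{L^\infty_xL^2_{14}}\|g_1\|_{L^2_xH^s_{k+\gamma/2}}^2.
\end{align*}
By \eqref{rhoutheta}, \eqref{boundbarG0} and Sobolev embedding combined with \eqref{apriori} (with $k\ge25\ge 14$ so that the weight-$14$ moments are dominated), the $L^\infty_xL^2_{14}$ norms of $M-\mu$, $\overline{G}$, $g_1$ and $\sqrt{\mu}g_2$ are all of size $C(\varepsilon^{r/2}+\eta+\bar\eta)$. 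Together with $\tfrac{1}{\varepsilon}\|g_1\|_{L^2_xH^s_{k+\gamma/2}}^2\le\mathcal{D}_{N,k}(t)$, the cubic errors are absorbed into $C_k(\varepsilon^{r/2}+\eta+\bar\eta)\mathcal{D}_{N,k}(t)$, and half of the $-\delta/\varepsilon$ piece survives as the dissipative term on the left of \eqref{0orderg1}.

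The remaining six cross-terms on the right of \eqref{g1}, together with the extra $-\tfrac{1}{\varepsilon}(Q((M-\mu)+\overline{G}+\sqrt{\mu}g_2,g_1),w_{2k}g_1)$ generated above, are each of the form $\tfrac{1}{\varepsilon}(Q(h,g_1),w_{2k}g_1)$ or $\tfrac{1}{\varepsilon}(Q(g_1,h),w_{2k}g_1)$ with $h\in\{M-\mu,\overline{G},\sqrt{\mu}g_2\}$. For each I would apply the weighted trilinear bound \eqref{fgh}, combined with the commutator inequality \eqref{0order} when the polynomial weight must be passed through $Q$, and invoke the Gaussian decay of $h$ in the cases $h=\overline{G}$ and $h=\sqrt{\mu}g_2$ to dispense with any weight loss. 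The resulting estimates are of the schematic form $\tfrac{C_k}{\varepsilon}\|h\|_{L^\infty_xL^2_{14}}\|g_1\|_{L^2_xH^s_{k+\gamma/2}}^2+\tfrac{C_k}{\varepsilon}\|g_1\|_{L^\infty_xL^2_{14}}\|h\|_{L^2_xH^s_{k+\gamma/2}}\|g_1\|_{L^2_xH^s_{k+\gamma/2}}$, and the same $C(\varepsilon^{r/2}+\eta+\bar\eta)$ smallness converts the $1/\varepsilon$ factor into $C_k(\varepsilon^{r/2}+\eta+\bar\eta)\mathcal{D}_{N,k}(t)$.

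The main obstacle will be the $g_2$ cross-terms: since $g_2$ itself is not purely microscopic and only $\tfrac{1}{\varepsilon}\|\mathbf{P}_1g_2\|_{L^2_xL^2_{v,D}}^2$ enters $\mathcal{D}_{N,k}$, one cannot treat $\sqrt{\mu}g_2$ via a dissipation of $g_2$ alone. The resolution is to split $g_2=\mathbf{P}_0g_2+\mathbf{P}_1g_2$: the $\mathbf{P}_1g_2$ part is absorbed directly by the $\mathbf{P}_1g_2$-dissipation, while by \eqref{g2macro} the hydrodynamic part $\mathbf{P}_0g_2$ reduces to a linear combination of low-order velocity moments of $g_1$, whose norms are dominated by $\|g_1\|_{L^2_xH^s_{k+\gamma/2}}$ itself; this is precisely the mechanism formalised in \eqref{DNk1}. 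Combining all the bounds produces \eqref{0orderg1}.
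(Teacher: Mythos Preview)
Your approach is essentially the paper's: test \eqref{g1} against $w_{2k}g_1$, invoke Lemma~\ref{leLD} with $g=F-\mu$ to extract the coercive term $-\tfrac{\delta}{\varepsilon}\|g_1\|_{L^2_xH^s_{k+\gamma/2}}^2$, and control the remaining $Q(g_1,h)$ terms via \eqref{fgh}; the use of \eqref{g2macro}/\eqref{DNk1} for the macroscopic part of $g_2$ is exactly what the paper does. Two small points are worth fixing. First, the ``extra term'' $-\tfrac{1}{\varepsilon}(Q((M-\mu)+\overline{G}+\sqrt{\mu}g_2,g_1),w_{2k}g_1)$ that your algebraic identity generates cancels \emph{exactly} with three of the six cross-terms in \eqref{ig1}; the paper avoids this detour by grouping $\CL_D g_1+Q(g_1+\sqrt{\mu}g_2+(M-\mu)+\overline{G},g_1)$ together from the start, leaving only the three $Q(g_1,h)$ terms to be handled separately (this is \eqref{0microrest}).

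Second, your H\"older choice for the first error term of Lemma~\ref{leLD}, namely $\tfrac{C_k}{\varepsilon}\|g_1\|_{L^\infty_xL^2_{14}}\|F-\mu\|_{L^2_xH^s_{k+\gamma/2}}\|g_1\|_{L^2_xH^s_{k+\gamma/2}}$, fails for the $M-\mu$ piece: since $\theta-\tfrac{3}{2}=\widetilde{\theta}+(\bar\theta-\tfrac{3}{2}-\bar\eta)+\bar\eta$ with $\bar\eta>0$ a constant, $M-\mu$ is \emph{not} in $L^2_x$ and $\|M-\mu\|_{L^2_xH^s_{k+\gamma/2}}=+\infty$. The fix is immediate: split $F-\mu$ into its four pieces and, for $M-\mu$ and $\overline{G}$, place $L^\infty_x$ on them (via \eqref{rhoutheta} and \eqref{boundbarG0}) rather than on $\|g_1\|_{L^2_{14}}$, exactly as the paper does implicitly in deriving \eqref{LD0order}.
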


\begin{proof}
	It is natural to take the inner product of the equation \eqref{g1} with
	$w_{2k}g_1$ over $\mathbb{R}^{3}\times\mathbb{R}^{3}$, one has
	\begin{align}\label{ig1}
		\frac{1}{2}\frac{d}{dt}\|w_{k}g_1\|^{2}\leq& \frac{1}{\varepsilon}(\CL_D g_1,w_{2k}g_1)+\frac{1}{\varepsilon}(Q(g_1,g_1),w_{2k}g_1)+\frac{1}{\varepsilon}\big\{(Q(\sqrt{\mu}g_2,g_1),w_{2k}g_1)\notag\\
		&
		+(Q(g_1,\sqrt{\mu}g_2),w_{2k}g_1)\big\}
		+\frac{1}{\varepsilon}\big\{(Q(M-\mu,g_1),w_{2k}g_1)
		+(Q(g_1,M-\mu),w_{2k}g_1)\big\}
		\notag\\
		&+\frac{1}{\varepsilon}\big((Q(\overline{G},g_1),w_{2k}g_1)
		+(Q(g_1,\overline{G}),w_{2k}g_1)\big).
	\end{align}
	Substituting $F=\mu+g_1+\sqrt\mu g_2+(M-\mu)+\overline{G}$ into \eqref{leld}, one gets
	\begin{align*}
		&\frac{1}{\varepsilon}(L_D   g_1,  w_{2k} g_1 )   +   \frac{1}{\varepsilon}(Q(g_1+\sqrt\mu g_2+(M-\mu)+\overline{G},   g_1),  w_{2k} g_1 )  
		\notag\\
		\le & - \de\frac{1}{\varepsilon} \Vert g_1 \Vert_{L^2_x H^s_{k+\gamma/2 }}^2
		+C_k\frac{1}{\varepsilon}\Big(\Vert g_1 \Vert_{L^\infty_x L^2_{14}} \Vert g_1 \Vert_{L^2_x   H^s_{ k+\gamma/2  }}\Vert  g_1 \Vert_{L^2_x H^s_{ k+\gamma/2 }}\notag\\
		& +\Vert g_1 \Vert_{L^\infty_x L^2_{14} } \Vert   g_1 \Vert_{L^2_xH^s_{ k+\gamma/2 }}^2
		+\Vert   g_1 \Vert_{L^2_x L^2_{14}} \Vert \sqrt{\mu }g_2 \Vert_{L^\infty_x   H^s_{ k+\gamma/2 }}\Vert   g_1 \Vert_{L^2_x H^s_{ k+\gamma/2 }}\notag\\
		& +\Vert \sqrt{\mu }g_2 \Vert_{L^\infty_x L^2_{14} } \Vert   g_1 \Vert_{L^2_xH^s_{ k+\gamma/2 }}^2+\Vert   g_1 \Vert_{L^2_x L^2_{14}} \Vert M-\mu \Vert_{L^\infty_x   H^s_{ k+\gamma/2}}\Vert   g_1 \Vert_{L^2_x H^s_{ k+\gamma/2 }}\notag\\
		& +\Vert M-\mu \Vert_{L^\infty_x L^2_{14} } \Vert   g_1 \Vert_{L^2_xH^s_{ k+\gamma/2 }}^2+\Vert   g_1 \Vert_{L^2_x L^2_{14}} \Vert \overline{G} \Vert_{L^\infty_x   H^s_{ k+\gamma/2}}\Vert   g_1 \Vert_{L^2_x H^s_{ k+\gamma/2}}\notag\\
		& +\Vert \overline{G} \Vert_{L^\infty_x L^2_{14} } \Vert  g_1 \Vert_{L^2_xH^s_{ k+\gamma/2}}^2\Big).
	\end{align*}
	By the fact that $\|M-\mu\|_{H^s_{ k+\gamma/2}}$ can be bounded by $|(\rho-1,u,\theta-\frac{3}{2})|$ for any $k\in \R$, we further use the equivalence form of $\mathcal{D}_{N,k}$ in \eqref{DNk1}, \eqref{background}, \eqref{apriori}, \eqref{rhoutheta} and \eqref{boundbarG} to get
	\begin{align}\label{LD0order}
		&\frac{1}{\varepsilon}(L_D   g_1,  w_{2k} g_1 )   +   \frac{1}{\varepsilon}(Q(g_1+\sqrt\mu g_2+(M-\mu)+\overline{G},   g_1),  w_{2k} g_1 )  
		\notag\\
		\leq& - \de\frac{1}{\varepsilon} \Vert g_1 \Vert_{L^2_x H^s_{k+\gamma/2 }}^2+ C_k(\varepsilon^{r/2}+\eta+\bar\eta)\mathcal{D}_{N,k}(t)+C_k\sqrt{\mathcal{E}_{N,k}(t)}
		\mathcal{D}_{N,k}(t).
	\end{align}
	For the rest terms in \eqref{ig1}, it follows by \eqref{fgh} that
	\begin{align}\label{0microrest}
		&\frac{1}{\varepsilon}(Q( g_1,\{\sqrt{\mu}g_2+(M-\mu)+\overline{G}\}),  w_{2k} g_1 )\notag\\
		\leq& C_k\frac{1}{\varepsilon}\big( \Vert g_1 \Vert_{L^2_xL^2_{14}} \Vert \{\sqrt{\mu}g_2+(M-\mu)+\overline{G}\} \Vert_{L^\infty_xH^s_{k+\gamma/2+2s }}  \Vert g_1\Vert_{L^2_vH^s_{k+\gamma/2}}\notag\\
		&+  \Vert \{\sqrt{\mu}g_2+(M-\mu)+\overline{G}\} \Vert_{L^\infty_xL^2_{14}}  \Vert g_1 \Vert_{L^2_xH^s_{k+\gamma/2}}  \Vert g_1\Vert_{L^2_vH^s_{k+\gamma/2}}\big)\notag\\
		\leq& - \de\frac{1}{\varepsilon} \Vert g_1 \Vert_{L^2_x H^s_{k+\gamma/2 }}^2+ C_k(\varepsilon^{r/2}+\eta+\bar\eta)\mathcal{D}_{N,k}(t)+C_k\sqrt{\mathcal{E}_{N,k}(t)}
		\mathcal{D}_{N,k}(t).
	\end{align}
	Hence, \eqref{0orderg1} follows from \eqref{LD0order}, \eqref{0microrest} and \eqref{apriori}.
\end{proof}
It remains to obtain the zero order estimate for $g_2$.

\begin{lemma}
	For $k\geq 25$, it holds that
	\begin{align}\label{zerog2}
		&\frac{1}{2}\frac{d}{dt}\|g_2\|^2+c\frac{1}{\varepsilon}\|\mathbf{P}_1g_2\|_{L^2_xL^2_{v,D}}\notag\\ 
		\leq& C_k\frac{1}{\varepsilon}\|g_1\|_{L^2_xH^s_{k+\ga/2}} + C_k\varepsilon\{\|(\nabla_{x}\widetilde{u},\nabla_{x}\widetilde{\theta})\|^{2}+\|\nabla_{x}g_1(t)\|_{L^2_xH^s_{k+\ga/2}}^{2}\}\notag\\&+C_k(\varepsilon^{r/2}+\eta+\bar\eta)\mathcal{D}_{N,k}(t)+C\eta\varepsilon^r.
	\end{align}
\end{lemma}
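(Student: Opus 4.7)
\textbf{Proof plan for \eqref{zerog2}.}
The natural starting point is to pair the $g_2$-equation \eqref{g2} with $g_2$ in $L^2_{x,v}$. The transport term $(v\cdot\nabla_x g_2, g_2)$ vanishes by integration by parts in $x$, producing $\tfrac{1}{2}\tfrac{d}{dt}\|g_2\|^2$ on the left. The linear term $\tfrac{1}{\varepsilon}(Lg_2, g_2)$ is estimated by the coercivity \eqref{coercive}, yielding the dissipation $-\tfrac{c}{\varepsilon}\|\mathbf{P}_1 g_2\|^2_{L^2_xL^2_{v,D}}$; a small fraction will be reserved to absorb error terms. Whenever a full norm $\|g_2\|_{L^2_xL^2_{v,D}}$ later appears, I use the identity \eqref{g2macro}, which follows from $P_0(g_1+\sqrt\mu g_2)=0$, to bound $\|\mathbf{P}_0 g_2\|_{L^2_xL^2_{v,D}}$ by moments of $g_1$ (and hence by $\|g_1\|_{L^2_xH^s_{k+\gamma/2}}$ after weighted Sobolev estimates), reducing everything to $\mathbf{P}_1 g_2$ plus a $g_1$ contribution.

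The source terms are then treated one by one. The coupling $\tfrac{1}{\varepsilon}(\CL_B g_1, g_2)$ uses the compact support of $\CL_B g_1 = \mu^{-1/2}A\chi_M g_1$ and Cauchy--Schwarz with a small parameter to give $\tfrac{c}{8\varepsilon}\|\mathbf{P}_1 g_2\|^2_{L^2_xL^2_{v,D}} + \tfrac{C_k}{\varepsilon}\|g_1\|^2_{L^2_xH^s_{k+\gamma/2}}$. The three $\Gamma$-type terms use the trilinear bound \eqref{Trilinear}: for example,
\begin{equation*}
\tfrac{1}{\varepsilon}|(\Gamma(\tfrac{M-\mu}{\sqrt\mu}, g_2), g_2)|\leq \tfrac{C}{\varepsilon}\|\tfrac{M-\mu}{\sqrt\mu}\|_{L^\infty_xL^2_v}\|g_2\|^2_{L^2_xL^2_{v,D}},
\end{equation*}
and \eqref{rhoutheta} yields a prefactor of order $\varepsilon^{r/2}+\eta+\bar\eta$, so this contribution sits in the $(\varepsilon^{r/2}+\eta+\bar\eta)\mathcal{D}_{N,k}(t)$ budget via \eqref{DNk1}; the symmetric $\Gamma(g_2,(M-\mu)/\sqrt\mu)$ is identical. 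The quadratic $\tfrac{1}{\varepsilon}\Gamma(\tfrac{\overline G+\sqrt\mu g_2}{\sqrt\mu}, \tfrac{\overline G+\sqrt\mu g_2}{\sqrt\mu})$ is split using $(\overline G+\sqrt\mu g_2)/\sqrt\mu = \overline G/\sqrt\mu + g_2$ and controlled by combining \eqref{Trilinear}, the a priori bound \eqref{apriori}, and \eqref{boundbarG0}, producing an absorbable $(\varepsilon^{r/2}+\eta+\bar\eta)\mathcal{D}_{N,k}$ contribution plus an $\eta\varepsilon^r$ remainder.

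The remaining non-collisional terms yield the two $\varepsilon$-weighted contributions in the stated bound. The macroscopic projection $\tfrac{1}{\sqrt\mu}P_0[v\cdot\nabla_x(g_1+\sqrt\mu g_2)]$ is expanded via \eqref{defP} as a linear combination of $\chi_i/\sqrt\mu$, which decay rapidly in $v$, multiplied by moments of $\nabla_x(g_1+\sqrt\mu g_2)$, so Cauchy--Schwarz with an $\varepsilon$ parameter gives $\varepsilon\|\nabla_x g_1\|^2_{L^2_xH^s_{k+\gamma/2}}$ plus an $\varepsilon\|\nabla_x g_2\|^2$ term that is easily absorbed by $\mathcal{D}_{N,k}$ via \eqref{DNk1} (the weight mismatch $\varepsilon$ vs.~$1/\varepsilon$ is in our favour). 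The correction $P_1(v\cdot\nabla_x\overline G)/\sqrt\mu$ uses \eqref{boundbarG} directly, and for $\partial_t\overline G/\sqrt\mu$ I substitute the fluid equations \eqref{macro2} and the Euler system \eqref{euler} to trade $\partial_t[\rho,u,\theta]$ and $\partial_t[\bar\rho,\bar u,\bar\theta]$ for spatial derivatives and microscopic remainders, exactly as in the proof of \eqref{zerofluid}. The final source $\tfrac{1}{\sqrt\mu}P_1\{v\cdot(|v-u|^2\nabla_x\widetilde\theta/(2R\theta^2)+(v-u)\cdot\nabla_x\widetilde u/(R\theta))M\}$ is linear in $\nabla_x(\widetilde u,\widetilde\theta)$ with rapidly decaying velocity profile, giving the $\varepsilon\|(\nabla_x\widetilde u,\nabla_x\widetilde\theta)\|^2$ contribution. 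The main obstacle is the mismatch between coercivity, which controls only $\mathbf{P}_1 g_2$, and the full $\|g_2\|$-norm appearing in every nonlinear coupling; the identity \eqref{g2macro} is the essential device that reduces $\mathbf{P}_0 g_2$ to $g_1$ and thereby closes the estimate at zero order.
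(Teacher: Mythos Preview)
Your proposal is correct and follows essentially the same approach as the paper: pair \eqref{g2} with $g_2$, invoke coercivity \eqref{coercive}, and estimate each source term exactly as you describe, with \eqref{g2macro} supplying the crucial conversion of $\mathbf{P}_0 g_2$ to $g_1$-moments. The only cosmetic difference is that the paper bounds the $P_0[v\cdot\nabla_x(g_1+\sqrt\mu g_2)]$ term in one shot by $C\varepsilon\mathcal{D}_{N,k}(t)$ (using that both factors are $O(\sqrt{\varepsilon\mathcal{D}_{N,k}})$ via \eqref{DNk1}), whereas you split it with a Cauchy--Schwarz parameter; both routes land in the same place.
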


\begin{proof} 
	We take the inner product of \eqref{g2} with $g_2$ over $\mathbb{R}^3\times\mathbb{R}^{3}$ and use \eqref{coercive} to get
	\begin{align}\label{ip0g2}
		&\frac{1}{2}\frac{d}{dt}\|g_2\|^{2}+
		c \frac{1}{\varepsilon}\|\mathbf{P}_1g_2\|_{L^2_xL^2_{v,D}}^{2}\notag\\
		\leq&\frac{1}{\varepsilon}(\CL_B g_1+\Gamma(\frac{M-\mu}{\sqrt{\mu}},g_2)
		+\Gamma(g_2,\frac{M-\mu}{\sqrt{\mu}}),g_2)+\frac{1}{\varepsilon}(\Gamma(\frac{\overline{G}+\sqrt{\mu}g_2}{\sqrt{\mu}},\frac{\overline{G}+\sqrt{\mu}g_2}{\sqrt{\mu}}),g_2)
		\nonumber\\
		&+(\frac{P_{0}(v\cdot\nabla_{x}(g_1+\sqrt{\mu}g_2))}{\sqrt{\mu}},g_2)
		-(\frac{P_{1}(v\cdot\nabla_{x}\overline{G})}{\sqrt{\mu}},g_2)
		-(\frac{\partial_{t}\overline{G}}{\sqrt{\mu}},g_2)
		\nonumber\\
		&-(\frac{1}{\sqrt{\mu}}P_{1}\{v\cdot(\frac{|v-u|^{2}\nabla_{x}\widetilde{\theta}}{2R\theta^{2}}
		+\frac{(v-u)\cdot\nabla_{x}\widetilde{u}}{R\theta})M\},g_2).
	\end{align}
	To bound each term above, we first use the definition of $\CL_B$ in \eqref{defLB}, \eqref{boundnorm} and \eqref{g2macro} to get
	\begin{align}\label{0Lb}
		\frac{1}{\varepsilon}(\CL_B g_1,g_2)&\leq C_k\frac{1}{\varepsilon}\int_{\R^3}\int_{\R^3}|g_1g_2|dxdv\notag\\
		&\leq C_k\frac{1}{\varepsilon}\|w_{|\ga|/2}g_1\|(\|\mathbf{P}_0g_2\|+\|\mathbf{P}_1g_2\|_{L^2_xL^2_{v,D}})\notag\\
		&\leq C_{k,\ka}\frac{1}{\varepsilon}\|w_{k+\ga/2}g_1\|^2+\ka\frac{1}{\varepsilon}\|\mathbf{P}_1g_2\|^2_{L^2_xL^2_{v,D}},
	\end{align}
	for any $\ka>0$. Then \eqref{boundbarG}, \eqref{Trilinear} and similar argument in \eqref{LD0order} show that
	\begin{align}\label{03g2}
		&\frac{1}{\varepsilon}\big((\Gamma(\frac{M-\mu+\overline{G}+\sqrt{\mu}g_2}{\sqrt{\mu}},g_2),g_2)+(\Gamma(\frac{\overline{G}}{\sqrt{\mu}},\frac{\overline{G}}{\sqrt{\mu}}),g_2)\big)\notag\\
		\leq& C(\varepsilon^{r/2}+\eta+\bar\eta)\mathcal{D}_{N,k}(t)+C_k\sqrt{\mathcal{E}_{N,k}(t)}
		\mathcal{D}_{N,k}(t).
	\end{align}
	Likewise, using \eqref{Trilinears}, we have
	\begin{align}\label{0g2g2}
		&\frac{1}{\varepsilon}(\Gamma(g_2,\frac{M-\mu+\overline{G}}{\sqrt{\mu}}),g_2)\leq C(\varepsilon^{r/2}+\eta+\bar\eta)\mathcal{D}_{N,k}(t)+C_k\sqrt{\mathcal{E}_{N,k}(t)}
		\mathcal{D}_{N,k}(t).
	\end{align}
	Using the definition of $P_0$ in \eqref{defP} and the equivalence form of $\mathcal{D}_{N,k}(t)$ in \eqref{DNk1}, it holds that
	\begin{align}\label{micro0P0}
		(\frac{P_{0}(v\cdot\nabla_{x}(g_1+\sqrt{\mu}g_2))}{\sqrt{\mu}},g_2)&=\int_{\R^3}\int_{\R^3}\mu^{-\frac{1}{2}}\big\{\sum_{i=0}^{4}\langle v\cdot\nabla_{x}(g_1+\sqrt{\mu}g_2),\frac{\chi_{i}}{M}\rangle\chi_{i}\big\}g_2dxdv\notag\\
		&\leq C\int_{\R^3}\int_{\R^3}\mu^{\frac{1}{4}}\|w_{3}\nabla_{x}(g_1+\sqrt{\mu}g_2)\|_{L^2_v}|g_2|dxdv\notag\\
		&\leq C\varepsilon\mathcal{D}_{N,k}(t).		
	\end{align}
	By \eqref{boundbarG0}, \eqref{background}, \eqref{apriori} and similar argument in \eqref{0Theta1}, one has 
	\begin{align}\label{Gg20}
		(\frac{P_{1}(v\cdot\nabla_{x}\overline{G})}{\sqrt{\mu}},g_2)
		+(\frac{\partial_{t}\overline{G}}{\sqrt{\mu}},g_2)\leq C\eta\varepsilon^{1+r/2}.
	\end{align}
	Likewise, 
	\begin{align}\label{thetag20}
		&(\frac{1}{\sqrt{\mu}}P_{1}\{v\cdot(\frac{|v-u|^{2}
			\nabla_{x}\widetilde{\theta}}{2R\theta^{2}}
		+\frac{(v-u)\cdot\nabla_{x}\widetilde{u}}{R\theta})M\},g_2)\notag\\
		\leq& \ka\frac{1}{\varepsilon}(\|w_{k+\ga/2}g_1(t)\|^{2}+\|\mathbf{P}_1g_2\|^2_{L^2_xL^2_{v,D}})+ C_{\ka}\varepsilon\|(\nabla_{x}\widetilde{u},\nabla_{x}\widetilde{\theta})\|^{2}.
	\end{align}
	Hence, noticing $\|w_{k+\ga/2}g_1(t)\|$ is bounded by $\|g_1\|_{L^2_xH^s_{k+\ga/2}}$, then by \eqref{ip0g2}, \eqref{0Lb}, \eqref{03g2}, \eqref{0g2g2}, \eqref{micro0P0}, \eqref{Gg20}, \eqref{thetag20}, \eqref{apriori} and choosing $\ka$ to be small, we deduce \eqref{zerog2}.
\end{proof}

\section{Estimates on lower order fluid quantities}
With all the preparation in the previous sections, we now study the fluid quantities with the effect of $\pa^\al$. Notice in this section we only estimate all the derivatives up to $N-1$ order. The analysis of the highest order, which requires some other approach, is left to another section. Our major task here is to deduce Lemma \ref{lem.macroN-1}. We start with the estimate on the density $\widetilde{\rho}$.
\begin{lemma}\label{lem.rho}
	For $1\leq|\alpha|\leq N-1$ and $0<\ka<1$, it holds that
	\begin{align}\label{energyrho}
		&\frac{1}{2}\frac{d}{dt}\int_{\mathbb{R}^{3}} \frac{2\bar{\theta}}{3\bar{\rho}^{2}}|\partial^{\alpha}\widetilde{\rho}|^{2} dx
		+(\nabla_x\cdot\partial^{\alpha}\widetilde{u},\frac{2\bar{\theta}}{3\bar{\rho}}\partial^{\alpha}\widetilde{\rho})\notag\\
		\leq& C\eta\varepsilon^r+C\ka\varepsilon\|\nabla_x\cdot\partial^{\alpha}\widetilde{u}\|^2+C_\ka\min\{\varepsilon^{-1}\sqrt{\mathcal{E}_{N,k}(t)}\mathcal{D}_{N,k}(t),\eta^{3/2}_0\varepsilon^r\}.
	\end{align}
\end{lemma}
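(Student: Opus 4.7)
The strategy is a weighted $L^2_x$ energy estimate on the continuity equation, i.e.~the first row of \eqref{macro1}, namely $\partial_t\widetilde\rho+u\cdot\nabla_x\widetilde\rho+\bar\rho\,\nabla_x\cdot\widetilde u+\widetilde u\cdot\nabla_x\bar\rho+\widetilde\rho\,\nabla_x\cdot u=0$. I would apply $\partial^\alpha$ to it and then take the $L^2_x$ inner product with the multiplier $\tfrac{2\bar\theta}{3\bar\rho^{2}}\partial^\alpha\widetilde\rho$. The weight is chosen exactly so that pairing it with the top-order piece $\bar\rho\,\nabla_x\cdot\partial^\alpha\widetilde u$ produces the cross term $(\nabla_x\cdot\partial^\alpha\widetilde u,\tfrac{2\bar\theta}{3\bar\rho}\partial^\alpha\widetilde\rho)$, which is moved to the left-hand side of \eqref{energyrho} so that it can later be cancelled by the matching term coming from the momentum equation. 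The time-derivative term yields $\tfrac12\tfrac{d}{dt}\int\tfrac{2\bar\theta}{3\bar\rho^{2}}|\partial^\alpha\widetilde\rho|^{2}dx$ plus the error $\tfrac12\int\partial_t(\tfrac{2\bar\theta}{3\bar\rho^{2}})|\partial^\alpha\widetilde\rho|^{2}dx$, controlled by $\|\partial_t(\bar\rho,\bar\theta)\|_{L^\infty_x}\lesssim\eta$ from \eqref{background} and \eqref{euler}, contributing to the $C\eta\varepsilon^r$ term.

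For the transport piece $\partial^\alpha(u\cdot\nabla_x\widetilde\rho)$, I would split off the top derivative as $u\cdot\nabla_x\partial^\alpha\widetilde\rho$ plus a commutator, and integrate by parts on the principal part to obtain $-\tfrac12\int\nabla_x\!\cdot\!(u\,\tfrac{2\bar\theta}{3\bar\rho^{2}})|\partial^\alpha\widetilde\rho|^{2}dx$. The resulting coefficient $\nabla_x\!\cdot\!(u\,\tfrac{2\bar\theta}{3\bar\rho^{2}})$ splits into a background part of size $\eta$ and a perturbation part of size $\|\nabla_x\widetilde u\|_{L^\infty_x}$. The residual commutator $[\partial^\alpha,u\cdot\nabla_x]\widetilde\rho$, together with $\partial^\alpha(\widetilde u\cdot\nabla_x\bar\rho)$, $\partial^\alpha(\widetilde\rho\,\nabla_x\!\cdot\! u)$, and $[\partial^\alpha,\bar\rho]\nabla_x\!\cdot\!\widetilde u$, are all controlled by Moser-type product estimates combined with the Sobolev embedding $H^2_x\hookrightarrow L^\infty_x$ (available since $N\geq 3$), reducing each to an $L^\infty_x\times L^2_x\times L^2_x$ trilinear form.

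The heart of the argument is producing the min on the right-hand side. Every trilinear piece decomposes into one where at least one factor is a background derivative $\partial(\bar\rho,\bar u,\bar\theta)$, bounded by $\eta$ via \eqref{background} and absorbed into $C\eta\varepsilon^r$; and one made purely of perturbation factors. For the pure perturbation pieces I would exploit two alternative closings, as emphasised in the discussion around \eqref{intr.cc}. The first uses $\|\nabla_x(\widetilde\rho,\widetilde u,\widetilde\theta)\|_{L^\infty_x}\lesssim\sqrt{\mathcal E_{N,k}(t)}$ and the fact that $\mathcal D_{N,k}(t)$ controls $\varepsilon\|\partial^\alpha\widetilde\rho\|^{2}$ for $1\leq|\alpha|\leq N$ by \eqref{DNk}, giving the bound $\varepsilon^{-1}\sqrt{\mathcal E_{N,k}(t)}\,\mathcal D_{N,k}(t)$. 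The second discards the dissipation and uses only the a priori assumption \eqref{apriori}: $\|\partial^\alpha\widetilde\rho\|^{2}\leq\mathcal E_{N,k}(t)\leq\eta_0\varepsilon^r$ and $\|\nabla_x(\widetilde\rho,\widetilde u,\widetilde\theta)\|_{L^\infty_x}\leq\sqrt{\eta_0}\,\varepsilon^{r/2}$, so the trilinear is at most $\mathcal E_{N,k}(t)^{3/2}\leq\eta_0^{3/2}\varepsilon^{3r/2}\leq\eta_0^{3/2}\varepsilon^{r}$ (using $r\geq 1$ and $\varepsilon<1$). Taking the pointwise minimum of the two yields the stated $C_\kappa\min\{\cdots\}$ term.

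The $C\kappa\varepsilon\|\nabla_x\!\cdot\!\partial^\alpha\widetilde u\|^{2}$ term appears when the differentiation $\partial^\alpha$ on the coupling $\bar\rho\,\nabla_x\!\cdot\!\widetilde u$ peels off a derivative to land on $\bar\rho$, leaving a residual of the form $(\nabla_x\bar\rho\,\partial^{\alpha-e_j}\nabla_x\!\cdot\!\widetilde u,\cdot)$; this I would split off from the main cross term with Young's inequality $ab\leq\kappa a^{2}+C_\kappa b^{2}$, putting $a=\sqrt\varepsilon\,\nabla_x\!\cdot\!\partial^\alpha\widetilde u$. The principal foreseeable obstacle is not any single step but pure bookkeeping: verifying that every commutator and coupling residual generated above falls cleanly into one of the two buckets (background $\eta\varepsilon^r$ bucket or the min$\{\cdot,\cdot\}$ nonlinear bucket) and that the powers of $\varepsilon$ align with the dissipation prefactor in \eqref{DNk}, so that the final estimate really reads as \eqref{energyrho}.
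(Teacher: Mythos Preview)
Your overall strategy---apply $\partial^\alpha$ to the continuity equation, pair against $\tfrac{2\bar\theta}{3\bar\rho^2}\partial^\alpha\widetilde\rho$, isolate the cross term, and treat every residual as either an $\eta\varepsilon^r$ background piece or a pure-perturbation trilinear bounded by the min of the two closings---matches the paper's proof almost exactly. The gap is in where you locate the $\kappa\varepsilon\|\nabla_x\!\cdot\!\partial^\alpha\widetilde u\|^2$ term.

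You attribute it to the commutator $[\partial^\alpha,\bar\rho]\nabla_x\!\cdot\!\widetilde u$, i.e.\ terms like $(\partial^{\alpha_1}\bar\rho\,\nabla_x\!\cdot\!\partial^{\alpha-\alpha_1}\widetilde u,\cdot)$. But those are harmless: the $\bar\rho$-derivative contributes a factor $\eta$, and $\nabla_x\!\cdot\!\partial^{\alpha-\alpha_1}\widetilde u$ carries at most $|\alpha|\le N-1$ derivatives, so sits in the energy without any $\varepsilon$-loss. In the paper this is exactly \eqref{brtutr}, bounded directly by $C\eta\varepsilon^r$ with no $\kappa$ at all. (Your own description is also internally inconsistent: you write $a=\sqrt\varepsilon\,\nabla_x\!\cdot\!\partial^\alpha\widetilde u$, but the residual you named contains $\partial^{\alpha-e_j}\nabla_x\!\cdot\!\widetilde u$, one order lower.)

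The genuine reason the $\kappa$-term is needed lies in the \emph{nonlinear} piece $\partial^\alpha(\widetilde\rho\,\nabla_x\!\cdot\!u)$: when all derivatives fall on $\nabla_x\!\cdot\!u$ and one splits $u=\widetilde u+\bar u$, the purely perturbative part is $(\widetilde\rho\,\nabla_x\!\cdot\!\partial^\alpha\widetilde u,\tfrac{2\bar\theta}{3\bar\rho^2}\partial^\alpha\widetilde\rho)$. For $|\alpha|=N-1$ the factor $\nabla_x\!\cdot\!\partial^\alpha\widetilde u$ is $N$-th order, and in the \emph{energy-only} branch you only have $\|\partial^N\widetilde u\|\le\varepsilon^{-1}\sqrt{\mathcal E_{N,k}}$ from \eqref{ENk}. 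Your generic trilinear bound then gives $\varepsilon^{-1}\mathcal E_{N,k}^{3/2}\le\eta_0^{3/2}\varepsilon^{3r/2-1}$, which exceeds $\eta_0^{3/2}\varepsilon^r$ whenever $r<2$. The paper's fix, \eqref{trutr2}, is to Young this single term as $\kappa\varepsilon\|\nabla_x\!\cdot\!\partial^\alpha\widetilde u\|^2+C_\kappa\varepsilon^{-1}\|\widetilde\rho\|_{L^\infty}^2\|\partial^\alpha\widetilde\rho\|^2$; the first piece is the $\kappa$-term in \eqref{energyrho} (later absorbed by the $\widetilde u$-dissipation), and the second is now genuinely $\le C_\kappa\eta_0^{3/2}\varepsilon^r$ since no $N$-th order factor remains. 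Once you relocate the $\kappa$-argument to this term, the rest of your proposal goes through as written.
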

\begin{proof}
	We apply $\partial^{\alpha}$ with $1\leq|\alpha|\leq N-1$ to the first equation  of \eqref{macro2} and take the inner product of the resulting equation with 
	$\frac{2\bar{\theta}}{3\bar{\rho}^{2}}\partial^{\alpha}\widetilde{\rho}$ to get
	\begin{align}
		\label{rho1}
		&\frac{1}{2}\frac{d}{dt}(\partial^{\alpha}\widetilde{\rho},\frac{2\bar{\theta}}{3\bar{\rho}^{2}}\partial^{\alpha}\widetilde{\rho})-\frac{1}{2}(\partial^{\alpha}\widetilde{\rho},(\frac{2\bar{\theta}}{3\bar{\rho}^{2}})_t\partial^{\alpha}\widetilde{\rho})
		+(\bar{\rho}\nabla_x\cdot\partial^{\alpha}\widetilde{u},\frac{2\bar{\theta}}{3\bar{\rho}^{2}}\partial^{\alpha}\widetilde{\rho})
		\notag\\
		&\qquad\qquad\qquad\qquad\qquad+\sum_{\alpha_{1}\leq \alpha, |\alpha_{1}|\geq1}C^{\alpha_{1}}_\alpha(\partial^{\alpha_1}\bar{\rho}\nabla_x\cdot\partial^{\alpha-\alpha_{1}}\widetilde{u},\frac{2\bar{\theta}}{3\bar{\rho}^{2}}\partial^{\alpha}\widetilde{\rho})
		\nonumber\\
		=&-(\partial^{\alpha}(u\cdot\nabla_x\widetilde{\rho}),\frac{2\bar{\theta}}{3\bar{\rho}^{2}}\partial^{\alpha}\widetilde{\rho})
		-(\partial^{\alpha}(\widetilde{u}\cdot\nabla_x\bar{\rho}),\frac{2\bar{\theta}}{3\bar{\rho}^{2}}\partial^{\alpha}\widetilde{\rho})
		-(\partial^{\alpha}(\widetilde{\rho}\nabla_x\cdot u),\frac{2\bar{\theta}}{3\bar{\rho}^{2}}\partial^{\alpha}\widetilde{\rho}).
	\end{align}	
	It follows by Sobolev inequality, \eqref{background} and \eqref{apriori} that
	\begin{align}\label{trbetr}
		|(\partial^{\alpha}\widetilde{\rho},\partial_t(\frac{2\bar{\theta}}{3\bar{\rho}^{2}})\partial^{\alpha}\widetilde{\rho})|\leq C(\|\partial_t\bar{\rho}\|_{L^\infty}+
		\|\partial_t\bar{\theta}\|_{L^\infty})\|\partial^{\alpha}\widetilde{\rho}\|^2\leq C\eta\varepsilon^r.
	\end{align}
	Similarly,
	\begin{align}\label{brtutr}
		\sum_{\alpha_{1}\leq \alpha, |\alpha_{1}|\geq1}C^{\alpha_{1}}_\alpha|(\partial^{\alpha_1}\bar{\rho}\nabla_x\cdot\partial^{\alpha-\alpha_{1}}\widetilde{u},\frac{2\bar{\theta}}{3\bar{\rho}^{2}}\partial^{\alpha}\widetilde{\rho})|\leq C\eta\varepsilon^r.
	\end{align}
	For the first term on the right hand side of \eqref{rho1}, a direct calculation shows that
	\begin{align}\label{dutrr}
		&-(\partial^{\alpha}(u\cdot\nabla_x\widetilde{\rho}),\frac{2\bar{\theta}}{3\bar{\rho}^{2}}\partial^{\alpha}\widetilde{\rho})\notag\\
		=&-
		(u\cdot\nabla_x\partial^{\alpha}\widetilde{\rho},\frac{2\bar{\theta}}{3\bar{\rho}^{2}}\partial^{\alpha}\widetilde{\rho})-\sum_{\alpha_{1}\leq \alpha, |\alpha_{1}|\geq1}C^{\alpha_{1}}_\alpha(\partial^{\alpha_1}u\cdot\nabla_x\partial^{\alpha-\alpha_1}\widetilde{\rho},\frac{2\bar{\theta}}{3\bar{\rho}^{2}}\partial^{\alpha}\widetilde{\rho}).
	\end{align}
	Integrating by parts, one has
	\begin{align*}
		|(u\cdot\nabla_x\partial^{\alpha}\widetilde{\rho},\frac{2\bar{\theta}}{3\bar{\rho}^{2}}\partial^{\alpha}\widetilde{\rho})|&=	|\frac{1}{2}(\partial^{\alpha}\widetilde{\rho},\nabla_x\cdot(u\frac{2\bar{\theta}}{3\bar{\rho}^{2}})\partial^{\alpha}\widetilde{\rho})|
		\\	
		&\leq C(\|\nabla_{x}(\bar{\rho},\bar{u},\bar{\theta})\|_{L^{\infty}}+\|\nabla_{x}\widetilde{u}\|_{L^{\infty}})
		\|\partial^{\alpha}\widetilde{\rho}\|^2.
	\end{align*}
	Using the definition of $\mathcal{E}_{N,k}(t),\mathcal{D}_{N,k}(t)$ in \eqref{ENk} and \eqref{DNk}, and our a priori assumption \eqref{apriori}, it holds that
	\begin{align*}
		\|\nabla_{x}\widetilde{u}\|_{L^{\infty}}\leq C\|\nabla^2_{x}\widetilde{u}\|^{\frac{1}{2}}\|\nabla^3_{x}\widetilde{u}\|^{\frac{1}{2}}
		\leq C\min\{\varepsilon^{-1/2}\sqrt{\mathcal{D}_{N,k}(t)},\eta_0^{1/2}\varepsilon^{(r-1)/2}\},
	\end{align*}
	and
	\begin{align*}
		\|\partial^{\alpha}\widetilde{\rho}\|\leq C\min\{\varepsilon^{-1/2}\sqrt{\mathcal{D}_{N,k}(t)},\eta_0^{1/2}\varepsilon^{r/2}\}.
	\end{align*}
	Then we obtain
	\begin{align}\label{utrr1}
		|(u\cdot\nabla_x\partial^{\alpha}\widetilde{\rho},\frac{2\bar{\theta}}{3\bar{\rho}^{2}}\partial^{\alpha}\widetilde{\rho})|\leq C\eta\varepsilon^r+C\min\{\varepsilon^{-1}\sqrt{\mathcal{E}_{N,k}(t)}
		\mathcal{D}_{N,k}(t),\eta^{3/2}_0\varepsilon^r\}.
	\end{align}
	For the summation part in \eqref{dutrr}, rewrite
	\begin{align}\label{ddutrr}
		&(\partial^{\alpha_1}u\cdot\nabla_x\partial^{\alpha-\alpha_1}\widetilde{\rho},\frac{2\bar{\theta}}{3\bar{\rho}^{2}}\partial^{\alpha}\widetilde{\rho})\notag\\
		=&(\partial^{\alpha_1}\widetilde{u}\cdot\nabla_x\partial^{\alpha-\alpha_1}\widetilde{\rho},\frac{2\bar{\theta}}{3\bar{\rho}^{2}}\partial^{\alpha}\widetilde{\rho})+(\partial^{\alpha_1}\bar{u}\cdot\nabla_x\partial^{\alpha-\alpha_1}\widetilde{\rho},\frac{2\bar{\theta}}{3\bar{\rho}^{2}}\partial^{\alpha}\widetilde{\rho}).
	\end{align}
	We first consider the case $1\leq|\alpha_{1}|\leq |\alpha|-1$, which leads to
	\begin{align}\label{urr1}
		|(\partial^{\alpha_1}\widetilde{u}\cdot\nabla_x\partial^{\alpha-\alpha_1}\widetilde{\rho},\frac{2\bar{\theta}}{3\bar{\rho}^{2}}\partial^{\alpha}\widetilde{\rho})|
		&\leq \|\partial^{\alpha_1}\widetilde{u}\|_{L^{\infty}}\|\nabla_x\partial^{\alpha-\alpha_1}\widetilde{\rho}\|\|\partial^{\alpha}\widetilde{\rho}\|\notag\\
		&
		\leq C\min\{\varepsilon^{-1}\sqrt{\mathcal{E}_{N,k}(t)}
		\mathcal{D}_{N,k}(t),\eta^{3/2}_0\varepsilon^r\}.
	\end{align}
	Then similarly, for $|\alpha_{1}|=|\alpha|$,
	\begin{align}\label{urr2}
		|(\partial^{\alpha_1}\widetilde{u}\cdot\nabla_x\partial^{\alpha-\alpha_1}\widetilde{\rho},\frac{2\bar{\theta}}{3\bar{\rho}^{2}}\partial^{\alpha}\widetilde{\rho})|
		&\leq C\|\partial^{\alpha_1}\widetilde{u}\|\|\nabla_x\partial^{\alpha-\alpha_1}\widetilde{\rho}\|_{L^{\infty}}\|\partial^{\alpha}\widetilde{\rho}\|\notag\\
		&
		\leq C\min\{\varepsilon^{-1}\sqrt{\mathcal{E}_{N,k}(t)}
		\mathcal{D}_{N,k}(t),\eta^{3/2}_0\varepsilon^r\}.
	\end{align}
	It follows from \eqref{urr1} and \eqref{urr2} that
	\begin{align}\label{turr}
		|(\partial^{\alpha_1}\widetilde{u}\cdot\nabla_x\partial^{\alpha-\alpha_1}\widetilde{\rho},\frac{2\bar{\theta}}{3\bar{\rho}^{2}}\partial^{\alpha}\widetilde{\rho})|
		\leq C\min\{\varepsilon^{-1}\sqrt{\mathcal{E}_{N,k}(t)}
		\mathcal{D}_{N,k}(t),\eta^{3/2}_0\varepsilon^r\}.
	\end{align}
	Similar calculations show that
	\begin{align}\label{burr}
		|(\partial^{\alpha_1}\bar{u}\cdot\nabla_x\partial^{\alpha-\alpha_1}\widetilde{\rho},\frac{2\bar{\theta}}{3\bar{\rho}^{2}}\partial^{\alpha}\widetilde{\rho})|\leq C\eta\varepsilon^r.
	\end{align}
	We have from \eqref{dutrr}, \eqref{utrr1}, \eqref{ddutrr}, \eqref{turr} and \eqref{burr} that
	\begin{align}
		\label{urr}
		|(\partial^{\alpha}(u\cdot\nabla_x\widetilde{\rho}),\frac{2\bar{\theta}}{3\bar{\rho}^{2}}\partial^{\alpha}\widetilde{\rho})|
		\leq C\eta\varepsilon^r+C\min\{\varepsilon^{-1}\sqrt{\mathcal{E}_{N,k}(t)}
		\mathcal{D}_{N,k}(t),\eta^{3/2}_0\varepsilon^r\}.
	\end{align} 
	Applying the Sobolev inequality and \eqref{background}, one gets
	\begin{align}
		\label{tubrtr}
		|(\partial^{\alpha}(\widetilde{u}\cdot\nabla_x\bar{\rho}),\frac{2\bar{\theta}}{3\bar{\rho}^{2}}\partial^{\alpha}\widetilde{\rho})|
		\leq C\sum_{\alpha_{1}\leq\alpha}
		\|\nabla_x\partial^{\alpha-\alpha_{1}}\bar{\rho}\|_{L^{\infty}}\|\partial^{\alpha_1}\widetilde{u}\|\|\partial^{\alpha}\widetilde{\rho}\|
		\leq C\eta\varepsilon^r.
	\end{align}
	For the last term on the right hand side of \eqref{rho1}, by a direct calculation, we have
	\begin{align}\label{dtrutr}
		-(\partial^{\alpha}(\widetilde{\rho}\nabla_x\cdot u),\frac{2\bar{\theta}}{3\bar{\rho}^{2}}\partial^{\alpha}\widetilde{\rho})=&
		-(\widetilde{\rho}\nabla_x\cdot \partial^{\alpha}\widetilde{u},\frac{2\bar{\theta}}{3\bar{\rho}^{2}}\partial^{\alpha}\widetilde{\rho})
		-(\partial^{\alpha}(\widetilde{\rho}\nabla_x\cdot \bar{u}),\frac{2\bar{\theta}}{3\bar{\rho}^{2}}\partial^{\alpha}\widetilde{\rho})
		\notag\\
		&-\sum_{\alpha_{1}\leq \alpha, |\alpha_{1}|\geq1}C^{\alpha_{1}}_\alpha
		(\partial^{\alpha_1}\widetilde{\rho}\nabla_x\cdot\partial^{\alpha-\alpha_{1}}\widetilde{u},\frac{2\bar{\theta}}{3\bar{\rho}^{2}}\partial^{\alpha}\widetilde{\rho}).
	\end{align}
	As how we obtain \eqref{tubrtr} and \eqref{turr}, the last two terms on the right hand side above can be bounded as follows:
	\begin{align}\label{trutr1}
		&\sum_{\alpha_{1}\leq \alpha, |\alpha_{1}|\geq1}C^{\alpha_{1}}_\alpha
		|(\partial^{\alpha_1}\widetilde{\rho}\nabla_x\cdot\partial^{\alpha-\alpha_{1}}\widetilde{u},\frac{2\bar{\theta}}{3\bar{\rho}^{2}}\partial^{\alpha}\widetilde{\rho})|+
		|(\partial^{\alpha}(\widetilde{\rho}\nabla_x\cdot \bar{u}),\frac{2\bar{\theta}}{3\bar{\rho}^{2}}\partial^{\alpha}\widetilde{\rho})|\notag\\
		\leq& C\eta\varepsilon^r+C\min\{\varepsilon^{-1}\sqrt{\mathcal{E}_{N,k}(t)}
		\mathcal{D}_{N,k}(t),\eta^{3/2}_0\varepsilon^r\}.
	\end{align}
	What left now is the first term on the right hand side of \eqref{dtrutr}. An application of Sobolev inequality shows that
	\begin{align}\label{trutr2}
		&|(\widetilde{\rho}\nabla_x\cdot \partial^{\alpha}\widetilde{u},\frac{2\bar{\theta}}{3\bar{\rho}^{2}}\partial^{\alpha}\widetilde{\rho})|\notag\\
		\leq& C\|\nabla_x\cdot\partial^{\alpha}\widetilde{u}\|\|\widetilde{\rho}\|_{L^{\infty}}\|\partial^{\alpha}\widetilde{\rho}\|\notag\\
		\leq&
		\min\{C\varepsilon^{-1}\sqrt{\mathcal{E}_{N,k}(t)}\mathcal{D}_{N,k}(t),\ka\varepsilon\|\nabla_x\cdot\partial^{\alpha}\widetilde{u}\|^2+C_\ka\varepsilon^{-1}\|\nabla_{x}\widetilde{\rho}\|\|\nabla^2_{x}\widetilde{\rho}\|\|\partial^{\alpha}\widetilde{\rho}\|^2\}\notag\\
		\leq& \min\{C\varepsilon^{-1}\sqrt{\mathcal{E}_{N,k}(t)}\mathcal{D}_{N,k}(t),\ka\varepsilon\|\nabla_x\cdot\partial^{\alpha}\widetilde{u}\|^2+C_\ka\eta^{3/2}_0\varepsilon^r\}\notag\\
		\leq& \ka\varepsilon\|\nabla_x\cdot\partial^{\alpha}\widetilde{u}\|^2+C_\ka\min\{\varepsilon^{-1}\sqrt{\mathcal{E}_{N,k}(t)}\mathcal{D}_{N,k}(t),\eta^{3/2}_0\varepsilon^r\}.
	\end{align}
	Then the combination of \eqref{dtrutr}, \eqref{trutr1} and \eqref{trutr2} yields
	\begin{align}\label{trutr}
		&|(\partial^{\alpha}(\widetilde{\rho}\nabla_x\cdot u),\frac{2\bar{\theta}}{3\bar{\rho}^{2}}\partial^{\alpha}\widetilde{\rho})|\notag\\
		\leq& C\eta\varepsilon^r+\ka\varepsilon\|\nabla_x\cdot\partial^{\alpha}\widetilde{u}\|^2+C_\ka\min\{\varepsilon^{-1}\sqrt{\mathcal{E}_{N,k}(t)}\mathcal{D}_{N,k}(t),\eta^{3/2}_0\varepsilon^r\}.
	\end{align}
	We finish the derivative estimate \eqref{energyrho} for $\widetilde{\rho}$ by collecting \eqref{rho1}, \eqref{trbetr}, \eqref{brtutr}, \eqref{urr}, \eqref{tubrtr} and \eqref{trutr}.
\end{proof}

For the second equation of \eqref{macro2}, we can get similar estimate to \eqref{energyrho}.
\begin{lemma}\label{lem.u}
	For $1\leq|\alpha|\leq N-1$ and $0<\ka<1$, it holds that
	\begin{align}\label{energyu}
		&\frac{1}{2}\frac{d}{dt}\|\partial^{\alpha}\widetilde{u}\|^{2}+c\varepsilon\|\partial^{\alpha}\nabla_{x}\widetilde{u}\|^{2}+
		(\frac{2\bar{\theta}}{3\bar{\rho}}\partial^{\alpha}\nabla_{x}\widetilde{\rho},\partial^{\alpha}\widetilde{u})+(\frac{2}{3}\partial^{\alpha}\nabla_{x}\widetilde{\theta},\partial^{\alpha}\widetilde{u})
		\nonumber\\
		&+\sum^{3}_{i,j=1}\frac{d}{dt}\int_{\mathbb{R}^{3}}\int_{\mathbb{R}^{3}}
		\Big\{\partial^{\alpha}[\frac{1}{\rho}\partial_{x_j}(R\theta B_{ij}(\frac{v-u}{\sqrt{R\theta}})\frac{\varepsilon P_{1}(\sqrt{\mu}g_2)}{M})]
		\partial^{\alpha}\widetilde{u}_i\Big\} dvdx
		\nonumber\\
		\leq&(\ka+C_\ka\varepsilon^{r/2})\mathcal{D}_{N,k}(t)+C_\ka \frac{1}{\varepsilon}\|w_2g_1\|_{H^{N-1}_xL^2_v}+C_\ka\eta \varepsilon^r\notag\\
		&+C_\ka\min\{\varepsilon^{-1}\sqrt{\mathcal{E}_{N,k}(t)}\mathcal{D}_{N,k}(t),\eta^{3/2}_0\varepsilon^r\}.
	\end{align}
\end{lemma}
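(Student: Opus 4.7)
The plan is to apply $\partial^\alpha$ with $1\leq|\alpha|\leq N-1$ to the momentum equation (second row of \eqref{macro2}) and test against $\partial^\alpha \widetilde{u}$. The time-derivative contribution gives $\frac{1}{2}\frac{d}{dt}\|\partial^\alpha \widetilde{u}\|^{2}$. The viscous term $\varepsilon\frac{1}{\rho}\sum_j[\mu(\theta)D_{ij}]_{x_j}$ produces, after integration by parts, the dissipation $c\varepsilon\|\partial^\alpha\nabla_x\widetilde{u}\|^{2}$ together with commutator errors of the form $\|\nabla_x(\frac{1}{\rho}\mu(\theta))\|_{L^\infty}\|\partial^\alpha\nabla_x\widetilde u\|\|\partial^\alpha\widetilde u\|$, absorbable by $\kappa\varepsilon\|\partial^\alpha\nabla_x\widetilde u\|^{2}+C_\kappa\eta\varepsilon^r+C_\kappa\varepsilon^{-1}\sqrt{\mathcal E_{N,k}}\mathcal D_{N,k}$ via Sobolev embedding and \eqref{background}, \eqref{apriori}. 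The pressure-like contributions $(\frac{2\bar\theta}{3\bar\rho}\partial^\alpha\nabla_x\widetilde\rho,\partial^\alpha\widetilde u)$ and $(\frac{2}{3}\partial^\alpha\nabla_x\widetilde\theta,\partial^\alpha\widetilde u)$ are kept on the left-hand side, since they will cancel later with their counterparts from the $\widetilde\rho$ and $\widetilde\theta$ equations.

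The transport-type terms $u\cdot\nabla_x\widetilde u$, $\widetilde u\cdot\nabla_x\bar u$ and $\frac{2}{3}(\frac{\theta}{\rho}-\frac{\bar\theta}{\bar\rho})\nabla_x\rho$ are treated exactly as in Lemma \ref{lem.rho}: expand by Leibniz, integrate by parts on the top-order piece, and distribute derivatives so that the factor with fewest derivatives is placed in $L^\infty_x$. Each product is estimated in one of two ways, according to the two bounds discussed in \eqref{intr.cc}: either directly by the a~priori bound $\mathcal E_{N,k}\le\eta_0\varepsilon^r$ (yielding $\eta_0^{3/2}\varepsilon^r$), or by interpolating the $L^\infty$ factor through the dissipation (yielding $\varepsilon^{-1}\sqrt{\mathcal E_{N,k}}\mathcal D_{N,k}$). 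The microscopic source $-\frac{1}{\rho}\int v_i v\cdot\nabla_x g_1\,dv$ is controlled by Cauchy--Schwarz with the weight $w_2$, producing the term $C_\kappa\frac{1}{\varepsilon}\|w_2 g_1\|_{H^{N-1}_x L^2_v}$ after absorbing a $\kappa\varepsilon\|\partial^\alpha\nabla_x\widetilde u\|^2$ multiple of the $\widetilde u$-dissipation.

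The hard part is the remainder $-\frac{1}{\rho}\int v_i(v\cdot\nabla_x L^{-1}_M\Theta)\,dv$. Mirroring the zero-order computation \eqref{reA}--\eqref{estrestA}, we invoke the Burnett-function identity $\int v_iv_j L^{-1}_M\Theta\,dv=R\theta\int B_{ij}(\frac{v-u}{\sqrt{R\theta}})\frac{\Theta}{M}\,dv$ and split $\Theta$ according to \eqref{defTheta}. The contributions from $\varepsilon\partial_t\overline G$, $\varepsilon P_1(v\cdot\nabla_x(\overline G+\sqrt\mu g_2))$, $P_1(\sqrt\mu\mathcal L_B g_1)$ and the quadratic $Q(\overline G+\sqrt\mu g_2,\overline G+\sqrt\mu g_2)$ are handled by Lemmas \ref{leL-1} and \ref{lebarG}, the trilinear bounds \eqref{Trilinear}--\eqref{Trilinears}, and \eqref{boundbur}, absorbing with the $\kappa+C_\kappa\varepsilon^{r/2}$ multiple of $\mathcal D_{N,k}$. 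The one piece that cannot be bounded pointwise in time is $\varepsilon P_1(\partial_t\sqrt\mu g_2)$; here, integration by parts in $t$ transfers $\partial_t$ onto the coefficient $\partial^\alpha[\frac{1}{\rho}\partial_{x_j}(R\theta B_{ij}(\tfrac{v-u}{\sqrt{R\theta}})\frac{\varepsilon}{M})]\partial^\alpha\widetilde u_i$, generating precisely the $\frac{d}{dt}$ boundary term displayed on the left-hand side of \eqref{energyu}. The remainder from this integration-by-parts involves $\partial_t(\rho,u,\theta)$ and $\partial^\alpha\partial_t\widetilde u$, both of which are reduced to $\mathcal D_{N,k}+\eta\varepsilon^r$ via the forthcoming bound \eqref{patN-1}. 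The main obstacle is thus the bookkeeping of this Leibniz/time-by-parts expansion at order $|\alpha|\le N-1$: one has to verify that every resulting term either carries an $\varepsilon^{r/2}$ smallness factor (from $\widetilde\rho,\widetilde u,\widetilde\theta$) or an $\overline G$-factor of size $\varepsilon$, so as to fit into the $(\kappa+C_\kappa\varepsilon^{r/2})\mathcal D_{N,k}+C_\kappa\eta\varepsilon^r$ budget on the right-hand side.
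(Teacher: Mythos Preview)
Your proposal is correct and follows essentially the same route as the paper: apply $\partial^\alpha$ to the second equation of \eqref{macro2}, test with $\partial^\alpha\widetilde u_i$, extract the viscous dissipation via integration by parts, shift one derivative in the $g_1$-term onto $\widetilde u$ before Cauchy--Schwarz, and handle the $L_M^{-1}\Theta$ piece by the Burnett identity together with a time-integration-by-parts on the $\varepsilon P_1(\partial_t\sqrt\mu g_2)$ contribution to produce the $\frac{d}{dt}$ boundary term. One small slip: the viscous commutator error should carry the explicit $\varepsilon$ prefactor (it reads $\varepsilon\|\nabla_x(\frac{1}{\rho}\mu(\theta))\|_{L^\infty}\|\partial^\alpha\nabla_x\widetilde u\|\|\partial^\alpha\widetilde u\|$), which yields the sharper bound $C\sqrt{\mathcal E_{N,k}}\mathcal D_{N,k}$ rather than $C\varepsilon^{-1}\sqrt{\mathcal E_{N,k}}\mathcal D_{N,k}$ and is then absorbed into $C_\kappa\varepsilon^{r/2}\mathcal D_{N,k}$ via the a~priori assumption.
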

\begin{proof}
	Applying $\partial^{\alpha}$ with $1\leq|\alpha|\leq N-1$  and taking the inner product of the resulting equation with 
	$\partial^{\alpha}\widetilde{u}_i$, one has
	\begin{align}\label{u1}
		&\frac{1}{2}\frac{d}{dt}\|\partial^{\alpha}\widetilde{u}_i\|^{2}+
		(\frac{2\bar{\theta}}{3\bar{\rho}}\partial^{\alpha}\widetilde{\rho}_{x_{i}},\partial^{\alpha}\widetilde{u}_i)+\sum_{\alpha_{1}\leq \alpha, |\alpha_{1}|\geq1}C_{\alpha}^{\alpha_{1}}
		(\partial^{\alpha_1}(\frac{2\bar{\theta}}{3\bar{\rho}})\partial^{\alpha-\alpha_1}\widetilde{\rho}_{x_{i}},\partial^{\alpha}\widetilde{u}_i)
		\nonumber\\
		&+
		(\frac{2}{3}\partial^{\alpha}\widetilde{\theta}_{x_{i}},\partial^{\alpha}\widetilde{u}_i)+(\partial^{\alpha}(u\cdot\nabla_{x}\widetilde{u}_{i})+\partial^{\alpha}(\widetilde{u}\cdot\nabla_{x}\bar{u}_{i}),\partial^{\alpha}\widetilde{u}_i)
		+(\partial^{\alpha}[\frac{2}{3}(\frac{\theta}{\rho}-\frac{\bar{\theta}}{\bar{\rho}})\rho_{x_{i}}],\partial^{\alpha}\widetilde{u}_i)
		\nonumber\\
		=&\varepsilon\sum^{3}_{j=1}(\partial^{\alpha}(\frac{1}{\rho}[\mu(\theta)D_{ij}]_{x_{j}}),\partial^{\alpha}\widetilde{u}_i)-(\partial^{\alpha}(\frac{1}{\rho}\int_{\mathbb{R}^{3}} v_{i}v\cdot\nabla_{x}g_1 dv),\partial^{\alpha}\widetilde{u}_i)\notag\\
		&
		-(\partial^{\alpha}(\frac{1}{\rho}\int_{\mathbb{R}^{3}} v_{i}v\cdot\nabla_{x}L^{-1}_{M}\Theta dv),\partial^{\alpha}\widetilde{u}_i).
	\end{align}
	One may expect that the left hand side of the above equation can be estimated in the very similar way to how we obtain \eqref{energyrho}. The major difference is the two terms on the right hand side. Using the arguments as in \eqref{brtutr}, \eqref{urr1}, \eqref{urr2} and \eqref{trutr2}, one arrives at
	\begin{align}\label{ufluid}
		&\sum_{\alpha_{1}\leq \alpha, |\alpha_{1}|\geq1}C_{\alpha}^{\alpha_{1}}|
		(\partial^{\alpha_1}(\frac{2\bar{\theta}}{3\bar{\rho}})\partial^{\alpha-\alpha_1}\widetilde{\rho}_{x_{i}},\partial^{\alpha}\widetilde{u}_i)|+|(\partial^{\alpha}(u\cdot\nabla_{x}\widetilde{u}_{i})+\partial^{\alpha}(\widetilde{u}\cdot\nabla_{x}\bar{u}_{i}),\partial^{\alpha}\widetilde{u}_i)|\notag\\
		&	\qquad+(\partial^{\alpha}[\frac{2}{3}(\frac{\theta}{\rho}-\frac{\bar{\theta}}{\bar{\rho}})\rho_{x_{i}}],\partial^{\alpha}\widetilde{u}_i)
		\notag\\&\leq 
		C\eta\varepsilon^r+\ka\varepsilon\|\nabla_x\cdot\partial^{\alpha}\widetilde{u}\|^2+C_\ka\min\{\varepsilon^{-1}\sqrt{\mathcal{E}_{N,k}(t)}\mathcal{D}_{N,k}(t),\eta^{3/2}_0\varepsilon^r\}.
	\end{align}
	The above inequality includes all we need to control the left hand side of \eqref{u1}. We now focus on the right hand side. It is direct to get
	\begin{align}\label{dtensor}
		&\varepsilon\sum^{3}_{j=1}(\partial^{\alpha}(\frac{1}{\rho}\partial_{x_{j}}[\mu(\theta)D_{ij}]),\partial^{\alpha}\widetilde{u}_i)\notag\\
		=&\varepsilon\sum^{3}_{j=1}(\partial^{\alpha}(\frac{1}{\rho}\partial_{x_{j}}[\mu(\theta)(\partial_{x_{j}}\widetilde{u}_{i}+\partial_{x_{i}}\widetilde{u}_{j}
		-\frac{2}{3}\delta_{ij}\nabla_{x}\cdot\widetilde{u})]),\partial^{\alpha}\widetilde{u}_i)\notag
		\\
		&+\varepsilon\sum^{3}_{j=1}(\partial^{\alpha}(\frac{1}{\rho}\partial_{x_{j}}[\mu(\theta)(\partial_{x_{j}}\bar{u}_{i}+\partial_{x_{i}}\bar{u}_{j}-\frac{2}{3}\delta_{ij}\nabla_{x}\cdot \bar{u})]),\partial^{\alpha}\widetilde{u}_i)\notag\\
		=&I_1+I_2+I_3,
	\end{align}
	where
	\begin{align*}
		I_1&=-\varepsilon\sum^{3}_{j=1}(\partial^{\alpha}[\frac{1}{\rho}\mu(\theta)(\partial_{x_{j}}\widetilde{u}_{i}+\partial_{x_{i}}\widetilde{u}_{j}
		-\frac{2}{3}\delta_{ij}\nabla_{x}\cdot \widetilde{u})],\partial^{\alpha}\partial_{x_{j}}\widetilde{u}_{i}),\notag\\
		I_2&=-\varepsilon\sum^{3}_{j=1}(\partial^{\alpha}[\partial_{x_{j}}(\frac{1}{\rho})\mu(\theta)(\partial_{x_{j}}\widetilde{u}_{i}+\partial_{x_{i}}\widetilde{u}_{j}
		-\frac{2}{3}\delta_{ij}\nabla_{x}\cdot\widetilde{u})],\partial^{\alpha}\widetilde{u}_{i}),\notag\\
		I_3&=\varepsilon\sum^{3}_{j=1}(\partial^{\alpha}(\frac{1}{\rho}\partial_{x_{j}}[\mu(\theta)(\partial_{x_{j}}\bar{u}_{i}+\partial_{x_{i}}\bar{u}_{j}-\frac{2}{3}\delta_{ij}\nabla_{x}\cdot \bar{u})]),\partial^{\alpha}\widetilde{u}_i).
	\end{align*}
	Furthermore, $I_1$ can be decomposed into
	\begin{align}\label{defI1}
		I_1=&-\varepsilon\sum^{3}_{j=1}(\frac{1}{\rho}\mu(\theta)(\partial^{\alpha}\partial_{x_{j}}\widetilde{u}_{i}+\partial^{\alpha}\partial_{x_{i}}\widetilde{u}_{j}
		-\frac{2}{3}\delta_{ij}\nabla_{x}\cdot \partial^{\alpha}\widetilde{u}),\partial^{\alpha}\partial_{x_{j}}\widetilde{u}_{i})\notag
		\\
		&-\varepsilon\sum^{3}_{j=1}\sum_{\alpha_{1}\leq\alpha,|\alpha_{1}|\geq1}C_{\alpha}^{\alpha_{1}}(\partial^{\alpha_1}[\frac{1}{\rho}\mu(\theta)]
		\partial^{\alpha-\alpha_1}(\partial_{x_{j}}\widetilde{u}_{i}+\partial_{x_{i}}\widetilde{u}_{j}-\frac{2}{3}\delta_{ij}\nabla_{x}\cdot\widetilde{u}),\partial^{\alpha}\partial_{x_{j}}\widetilde{u}_{i}).
	\end{align}
	The first term on the right hand side will be handled later. For the second term, applying the Sobolev inequality with $1\leq|\alpha_{1}|\leq |\alpha|-1$, one has
	\begin{align}\label{I11}
		&\varepsilon|(\partial^{\alpha_1}[\frac{1}{\rho}\mu(\theta)]\partial^{\alpha-\alpha_1}
		(\partial_{x_{j}}\widetilde{u}_{i}+\partial_{x_{i}}\widetilde{u}_{j}-\frac{2}{3}\delta_{ij}\nabla_{x}\cdot\widetilde{u}),\partial^{\alpha}\partial_{x_{j}}\widetilde{u}_{i})|\notag
		\\
		\leq& C\varepsilon \|\partial^{\alpha_1}[\frac{1}{\rho}\mu(\theta)]\|_{L^{\infty}}
		\|\nabla_x\partial^{\alpha-\alpha_1}\widetilde{u}\|\|\partial^{\alpha}\partial_{x_{j}}\widetilde{u}_{i}\|\notag
		\\
		\leq& C\varepsilon\|\{|\partial^{\alpha_1}(\rho,\theta)|+\cdot\cdot\cdot+|\nabla_{x}(\rho,\theta)|^{|\alpha_1|}\}\|_{L^{\infty}}
		\|\nabla_x\partial^{\alpha-\alpha_1}\widetilde{u}\|\|\partial^{\alpha}\partial_{x_{j}}\widetilde{u}_{i}\|\notag
		\\
		\leq& C\eta\varepsilon^{r}+C\sqrt{\mathcal{E}_{N,k}(t)}\mathcal{D}_{N,k}(t).
	\end{align}
	For the other case $|\alpha_{1}|=|\alpha|$, we have
	\begin{align}\label{I12}
		&\varepsilon|(\partial^{\alpha_1}[\frac{1}{\rho}\mu(\theta)]\partial^{\alpha-\alpha_1}(\partial_{x_{j}}\widetilde{u}_{i}+\partial_{x_{i}}\widetilde{u}_{j}-\frac{2}{3}\delta_{ij}\nabla_{x}\cdot\widetilde{u}),\partial^{\alpha}\partial_{x_{j}}\widetilde{u}_{i})|\notag
		\\
		\leq& C\varepsilon\|\partial^{\alpha_1}[\frac{1}{\rho}\mu(\theta)]\|_{L^{3}}
		\|\nabla_x\partial^{\alpha-\alpha_1}\widetilde{u}\|_{L^{6}}\|\partial^{\alpha}\partial_{x_{j}}\widetilde{u}_{i}\|_{L^{2}}\notag
		\\
		\leq& C\eta\varepsilon^r+C\sqrt{\mathcal{E}_{N,k}(t)}\mathcal{D}_{N,k}(t).
	\end{align}
	Combining \eqref{defI1}, \eqref{I11} and \eqref{I12}, it holds that
	\begin{align}\label{I1}
		I_1\leq&-\varepsilon\sum^{3}_{j=1}(\frac{1}{\rho}\mu(\theta)(\partial^{\alpha}\partial_{x_{j}}\widetilde{u}_{i}+\partial^{\alpha}\partial_{x_{i}}\widetilde{u}_{j}
		-\frac{2}{3}\delta_{ij}\nabla_{x}\cdot \partial^{\alpha}\widetilde{u}),\partial^{\alpha}\partial_{x_{j}}\widetilde{u}_{i})\notag
		\\
		&+C\eta\varepsilon^r+C\sqrt{\mathcal{E}_{N,k}(t)}\mathcal{D}_{N,k}(t).
	\end{align}
	Since $I_2$ will not produce higher order derivatives than $I_1$, it is easier to estimate. By similar arguments as above, we have
	\begin{align}\label{I2}
		I_2\leq&C\eta\varepsilon^r+C\sqrt{\mathcal{E}_{N,k}(t)}\mathcal{D}_{N,k}(t).
	\end{align}
	Decomposing $I_3$ as in \eqref{defI1}, and using \eqref{background} and the fact that
	\begin{align*}
		&\varepsilon|(\frac{1}{\rho}\mu(\theta)\partial^{\alpha}\partial_{x_{j}}(\partial_{x_{j}}\bar{u}_{i}+\partial_{x_{i}}\bar{u}_{j}
		-\frac{2}{3}\delta_{ij}\nabla_{x}\cdot \partial^{\alpha}\bar{u}),\widetilde{u}_{i})|\leq C\eta\varepsilon^{1+r/2}\leq C\eta\varepsilon^{r},
	\end{align*} one arrives at
	\begin{align}\label{I3}
		I_3\leq&C\eta\varepsilon^r.
	\end{align}
	It follows from \eqref{dtensor}, \eqref{I1}, \eqref{I2} and \eqref{I3} that
	\begin{align}
		\label{tensor}
		&\varepsilon\sum^{3}_{j=1}(\partial^{\alpha}(\frac{1}{\rho}\partial_{x_{j}}[\mu(\theta)D_{ij}]),\partial^{\alpha}\widetilde{u}_i)
		\nonumber\\
		\leq&
		-\varepsilon\sum^{3}_{j=1}(\frac{1}{\rho}\mu(\theta)(\partial^{\alpha}\partial_{x_{j}}\widetilde{u}_{i}
		+\partial^{\alpha}\partial_{x_{i}}\widetilde{u}_{j}
		-\frac{2}{3}\delta_{ij}\nabla_{x}\cdot \partial^{\alpha}\widetilde{u}),\partial^{\alpha}\partial_{x_{j}}\widetilde{u}_{i})\notag\\
		&+C\eta\varepsilon^r+C\sqrt{\mathcal{E}_{N,k}(t)}\mathcal{D}_{N,k}(t).
	\end{align}
	For the second term on the right hand side of \eqref{u1}, by the fact that there exists some $e_j$ with $\al=\al-e_j+e_j$ and $|e_j|=1$, we use integration by parts and \eqref{background} to get
	\begin{align}\label{ug1}
		&|(\partial^{\alpha}(\frac{1}{\rho}\int_{\mathbb{R}^{3}} v_{i}v\cdot\nabla_{x}g_1 dv),\partial^{\alpha}\widetilde{u}_i)|\notag\\=&|(\partial^{\alpha-e_j}(\frac{1}{\rho}\int_{\mathbb{R}^{3}} v_{i}v\cdot\nabla_{x}g_1 dv),\partial^{\alpha+e_j}\widetilde{u}_i)|\notag\\
		=&\sum_{ \alpha_{1}\leq\alpha-e_j}C_{\alpha}^{\alpha_{1}}|(\partial^{\alpha_1}(\frac{1}{\rho})\partial^{\alpha-e_j-\alpha_{1}}\int_{\mathbb{R}^{3}} v_{i}v\cdot\nabla_{x}g_1 dv,\partial^{\alpha+e_j}\widetilde{u}_i)|\notag\\
		\leq& C(\ka+\eta)\mathcal{D}_{N,k}(t)+C_\ka \frac{1}{\varepsilon}\|w_2g_1\|_{H^{N-1}_xL^2_v}+C\sqrt{\mathcal{E}_{N,k}(t)}\mathcal{D}_{N,k}(t).
	\end{align}
	What is left now is the last term on the right hand side of \eqref{u1}, we integrate by parts to get
	\begin{align}\label{reTheta}
		&-(\partial^{\alpha}[\frac{1}{\rho}\int_{\mathbb{R}^{3}}v_{i}v\cdot\nabla_{x}L^{-1}_{M}\Theta dv],\partial^{\alpha}\widetilde{u}_i)
		\nonumber\\
		=&-\sum^{3}_{j=1}(\partial^{\alpha}[\frac{1}{\rho}\partial_{x_j}(\int_{\mathbb{R}^{3}}R\theta B_{ij}(\frac{v-u}{\sqrt{R\theta}})\frac{\Theta}{M}\, dv)],\partial^{\alpha}\widetilde{u}_i)
		\nonumber\\
		=&I_4+I_5,
	\end{align}
	where 
	$$
	I_4=\sum^{3}_{j=1}(\partial^{\alpha}[\partial_{x_j}(\frac{1}{\rho})\int_{\mathbb{R}^{3}}R\theta B_{ij}(\frac{v-u}{\sqrt{R\theta}})\frac{\Theta}{M}\, dv],\partial^{\alpha}\widetilde{u}_i),
	$$
	and 
	$$
	I_5=\sum^{3}_{j=1}(\partial^{\alpha}[\frac{1}{\rho}\int_{\mathbb{R}^{3}}R\theta B_{ij}(\frac{v-u}{\sqrt{R\theta}})\frac{\Theta}{M}\, dv],\partial^{\alpha}\partial_{x_j}\widetilde{u}_i).
	$$
	Substituting \eqref{defTheta} into $I_4$, it holds that
	\begin{align}\label{reI4}
		I_4&=\sum^{3}_{j=1}(\partial^{\alpha}[\partial_{x_j}(\frac{1}{\rho})\int_{\mathbb{R}^{3}}R\theta B_{ij}(\frac{v-u}{\sqrt{R\theta}})\frac{1}{M}\big\{\varepsilon \partial_{t}\overline{G}+\varepsilon P_{1}(\pa_t\sqrt{\mu}g_2)\notag\\
		&\quad+\varepsilon P_{1}(v\cdot\nabla_{x}(\overline{G}+\sqrt{\mu}g_2))-P_{1}(\sqrt{\mu}\CL_Bg_1)-Q(\overline{G}+\sqrt{\mu}g_2,\overline{G}+\sqrt{\mu}g_2)\big\}\, dv],\partial^{\alpha}\widetilde{u}_i)\notag\\
		&=I_{41}+I_{42}+I_{43}+I_{44}+I_{45}.
	\end{align}
	For $I_{41}$, since the time derivative is involved, we use the second equation of \eqref{macro1} to get
	\begin{align}\label{controltime}
		(\partial^{\alpha}\partial_t\widetilde{u},\partial^{\alpha}\partial_t\widetilde{u})&=
		-(\partial^{\alpha}[u\cdot\nabla_{x}\widetilde{u}
		+\frac{2\bar{\theta}}{3\bar{\rho}}\nabla_{x}\widetilde{\rho}
		+\frac{2}{3}\nabla_{x}\widetilde{\theta}
		+\widetilde{u}\cdot\nabla_{x}\bar{u}+\frac{2}{3}(\frac{\theta}{\rho}-\frac{\bar{\theta}}{\bar{\rho}})
		\nabla_{x}\rho],\partial^{\alpha}\partial_t\widetilde{u})
		\nonumber\\
		&\quad-(\partial^{\alpha}(\frac{1}{\rho}\int_{\mathbb{R}^{3}} v\otimes v\cdot\nabla_{x} G dv),\partial^{\alpha}\partial_t\widetilde{u})
		\nonumber\\
		&\leq C\ka\|\partial^{\alpha}\partial_t\widetilde{u}\|^{2}+
		C_\ka\|\partial^{\alpha}(\nabla_{x}\widetilde{\rho},\nabla_{x}\widetilde{u},\nabla_{x}\widetilde{\theta})\|^{2}+C_\ka \|w_2\partial^{\alpha}\nabla_{x} (g_1,g_2)\|^{2}\notag\\
		&\quad+C_\ka\eta^2\varepsilon^{r} +C_\ka\min\{\varepsilon^{-1}\mathcal{E}_{N,k}(t)\mathcal{D}_{N,k}(t),\varepsilon^{-2}\mathcal{E}^2_{N,k}(t)\},
	\end{align}
	for any $0<\ka<1$, which yields, by choosing $\ka$ to be small, that
	\begin{align*}
		\|\partial^{\alpha}\partial_t\widetilde{u}\|^{2}\leq 
		C\mathcal{E}_{N,k}(t)+C\eta^2\varepsilon^{r}+C\min\{\varepsilon^{-1}\mathcal{E}_{N,k}(t)\mathcal{D}_{N,k}(t),\varepsilon^{-2}\mathcal{E}^2_{N,k}(t)\},
	\end{align*}
	for $|\al|\leq N-2$, and
	\begin{align*}
		\|\partial^{\alpha}\partial_t\widetilde{u}\|^{2}\leq 
		C\frac{1}{\varepsilon}\mathcal{D}_{N,k}(t)+C\eta^2\varepsilon^{r}+C\varepsilon^{-1}\mathcal{E}_{N,k}(t)\mathcal{D}_{N,k}(t).
	\end{align*}
	for $|\al|\leq N-1$.
	Similar arguments show that
	\begin{align}\label{patN-2}
		\|\partial^{\alpha}\partial_t(\widetilde{\rho},\widetilde{u},\widetilde{\theta})\|^{2}&\leq 
		C\mathcal{E}_{N,k}(t)+C\eta^2\varepsilon^{r}+C\min\{\varepsilon^{-1}\mathcal{E}_{N,k}(t)\mathcal{D}_{N,k}(t),\varepsilon^{-2}\mathcal{E}^2_{N,k}(t)\}\notag\\
		&\leq C\min\{1,\mathcal{E}_{N,k}(t)+\eta^2\varepsilon^{r}+\varepsilon^{-1}\mathcal{E}_{N,k}(t)\mathcal{D}_{N,k}(t)\},
	\end{align}
	for $|\al|\leq N-2$, and
	\begin{align}\label{patN-1}
		\|\partial^{\alpha}\partial_t(\widetilde{\rho},\widetilde{u},\widetilde{\theta})\|^{2}\leq 
		C\frac{1}{\varepsilon}\mathcal{D}_{N,k}(t)+C\eta^2\varepsilon^{r}+C\varepsilon^{-1}\mathcal{E}_{N,k}(t)\mathcal{D}_{N,k}(t),
	\end{align}
	for all $|\al|\leq N-1$.
	Then for $I_{41}$, it holds from \eqref{background}, \eqref{apriori}, \eqref{barG}, \eqref{patN-2}, the Cauchy-Schwarz and Sobolev inequalities that
	\begin{align}\label{I41}
		I_{41}&=\sum^{3}_{j=1}(\partial^{\alpha}[\partial_{x_j}(\frac{1}{\rho})\int_{\mathbb{R}^{3}}R\theta B_{ij}(\frac{v-u}{\sqrt{R\theta}})\frac{\varepsilon \partial_{t}\overline{G}}{M} dv],\partial^{\alpha}\widetilde{u}_i)\notag\\
		&\leq C\eta
		\varepsilon^r.
	\end{align}
	Recall 
	\begin{align*}
		I_{42}=&\sum^{3}_{j=1}\int_{\mathbb{R}^{3}}\Big\{\partial^{\alpha}[\partial_{x_j}(\frac{1}{\rho})\int_{\mathbb{R}^{3}}R\theta B_{ij}(\frac{v-u}{\sqrt{R\theta}})\frac{\varepsilon P_{1}(\pa_t\sqrt{\mu}g_2)}{M}\, dv]\,\partial^{\alpha}\widetilde{u}_i\Big\} dx.
	\end{align*}
	We integrate by parts to get
	\begin{align*}
		I_{42}=&\sum^{3}_{j=1}\frac{d}{dt}\int_{\mathbb{R}^{3}}\int_{\mathbb{R}^{3}}\partial^{\alpha}
		\Big\{[\partial_{x_j}(\frac{1}{\rho})R\theta B_{ij}(\frac{v-u}{\sqrt{R\theta}})\frac{\varepsilon}{M}]P_{1}(\sqrt{\mu}g_2)\Big\}\partial^{\alpha}\widetilde{u}_i dvdx
		\nonumber\\
		&-\sum^{3}_{j=1}\int_{\mathbb{R}^{3}}\int_{\mathbb{R}^{3}}\partial^{\alpha}
		\Big\{\partial_t[\partial_{x_j}(\frac{1}{\rho})R\theta B_{ij}(\frac{v-u}{\sqrt{R\theta}})\frac{\varepsilon}{M}]P_{1}(\sqrt{\mu}g_2)\Big\}\partial^{\alpha}\widetilde{u}_i dvdx
		\nonumber\\
		&-\sum^{3}_{j=1}\int_{\mathbb{R}^{3}}\int_{\mathbb{R}^{3}}\partial^{\alpha}
		\Big\{[\partial_{x_j}(\frac{1}{\rho})R\theta B_{ij}(\frac{v-u}{\sqrt{R\theta}})\frac{\varepsilon}{M}]P_{1}(\sqrt{\mu}g_2)\Big\}\partial^{\alpha}\partial_t\widetilde{u}_{i} dvdx\notag\\
		&+\sum^{3}_{j=1}\int_{\mathbb{R}^{3}}\int_{\mathbb{R}^{3}}\partial^{\alpha}
		\Big\{[\partial_{x_j}(\frac{1}{\rho})R\theta B_{ij}(\frac{v-u}{\sqrt{R\theta}})\frac{\varepsilon}{M}]\sum_{l=0}^{4}( \sqrt{\mu}g_2,\partial_t\frac{\chi_{l}}{M})_{L^2_v}\chi_{l}\Big\}\partial^{\alpha}\widetilde{u}_{i} dvdx\notag\\
		&+\sum^{3}_{j=1}\int_{\mathbb{R}^{3}}\int_{\mathbb{R}^{3}}\partial^{\alpha}
		\Big\{[\partial_{x_j}(\frac{1}{\rho})R\theta B_{ij}(\frac{v-u}{\sqrt{R\theta}})\frac{\varepsilon}{M}]\sum_{l=0}^{4}( \sqrt{\mu}g_2,\frac{\chi_{l}}{M})_{L^2_v}\partial_t\chi_{l}\Big\}\partial^{\alpha}\widetilde{u}_{i} dvdx.
	\end{align*}
	We keep the first term on the right hand side above there and turn to the second one. Since $|\al|\geq 1$, there is some $e_i$ such that $\partial^{e_i}=\partial_{x_i}$ with $|e_i|=1$. Integrating by parts again, it holds that
	\begin{align*}
		&-\sum^{3}_{j=1}\int_{\mathbb{R}^{3}}\int_{\mathbb{R}^{3}}\partial^{\alpha}
		\Big\{\partial_t[\partial_{x_j}(\frac{1}{\rho})R\theta B_{ij}(\frac{v-u}{\sqrt{R\theta}})\frac{\varepsilon}{M}]P_{1}(\sqrt{\mu}g_2)\Big\}\partial^{\alpha}\widetilde{u}_i dvdx
		\\
		=&\sum^{3}_{j=1}\int_{\mathbb{R}^{3}}\int_{\mathbb{R}^{3}}\partial^{\alpha-e_i}
		\Big\{\partial_t[\partial_{x_j}(\frac{1}{\rho})R\theta B_{ij}(\frac{v-u}{\sqrt{R\theta}})\frac{\varepsilon}{M}]P_{1}(\sqrt{\mu}g_2)\Big\}\partial^{\alpha+e_i}\widetilde{u}_i dvdx
		\\
		\leq&\ka\varepsilon\|\partial^{\alpha+e_i}\widetilde{u}_i\|^{2}+C_\ka\varepsilon
		\sum^{3}_{j=1}\int_{\mathbb{R}^{3}}|\int_{\mathbb{R}^{3}}\partial^{\alpha-e_i}
		\Big\{\partial_t[\partial_{x_j}(\frac{1}{\rho})R\theta B_{ij}(\frac{v-u}{\sqrt{R\theta}})\frac{1}{M}]P_{1}(\sqrt{\mu}g_2)\Big\} dv|^{2} dx,
	\end{align*}
	which, using \eqref{background}, \eqref{apriori}, Lemma \ref{leL-1}, \eqref{patN-2} and \eqref{patN-1}, can be further bounded by
	$$(\ka+C_\ka \varepsilon)\mathcal{D}_{N,k}(t)+C_\ka\sqrt{\mathcal{E}_{N,k}(t)}\mathcal{D}_{N,k}(t)+C_\ka\eta \varepsilon^r.$$
	The other terms can be controlled in the same way. Hence, we arrive at
	\begin{align}\label{I42}
		I_{42}\leq&\sum^{3}_{j=1}\frac{d}{dt}\int_{\mathbb{R}^{3}}\int_{\mathbb{R}^{3}}\partial^{\alpha}
		\Big\{[\partial_{x_j}(\frac{1}{\rho})R\theta B_{ij}(\frac{v-u}{\sqrt{R\theta}})\frac{\varepsilon}{M}]P_{1}(\sqrt{\mu}g_2)\Big\}\partial^{\alpha}\widetilde{u}_i dvdx
		\nonumber\\
		&+(\ka+C_\ka \varepsilon)\mathcal{D}_{N,k}(t)+C_\ka\sqrt{\mathcal{E}_{N,k}(t)}\mathcal{D}_{N,k}(t)+C_\ka\eta \varepsilon^r.
	\end{align}
	The term $I_{43}$ only contains derivatives in space variable. Thus, using very similar arguments to how we estimate $I_{41}$ and $I_{42}$, it holds that
	\begin{align}\label{I43}
		I_{43}\leq (\ka+C_\ka \varepsilon)\mathcal{D}_{N,k}(t)+C_\ka\sqrt{\mathcal{E}_{N,k}(t)}\mathcal{D}_{N,k}(t)+C_\ka\eta \varepsilon^r.
	\end{align}
	We recall
	\begin{align*}
		I_{44}
		=&-\sum^{3}_{j=1}\int_{\mathbb{R}^{3}}\Big\{\partial^{\alpha}[\partial_{x_j}(\frac{1}{\rho})\int_{\mathbb{R}^{3}}R\theta B_{ij}(\frac{v-u}{\sqrt{R\theta}})\frac{(I-P_{0})(\sqrt{\mu}\CL_Bg_1)}{M}\, dv]\,\partial^{\alpha}\widetilde{u}_i\Big\} dx.
	\end{align*}
	We only calculate for the term involving the identity operator $I$ on the right hand side above as an example and the other term containing $P_0$ can be handled in the same way. Noticing $|\pa^{\bar\al}(\widetilde{\rho},\widetilde{u},\widetilde{\theta})|\leq C$ for $|\bar\al|\leq N-2$ by \eqref{apriori}, together with \eqref{boundbur} and \eqref{defLB}, we have
	\begin{align}\label{macrog1}
		&\int_{\mathbb{R}^{3}}\Big\{\partial^{\alpha}[\partial_{x_j}(\frac{1}{\rho})\int_{\mathbb{R}^{3}}R\theta B_{ij}(\frac{v-u}{\sqrt{R\theta}})\frac{\sqrt{\mu}\CL_Bg_1}{M}\, dv]\,\partial^{\alpha}\widetilde{u}_i\Big\} dx\notag\\
		\leq&C\sum_{\al_1+\al_2+\al_3= \al}\int_{\mathbb{R}^{3}}\big|[\partial^{\alpha_1}\partial_{x_j}(\frac{1}{\rho})\int_{\mathbb{R}^{3}}\partial^{\alpha_2}(R\theta B_{ij}(\frac{v-u}{\sqrt{R\theta}}))\frac{\sqrt{\mu}\CL_B\partial^{\alpha_3}g_1}{M}\, dv]\,\partial^{\alpha}\widetilde{u}_i\big|dx\notag\\
		\leq&C\sum_{\al_1+\al_2+\al_3= \al}\int_{\mathbb{R}^{3}}\big|\partial^{\alpha_1}\partial_{x_j}(\frac{1}{\rho})\partial^{\alpha_2}(u,\theta)\partial^{\alpha}\widetilde{u}_i\big(\int_{\mathbb{R}^{3}}|\CL_B\partial^{\alpha_3}g_1|^2 dv\big)^\frac{1}{2}\big|dx\notag\\
		\leq&C\sum_{\al_1+\al_2+\al_3= \al}\int_{\mathbb{R}^{3}}\big|\partial^{\alpha_1}\partial_{x_j}(\frac{1}{\rho})\partial^{\alpha_2}(u,\theta)\sqrt\varepsilon\partial^{\alpha}\widetilde{u}_i\big(\frac{1}{\varepsilon}\int_{\mathbb{R}^{3}}|\partial^{\alpha_3}g_1|^2 dv\big)^\frac{1}{2}\big|dx.
	\end{align}
	We only need to consider the endpoint case $\al_1=\al$ where $\partial^{\alpha_1}\partial_{x_j}$ can possibly reach the highest order derivative, then the above integral is bounded by
	$$
	C\eta\varepsilon^r+C\sqrt{\mathcal{E}_{N,k}(t)}\mathcal{D}_{N,k}(t)+C\big(\frac{1}{\varepsilon}\int_{\mathbb{R}^{3}}\|g_1\|_{L^\infty_x}^2 dv\big)^\frac{1}{2}\|\sqrt\varepsilon\partial^{\alpha}\partial_{x_j}\widetilde\rho\|\|\partial^{\alpha}\widetilde{u}_i\|,
	$$
	which can be further controlled by
	$C\eta\varepsilon^r+C\sqrt{\mathcal{E}_{N,k}(t)}\mathcal{D}_{N,k}(t)$.
	Similarly, all the terms in the rest cases enjoy the same upper bound. Then we arrive at
	\begin{align}\label{I44}
		I_{44}\leq C\sqrt{\mathcal{E}_{N,k}(t)}\mathcal{D}_{N,k}(t)+C\eta \varepsilon^r.
	\end{align}
	Rewrite
	\begin{align*}
		I_{45}&=-\sum^{3}_{j=1}\int_{\mathbb{R}^{3}}\Big\{\partial^{\alpha}[\partial_{x_j}(\frac{1}{\rho})\int_{\mathbb{R}^{3}}R\theta B_{ij}(\frac{v-u}{\sqrt{R\theta}})\frac{\sqrt{\mu}}{M}\Gamma(\frac{\overline{G}+\sqrt{\mu}g_2}{\sqrt{\mu}},\frac{\overline{G}+\sqrt{\mu}g_2}{\sqrt{\mu}})\, dv]\,\partial^{\alpha}\widetilde{u}_i\Big\} dx.
	\end{align*}
	Then using \eqref{background}, \eqref{Trilinear}, \eqref{boundbarG0}, \eqref{boundbarG}, \eqref{apriori} and similar arguments as in \eqref{macrog1}, it holds that
	\begin{align}\label{I45}
		I_{45}\leq C\sqrt{\mathcal{E}_{N,k}(t)}\mathcal{D}_{N,k}(t)+C\eta \varepsilon^r.
	\end{align}
	Collecting \eqref{I41}, \eqref{I42}, \eqref{I43}, \eqref{I44} and \eqref{I45} gives
	\begin{align}\label{I4}
		I_4&\leq \sum^{3}_{j=1}\frac{d}{dt}\int_{\mathbb{R}^{3}}\int_{\mathbb{R}^{3}}\partial^{\alpha}
		\Big\{[\partial_{x_j}(\frac{1}{\rho})R\theta B_{ij}(\frac{v-u}{\sqrt{R\theta}})\frac{\varepsilon}{M}]P_{1}(\sqrt{\mu}g_2)\Big\}\partial^{\alpha}\widetilde{u}_i dvdx
		\nonumber\\
		&+(\ka+C_\ka \varepsilon)\mathcal{D}_{N,k}(t)+C_\ka\sqrt{\mathcal{E}_{N,k}(t)}\mathcal{D}_{N,k}(t)+C_\ka\eta \varepsilon^r.
	\end{align}
	If we resolve $I_5$ as $I_4$ in \eqref{reI4}, we see their structures are very similar and the only difference is that the $\pa_{x_j}$ is moved from $\frac{1}{\rho}$ to $\widetilde{u}$. Hence, we just need to perform the similar calculations as in \eqref{I41} to \eqref{I45}, which gives
	\begin{align}\label{I5}
		I_5\leq& -\sum^{3}_{j=1}\frac{d}{dt}\int_{\mathbb{R}^{3}}\int_{\mathbb{R}^{3}}
		\Big\{\partial^{\alpha}\partial_{x_j}[\frac{1}{\rho}R\theta B_{ij}(\frac{v-u}{\sqrt{R\theta}})\frac{\varepsilon P_{1}(\sqrt{\mu}g_2)}{M}]
		\partial^{\alpha}\widetilde{u}_i\Big\} dvdx
		\nonumber\\
		&+(\ka+C_\ka \varepsilon)\mathcal{D}_{N,k}(t)+C_\ka\sqrt{\mathcal{E}_{N,k}(t)}\mathcal{D}_{N,k}(t)+C_\ka\eta \varepsilon^r.
	\end{align}
	It follows from \eqref{reTheta}, \eqref{I4} and \eqref{I5} that
	\begin{align}\label{uTheta}
		&-(\partial^{\alpha}[\frac{1}{\rho}\int_{\mathbb{R}^{3}}v_{i}v\cdot\nabla_{x}L^{-1}_{M}\Theta dv],\partial^{\alpha}\widetilde{u}_i)
		\nonumber\\
		\leq&\sum^{3}_{j=1}\frac{d}{dt}\int_{\mathbb{R}^{3}}\int_{\mathbb{R}^{3}}\partial^{\alpha}
		\Big\{[\partial_{x_j}(\frac{1}{\rho})R\theta B_{ij}(\frac{v-u}{\sqrt{R\theta}})\frac{\varepsilon}{M}]P_{1}(\sqrt{\mu}g_2)\Big\}\partial^{\alpha}\widetilde{u}_i dvdx
		\nonumber\\
		&-\sum^{3}_{j=1}\frac{d}{dt}\int_{\mathbb{R}^{3}}\int_{\mathbb{R}^{3}}
		\Big\{\partial^{\alpha}\partial_{x_j}[\frac{1}{\rho}R\theta B_{ij}(\frac{v-u}{\sqrt{R\theta}})\frac{\varepsilon P_{1}(\sqrt{\mu}g_2)}{M}]
		\partial^{\alpha}\widetilde{u}_i\Big\} dvdx
		\nonumber\\
		&+(\ka+C_\ka \varepsilon)\mathcal{D}_{N,k}(t)+C_\ka\sqrt{\mathcal{E}_{N,k}(t)}\mathcal{D}_{N,k}(t)+C_\ka\eta \varepsilon^r.
	\end{align}
	We combine \eqref{u1}, \eqref{ufluid}, \eqref{tensor}, \eqref{ug1} and \eqref{uTheta} to obtain
	\begin{align}
		&\frac{1}{2}\frac{d}{dt}\|\partial^{\alpha}\widetilde{u}_i\|^{2}+
		(\frac{2\bar{\theta}}{3\bar{\rho}}\partial^{\alpha}\widetilde{\rho}_{x_{i}},\partial^{\alpha}\widetilde{u}_i)+
		(\frac{2}{3}\partial^{\alpha}\widetilde{\theta}_{x_{i}},\partial^{\alpha}\widetilde{u}_i)
		\nonumber\\
		\leq&-\varepsilon\sum^{3}_{j=1}(\frac{1}{\rho}\mu(\theta)(\partial^{\alpha}\partial_{x_{j}}\widetilde{u}_{i}
		+\partial^{\alpha}\partial_{x_{i}}\widetilde{u}_{j}
		-\frac{2}{3}\delta_{ij}\nabla_{x}\cdot \partial^{\alpha}\widetilde{u}),\partial^{\alpha}\partial_{x_{j}}\widetilde{u}_{i})\notag\\
		&+\sum^{3}_{j=1}\frac{d}{dt}\int_{\mathbb{R}^{3}}\int_{\mathbb{R}^{3}}\partial^{\alpha}
		\Big\{[\partial_{x_j}(\frac{1}{\rho})R\theta B_{ij}(\frac{v-u}{\sqrt{R\theta}})\frac{\varepsilon}{M}]P_{1}(\sqrt{\mu}g_2)\Big\}\partial^{\alpha}\widetilde{u}_i dvdx
		\nonumber\\
		&-\sum^{3}_{j=1}\frac{d}{dt}\int_{\mathbb{R}^{3}}\int_{\mathbb{R}^{3}}
		\Big\{\partial^{\alpha}\partial_{x_j}[\frac{1}{\rho}R\theta B_{ij}(\frac{v-u}{\sqrt{R\theta}})\frac{\varepsilon P_{1}(\sqrt{\mu}g_2)}{M}]
		\partial^{\alpha}\widetilde{u}_i\Big\} dvdx
		\nonumber\\
		&+(\ka+C_\ka\varepsilon)\mathcal{D}_{N,k}(t)+C_\ka\sqrt{\mathcal{E}_{N,k}(t)}\mathcal{D}_{N,k}(t)+C_\ka \frac{1}{\varepsilon}\|w_2g_1\|_{H^{N-1}_xL^2_v}+C_\ka\eta \varepsilon^r\notag\\
		&+C_\ka\min\{\varepsilon^{-1}\sqrt{\mathcal{E}_{N,k}(t)}\mathcal{D}_{N,k}(t),\eta^{3/2}_0\varepsilon^r\},\notag
	\end{align}
	which, by taking summation and using our a prior assumption $\sqrt{\mathcal{E}_{N,k}(t)}\leq \varepsilon^{r/2}$, gives
	\begin{align}\label{sumu}
		&\frac{1}{2}\frac{d}{dt}\|\partial^{\alpha}\widetilde{u}\|^{2}+
		(\frac{2\bar{\theta}}{3\bar{\rho}}\partial^{\alpha}\nabla_{x}\widetilde{\rho},\partial^{\alpha}\widetilde{u})+(\frac{2}{3}\partial^{\alpha}\nabla_{x}\widetilde{\theta},\partial^{\alpha}\widetilde{u})
		\nonumber\\
		&+\sum^{3}_{i,j=1}\frac{d}{dt}\int_{\mathbb{R}^{3}}\int_{\mathbb{R}^{3}}
		\Big\{\partial^{\alpha}[\frac{1}{\rho}\partial_{x_j}(R\theta B_{ij}(\frac{v-u}{\sqrt{R\theta}})\frac{\varepsilon P_{1}(\sqrt{\mu}g_2)}{M})]
		\partial^{\alpha}\widetilde{u}_i\Big\} dvdx
		\nonumber\\
		\leq&-\varepsilon\sum^{3}_{i,j=1}(\frac{1}{\rho}\mu(\theta)(\partial^{\alpha}\partial_{x_{j}}\widetilde{u}_{i}+\partial^{\alpha}\partial_{x_{i}}\widetilde{u}_{j}
		-\frac{2}{3}\delta_{ij}\nabla_{x}\cdot \partial^{\alpha}\widetilde{u}),\partial^{\alpha}\partial_{x_{j}}\widetilde{u}_{i})+(\ka+C_\ka\varepsilon^{r/2})\mathcal{D}_{N,k}(t)\nonumber\\
		&+C_\ka \frac{1}{\varepsilon}\|w_2g_1\|_{H^{N-1}_xL^2_v}+C_\ka\eta \varepsilon^r+C_\ka\min\{\varepsilon^{-1}\sqrt{\mathcal{E}_{N,k}(t)}\mathcal{D}_{N,k}(t),\eta^{3/2}_0\varepsilon^r\}.
	\end{align}
	We have to take one more step to obtain the dissipation of $\widetilde{u}$ from the first term on the right hand side above by
	\begin{align*}
		&\varepsilon\sum^{3}_{i,j=1}(\frac{1}{\rho}\mu(\theta)(\partial^{\alpha}\partial_{x_{j}}\widetilde{u}_{i}+\partial^{\alpha}\partial_{x_{i}}\widetilde{u}_{j}
		-\frac{2}{3}\delta_{ij}\nabla_{x}\cdot \partial^{\alpha}\widetilde{u}),\partial^{\alpha}\partial_{x_{j}}\widetilde{u}_{i})
		\\
		=&\varepsilon\sum^{3}_{i,j=1}(\frac{\mu(\theta)}{\rho}\partial^{\alpha}\partial_{x_{j}}\widetilde{u}_{i},\partial^{\alpha}\partial_{x_{j}}\widetilde{u}_{i})
		+\frac{1}{3}\varepsilon\sum^{3}_{i,j=1}(\frac{\mu(\theta)}{\rho}\partial^{\alpha}\partial_{x_{j}}\widetilde{u}_{j},\partial^{\alpha}\partial_{x_{i}}\widetilde{u}_{i})
		\\
		&-\varepsilon\sum^{3}_{i,j=1}(\partial_{x_{j}}[\frac{\mu(\theta)}{\rho}]\partial^{\alpha}\partial_{x_{i}}\widetilde{u}_{j},\partial^{\alpha}\widetilde{u}_{i})
		+\varepsilon\sum^{3}_{i,j=1}(\partial_{x_{i}}[\frac{\mu(\theta)}{\rho}]\partial^{\alpha}\partial_{x_{j}}\widetilde{u}_{j},\partial^{\alpha}\widetilde{u}_{i})
		\\
		\geq& c\varepsilon\|\partial^{\alpha}\nabla_{x}\widetilde{u}\|^{2}-C\sqrt{\mathcal{E}_{N,k}(t)}\mathcal{D}_{N,k}(t)-C\eta\varepsilon^{r},
	\end{align*}
	which, combining with \eqref{sumu} and \eqref{apriori}, yields \eqref{energyu}.
\end{proof}

Next, for $\widetilde{\theta}$, we have the following lemma. 
\begin{lemma}\label{lem.theta}
	For $1\leq|\alpha|\leq N-1$ and $0<\ka<1$, it holds that
	\begin{align}\label{energytheta}
		&\frac{1}{2}\frac{d}{dt}(\partial^{\alpha}\widetilde{\theta},\frac{1}{\bar{\theta}}\partial^{\alpha}\widetilde{\theta})+(\frac{2}{3}\nabla_{x}\cdot \partial^{\alpha}\widetilde{u},\partial^{\alpha}\widetilde{\theta})
		+c\varepsilon\|\partial^{\alpha}\nabla_{x}\widetilde{\theta}\|^{2}
		\nonumber\\
		&+\sum^{3}_{i=1}\frac{d}{dt}\int_{\mathbb{R}^{3}}\int_{\mathbb{R}^{3}}
		\Big\{\partial^{\alpha}[\frac{1}{\rho}\partial_{x_i}
		((R\theta)^{\frac{3}{2}}A_{i}(\frac{v-u}{\sqrt{R\theta}})\frac{\varepsilon P_{1}(\sqrt{\mu}g_2)}{M})]\frac{1}{\bar{\theta}}\partial^{\alpha}\widetilde{\theta}\Big\} dvdx
		\nonumber\\	
		&+\sum^{3}_{i,j=1}\frac{d}{dt}\int_{\mathbb{R}^{3}}\int_{\mathbb{R}^{3}}
		\Big\{\partial^{\alpha}[\frac{1}{\rho}
		\partial_{x_i}u_j R\theta B_{ij}(\frac{v-u}{\sqrt{R\theta}})\frac{\varepsilon P_{1}(\sqrt{\mu}g_2)}{M}]\frac{1}{\bar{\theta}}\partial^{\alpha}\widetilde{\theta}\Big\} dvdx
		\nonumber\\	
		\leq& (\ka+C_\ka\varepsilon^{r/2})\mathcal{D}_{N,k}(t)+C_\ka \frac{1}{\varepsilon}\|w_3g_1\|_{H^{N-1}_xL^2_v}+C_\ka\eta \varepsilon^r\notag\\
		&+C_\ka\min\{\varepsilon^{-1}\sqrt{\mathcal{E}_{N,k}(t)}\mathcal{D}_{N,k}(t),\eta^{3/2}_0\varepsilon^r\}.
	\end{align}
\end{lemma}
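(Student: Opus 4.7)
The strategy is to apply $\partial^{\alpha}$ with $1\leq|\alpha|\leq N-1$ to the third equation of \eqref{macro2} and take the inner product of the resulting equation with $\frac{1}{\bar{\theta}}\partial^{\alpha}\widetilde{\theta}$ over $\mathbb{R}^3$, then proceed in close parallel with the proof of Lemma \ref{lem.u}. The analogy is essentially a dictionary: the role of the viscous dissipation $\varepsilon\frac{1}{\rho}\sum_j\partial_{x_j}[\mu(\theta)D_{ij}]$ is now played by the heat conductivity term $\varepsilon\frac{1}{\rho}\sum_j\partial_{x_j}(\kappa(\theta)\partial_{x_j}\theta)$, and the role of the Burnett function $B_{ij}$ coming from $\int v_iv_j L_M^{-1}\Theta\, dv$ is now played by $A_i$ coming from the identity $\int (\tfrac{1}{2}|v|^2-v\cdot u)v_i L_M^{-1}\Theta\, dv = (R\theta)^{3/2}\int A_i(\tfrac{v-u}{\sqrt{R\theta}})\tfrac{\Theta}{M}dv$.

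The LHS is handled term-by-term as in Lemma \ref{lem.rho} and Lemma \ref{lem.u}. The time-derivative hitting $\bar\theta^{-1}$ produces $C\eta\varepsilon^r$ by \eqref{background}. The coupling term $(\frac{2}{3}\bar\theta\nabla_x\cdot\widetilde{u},\frac{1}{\bar\theta}\partial^\al\widetilde\theta)$ yields the coupling integral on the LHS of \eqref{energytheta} plus remainders bounded as in \eqref{brtutr}. The transport terms $\partial^\al(u\cdot\nabla_x\widetilde\theta)$, $\partial^\al(\widetilde u\cdot\nabla_x\bar\theta)$ and $\partial^\al(\widetilde\theta\nabla_x\cdot u)$ follow verbatim the arguments in \eqref{urr}, \eqref{tubrtr} and \eqref{trutr}, producing $C\eta\varepsilon^r + \kappa\varepsilon\|\nabla_x\cdot\partial^\al\widetilde u\|^2 + C_\kappa\min\{\varepsilon^{-1}\sqrt{\mathcal{E}_{N,k}}\mathcal{D}_{N,k},\eta_0^{3/2}\varepsilon^r\}$ (the $\widetilde u$ term is absorbed into $\mathcal{D}_{N,k}$). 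The heat-conduction dissipation is produced exactly as in \eqref{I1}–\eqref{tensor}: after distributing $\partial^\al$, one of the two $\partial_{x_j}$'s stays on $\partial^\al\widetilde\theta$, giving $c\varepsilon\|\partial^\al\nabla_x\widetilde\theta\|^2$ up to commutator errors $\lesssim C\eta\varepsilon^r+C\sqrt{\mathcal{E}_{N,k}}\mathcal{D}_{N,k}$. The remaining viscous cross term $\varepsilon\frac{1}{\rho}\mu(\theta)u_{ix_j}D_{ij}$ is cubic in derivatives of $u$ and bounded directly via Sobolev embedding and \eqref{apriori}.

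The microscopic contributions require a little algebraic preparation. The $g_1$-integrals $\frac{1}{\rho}\int\tfrac{1}{2}|v|^2 v\cdot\nabla_x g_1\,dv$ and $\frac{1}{\rho}u\cdot\int v\otimes v\cdot\nabla_x g_1\,dv$ are integrated by parts to shift one spatial derivative onto $\partial^\al\widetilde\theta$, yielding the bound $C_\kappa\varepsilon^{-1}\|w_3 g_1\|_{H^{N-1}_xL^2_v}$ exactly as in \eqref{ug1}. For the two $L_M^{-1}\Theta$ terms, I first combine them via the splitting $\tfrac{1}{2}|v|^2 v_i = (\tfrac{1}{2}|v|^2-v\cdot u)v_i + u_j v_i v_j$, so that after collecting with $\frac{1}{\rho}u\cdot\int v\otimes v\cdot\nabla_x L_M^{-1}\Theta\,dv$ the result reduces to
\[
-\frac{1}{\rho}\sum_{i}\partial_{x_i}\Big[(R\theta)^{3/2}\!\int A_i\tfrac{\Theta}{M}dv\Big]-\frac{1}{\rho}\sum_{i,j}(\partial_{x_i}u_j) R\theta\!\int B_{ij}\tfrac{\Theta}{M}dv.
\]
Integrating by parts in $x$ against $\frac{1}{\bar\theta}\partial^\al\widetilde\theta$ and using the decomposition \eqref{defTheta} of $\Theta$, each of the five constituents $\varepsilon\partial_t\overline{G}$, $\varepsilon P_1(\partial_t\sqrt{\mu}g_2)$, $\varepsilon P_1(v\cdot\nabla_x(\overline{G}+\sqrt\mu g_2))$, $P_1(\sqrt\mu\mathcal{L}_B g_1)$ and $Q(\overline{G}+\sqrt\mu g_2,\overline{G}+\sqrt\mu g_2)$ is handled exactly as in $I_{41}$–$I_{45}$ in \eqref{I41}–\eqref{I45}. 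The $\varepsilon\partial_t(\sqrt\mu g_2)$ piece is further integrated by parts in $t$, producing the two $\frac{d}{dt}$ integrals stated in \eqref{energytheta}, while the remainders are absorbed into $(\kappa+C_\kappa\varepsilon)\mathcal{D}_{N,k}$, $C_\kappa\sqrt{\mathcal{E}_{N,k}}\mathcal{D}_{N,k}$ and $C_\kappa\eta\varepsilon^r$.

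The main obstacle, as in Lemma \ref{lem.u}, is the control of the time-derivative pieces arising from $\varepsilon\partial_t\overline{G}$ and $\varepsilon P_1(\partial_t\sqrt\mu g_2)$: after integration by parts in $t$ one encounters $\partial^\al\partial_t(\widetilde\rho,\widetilde u,\widetilde\theta)$, which has to be handled by \eqref{patN-1} so as to exchange a time derivative for a half-power of $\varepsilon^{-1}\mathcal{D}_{N,k}$. A secondary delicate point is that the $\partial_{x_i}u_j B_{ij}$ factor introduces an extra derivative of $u$ inside the bracket; when $\partial^\al$ distributes fully onto that $\partial_{x_i}u_j$, the resulting product must be controlled using Sobolev embedding for $|\alpha|\leq N-1$ so that at most one factor of $\partial^\alpha$ appears on the highest-order velocity moment and the rest is absorbed by $\sqrt{\mathcal{E}_{N,k}}\mathcal{D}_{N,k}$. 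Collecting everything and using the a priori assumption \eqref{apriori} to convert $\sqrt{\mathcal{E}_{N,k}}$ into $\varepsilon^{r/2}$ gives \eqref{energytheta}.
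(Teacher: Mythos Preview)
Your proposal is correct and follows essentially the same route as the paper: apply $\partial^{\alpha}$ to the third equation of \eqref{macro2}, test against $\frac{1}{\bar\theta}\partial^{\alpha}\widetilde\theta$, mirror the estimates of Lemmas \ref{lem.rho} and \ref{lem.u} for the fluid terms, and for the $L_M^{-1}\Theta$ contributions use exactly the splitting $\tfrac{1}{2}|v|^2 v_i=(\tfrac{1}{2}|v|^2-v\cdot u)v_i+u_jv_iv_j$ to reduce to the $A_i$ and $B_{ij}$ integrals, then handle each constituent of $\Theta$ as in \eqref{I41}--\eqref{I45}. The paper's proof is precisely this, with the $\tfrac{d}{dt}$ terms in \eqref{energytheta} coming from the time integration by parts on $\varepsilon P_1(\partial_t\sqrt{\mu}g_2)$ as you describe.
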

\begin{proof}
	From the third equation of \eqref{macro2} and similar steps to \eqref{rho1} and \eqref{u1}, it holds that
	\begin{align}\label{theta1}
		&(\partial_{t}\partial^{\alpha}\widetilde{\theta},\frac{1}{\bar{\theta}}\partial^{\alpha}\widetilde{\theta})+(\frac{2}{3}\nabla_{x}\cdot \partial^{\alpha}\widetilde{u},\partial^{\alpha}\widetilde{\theta})+\sum_{\alpha_{1}\leq \alpha, |\alpha_{1}|\geq1}C^{\alpha_{1}}_\alpha(\frac{2}{3}\partial^{\alpha_{1}}\bar{\theta}\nabla_{x}\cdot \partial^{\alpha-\alpha_{1}}\widetilde{u},\frac{1}{\bar{\theta}}\partial^{\alpha}\widetilde{\theta})
		\nonumber\\
		=&-(\partial^{\alpha}(u\cdot\nabla_{x}\widetilde{\theta}),\frac{1}{\bar{\theta}}\partial^{\alpha}\widetilde{\theta})-(\partial^{\alpha}(\widetilde{u}\cdot\nabla_{x}\bar{\theta}),\frac{1}{\bar{\theta}}\partial^{\alpha}\widetilde{\theta})
		-(\frac{2}{3}\partial^{\alpha}(\widetilde{\theta}\nabla_{x}\cdot u)
		,\frac{1}{\bar{\theta}}\partial^{\alpha}\widetilde{\theta})\nonumber\\
		&
		+\varepsilon\sum^{3}_{j=1}(\partial^{\alpha}[\frac{1}{\rho}\partial_{x_{j}}(\kappa(\theta)\partial_{x_{j}}\theta)],\frac{1}{\bar{\theta}}\partial^{\alpha}\widetilde{\theta})
		+\varepsilon\sum^{3}_{i,j=1}(\partial^{\alpha}[\frac{1}{\rho}\mu(\theta)\partial_{x_{j}}u_{i}D_{ij}],\frac{1}{\bar{\theta}}\partial^{\alpha}\widetilde{\theta})
		\notag\\
		&-(\partial^{\alpha}[\frac{1}{\rho}\int_{\mathbb{R}^{3}} \frac{1}{2}|v|^{2} v\cdot\nabla_{x}g_1 dv],\frac{1}{\bar{\theta}}\partial^{\alpha}\widetilde{\theta})
		+(\partial^{\alpha}[\frac{1}{\rho}u\cdot\int_{\mathbb{R}^{3}} v\otimes v\cdot\nabla_{x}g_1 dv]
		,\frac{1}{\bar{\theta}}\partial^{\alpha}\widetilde{\theta})\notag\\
		&-(\partial^{\alpha}[\frac{1}{\rho}\int_{\mathbb{R}^{3}} \frac{1}{2}|v|^{2} v\cdot\nabla_{x}L^{-1}_{M}\Theta dv],\frac{1}{\bar{\theta}}\partial^{\alpha}\widetilde{\theta})
		+(\partial^{\alpha}[\frac{1}{\rho}u\cdot\int_{\mathbb{R}^{3}} v\otimes v\cdot\nabla_{x}L^{-1}_{M}\Theta dv]
		,\frac{1}{\bar{\theta}}\partial^{\alpha}\widetilde{\theta}).
	\end{align}
	Fortunately, almost all the terms above can be estimated as in Lemma \ref{lem.rho} and Lemma \ref{lem.u}. We first use integration by parts, the Sobolev inequality \eqref{euler} and  \eqref{background} to get
	\begin{align*}
		(\partial_{t}\partial^{\alpha}\widetilde{\theta},\frac{1}{\bar{\theta}}\partial^{\alpha}\widetilde{\theta})&=\frac{1}{2}\frac{d}{dt}(\partial^{\alpha}\widetilde{\theta},\frac{1}{\bar{\theta}}\partial^{\alpha}\widetilde{\theta})-\frac{1}{2}(\partial^{\alpha}\widetilde{\theta},\partial_{t}(\frac{1}{\bar{\theta}})\partial^{\alpha}\widetilde{\theta})
		\\
		&\geq\frac{1}{2}\frac{d}{dt}(\partial^{\alpha}\widetilde{\theta},\frac{1}{\bar{\theta}}\partial^{\alpha}\widetilde{\theta})
		-C\|\partial_t\bar{\theta}\|_{\infty}\|\partial^{\alpha}\widetilde{\theta}\|^2\notag\\
		&\geq \frac{1}{2}\frac{d}{dt}(\partial^{\alpha}\widetilde{\theta},\frac{1}{\bar{\theta}}\partial^{\alpha}\widetilde{\theta})-C\eta\varepsilon^r.
	\end{align*}
	Then similar arguments as in \eqref{brtutr}, \eqref{urr1}, \eqref{urr2} and \eqref{trutr2} yield
	\begin{align*}
		&\sum_{\alpha_{1}\leq \alpha, |\alpha_{1}|\geq1}C^{\alpha_{1}}_\alpha|(\frac{2}{3}\partial^{\alpha_{1}}\bar{\theta}\nabla_{x}\cdot \partial^{\alpha-\alpha_{1}}\widetilde{u},\frac{1}{\bar{\theta}}\partial^{\alpha}\widetilde{\theta})|
		+|(\partial^{\alpha}(u\cdot\nabla_{x}\widetilde{\theta}),\frac{1}{\bar{\theta}}\partial^{\alpha}\widetilde{\theta})|
		\\
		&\qquad+|(\partial^{\alpha}(\widetilde{u}\cdot\nabla_{x}\bar{\theta}),\frac{1}{\bar{\theta}}\partial^{\alpha}\widetilde{\theta})|
		+|(\frac{2}{3}\partial^{\alpha}(\widetilde{\theta}\nabla_{x}\cdot u)
		,\frac{1}{\bar{\theta}}\partial^{\alpha}\widetilde{\theta})|
		\notag\\
		\leq&C\eta\varepsilon^r+\ka\varepsilon\|\nabla_x\cdot\partial^{\alpha}\widetilde{u}\|^2+C_\ka\min\{\varepsilon^{-1}\sqrt{\mathcal{E}_{N,k}(t)}\mathcal{D}_{N,k}(t),\eta^{3/2}_0\varepsilon^r\}.
	\end{align*}
	It remains now to estimate the last six terms on the right hand side of \eqref{theta1}. As in \eqref{tensor}, we have
	\begin{align*}
		&\varepsilon\sum^{3}_{j=1}(\partial^{\alpha}[\frac{1}{\rho}\partial_{x_{j}}(\kappa(\theta)\partial_{x_{j}}\theta)],\frac{1}{\bar{\theta}}\partial^{\alpha}\widetilde{\theta})\notag\\
		\leq&-\varepsilon\sum^{3}_{j=1}(\frac{1}{\rho}\kappa(\theta)\partial^{\alpha}\partial_{x_{j}}\widetilde{\theta},
		\frac{1}{\bar{\theta}}\partial^{\alpha}\partial_{x_{j}}\widetilde{\theta})
		+C\eta\varepsilon^r+C\sqrt{\mathcal{E}_{N,k}(t)}\mathcal{D}_{N,k}(t).
	\end{align*}
	In the same way,
	\begin{align*}
		\varepsilon\sum^{3}_{i,j=1}(\partial^{\alpha}[\frac{1}{\rho}\mu(\theta)\partial_{x_{j}}u_{i}D_{ij}],\frac{1}{\bar{\theta}}\partial^{\alpha}\widetilde{\theta})\leq C\eta\varepsilon^r+C\sqrt{\mathcal{E}_{N,k}(t)}\mathcal{D}_{N,k}(t).
	\end{align*}
	For sixth and seventh term on the right hand side of \eqref{theta1} containing $g_1$, we have for some $|e_i|=1$ that
	\begin{align*}
		&-(\partial^{\alpha}[\frac{1}{\rho}\int_{\mathbb{R}^{3}} \frac{1}{2}|v|^{2} v\cdot\nabla_{x}g_1 dv],\frac{1}{\bar{\theta}}\partial^{\alpha}\widetilde{\theta})
		+(\partial^{\alpha}[\frac{1}{\rho}u\cdot\int_{\mathbb{R}^{3}} v\otimes v\cdot\nabla_{x}g_1 dv]
		,\frac{1}{\bar{\theta}}\partial^{\alpha}\widetilde{\theta})
		\notag\\
		=&(\partial^{\alpha-e_i}[\frac{1}{\rho}\int_{\mathbb{R}^{3}} \frac{1}{2}|v|^{2} v\cdot\nabla_{x}g_1 dv-\frac{1}{\rho}u\cdot\int_{\mathbb{R}^{3}} v\otimes v\cdot\nabla_{x}g_1 dv]
		,\frac{1}{\bar{\theta}}\partial^{\alpha+e_i}\widetilde{\theta})
		\notag\\
		\leq& C\ka\mathcal{D}_{N,k}(t)+C_\ka \frac{1}{\varepsilon}\|w_3g_1\|_{H^{N-1}_xL^2_v}.
	\end{align*}
	The last two terms can be calculated in the way of \eqref{reTheta}, \eqref{I4} and \eqref{I5}. We omit the detailed steps to get
	\begin{align*}
		&-(\partial^{\alpha}[\frac{1}{\rho}\int_{\mathbb{R}^{3}} \frac{1}{2}|v|^{2} v\cdot\nabla_{x}L^{-1}_{M}\Theta dv],\frac{1}{\bar{\theta}}\partial^{\alpha}\widetilde{\theta})
		+(\partial^{\alpha}[\frac{1}{\rho}u\cdot\int_{\mathbb{R}^{3}} v\otimes v\cdot\nabla_{x}L^{-1}_{M}\Theta dv]
		,\frac{1}{\bar{\theta}}\partial^{\alpha}\widetilde{\theta})\notag\\
		=&-\sum^{3}_{i=1}
		(\partial^{\alpha}[\frac{1}{\rho}\partial_{x_i}
		((R\theta)^{\frac{3}{2}}\int_{\mathbb{R}^{3}}A_{i}(\frac{v-u}{\sqrt{R\theta}})\frac{\Theta}{M} dv)],\frac{1}{\bar{\theta}}\partial^{\alpha}\widetilde{\theta})\\
		&\quad-\sum^{3}_{i,j=1}
		(\partial^{\alpha}[\frac{1}{\rho}
		\partial_{x_i}u_j R\theta\int_{\mathbb{R}^{3}}B_{ij}(\frac{v-u}{\sqrt{R\theta}})\frac{\Theta}{M} dv],\frac{1}{\bar{\theta}}\partial^{\alpha}\widetilde{\theta})\notag\\
		\leq&
		-\sum^{3}_{i=1}\frac{d}{dt}\int_{\mathbb{R}^{3}}\int_{\mathbb{R}^{3}}
		\Big\{\partial^{\alpha}[\frac{1}{\rho}\partial_{x_i}
		((R\theta)^{\frac{3}{2}}A_{i}(\frac{v-u}{\sqrt{R\theta}})\frac{\varepsilon P_{1}(\sqrt{\mu}g_2)}{M})]\frac{1}{\bar{\theta}}\partial^{\alpha}\widetilde{\theta}\Big\} dvdx
		\nonumber\\	
		&-\sum^{3}_{i,j=1}\frac{d}{dt}\int_{\mathbb{R}^{3}}\int_{\mathbb{R}^{3}}
		\Big\{\partial^{\alpha}[\frac{1}{\rho}
		\partial_{x_i}u_j R\theta B_{ij}(\frac{v-u}{\sqrt{R\theta}})\frac{\varepsilon P_{1}(\sqrt{\mu}g_2)}{M}]\frac{1}{\bar{\theta}}\partial^{\alpha}\widetilde{\theta}\Big\} dvdx
		\nonumber\\	
		&+(\ka+C_\ka\varepsilon)\mathcal{D}_{N,k}(t)+C_\ka\sqrt{\mathcal{E}_{N,k}(t)}\mathcal{D}_{N,k}(t)+C_\ka\eta \varepsilon^r.
	\end{align*}
This ends the proof of Lemma \ref{lem.theta}.
\end{proof}

We see from Lemma \ref{lem.rho}, Lemma \ref{lem.u} and Lemma \ref{lem.theta} that one more step is needed to obtain the dissipation of $\widetilde{\rho}$.
\begin{lemma}
	For any $1\leq|\alpha|\leq N-1$, it holds that
	\begin{align}\label{dissirho}	
		&\varepsilon\sum_{1\leq |\alpha|\leq N-1} \frac{d}{dt}(\partial^{\alpha}\widetilde{u},\nabla_x\partial^{\alpha}\widetilde{\rho})
		+c\varepsilon\sum_{2\leq |\alpha|\leq N}\|\partial^{\alpha}\widetilde{\rho}\|^2
		\nonumber\\
		\leq& C\varepsilon\sum_{2\leq |\alpha|\leq N}
		(\|\partial^{\alpha}\widetilde{u}\|^2+\|\partial^{\alpha}\widetilde{\theta}\|^2+\|w_2 \partial^{\alpha}g_1\|^2+\|\partial^{\alpha}\mathbf{P}_1g_2\|_{L^2_{v,D}}^2)\notag\\
		&
		+C\sqrt{\mathcal{E}_{N,k}(t)}\mathcal{D}_{N,k}(t)+C\eta \varepsilon^r.
	\end{align}
\end{lemma}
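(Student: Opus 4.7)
The plan is to mimic at the level of $\pa^{\al}$ with $1\leq|\al|\leq N-1$ the zero-order computation \eqref{diss0rho1}--\eqref{control0error}. Apply $\pa^{\al}$ to the second (momentum) equation of \eqref{macro1} and take the $L^2_x$ inner product with $\na_x\pa^{\al}\widetilde{\rho}$. The leading contribution $(\pa^{\al}(\frac{2\bar{\theta}}{3\bar{\rho}}\na_x\widetilde{\rho}),\na_x\pa^{\al}\widetilde{\rho})$ gives, after isolating the diagonal piece $\frac{2\bar\theta}{3\bar\rho}|\pa^{\al}\na_x\widetilde{\rho}|^{2}$, the desired positive quadratic form $c\|\pa^{\al}\na_x\widetilde{\rho}\|^{2}$, with lower bound on the coefficient from \eqref{background} and \eqref{apriori}; the commutator pieces from distributing $\pa^{\al}$ onto $\frac{2\bar{\theta}}{3\bar{\rho}}$ are of the type already handled in \eqref{ufluid}.

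The structurally important term is the time derivative. One writes
\[
(\pa_t\pa^{\al}\widetilde{u},\na_x\pa^{\al}\widetilde{\rho})
=\frac{d}{dt}(\pa^{\al}\widetilde{u},\na_x\pa^{\al}\widetilde{\rho})
+(\na_x\pa^{\al}\widetilde{u},\pa_t\pa^{\al}\widetilde{\rho}),
\]
after integration by parts in $t$ and in $x$, and then replaces $\pa_t\widetilde{\rho}$ by $-u\cdot\na_x\widetilde{\rho}-\bar{\rho}\na_x\cdot\widetilde{u}-\widetilde{u}\cdot\na_x\bar{\rho}-\widetilde{\rho}\na_x\cdot u$ from the continuity equation. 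The resulting contributions are controlled by $\|\na_x\pa^{\al}\widetilde{u}\|^{2}$ together with nonlinear commutators bounded via Sobolev, \eqref{background}, \eqref{apriori} by $C\eta\varepsilon^{r}+C\sqrt{\mathcal{E}_{N,k}(t)}\mathcal{D}_{N,k}(t)$, exactly as in \eqref{diss0rho2}--\eqref{diss0rho3}.

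The remaining terms follow the blueprint of Lemmas \ref{lem.rho} and \ref{lem.u}. Transport and background terms $(u\cdot\na_x\widetilde{u},\widetilde{u}\cdot\na_x\bar{u},(\theta/\rho-\bar{\theta}/\bar{\rho})\na_x\rho,\na_x\widetilde{\theta})$ paired with $\na_x\pa^{\al}\widetilde{\rho}$ produce $C\eta\varepsilon^{r}+C\sqrt{\mathcal{E}_{N,k}(t)}\mathcal{D}_{N,k}(t)$ plus $\ka\|\pa^{\al}\na_x\widetilde{\rho}\|^{2}+C_{\ka}\|\pa^{\al}\na_x\widetilde{\theta}\|^{2}$. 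The microscopic flux $\pa^{\al}(\frac{1}{\rho}\int_{\R^3}v\otimes v\cdot\na_x G\,dv)$ is decomposed via $G=\overline{G}+g_1+\sqrt{\mu}g_2$: the $\overline{G}$-piece is $O(\varepsilon)$ by Lemma \ref{lebarG}; the $g_1$-piece contributes $\|w_2\pa^{\al}g_1\|^{2}$; and the $g_2$-piece splits into $\mathbf{P}_0g_2$, which is recovered from $g_1$ via \eqref{g2macro}, and $\mathbf{P}_1g_2$, which yields exactly the $\|\pa^{\al}\mathbf{P}_1g_2\|^{2}_{L^2_{v,D}}$ term on the right-hand side.

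After choosing $\ka$ sufficiently small, multiplying by $\varepsilon$, and summing over $1\leq|\al|\leq N-1$, the left-hand side reproduces the claimed dissipation since $|\al|+1$ ranges over $\{2,\dots,N\}$. The principal obstacle is that $\na_x\pa^{\al}\widetilde{\rho}$ sits at the highest admissible spatial order $N$ and carries no intrinsic $\sqrt{\varepsilon}$-smallness; every pairing with it must therefore be arranged so that, after the final multiplication by $\varepsilon$, the bad contribution can be absorbed into $\ka\varepsilon\|\pa^{\al}\na_x\widetilde{\rho}\|^{2}$ on the left. This forces careful use of the commutator-type bounds from Lemmas \ref{lem.rho}--\ref{lem.theta} and of \eqref{patN-2}--\eqref{patN-1} whenever $\pa_t$ acting on a fluid quantity enters indirectly through the momentum equation.
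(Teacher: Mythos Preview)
Your proposal is correct and follows essentially the same approach as the paper: apply $\pa^{\al}$ to the momentum equation of \eqref{macro1}, pair with $\na_x\pa^{\al}\widetilde{\rho}$, handle the time derivative via the continuity equation as in \eqref{diss0rho2}--\eqref{diss0rho3}, decompose the microscopic flux using $G=\overline{G}+g_1+\sqrt{\mu}g_2$ together with \eqref{g2macro}, and bound the remaining nonlinear and background terms by the patterns established in \eqref{brtutr}, \eqref{urr1}, \eqref{urr2}, \eqref{trutr2}. The paper's own proof is a two-sentence sketch pointing to exactly these earlier arguments, so your expansion simply fills in what the authors leave to the reader; the only superfluous remark is the invocation of \eqref{patN-2}--\eqref{patN-1}, which is not needed here since the continuity equation eliminates $\pa_t\widetilde{\rho}$ directly.
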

\begin{proof}
	Applying  $\partial^{\alpha}$ to the second equation of \eqref{macro1} with $1\leq|\alpha|\leq N-1$  
	and then taking the inner product of the resulting equation with  $\nabla_x\partial^{\alpha}\widetilde{\rho}$, one gets
	\begin{align*}	
		&(\partial^{\alpha}\partial_t\widetilde{u},\nabla_x\partial^{\alpha}\widetilde{\rho})
		+(\frac{2\bar{\theta}}{3\bar{\rho}}\nabla_x\partial^{\alpha}\widetilde{\rho},\nabla_x\partial^{\alpha}\widetilde{\rho})
		+\sum_{\alpha_{1}\leq \alpha, |\alpha_{1}|\geq1}C_{\alpha}^{\alpha_{1}}(\partial^{\alpha_1}(\frac{2\bar{\theta}}{3\bar{\rho}})\nabla_x\partial^{\alpha-\alpha_{1}}\widetilde{\rho},\nabla_x\partial^{\alpha}\widetilde{\rho})
		\nonumber\\
		=&
		-\big(\partial^{\alpha}(u\cdot\nabla_x\widetilde{u})+\frac{2}{3}\nabla_x\partial^{\alpha}\widetilde{\theta}+\partial^{\alpha}(\widetilde{u}\cdot\nabla_x\bar{u})+\partial^{\alpha}[
		\frac{2}{3}(\frac{\theta}{\rho}-\frac{\bar{\theta}}{\bar{\rho}})\nabla_x\rho],\nabla_x\partial^{\alpha}\widetilde{\rho}\big)
		\nonumber\\
		&-(\partial^{\alpha}(\frac{1}{\rho}\int_{\mathbb{R}^{3}} v\otimes v\cdot\nabla_x G dv),\nabla_x\partial^{\alpha}\widetilde{\rho}).
	\end{align*}
	Then \eqref{dissirho} follows from $G(t,x,v)=\overline{G}(t,x,v)+g_1(t,x,v)+\sqrt{\mu}g_2(t,x,v)$, \eqref{g2macro} and similar arguments in \eqref{brtutr}, \eqref{urr1}, \eqref{urr2} and \eqref{trutr2}.
\end{proof}
Noticing in \eqref{energyu} and \eqref{energytheta}, there are terms like $$\frac{d}{dt}\int_{\mathbb{R}^{3}}\int_{\mathbb{R}^{3}}
\Big\{\partial^{\alpha}[\frac{1}{\rho}\partial_{x_j}(R\theta B_{ij}(\frac{v-u}{\sqrt{R\theta}})\frac{\varepsilon P_{1}(\sqrt{\mu}g_2)}{M})]
\partial^{\alpha}\widetilde{u}_i\Big\} dvdx.$$ Therefore, before taking combination, we estimate such terms.
\begin{lemma}
	For any $1\leq|\alpha|\leq N-1$ and $0<\ka<1$, it holds that
	\begin{align}\label{cor}
		&\sum_{1\leq |\alpha|\leq N-1}\sum^{3}_{i,j=1}\int_{\mathbb{R}^{3}}\int_{\mathbb{R}^{3}}
		\Big|\partial^{\alpha}[\frac{1}{\rho}\partial_{x_j}(R\theta B_{ij}(\frac{v-u}{\sqrt{R\theta}})\frac{\varepsilon P_{1}(\sqrt{\mu}g_2)}{M})]
		\partial^{\alpha}\widetilde{u}_i\Big| dvdx
		\nonumber\\
		&+\sum_{1\leq |\alpha|\leq N-1}\sum^{3}_{i=1}\int_{\mathbb{R}^{3}}\int_{\mathbb{R}^{3}}
		\Big|\partial^{\alpha}[\frac{1}{\rho}\partial_{x_i}
		((R\theta)^{\frac{3}{2}}A_{i}(\frac{v-u}{\sqrt{R\theta}})\frac{\varepsilon P_{1}(\sqrt{\mu}g_2)}{M})]\frac{1}{\bar{\theta}}\partial^{\alpha}\widetilde{\theta}\Big| dvdx
		\nonumber\\	
		&+\sum_{1\leq |\alpha|\leq N-1}\sum^{3}_{i,j=1}\int_{\mathbb{R}^{3}}\int_{\mathbb{R}^{3}}
		\Big|\partial^{\alpha}[\frac{1}{\rho}
		\partial_{x_i}u_j R\theta B_{ij}(\frac{v-u}{\sqrt{R\theta}})\frac{\varepsilon P_{1}(\sqrt{\mu}g_2)}{M}]\frac{1}{\bar{\theta}}\partial^{\alpha}\widetilde{\theta}\Big| dvdx\notag\\
		\leq&C\ka 
		\sum_{1\leq |\alpha|\leq N-1}\|\partial^{\alpha}(\widetilde{\rho},\widetilde{u},\widetilde{\theta})\|^2+C_{\ka}\varepsilon^2\sum_{|\alpha|=N}\|\partial^{\alpha}g_2\|^2+
		C_{\ka}(\eta+\varepsilon^{r/2})\varepsilon^{r}.
	\end{align}
\end{lemma}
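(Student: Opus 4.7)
I focus on the first sum; the other two are handled identically with $A_i$ (resp.\ $\partial_{x_i}u_j\,B_{ij}$) in place of $B_{ij}$ and with $\partial^\alpha\widetilde\theta$ replacing $\partial^\alpha\widetilde u_i$. The Leibniz rule distributes the total of $|\alpha|+1\leq N$ spatial derivatives in $\partial^\alpha\partial_{x_j}\bigl[\tfrac{R\theta}{\rho}B_{ij}(\tfrac{v-u}{\sqrt{R\theta}})\tfrac{\varepsilon P_1(\sqrt\mu g_2)}{M}\bigr]$ among the $(\rho,u,\theta)$-coefficients (including the $v$-dependent Burnett factor) and $P_1(\sqrt\mu g_2)$. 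I split by how many derivatives hit $g_2$.

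\textbf{Top-order term.} The decisive term is the one where all $|\alpha|+1$ derivatives land on $g_2$, producing $\tfrac{\varepsilon R\theta\,B_{ij}}{\rho M}\partial^{\alpha+e_j}P_1(\sqrt\mu g_2)$. Pair it with $\partial^\alpha\widetilde u_i$ and integrate. The $v$-integral is bounded via Cauchy--Schwarz using $\|w_k\mu^{-1/2}B_{ij}(\tfrac{v-u}{\sqrt{R\theta}})\|_{L^2_v}\leq C$ from \eqref{boundbur} and $M/\mu\leq C$ from \eqref{rhoutheta}, giving
\[
\varepsilon\,\|\partial^{\alpha+e_j}g_2\|\,\|\partial^\alpha\widetilde u_i\|\leq\kappa\|\partial^\alpha\widetilde u_i\|^2+C_\kappa\varepsilon^2\|\partial^{\alpha+e_j}g_2\|^2
\]
by Young's inequality. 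When $|\alpha|=N-1$, the rightmost quantity is exactly the $\varepsilon^2\sum_{|\alpha|=N}\|\partial^\alpha g_2\|^2$ contribution in the stated bound; when $|\alpha+e_j|\leq N-1$, it is $\leq C_\kappa\varepsilon^2\mathcal E_{N,k}(t)\leq C_\kappa\eta_0\varepsilon^{2+r}$, absorbed into the $(\eta+\varepsilon^{r/2})\varepsilon^r$ remainder.

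\textbf{Mixed terms.} For every other Leibniz term, strictly fewer than $N$ derivatives fall on $g_2$. Writing the term as a product of a derivative of $\tfrac{1}{\rho},R\theta,B_{ij}(\tfrac{v-u}{\sqrt{R\theta}})$ of order $\leq N-1$ against $\partial^\beta g_2$ with $|\beta|\leq N-1$, I place the factor of larger differential order in $L^2_x$ and the lower one in $L^\infty_x$ via Sobolev embedding. The $L^\infty$ coefficient factor is bounded by $C(\eta+\varepsilon^{r/2})$ using \eqref{background}, \eqref{rhoutheta}, \eqref{apriori}, while $\|\partial^\beta g_2\|\leq\sqrt{\mathcal E_{N,k}(t)}\leq\sqrt{\eta_0}\,\varepsilon^{r/2}$ and $\|\partial^\alpha\widetilde u_i\|\leq\sqrt{\eta_0}\,\varepsilon^{r/2}$. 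The product is then $\leq C\varepsilon(\eta+\varepsilon^{r/2})\varepsilon^{r}\leq C_\kappa(\eta+\varepsilon^{r/2})\varepsilon^{r}$, as required. The analogous decomposition works verbatim for the $A_i$-integral and for the third integral, where the additional $\partial_{x_i}u_j$ is treated as one more coefficient of order $\leq N-1$ whose $L^\infty$-norm is $\leq C(\eta+\varepsilon^{r/2})$ when it is not the one of maximal order.

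\textbf{Main obstacle.} The delicate point is not any single algebraic manipulation but rather matching the $\varepsilon$-weights so that the top-order term $\partial^{\alpha+e_j}g_2$ with $|\alpha+e_j|=N$ lands exactly inside the $\varepsilon^2\sum_{|\alpha|=N}\|\partial^\alpha g_2\|^2$ piece of $\mathcal E_{N,k}$ in \eqref{ENk}. This works precisely because the $\varepsilon$ prefactor in $\varepsilon P_1(\sqrt\mu g_2)$ (inherited from $\overline G+\sqrt\mu g_2=\varepsilon L_M^{-1}P_1(v\cdot\nabla_xM)+L_M^{-1}\Theta$ in \eqref{reG}) allows Young's inequality to split $\varepsilon\cdot\varepsilon=\kappa\cdot 1+C_\kappa\cdot\varepsilon^2$, which is consistent with the design of the energy functional. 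Without this $\varepsilon$-cancellation (or with weaker decay of $B_{ij},A_i$ in $v$), the top-order $g_2$-derivative could not be absorbed and the estimate would fail.
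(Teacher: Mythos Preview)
Your approach is essentially the same as the paper's: isolate the top-order term where all $|\alpha|+1$ derivatives hit $g_2$, bound its $v$-integral via Cauchy--Schwarz and the Burnett-function bound \eqref{boundbur}, then apply Young's inequality to produce the $\kappa\|\partial^\alpha\widetilde u_i\|^2+C_\kappa\varepsilon^2\|\partial^{\alpha+e_j}g_2\|^2$ split; the remaining Leibniz terms are absorbed into $(\eta+\varepsilon^{r/2})\varepsilon^r$ via H\"older/Sobolev and the a priori bounds.

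One correction: the inequality you invoke from \eqref{rhoutheta} should read $\mu/M\leq C$, not $M/\mu\leq C$. Since $\frac{3}{2}<\theta<2$ gives $R\theta>1$, the local Maxwellian $M$ decays \emph{slower} than $\mu$, so $M/\mu$ is unbounded at large $|v|$. What you actually need in the Cauchy--Schwarz step is
\[
\Bigl\|B_{ij}\Bigl(\tfrac{v-u}{\sqrt{R\theta}}\Bigr)\tfrac{\sqrt\mu}{M}\Bigr\|_{L^2_v}
\leq \Bigl\|\tfrac{\mu}{M}\Bigr\|_{L^\infty_v}\,\Bigl\|B_{ij}\Bigl(\tfrac{v-u}{\sqrt{R\theta}}\Bigr)\mu^{-1/2}\Bigr\|_{L^2_v}\leq C,
\]
which uses $\mu/M\leq C$ together with \eqref{boundbur}; the paper equivalently writes this as a bound on $\|B_{ij}M^{-1/2}\|_{L^2_v}$. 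With this fix, your argument goes through and matches the paper's proof.
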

\begin{proof}
	We only prove it for the case
	$$
	\int_{\mathbb{R}^{3}}\int_{\mathbb{R}^{3}}
	\Big|\partial^{\alpha}[\frac{1}{\rho}\partial_{x_j}(R\theta B_{ij}(\frac{v-u}{\sqrt{R\theta}})\frac{\varepsilon \sqrt{\mu}g_2}{M})]
	\partial^{\alpha}\widetilde{u}_i\Big| dvdx,
	$$
since other cases can be calculated with the same approach. It is direct to see that
	\begin{align*}
		&\int_{\mathbb{R}^{3}}\int_{\mathbb{R}^{3}}
		\Big|\partial^{\alpha}[\frac{1}{\rho}\partial_{x_j}(R\theta B_{ij}(\frac{v-u}{\sqrt{R\theta}})\frac{\varepsilon\sqrt{\mu}g_2}{M})]
		\partial^{\alpha}\widetilde{u}_i\Big| dvdx\notag\\
		\leq &\int_{\mathbb{R}^{3}}\int_{\mathbb{R}^{3}}
		\Big|\frac{1}{\rho}R\theta B_{ij}(\frac{v-u}{\sqrt{R\theta}})\frac{\varepsilon \sqrt{\mu}\partial^{\alpha}\partial_{x_j}g_2}{M}
		\partial^{\alpha}\widetilde{u}_i\Big| dvdx\notag\\
		&
		+C\sum_{\alpha_{1}\leq \alpha, |\alpha_{1}|\geq1}\int_{\mathbb{R}^{3}}\int_{\mathbb{R}^{3}}
		\Big|\partial^{\al_1}[\frac{1}{\rho}\partial_{x_j}(R\theta B_{ij}(\frac{v-u}{\sqrt{R\theta}})\frac{1}{M})]\varepsilon\sqrt{\mu}\pa^{\al-\al_1}g_2
		\partial^{\alpha}\widetilde{u}_i\Big| dvdx.
	\end{align*}
	Using \eqref{apriori}, H\"older's inequality and the fact that for $|\al|\leq N-2$,
	$$
	|\pa^\al(\rho-1,u,\theta-\frac{3}{2})|+\|\pa^\al(g_1,g_2)\|_{L^\infty_xL^2_v}\leq C(\eta+\bar{\eta}+\eta_0)\leq C,
	$$
	we have
	$$
	\sum_{\alpha_{1}\leq \alpha, |\alpha_{1}|\geq1}\int_{\mathbb{R}^{3}}\int_{\mathbb{R}^{3}}
	\Big|\partial^{\al_1}[\frac{1}{\rho}\partial_{x_j}(R\theta B_{ij}(\frac{v-u}{\sqrt{R\theta}})\frac{1}{M})]\varepsilon\sqrt{\mu}\pa^{\al-\al_1}g_2
	\partial^{\alpha}\widetilde{u}_i\Big| dvdx\leq C(\eta+\varepsilon^{r/2})\varepsilon^{r},
	$$
	which yields
	\begin{align*}
		&\int_{\mathbb{R}^{3}}\int_{\mathbb{R}^{3}}
		\Big|\partial^{\alpha}[\frac{1}{\rho}\partial_{x_j}(R\theta B_{ij}(\frac{v-u}{\sqrt{R\theta}})\frac{\varepsilon \sqrt{\mu}g_2}{M})]
		\partial^{\alpha}\widetilde{u}_i\Big| dvdx\notag\\
		\leq& \int_{\mathbb{R}^{3}}\int_{\mathbb{R}^{3}}
		\Big|\frac{1}{\rho}R\theta B_{ij}(\frac{v-u}{\sqrt{R\theta}})\frac{\varepsilon \sqrt{\mu}\partial^{\alpha}\partial_{x_j}g_2}{M}
		\partial^{\alpha}\widetilde{u}_i\Big| dvdx+C(\eta+\varepsilon^{r/2})\varepsilon^{r}\notag\\
		\leq& C\int_{\mathbb{R}^{3}}\Big(\int_{\mathbb{R}^{3}}
		\Big| B_{ij}(\frac{v-u}{\sqrt{R\theta}})M^{-\frac{1}{2}}\Big|^2dv\Big)^\frac{1}{2}\varepsilon\| \partial^{\alpha}\partial_{x_j}g_2\|_{L^2_v}
		|\partial^{\alpha}\widetilde{u}_i| dx+C(\eta+\varepsilon^{r/2})\varepsilon^{r}\notag\\
		\leq& C\varepsilon\|\partial^{\alpha}\widetilde{u}\|\|\partial^{\alpha}\partial_{x_j}g_2\|+
		C_{\ka}(\eta+\varepsilon^{r/2})\varepsilon^{r}.
	\end{align*}
	If $|\al|\leq N-2$, then $\varepsilon\|\partial^{\alpha}\widetilde{u}\|\|\partial^{\alpha}\partial_{x_j}g_2\|\leq C\varepsilon^{1+r}$. If $|\al|=N-1$, then 
	$$
	\varepsilon\|\partial^{\alpha}\widetilde{u}\|\|\partial^{\alpha}\partial_{x_j}g_2\|\leq \ka \|\partial^{\alpha}\widetilde{u}\|^2+ C_\ka\varepsilon^2\|\partial^{\alpha}\nabla_{x}g_2\|^2.
	$$ 
	Thus, it holds that
	\begin{align*}
		&\int_{\mathbb{R}^{3}}\int_{\mathbb{R}^{3}}
		\Big|\partial^{\alpha}[\frac{1}{\rho}\partial_{x_j}(R\theta B_{ij}(\frac{v-u}{\sqrt{R\theta}})\frac{\varepsilon \sqrt{\mu}g_2}{M})]
		\partial^{\alpha}\widetilde{u}_i\Big| dvdx\notag\\
		\leq&  C\ka \|\partial^{\alpha}\widetilde{u}\|^2+ C_{\ka}\varepsilon^2\sum_{|\alpha|=N}\|\partial^{\alpha}g_2\|^2+
		C_{\ka}(\eta+\varepsilon^{r/2})\varepsilon^{r},
	\end{align*}
	which gives the desired estimate.
\end{proof}

Combining \eqref{energyrho}, \eqref{energyu}, \eqref{energytheta}, \eqref{dissirho},  \eqref{cor} and our a priori assumption \eqref{apriori}, then choosing $\ka$ to be small, we directly get the following lemma.
\begin{lemma}\label{lem.macroN-1}
	It holds that
	\begin{align}\label{macroN-1}
		&\sum_{1\leq |\alpha|\leq N-1}\|\partial^{\alpha}(\widetilde{\rho},\widetilde{u},\widetilde{\theta})(t)\|^2+c\varepsilon\sum_{2\leq |\alpha|\leq N}\int^t_0\|\partial^{\alpha}(\widetilde{\rho},\widetilde{u},\widetilde{\theta})(s)\|^2ds
		\nonumber\\
		\leq& C\sum_{1\leq |\alpha|\leq N-1}\|\partial^{\alpha}(\widetilde{\rho},\widetilde{u},\widetilde{\theta})(0)\|^2+C\varepsilon^2\sum_{|\alpha|=N}
		\|\partial^{\alpha}\widetilde{\rho}(0)\|^2+C\varepsilon^2\sum_{|\alpha|=N}
		(\|\partial^{\alpha}\widetilde{\rho}(t)\|^2+
		\|\partial^{\alpha}g_2(t)\|^2)
		\nonumber\\
		&+ C\varepsilon\sum_{1\leq |\alpha|\leq N}\int^t_0(\|w_2 \partial^{\alpha}g_1(s)\|^2+\| \partial^{\alpha}\mathbf{P}_1g_2(s)\|_{L^2_{v,D}}^2)ds+(C\ka+C_\ka\varepsilon^{r/2})\int^t_0\mathcal{D}_N(s) ds\notag\\
		&+C_\ka \frac{1}{\varepsilon}\int_0^t\|w_3g_1(s)\|_{H^{N-1}_xL^2_v}ds+C_\ka(\eta+\varepsilon^{r/2})\varepsilon^{r}+C_\ka t\eta\varepsilon^{r}+C_\ka\int^t_0\min\{\eta_0^{1/2}\varepsilon^{r/2-1}\mathcal{D}_{N,k}(s),\eta^{3/2}_0\varepsilon^r\}ds,
	\end{align}
	where $C_\ka>1$ depends only on $\ka$.
\end{lemma}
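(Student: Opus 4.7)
The plan is to sum the differential inequalities \eqref{energyrho}, \eqref{energyu}, \eqref{energytheta} over all multi-indices $\alpha$ with $1\le|\alpha|\le N-1$, add a small multiple of $\varepsilon\cdot$\eqref{dissirho}, and then integrate in time over $[0,t]$. The key algebraic observation that makes the summation productive is that the pressure--velocity cross terms cancel up to harmless commutators: $(\nabla_x\cdot\partial^{\alpha}\widetilde u,\tfrac{2\bar\theta}{3\bar\rho}\partial^{\alpha}\widetilde\rho)$ from \eqref{energyrho} and $(\tfrac{2\bar\theta}{3\bar\rho}\partial^{\alpha}\nabla_x\widetilde\rho,\partial^{\alpha}\widetilde u)$ from \eqref{energyu} combine, after one integration by parts in $x$, to produce only a $\nabla_x(\tfrac{2\bar\theta}{3\bar\rho})$-factor which is $O(\eta)$ by \eqref{background}, while $(\tfrac{2}{3}\partial^{\alpha}\nabla_x\widetilde\theta,\partial^{\alpha}\widetilde u)$ and $(\tfrac{2}{3}\nabla_x\cdot\partial^{\alpha}\widetilde u,\partial^{\alpha}\widetilde\theta)$ cancel exactly. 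Treating the weights $\tfrac{2\bar\theta}{3\bar\rho^2}$ and $\tfrac{1}{\bar\theta}$ as quantities comparable to $1$ (again by \eqref{background}), the accumulated time derivative is $\tfrac{1}{2}\tfrac{d}{dt}\sum_{1\le|\alpha|\le N-1}\|\partial^{\alpha}(\widetilde\rho,\widetilde u,\widetilde\theta)\|^{2}$ plus boundary-type contributions handled below, and the accumulated dissipation produced by the $(\widetilde u,\widetilde\theta)$ estimates together with \eqref{dissirho} rearranges to $c\varepsilon\sum_{2\le|\alpha|\le N}\|\partial^{\alpha}(\widetilde\rho,\widetilde u,\widetilde\theta)\|^{2}$.

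The subtle point is the treatment of the time-derivative boundary quantities. Three families appear: the $B_{ij}$-type terms of the form $\sum_{j}\tfrac{d}{dt}\iint\partial^{\alpha}[\tfrac{1}{\rho}\partial_{x_j}(R\theta B_{ij}(\tfrac{v-u}{\sqrt{R\theta}})\tfrac{\varepsilon P_{1}(\sqrt\mu g_2)}{M})]\partial^{\alpha}\widetilde u_i\,dvdx$ from \eqref{energyu}, their $A_i$- and $u_jB_{ij}$-analogues from \eqref{energytheta}, and the auxiliary correction $\varepsilon(\partial^{\alpha}\widetilde u,\nabla_x\partial^{\alpha}\widetilde\rho)$ arising from \eqref{dissirho}. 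After time integration, each contributes a value at $t$ and at $0$. Applying \eqref{cor} to the first two families at time $t$ produces the bound $C\kappa\sum_{1\le|\alpha|\le N-1}\|\partial^{\alpha}(\widetilde\rho,\widetilde u,\widetilde\theta)(t)\|^{2}+C_{\kappa}\varepsilon^{2}\sum_{|\alpha|=N}\|\partial^{\alpha}g_2(t)\|^{2}+C_{\kappa}(\eta+\varepsilon^{r/2})\varepsilon^{r}$, and Cauchy--Schwarz gives $|\varepsilon(\partial^{\alpha}\widetilde u,\nabla_x\partial^{\alpha}\widetilde\rho)(t)|\le\kappa\|\partial^{\alpha}\widetilde u(t)\|^{2}+C_{\kappa}\varepsilon^{2}\|\nabla_x\partial^{\alpha}\widetilde\rho(t)\|^{2}$, which for the endpoint $|\alpha|=N-1$ accounts precisely for the term $C\varepsilon^{2}\sum_{|\alpha|=N}\|\partial^{\alpha}\widetilde\rho(t)\|^{2}$ on the right of \eqref{macroN-1}. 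The corresponding values at $t=0$ are likewise absorbed into $C\sum_{1\le|\alpha|\le N-1}\|\partial^{\alpha}(\widetilde\rho,\widetilde u,\widetilde\theta)(0)\|^{2}+C\varepsilon^{2}\sum_{|\alpha|=N}\|\partial^{\alpha}\widetilde\rho(0)\|^{2}$, and in fact the $g_2$-dependent portion at $t=0$ vanishes thanks to the Caflisch initialization $g_2(0,x,v)\equiv 0$.

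What remains is to route the already-established right-hand side pieces into their respective slots in \eqref{macroN-1}: the $\varepsilon$-weighted $g_2$-terms produced by \eqref{cor} plus the $\|w_2\partial^{\alpha}g_1\|^{2}$ and $\|\partial^{\alpha}\mathbf{P}_{1}g_2\|_{L^{2}_{v,D}}^{2}$ terms in \eqref{dissirho} combine into $C\varepsilon\sum_{1\le|\alpha|\le N}(\|w_2\partial^{\alpha}g_1\|^{2}+\|\partial^{\alpha}\mathbf{P}_1g_2\|_{L^{2}_{v,D}}^{2})$; the $\|w_3g_1\|_{H^{N-1}_xL^{2}_v}$ contributions from the microscopic flux integrals in \eqref{energyu}, \eqref{energytheta} yield the $C_{\kappa}\varepsilon^{-1}\int_{0}^{t}\|w_3g_1\|_{H^{N-1}_xL^{2}_v}\,ds$ piece; the $\kappa$-small and $\varepsilon^{r/2}$-small pieces produced across \eqref{energyu}, \eqref{energytheta}, \eqref{cor} bundle into $(C\kappa+C_\kappa\varepsilon^{r/2})\int_{0}^{t}\mathcal D_{N,k}(s)\,ds$; the $O(\eta\varepsilon^{r})$ background terms integrated in time give $C_{\kappa}t\eta\varepsilon^{r}$, with the time-independent $C_\kappa(\eta+\varepsilon^{r/2})\varepsilon^{r}$ piece coming from the endpoint boundary quantities; and the cubic remainder $\min\{\varepsilon^{-1}\sqrt{\mathcal E_{N,k}}\mathcal D_{N,k},\eta_0^{3/2}\varepsilon^{r}\}$ propagates unchanged via \eqref{apriori}. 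The main obstacle is quantitative rather than structural: one must choose $\kappa$ small enough that the boundary contribution $C\kappa\|\partial^{\alpha}(\widetilde\rho,\widetilde u,\widetilde\theta)(t)\|^{2}$ and the dissipation contribution $C\kappa\int_{0}^{t}\mathcal D_{N,k}\,ds$ can both be absorbed back into the corresponding terms already on the left-hand side, which is possible because the extracted energy and dissipation constants are independent of $\kappa$. This absorption step concludes \eqref{macroN-1}.
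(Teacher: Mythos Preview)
Your proposal is correct and follows essentially the same approach as the paper's own proof, which is simply the one-line statement ``Combining \eqref{energyrho}, \eqref{energyu}, \eqref{energytheta}, \eqref{dissirho}, \eqref{cor} and our a priori assumption \eqref{apriori}, then choosing $\kappa$ to be small, we directly get the following lemma.'' You have correctly identified the cancellation of the cross terms via integration by parts, the role of \eqref{cor} in handling the $B_{ij}$- and $A_i$-type boundary quantities, the source of the $\varepsilon^{2}\|\partial^{\alpha}\widetilde\rho\|^{2}$ terms from the auxiliary functional in \eqref{dissirho}, and the vanishing of the $g_2$-dependent initial contributions due to $g_2(0,\cdot,\cdot)\equiv 0$. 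One minor remark: in the final absorption step, only the boundary energy term $C\kappa\|\partial^{\alpha}(\widetilde\rho,\widetilde u,\widetilde\theta)(t)\|^{2}$ and the $\kappa\varepsilon\|\nabla_x\partial^{\alpha}\widetilde u\|^{2}$-type pieces are actually absorbed into the left side, while the $(C\kappa+C_{\kappa}\varepsilon^{r/2})\int_{0}^{t}\mathcal D_{N,k}\,ds$ term is retained on the right in the stated inequality \eqref{macroN-1}; this does not affect the validity of your argument.
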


\section{Estimates on lower order microscopic quantities}
In this section, our main task is to establish Lemma \ref{leloworderg1} and Lemma \ref{leloworderg2}, which give the bounds for microscopic quantities $g_1$ and $g_2$ from \eqref{g1} and \eqref{g2}.

Applying $\partial^{\alpha}$ to \eqref{g1} with $1\leq|\alpha|\leq N-1$ and taking the inner product of the resulting equation with
$w_{2k-8|\al|}\partial^{\alpha}g_1$ over $\mathbb{R}^{3}\times\mathbb{R}^{3}$, one has
\begin{align}\label{ipg1}
	\frac{1}{2}\frac{d}{dt}\|w_{k-4|\al|}\partial^{\alpha}g_1\|^{2}\leq& \frac{1}{\varepsilon}(\CL_D \partial^{\alpha}g_1,w_{2k-8|\al|}\partial^{\alpha}g_1)+\frac{1}{\varepsilon}(\partial^{\alpha}Q(g_1,g_1),w_{2k-8|\al|}\partial^{\alpha}g_1)\notag\\
	&+\frac{1}{\varepsilon}\big((\partial^{\alpha}Q(\sqrt{\mu}g_2,g_1),w_{2k-8|\al|}\partial^{\alpha}g_1)
	+(\partial^{\alpha}Q(g_1,\sqrt{\mu}g_2),w_{2k-8|\al|}\partial^{\alpha}g_1)\big)
	\notag\\
	&+\frac{1}{\varepsilon}\big((\partial^{\alpha}Q(M-\mu,g_1),w_{2k-8|\al|}\partial^{\alpha}g_1)
	+(\partial^{\alpha}Q(g_1,M-\mu),w_{2k-8|\al|}\partial^{\alpha}g_1)\big)
	\notag\\
	&+\frac{1}{\varepsilon}\big((\partial^{\alpha}Q(\overline{G},g_1),w_{2k-8|\al|}\partial^{\alpha}g_1)
	+(\partial^{\alpha}Q(g_1,\overline{G}),w_{2k-8|\al|}\partial^{\alpha}g_1)\big).
\end{align}
In the above inequality, it will be easier to estimate three terms which contain $Q(g_1,\sqrt{\mu}g_2)$, $Q(g_1,M-\mu)$ and $Q(g_1,\overline{G})$ since $\sqrt{\mu}g_2$, $M-\mu$ and $\overline{G}$ all produce exponential decay in $v$ that can absorb any polynomial increase. For the rest terms, we will bound them for two cases, one is that all the derivatives acting on $g_1$ like $Q(\cdot,\pa^\al g_1)$, the other will be in the form of $Q(\cdot,\pa^{\al_1} g_1)$ with $\al_1<\al$. We focus on the terms including $Q(\cdot,\pa^\al g_1)$ first.

\begin{lemma}\label{leestLD}
	There exists a constant $c$ such that for $1\leq|\alpha|\leq N-1$ and $k\geq 25+4|\al|$, it holds that
	\begin{align}\label{estLD}
		&\frac{1}{\varepsilon}(L_D  \partial^\alpha g_1,  w_{2k-8|\alpha|}\partial^\alpha g_1 )   +   \frac{1}{\varepsilon}(Q(g_1+\sqrt\mu g_2+(M-\mu)+\overline{G},  \partial^\alpha g_1),  w_{2k-8|\alpha|}\partial^\alpha g_1 ) 
		\notag\\
		\le&   - c \frac{1}{\varepsilon}\Vert w_{k-4|\alpha |}\partial^\alpha g_1 \Vert_{H^s_{\ga/2}}^2
		+C_k(\eta+\bar\eta)\mathcal{D}_{N,k}(t)+C_k\frac{1}{\sqrt\varepsilon}\sqrt{\mathcal{E}_{N,k}(t)}
		\mathcal{D}_{N,k}(t),
	\end{align}
	where $C_k$ depends only on $k$.
\end{lemma}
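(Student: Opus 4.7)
The plan is to apply Lemma \ref{leLD} pointwise in the spatial variable to $f=\partial^\alpha g_1$ with the background density $F=\mu+g_1+\sqrt\mu g_2+(M-\mu)+\overline{G}$, then integrate over $\mathbb{R}^3_x$. Writing $L_\mu h=Q(\mu,h)+Q(h,\mu)$, the left-hand side of \eqref{estLD} rearranges into $\varepsilon^{-1}(\CL_D f+Q(g,f),w_{2(k-4|\alpha|)}f)_{L^2_v}$ with $g=g_1+\sqrt\mu g_2+(M-\mu)+\overline{G}$, which is exactly the combination covered by Lemma \ref{leLD}. The required hypotheses on $F$ (nonnegativity, lower $L^1_v$ bound, upper $L^1_{v,2}+L\log L$ bound) hold uniformly in $(t,x)$ because $F$ is the full nonnegative Boltzmann solution and $\rho,u,\theta$ are controlled by \eqref{rhoutheta}.

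Applying Lemma \ref{leLD} with its weight index shifted from $k$ to $k-4|\alpha|$ (permissible since $k-4|\alpha|\ge 25\ge 14$ by the hypothesis) yields, pointwise in $(t,x)$,
\begin{equation*}
(\CL_D f+Q(g,f),w_{2(k-4|\alpha|)}f)\le -\delta\|f\|_{H^s_{k-4|\alpha|+\gamma/2}}^2 + C_k\,R_1 + C_k\,R_2,
\end{equation*}
where $R_1=\|f\|_{L^2_{14}}\|g\|_{H^s_{k-4|\alpha|+\gamma/2}}\|f\|_{H^s_{k-4|\alpha|+\gamma/2}}$ and $R_2=\|g\|_{L^2_{14}}\|f\|_{H^s_{k-4|\alpha|+\gamma/2}}^2$. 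Integrating over $x$ and multiplying by $\varepsilon^{-1}$, the coercive first term furnishes the desired negative contribution $-c\varepsilon^{-1}\|w_{k-4|\alpha|}\partial^\alpha g_1\|_{L^2_xH^s_{\gamma/2}}^2$. Since $g$ carries no spatial derivative, H\"older's inequality in $x$ allows placing $g$ in $L^\infty_x$ via the Sobolev embedding $H^2_x\hookrightarrow L^\infty_x$ (valid since $N\ge 3$), while the two $L^2_x$ factors are supplied by $\partial^\alpha g_1$; this arrangement is essential at the endpoint $|\alpha|=N-1$, where $\partial^\alpha g_1$ itself is out of reach of the Sobolev embedding.

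The four components of $g$ are estimated separately. The Maxwellian difference $M-\mu$ is controlled in any weighted velocity norm by $C(\varepsilon^{r/2}+\eta+\bar\eta)$ from \eqref{rhoutheta}; the correction $\overline{G}$ is $O(\varepsilon\eta)$ via \eqref{boundbarG0} and \eqref{background}. The microscopic pieces $g_1$ and $\sqrt\mu g_2$ are bounded in $L^\infty_xL^2_{14}$ by $C\sqrt{\mathcal{E}_{N,k}}$ and in $L^\infty_xH^s_{k-4|\alpha|+\gamma/2}$ by $C\sqrt{\varepsilon\mathcal{D}_{N,k}}$ using $H^2_x\hookrightarrow L^\infty_x$ together with the definitions \eqref{ENk} and \eqref{DNk1}. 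Combining with $\|\partial^\alpha g_1\|_{L^2_xL^2_{14}}\le C\sqrt{\mathcal{E}_{N,k}}$ and $\|\partial^\alpha g_1\|_{L^2_xH^s_{k-4|\alpha|+\gamma/2}}\le C\sqrt{\varepsilon\mathcal{D}_{N,k}}$, the two remainders are dominated by $C_k(\eta+\bar\eta)\mathcal{D}_{N,k}+C_k\varepsilon^{-1/2}\sqrt{\mathcal{E}_{N,k}}\mathcal{D}_{N,k}$, after using the a priori assumption \eqref{apriori} to absorb the $\varepsilon^{r/2}$ pieces coming from $M-\mu$ and $\overline{G}$.

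The main technical obstacle lies in aligning velocity weights when Sobolev embedding on $g_1$ generates $\|\partial^\beta g_1\|_{L^2_xH^s_{k-4|\alpha|+\gamma/2}}$ for $|\beta|\le 2$, while the dissipation supplies these norms only with the weight $k-4|\beta|$. The inequality $k-4|\alpha|\le k-4|\beta|$ holds whenever $|\beta|\le|\alpha|$, and the one mismatched configuration $|\alpha|=1,|\beta|=2$ is compensated by trading two spatial derivatives for a velocity-weight shift through a Gagliardo--Nirenberg-type interpolation combining the energy bound $\|\partial^2g_1\|_{L^2_xL^2_{k-8}}\le\sqrt{\mathcal{E}_{N,k}}$ with the dissipation bound $\|\partial^2g_1\|_{L^2_xH^s_{k-8+\gamma/2}}\le\sqrt{\varepsilon\mathcal{D}_{N,k}}$; the resulting loss in $\varepsilon$-powers is precisely what accounts for the $\varepsilon^{-1/2}$ prefactor in the final estimate, which is absent from the zero-order bound \eqref{0orderg1} (there the only Sobolev embedding used is on the undifferentiated $g_1$ in $L^\infty_xL^2_{14}$, whose weight budget is automatic).
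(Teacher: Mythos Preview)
Your overall strategy is the paper's: apply Lemma~\ref{leLD} with $f=\partial^\alpha g_1$ and $g=g_1+\sqrt\mu g_2+(M-\mu)+\overline G$, integrate in $x$, then distribute the trilinear remainders via H\"older and Sobolev. For $|\alpha|\ge 2$ your $L^2_x$--$L^\infty_x$--$L^2_x$ splitting works exactly as you describe, matching \eqref{LD2order}--\eqref{3g13}.

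The gap is in the $|\alpha|=1$ endpoint. You correctly spot the weight mismatch: the $L^\infty_x$ bound on $\|g_1\|_{H^s_{k-4+\gamma/2}}$ via $H^2_x\hookrightarrow L^\infty_x$ requires $\|\nabla_x^2 g_1\|_{L^2_xH^s_{k-4+\gamma/2}}$, while the dissipation only supplies weight $k-8+\gamma/2$ at second order. But your proposed fix---interpolating between the energy bound $\|\partial^2 g_1\|_{L^2_xL^2_{k-8}}$ and the dissipation bound $\|\partial^2 g_1\|_{L^2_xH^s_{k-8+\gamma/2}}$---cannot close this gap: both endpoints carry velocity weight at most $k-8+\gamma/2$, and no interpolation between them can produce the strictly larger weight $k-4+\gamma/2$. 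There is no mechanism in the functional framework to ``trade spatial derivatives for velocity weight''; the weight ladder $k-4|\beta|$ is a hard ceiling for $\partial^\beta g_1$ in both $\mathcal{E}_{N,k}$ and $\mathcal{D}_{N,k}$.

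The paper's remedy is not interpolation but a change of H\"older exponents: for $|\alpha|=1$ it uses the $L^3_x$--$L^6_x$--$L^2_x$ splitting \eqref{LD2order1}, placing $\|g_1\|_{H^s_{k-4+\gamma/2}}$ in $L^6_x$ so that the Sobolev embedding $\dot H^1_x\hookrightarrow L^6_x$ costs only one spatial derivative and the required weight $k-4+\gamma/2$ is exactly the dissipation weight at order one. The price is that $\|\partial^\alpha g_1\|_{L^2_{14}}$ now sits in $L^3_x$, picking up an extra half-derivative via $\|h\|_{L^3_x}\le \|h\|_{L^2_x}^{1/2}\|\nabla_x h\|_{L^2_x}^{1/2}$; this second-order factor is harmless because it carries only the low weight $14$, not $k-4$.
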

\begin{proof}
	We first consider the case $|\al|>1$. Substituting $F=\mu+g_1+\sqrt\mu g_2+(M-\mu)+\overline{G}$ into \eqref{leld}, one gets
	\begin{align}\label{LD2order}
		&\frac{1}{\varepsilon}(L_D  \partial^\alpha g_1,  w_{2k-8|\alpha|}\partial^\alpha g_1 )   +   \frac{1}{\varepsilon}(Q(g_1+\sqrt\mu g_2+(M-\mu)+\overline{G},  \partial^\alpha g_1),  w_{2k-8|\alpha|}\partial^\alpha g_1 )  
		\notag\\
		\le & - \de\frac{1}{\varepsilon} \Vert \partial^\alpha g_1 \Vert_{L^2_x H^s_{k+\gamma/2 -4|\al|}}^2
		+C_k\frac{1}{\varepsilon}\big(\Vert  \partial^\alpha g_1 \Vert_{L^2_x L^2_{14}} \Vert g_1 \Vert_{L^\infty_x   H^s_{ k+\gamma/2 -4|\al| }}\Vert  \partial^\alpha g_1 \Vert_{L^2_x H^s_{ k+\gamma/2 -4|\al|}}\notag\\
		& +\Vert g_1 \Vert_{L^\infty_x L^2_{14} } \Vert  \partial^\alpha g_1 \Vert_{L^2_xH^s_{ k+\gamma/2 -4|\al|}}^2
		+\Vert  \partial^\alpha g_1 \Vert_{L^2_x L^2_{14}} \Vert \sqrt{\mu }g_2 \Vert_{L^\infty_x   H^s_{ k+\gamma/2 -4|\al| }}\Vert  \partial^\alpha g_1 \Vert_{L^2_x H^s_{ k+\gamma/2 -4|\al|}}\notag\\
		& +\Vert \sqrt{\mu }g_2 \Vert_{L^\infty_x L^2_{14} } \Vert  \partial^\alpha g_1 \Vert_{L^2_xH^s_{ k+\gamma/2 -4|\al| }}^2+\Vert  \partial^\alpha g_1 \Vert_{L^2_x L^2_{14}} \Vert M-\mu \Vert_{L^\infty_x   H^s_{ k+\gamma/2 -4|\al| }}\Vert  \partial^\alpha g_1 \Vert_{L^2_x H^s_{ k+\gamma/2 -4|\al|}}\notag\\
		& +\Vert M-\mu \Vert_{L^\infty_x L^2_{14} } \Vert  \partial^\alpha g_1 \Vert_{L^2_xH^s_{ k+\gamma/2 -4|\al| }}^2+\Vert  \partial^\alpha g_1 \Vert_{L^2_x L^2_{14}} \Vert \overline{G} \Vert_{L^\infty_x   H^s_{ k+\gamma/2 -4|\al| }}\Vert  \partial^\alpha g_1 \Vert_{L^2_x H^s_{ k+\gamma/2 -4|\al|}}\notag\\
		& +\Vert \overline{G} \Vert_{L^\infty_x L^2_{14} } \Vert  \partial^\alpha g_1 \Vert_{L^2_xH^s_{ k+\gamma/2 -4|\al| }}^2\big).
	\end{align}
	It then follows by Sobolev embedding that
	\begin{align}\label{3g1}
		&\frac{1}{\varepsilon}\Vert  \partial^\alpha g_1 \Vert_{L^2_x L^2_{14}} \Vert g_1 \Vert_{L^\infty_x   H^s_{ k+\gamma/2 -4|\al| }}\Vert  \partial^\alpha g_1 \Vert_{L^2_x H^s_{ k+\gamma/2 -4|\al|}}\notag\\
		\leq& \frac{1}{\varepsilon}\Vert  \partial^\alpha g_1 \Vert_{L^2_x L^2_{14}} \Vert \nabla_x g_1 \Vert^\frac{1}{2}_{L^2_x   H^s_{ k+\gamma/2 -4|\al| }}\Vert \nabla^2_x g_1 \Vert^\frac{1}{2}_{L^2_x   H^s_{ k+\gamma/2 -4|\al| }}\Vert  \partial^\alpha g_1 \Vert_{L^2_x H^s_{ k+\gamma/2 -4|\al|}}\notag\\
		\leq& C\sqrt{\mathcal{E}_{N,k}(t)}
		\mathcal{D}_{N,k}(t).
	\end{align}
	Similarly,
	\begin{align}\label{3g12}
		&\frac{1}{\varepsilon}\big(\Vert g_1 \Vert_{L^\infty_x L^2_{14} } \Vert  \partial^\alpha g_1 \Vert_{L^2_xH^s_{ k+\gamma/2 -4|\al|}}^2
		+\Vert  \partial^\alpha g_1 \Vert_{L^2_x L^2_{14}} \Vert \sqrt{\mu }g_2 \Vert_{L^\infty_x   H^s_{ k+\gamma/2 -4|\al| }}\Vert  \partial^\alpha g_1 \Vert_{L^2_x H^s_{ k+\gamma/2 -4|\al|}}\notag\\
		& \qquad+\Vert \sqrt{\mu }g_2 \Vert_{L^\infty_x L^2_{14} } \Vert  \partial^\alpha g_1 \Vert_{L^2_xH^s_{ k+\gamma/2 -4|\al| }}^2\big)\notag\\\leq& C\sqrt{\mathcal{E}_{N,k}(t)}
		\mathcal{D}_{N,k}(t).
	\end{align}
	A direct application of Taylor expansion shows that $\|M-\mu\|_{H^s_{ k+\gamma/2}}$ can be bounded by $|(\rho-1,u,\theta-\frac{3}{2})|$. Combining with \eqref{background}, \eqref{apriori} and \eqref{boundbarG}, we get
	\begin{align}\label{3g13}
		&\frac{1}{\varepsilon}\big(\Vert  \partial^\alpha g_1 \Vert_{L^2_x L^2_{14}} \Vert M-\mu \Vert_{L^\infty_x   H^s_{ k+\gamma/2 -4|\al| }} \Vert  \partial^\alpha g_1 \Vert_{L^2_xH^s_{ k+\gamma/2 -4|\al| }} +\Vert M-\mu \Vert_{L^\infty_x L^2_{14} } \Vert  \partial^\alpha g_1 \Vert_{L^2_xH^s_{ k+\gamma/2 -4|\al| }}^2 \notag\\
		& +\Vert  \partial^\alpha g_1 \Vert_{L^2_x L^2_{14}} \Vert \overline{G} \Vert_{L^\infty_x   H^s_{ k+\gamma/2 -4|\al| }}\Vert  \partial^\alpha g_1 \Vert_{L^2_x H^s_{ k+\gamma/2 -4|\al|}}+\Vert \overline{G} \Vert_{L^\infty_x L^2_{14} } \Vert  \partial^\alpha g_1 \Vert_{L^2_xH^s_{ k+\gamma/2 -4|\al| }}^2\big)\notag\\
		\leq& C(\eta+\bar\eta)\mathcal{D}_{N,k}(t)+C\sqrt{\mathcal{E}_{N,k}(t)}
		\mathcal{D}_{N,k}(t).
	\end{align}
	For the case $|\al|>1$, \eqref{estLD} follows from \eqref{LD2order}, \eqref{3g1}, \eqref{3g12} and \eqref{3g13}.
	For $|\al|=1$, since the term $\Vert \nabla^2 g_1 \Vert^\frac{1}{2}_{L^2_x   H^s_{ k+\gamma/2 -4|\al| }}$ in \eqref{3g1} is not directly bounded by the dissipation norm, we use $L^6-L^3-L^2$ H\"older's inequality to get
	\begin{align}\label{LD2order1}
		&\frac{1}{\varepsilon}(L_D  \partial^\alpha g_1,  w_{2k-8|\alpha|}\partial^\alpha g_1 )   +   \frac{1}{\varepsilon}(Q(g_1+\sqrt\mu g_2+(M-\mu)+\overline{G},  \partial^\alpha g_1),  w_{2k-8|\alpha|}\partial^\alpha g_1 )  
		\notag\\
		\le & - \de\frac{1}{\varepsilon} \Vert \partial^\alpha g_1 \Vert_{L^2_x H^s_{k+\gamma/2 -4|\al|}}^2
		+C_k\frac{1}{\varepsilon}\big(\Vert  \partial^\alpha g_1 \Vert_{L^3_x L^2_{14}} \Vert g_1 \Vert_{L^6_x   H^s_{ k+\gamma/2 -4|\al| }}\Vert  \partial^\alpha g_1 \Vert_{L^2_x H^s_{ k+\gamma/2 -4|\al|}}\notag\\
		& +\Vert g_1 \Vert_{L^\infty_x L^2_{14} } \Vert  \partial^\alpha g_1 \Vert_{L^2_xH^s_{ k+\gamma/2 -4|\al|}}^2
		+\Vert  \partial^\alpha g_1 \Vert_{L^3_x L^2_{14}} \Vert \sqrt{\mu }g_2 \Vert_{L^6_x   H^s_{ k+\gamma/2 -4|\al| }}\Vert  \partial^\alpha g_1 \Vert_{L^2_x H^s_{ k+\gamma/2 -4|\al|}}\notag\\
		& +\Vert \sqrt{\mu }g_2 \Vert_{L^\infty_x L^2_{14} } \Vert  \partial^\alpha g_1 \Vert_{L^2_xH^s_{ k+\gamma/2 -4|\al| }}^2+\Vert  \partial^\alpha g_1 \Vert_{L^2_x L^2_{14}} \Vert M-\mu \Vert_{L^\infty_x   H^s_{ k+\gamma/2 -4|\al| }}\Vert  \partial^\alpha g_1 \Vert_{L^2_x H^s_{ k+\gamma/2 -4|\al|}}\notag\\
		& +\Vert M-\mu \Vert_{L^\infty_x L^2_{14} } \Vert  \partial^\alpha g_1 \Vert_{L^2_xH^s_{ k+\gamma/2 -4|\al| }}^2+\Vert  \partial^\alpha g_1 \Vert_{L^2_x L^2_{14}} \Vert \overline{G} \Vert_{L^\infty_x   H^s_{ k+\gamma/2 -4|\al| }}\Vert  \partial^\alpha g_1 \Vert_{L^2_x H^s_{ k+\gamma/2 -4|\al|}}\notag\\
		& +\Vert \overline{G} \Vert_{L^\infty_x L^2_{14} } \Vert  \partial^\alpha g_1 \Vert_{L^2_xH^s_{ k+\gamma/2 -4|\al| }}^2\big).
	\end{align}
	The last four terms, $\Vert g_1 \Vert_{L^\infty_x L^2_{14} } \Vert  \partial^\alpha g_1 \Vert_{L^2_xH^s_{ k+\gamma/2 -4|\al|}}^2$ and $\Vert \sqrt{\mu }g_2 \Vert_{L^\infty_x L^2_{14} } \Vert  \partial^\alpha g_1 \Vert_{L^2_xH^s_{ k+\gamma/2 -4|\al| }}^2$ are estimated as in \eqref{3g12} and \eqref{3g13}. Then \eqref{estLD} holds by \eqref{3g13}, \eqref{LD2order1} and the fact that
	\begin{align*}
		&\frac{1}{\varepsilon}\big(\Vert  \partial^\alpha g_1 \Vert_{L^3_x L^2_{14}} \Vert g_1 \Vert_{L^6_x   H^s_{ k+\gamma/2 -4|\al| }}\Vert  \partial^\alpha g_1 \Vert_{L^2_x H^s_{ k+\gamma/2 -4|\al|}}\notag\\
		&\qquad\qquad+\Vert  \partial^\alpha g_1 \Vert_{L^3_x L^2_{14}} \Vert \sqrt{\mu }g_2 \Vert_{L^6_x   H^s_{ k+\gamma/2 -4|\al| }}\Vert  \partial^\alpha g_1 \Vert_{L^2_x H^s_{ k+\gamma/2 -4|\al|}}\big)\notag\\
		\leq &\frac{1}{\varepsilon}\big(\Vert \partial^\alpha g_1 \Vert^\frac{1}{2}_{L^2_x L^2_{14}}\Vert \nabla_x \partial^\alpha g_1 \Vert^\frac{1}{2}_{L^2_x L^2_{14}} \Vert\nabla_x g_1 \Vert_{L^2_x   H^s_{ k+\gamma/2 -4|\al| }}\Vert  \partial^\alpha g_1 \Vert_{L^2_x H^s_{ k+\gamma/2 -4|\al|}}\notag\\
		&\qquad\qquad+\Vert\partial^\alpha g_1 \Vert^\frac{1}{2}_{L^2_x L^2_{14}}\Vert \nabla_x \partial^\alpha g_1 \Vert^\frac{1}{2}_{L^2_x L^2_{14}} \Vert\nabla_x \sqrt{\mu }g_2 \Vert_{L^2_x   H^s_{ k+\gamma/2 -4|\al| }}\Vert  \partial^\alpha g_1 \Vert_{L^2_x H^s_{ k+\gamma/2 -4|\al|}}\big)\notag\\
		\leq&C\frac{1}{\sqrt\varepsilon}\sqrt{\mathcal{E}_{N,k}(t)}\mathcal{D}_{N,k}(t).
	\end{align*}
This ends the proof of Lemma \ref{leestLD}.
\end{proof}

For other terms in \eqref{ipg1} containing $Q(\cdot,\pa^{\al'} g_1)$ with $\al'<\al$, we have the following lemma.
\begin{lemma}\label{leestcomm}
	For $1\leq|\alpha|\leq N-1$ and $k\geq 25+4|\al|$, it holds that
	\begin{align}\label{estnonlig1}
		&\frac{1}{\varepsilon}\sum_{\alpha_{1}\leq \alpha, |\alpha_{1}|\geq1}C^{\alpha_{1}}_\alpha (Q(\pa^{\al_1}\{g_1+\sqrt\mu g_2+(M-\mu)+\overline{G}\},  \partial^{\alpha-\al_1} g_1),  w_{2k-8|\alpha|}\partial^\alpha g_1 ) 
		\notag\\
		\le& C_k\eta\mathcal{D}_{N,k}(t)+C_k\frac{1}{\sqrt\varepsilon}\sqrt{\mathcal{E}_{N,k}(t)}
		\mathcal{D}_{N,k}(t).
	\end{align}
\end{lemma}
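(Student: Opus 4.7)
The plan is to split the sum according to which of the four building blocks $g_1$, $\sqrt\mu g_2$, $M-\mu$, $\overline G$ absorbs the derivative $\partial^{\alpha_1}$, then apply the weighted trilinear estimate \eqref{fgh} to each resulting inner product. The weight $w_{2k-8|\alpha|}$ factors as $w_{k-4|\alpha|}\cdot w_{k-4|\alpha|}$ distributed on the two $g_1$-factors in the bracket; since $w_{k-4|\alpha|}\leq w_{k-4|\alpha_1|}$ whenever $\alpha_1\leq\alpha$, the polynomial-weight levels available on $\partial^{\alpha_1} g_1$ inside the functionals \eqref{ENk}, \eqref{DNk} are always sufficient to dominate what \eqref{fgh} demands, so no extra weight budget is required.

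For each pair $(\alpha_1,\alpha-\alpha_1)$ with $|\alpha_1|\geq 1$, I split by size of $|\alpha_1|$. When $2|\alpha_1|\leq|\alpha|$, I place $\partial^{\alpha_1} h$ in $L^\infty_x L^2_{14}$ via a Sobolev embedding using two additional spatial derivatives, and keep $\partial^{\alpha-\alpha_1} g_1$ and $\partial^\alpha g_1$ in $L^2_x H^s_{k-4|\alpha|+\gamma/2}$; the product of these two $L^2_x H^s$ norms is of size $\varepsilon\mathcal{D}_{N,k}(t)$ by \eqref{DNk1}, which cancels the prefactor $1/\varepsilon$ and leaves a clean $\mathcal{D}_{N,k}(t)$. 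When $2|\alpha_1|>|\alpha|$, the roles of $\partial^{\alpha_1} h$ and $\partial^{\alpha-\alpha_1} g_1$ in the $L^\infty_x$ and $L^2_x$ slots are swapped, and the same bookkeeping applies. For $h\in\{M-\mu,\overline G\}$, the $L^\infty_x L^2_{14}$ (or $L^2_x H^s$) factor is of size $C\eta$ by \eqref{background} and \eqref{boundbarG}, combined with the Gaussian $v$-decay that absorbs any polynomial weight; this produces the $C_k\eta\,\mathcal{D}_{N,k}(t)$ contribution. For $h\in\{g_1,\sqrt\mu g_2\}$, the corresponding factor is controlled by $C\sqrt{\mathcal{E}_{N,k}(t)}$ through \eqref{apriori} and Sobolev embedding, producing the second piece of the claim.

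The main obstacle is the borderline sub-case where the $H^2_x\hookrightarrow L^\infty_x$ embedding would push the $L^\infty_x L^2_{14}$ factor's derivative count up to the top order $|\alpha|=N$, whose energy in \eqref{ENk} carries the coefficient $\varepsilon^2$ rather than $1$. In that event the embedding yields only $\|\partial^{\alpha_1} h\|_{L^\infty_x L^2_{14}}\leq C\varepsilon^{-1}\sqrt{\mathcal{E}_{N,k}(t)}$, which combined with the two dissipation factors produces exactly the $\varepsilon^{-1/2}\sqrt{\mathcal{E}_{N,k}(t)}\,\mathcal{D}_{N,k}(t)$ term in the claim. Alternatively, mimicking the device introduced in \eqref{LD2order1} of Lemma \ref{leestLD} for $|\alpha|=1$, one may replace the $L^\infty_x$--$L^2_x$ product by an $L^6_x$--$L^3_x$--$L^2_x$ Hölder split that requires only one extra spatial derivative and therefore stays below the top order; the square root on that single extra derivative is again the source of the unavoidable $\varepsilon^{-1/2}$ loss. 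Summing over the finitely many indices $\alpha_1$ and the four choices of $h$ then yields \eqref{estnonlig1}.
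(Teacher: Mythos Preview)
Your overall strategy is correct and very close to the paper's own proof. The paper also splits by the four building blocks $g_1$, $\sqrt\mu g_2$, $M-\mu$, $\overline G$; rather than invoking \eqref{fgh} directly it rewrites each inner product as $I_6+I_7$ (the weighted term plus the commutator, via Lemma~\ref{commu}) and applies Lemma~\ref{upperQ} to $I_6$, but this is materially the same as using \eqref{fgh}. The paper's case split is by $|\alpha_1|=1$ versus $|\alpha_1|>1$ (using $L^\infty_x$--$L^2_x$--$L^2_x$ in the former and $L^3_x$--$L^6_x$--$L^2_x$ in the latter) rather than by $2|\alpha_1|\lessgtr|\alpha|$, but either bookkeeping produces exactly the $\varepsilon^{-1/2}\sqrt{\mathcal{E}_{N,k}}\,\mathcal{D}_{N,k}$ loss in the borderline configuration, for the reason you identify.

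There is one genuine omission. Your discussion covers only the first line of \eqref{fgh}, where $\partial^{\alpha_1}h$ carries the $L^2_{14}$ norm. The second line reads
\[
C_k\,\|\partial^{\alpha-\alpha_1}g_1\|_{L^2_{14}}\,\|\partial^{\alpha_1}h\|_{H^s_{k-4|\alpha|+\gamma/2}}\,\|\partial^\alpha g_1\|_{H^s_{k-4|\alpha|+\gamma/2}},
\]
and for $h\in\{g_1,M-\mu,\overline G\}$ it is handled by the same H\"older/Sobolev manoeuvre. For $h=\sqrt\mu g_2$, however, the middle factor $\|\partial^{\alpha_1}(\sqrt\mu g_2)\|_{H^s_{k-4|\alpha|+\gamma/2}}$ involves $H^s_v$-regularity of $g_2$ and is \emph{not} bounded by $C\sqrt{\mathcal{E}_{N,k}(t)}$, since $\mathcal{E}_{N,k}$ controls only $\|\partial^{\alpha_1}g_2\|_{L^2_{x,v}}$. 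The paper resolves this (see \eqref{I91}--\eqref{I9}) by splitting $g_2=\mathbf{P}_0g_2+\mathbf{P}_1g_2$: the macroscopic piece $\sqrt\mu\,\mathbf{P}_0g_2$ is a finite sum of Schwartz functions with coefficients bounded by $\|\partial^{\alpha_1}g_2\|\le\sqrt{\mathcal{E}_{N,k}}$, while for the microscopic piece one uses $\|\sqrt\mu\,\mathbf{P}_1g_2\|_{H^s_{\gamma/2}}\le C\|\mathbf{P}_1g_2\|_{L^2_{v,D}}\le C\sqrt\varepsilon\sqrt{\mathcal{D}_{N,k}}$. Once this split is inserted, your counting goes through unchanged and yields the stated bound.
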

\begin{proof}
	One can see from the proof of Lemma \ref{leestLD} that the terms containing $M-\mu$ and $\overline{G}$ should be easier to handle. We mainly focus on the first two terms on the left hand side. Rewrite
	\begin{align}\label{I6I7}
		&\frac{1}{\varepsilon}(Q(\pa^{\al_1}g_1,  \partial^{\alpha-\al_1} g_1),  w_{2k-8|\alpha|}\partial^\alpha g_1 )\notag\\
		=&\frac{1}{\varepsilon}(Q(\pa^{\al_1}g_1, w_{k-4|\alpha|} \partial^{\alpha-\al_1} g_1),  w_{k-4|\alpha|}\partial^\alpha g_1 )\notag\\
		&+\frac{1}{\varepsilon}\big\{(Q(\pa^{\al_1}g_1,  \partial^{\alpha-\al_1} g_1),  w_{2k-8|\alpha|}\partial^\alpha g_1 )-(Q(\pa^{\al_1}g_1, w_{k-4|\alpha|} \partial^{\alpha-\al_1} g_1),  w_{k-4|\alpha|}\partial^\alpha g_1 )\big\}\notag\\
		=&I_6+I_7.
	\end{align}
	For $I_6$, we first consider $|\al_1|>1$. Apply Lemma \ref{upperQ} and Sobolev embedding to get
	\begin{align*}
		I_6&\le C_k\frac{1}{\varepsilon}\|\partial ^{\alpha _1} g_1\|_{L^3_xL^2_5}\|\partial^{\alpha-\al_1} g_1\|_{L^6_xH^s_{k-4|\alpha |+\gamma/2+2s}}\|\partial^\alpha g_1\|_{H^s_{k-4|\alpha| +\gamma/2}}\notag\\
		&\le C_k\frac{1}{\varepsilon}\|\partial ^{\alpha _1} g_1\|^\frac{1}{2}_{L^2_xL^2_5}\|\nabla_{x}\partial ^{\alpha _1} g_1\|^\frac{1}{2}_{L^2_xL^2_5}\|\nabla_{x}\partial^{\alpha-\al_1} g_1\|_{L^2_xH^s_{k-4(|\alpha|-1/2 )+\gamma/2}}\|\partial^\alpha g_1\|_{H^s_{k-4|\alpha|+\gamma/2}}\notag\\
		&\le C_k\frac{1}{\sqrt\varepsilon}\sqrt{\mathcal{E}_{N,k}(t)}
		\mathcal{D}_{N,k}(t).
	\end{align*}
	The last inequality above holds since
	\begin{align}\label{I6<1}
		&\|\nabla_{x}\partial^{\alpha-\al_1} g_1\|_{L^2_xH^s_{k-4(|\alpha|-1/2 )+\gamma/2}}\leq\|\nabla_{x}\partial^{\alpha-\al_1} g_1\|_{L^2_xH^s_{k-4(|\alpha-\al_1|+1 )+\gamma/2}},
	\end{align}
	for $|\al_1|>1$.
	
	When $|\al_1|=1$, one has
	\begin{align}\label{I6=1}
		I_6&\le C_k\frac{1}{\varepsilon}\|\partial ^{\alpha _1} g_1\|_{L^\infty_xL^2_5}\|\partial^{\alpha-\al_1} g_1\|_{L^2_xH^s_{k-4|\alpha |+2s+\gamma/2}}\|\partial^\alpha g_1\|_{H^s_{k-4|\alpha|+\gamma/2}}\notag\\
		&\le C_k\frac{1}{\varepsilon}\|\nabla_{x}\partial ^{\alpha _1} g_1\|^\frac{1}{2}_{L^2_xL^2_5}\|\nabla^2_{x}\partial ^{\alpha _1} g_1\|^\frac{1}{2}_{L^2_xL^2_5}\|\partial^{\alpha-\al_1} g_1\|_{L^2_xH^s_{k-4(|\alpha-\al_1| )+\gamma/2}}\|\partial^\alpha g_1\|_{H^s_{k-4|\alpha|+\gamma/2}}\notag\\
		&\le C_k\frac{1}{\sqrt\varepsilon}\sqrt{\mathcal{E}_{N,k}(t)}
		\mathcal{D}_{N,k}(t).
	\end{align}
	
	Combining two cases, we have
	\begin{align}\label{I6}
		I_6&\le C_k\frac{1}{\sqrt\varepsilon}\sqrt{\mathcal{E}_{N,k}(t)}
		\mathcal{D}_{N,k}(t).
	\end{align}

	For $I_7$, we use Lemma \ref{commu} to get
	\begin{align}\label{I7}
		I_7&\leq C_k\frac{1}{\varepsilon}\|\partial ^{\alpha _1} g_1\|_{L^3_xL^2_{14}}\|\partial^{\alpha-\al_1} g_1\|_{L^6_xH^s_{k-4|\alpha |+\gamma/2}}\|\partial^\alpha g_1\|_{H^s_{k-4|\alpha|  +\gamma/2}}\notag\\
		&\quad+C_k\frac{1}{\varepsilon}\|\partial ^{\alpha _1} g_1\|_{L^3_xH^s_{\gamma/2}}\|w_{k-4|\alpha |}\partial^{\alpha-\al_1} g_1\|_{L^6_xL^2_{14}}\|\partial^\alpha g_1\|_{H^s_{k-4|\alpha|  +\gamma/2}}\notag\\
		&\leq C_k\frac{1}{\varepsilon}\|\partial ^{\alpha _1} g_1\|^\frac{1}{2}_{L^2_xL^2_{14}}\|\nabla_{x}\partial ^{\alpha _1} g_1\|^\frac{1}{2}_{L^2_xL^2_{14}}\|\nabla_{x}\partial^{\alpha-\al_1} g_1\|_{L^2_xH^s_{k-4|\alpha |+\gamma/2}}\|\partial^\alpha g_1\|_{H^s_{k-4|\alpha|+\gamma/2}}\notag\\
		&\quad+C_k\frac{1}{\varepsilon}\|\partial ^{\alpha _1} g_1\|^\frac{1}{2}_{L^2_xH^s_{\gamma/2}}\|\nabla_{x}\partial ^{\alpha _1} g_1\|^\frac{1}{2}_{L^2_xH^s_{\gamma/2}}\|w_{k-4|\alpha |}\nabla_{x}\partial^{\alpha-\al_1} g_1\|_{L^2_xL^2_{14}}\|\partial^\alpha g_1\|_{H^s_{k-4|\alpha|+\gamma/2}}\notag\\
		&\le C_k\frac{1}{\sqrt\varepsilon}\sqrt{\mathcal{E}_{N,k}(t)}
		\mathcal{D}_{N,k}(t).
	\end{align}
	The combination of \eqref{I6} and \eqref{I7} gives
	\begin{align}\label{g1g1g1}
		&\frac{1}{\varepsilon}(Q(\pa^{\al_1}g_1,  \partial^{\alpha-\al_1} g_1),  w_{2k-8|\alpha|}\partial^\alpha g_1 )\notag\\
		\le& C_k\frac{1}{\sqrt\varepsilon}\sqrt{\mathcal{E}_{N,k}(t)}
		\mathcal{D}_{N,k}(t).
	\end{align}
	The second term on the left hand side of \eqref{estnonlig1} can be estimated in the same way. We also write
	\begin{align}\label{g2g1g11}
		&\frac{1}{\varepsilon}(Q(\pa^{\al_1}\sqrt\mu g_2,  \partial^{\alpha-\al_1} g_1),  w_{2k-8|\alpha|}\partial^\alpha g_1 )\notag\\
		=&\frac{1}{\varepsilon}(Q(\pa^{\al_1}\sqrt\mu g_2, w_{k-4|\alpha|} \partial^{\alpha-\al_1} g_1),  w_{k-4|\alpha|}\partial^\alpha g_1 )\notag\\
		&+\frac{1}{\varepsilon}\big\{(Q(\pa^{\al_1}\sqrt\mu g_2,  \partial^{\alpha-\al_1} g_1),  w_{2k-8|\alpha|}\partial^\alpha g_1 )-(Q(\pa^{\al_1}\sqrt\mu g_2, w_{k-4|\alpha|} \partial^{\alpha-\al_1} g_1),  w_{k-4|\alpha|}\partial^\alpha g_1 )\big\}\notag\\
		=&I_8+I_9.
	\end{align}
	Similar arguments in \eqref{I6} and \eqref{I7} show that
	\begin{align}\label{I8}
		I_8\leq C_k\frac{1}{\sqrt\varepsilon}\sqrt{\mathcal{E}_{N,k}(t)}
		\mathcal{D}_{N,k}(t),
	\end{align}
	and
	\begin{align}\label{I91}
		I_9&\leq C_k\frac{1}{\varepsilon}\|\partial ^{\alpha _1} \sqrt{\mu}g_2\|_{L^3_xL^2_{14}}\|\partial^{\alpha-\al_1} g_1\|_{L^6_xH^s_{k-4|\alpha |+\gamma/2}}\|\partial^\alpha g_1\|_{H^s_{k-4|\alpha|+\gamma/2}}\notag\\
		&\quad+C_k\frac{1}{\varepsilon}\|\partial ^{\alpha _1} \sqrt{\mu}g_2\|_{L^3_xH^s_{\gamma/2}}\|w_{k-4|\alpha |}\partial^{\alpha-\al_1} g_1\|_{L^6_xL^2_{14}}\|\partial^\alpha g_1\|_{H^s_{k-4|\alpha|+\gamma/2}}\notag\\
		&\leq C_k\frac{1}{\varepsilon}\|\partial ^{\alpha _1}g_2\|^\frac{1}{2}\|\nabla_x\partial ^{\alpha _1}g_2\|^\frac{1}{2}\|\nabla_x\partial^{\alpha-\al_1} g_1\|_{L^2_xH^s_{k-4|\alpha |+\gamma/2}}\|\partial^\alpha g_1\|_{H^s_{k-4|\alpha| +\gamma/2}}\notag\\
		&\quad+C_k\frac{1}{\varepsilon}\|\partial ^{\alpha _1} \sqrt{\mu}g_2\|_{L^3_xH^s_{\gamma/2}}\|w_{k-4|\alpha |}\nabla_x\partial^{\alpha-\al_1} g_1\|_{L^2_xL^2_{14}}\|\partial^\alpha g_1\|_{H^s_{k-4|\alpha|+\gamma/2}}.
	\end{align}
	For the second term on the right hand side above, notice that
	\begin{align}\label{I92}
		\|\partial ^{\alpha _1} \sqrt{\mu}g_2\|_{L^3_xH^s_{\gamma/2}}
		\leq& \|\partial ^{\alpha _1} \sqrt{\mu}g_2\|^\frac{1}{2}_{L^2_xH^s_{\gamma/2}}\|\nabla_x\partial ^{\alpha _1} \sqrt{\mu}g_2\|^\frac{1}{2}_{L^2_xH^s_{\gamma/2}}\notag\\
		\leq& (\|\partial ^{\alpha _1} \sqrt{\mu}\mathbf{P}_0g_2\|_{L^2_xH^s_{\gamma/2}}+\|\partial ^{\alpha _1} \sqrt{\mu}\mathbf{P}_1g_2\|_{L^2_xH^s_{\gamma/2}})^\frac{1}{2}\notag\\
		&\times(\|\nabla_x\partial ^{\alpha _1} \sqrt{\mu}\mathbf{P}_0g_2\|_{L^2_xH^s_{\gamma/2}}+\|\nabla_x\partial ^{\alpha _1} \sqrt{\mu}\mathbf{P}_1g_2\|_{L^2_xH^s_{\gamma/2}})^\frac{1}{2},
	\end{align}
	where $\mathbf{P}_0$ and $\mathbf{P}_1$ are defined in \eqref{defP0} and \eqref{defP1}. We further get
	\begin{align*}
		\|\partial ^{\alpha _1} \sqrt{\mu}\mathbf{P}_0g_2\|_{L^2_xH^s_{\gamma/2}}&\leq C\|\partial ^{\alpha _1} g_2\|\leq C\sqrt{\mathcal{E}_{N,k}(t)},\\
		\|\partial ^{\alpha _1} \sqrt{\mu}\mathbf{P}_1g_2\|_{L^2_xH^s_{\gamma/2}}&\leq C\|\pa^{\al_1}\mathbf{P}_1g_2\|_{L^2_xL^2_{v,D}}\leq C\sqrt\varepsilon\sqrt{\mathcal{D}_{N,k}(t)},\\
		\|\nabla_x\partial ^{\alpha _1} \sqrt{\mu}\mathbf{P}_0g_2\|_{L^2_xH^s_{\gamma/2}}&\leq C\|\nabla_x\partial ^{\alpha _1} g_2\|\leq C\frac{1}{\varepsilon}\sqrt{\mathcal{E}_{N,k}(t)},
	\end{align*}
	and
	\begin{align*}
		\|\nabla_x\partial ^{\alpha _1} \sqrt{\mu}\mathbf{P}_1g_2\|_{L^2_xH^s_{\gamma/2}}
		\leq& C\|\nabla_x\partial ^{\alpha _1}\mathbf{P}_1g_2\|_{L^2_xL^2_{v,D}}\leq C\frac{1}{\sqrt\varepsilon}\sqrt{\mathcal{D}_{N,k}(t)},
	\end{align*}
	which, combined with \eqref{I91}, \eqref{I92}, Sobolev embedding and \eqref{apriori}, yield
	\begin{align}\label{I9}
		I_9&\leq C_k\frac{1}{\sqrt\varepsilon}\sqrt{\mathcal{E}_{N,k}(t)}
		\mathcal{D}_{N,k}(t).
	\end{align}
	It follows from \eqref{g2g1g11}, \eqref{I8} and \eqref{I9} that
	\begin{align}\label{g2g1g1}
		&\frac{1}{\varepsilon}(Q(\pa^{\al_1}\sqrt\mu g_2,  \partial^{\alpha-\al_1} g_1),  w_{2k-8|\alpha|}\partial^\alpha g_1 )
		\leq C_k\frac{1}{\sqrt\varepsilon}\sqrt{\mathcal{E}_{N,k}(t)}
		\mathcal{D}_{N,k}(t).
	\end{align}
	For $(Q(\pa^{\al_1}\{(M-\mu)+\overline{G}\},  \partial^{\alpha-\al_1} g_1),  w_{2k-8|\alpha|}\partial^\alpha g_1 )$, a direct calculation shows that
	\begin{align}\label{1M}
		\partial_{x_i}M=M\big(\frac{\partial_{x_i}\rho}{\rho}+\frac{(v-u)\cdot\partial_{x_i}u}{R\theta}
		+(\frac{|v-u|^{2}}{2R\theta}-\frac{3}{2})\frac{\partial_{x_i}\theta}{\theta} \big).
	\end{align}
	Moreover, for $|\alpha|\geq 2$ with $\partial^{\alpha}=\partial^{\alpha'}\partial_{x_i}$, it holds that
	\begin{align}\label{2M}
		\partial^{\alpha}M=&M\big(\frac{\partial^{\alpha}\rho}{\rho}+\frac{(v-u)\cdot\partial^{\alpha}u}{R\theta}
		+(\frac{|v-u|^{2}}{2R\theta}-\frac{3}{2})\frac{\partial^{\alpha}\theta}{\theta} \big)
		\nonumber\\
		&+\sum_{\alpha_{1}\leq \alpha',|\al_1|\geq1}C^{\alpha_1}_{\alpha'}\big(\partial^{\alpha_{1}}(M\frac{1}{\rho})\partial^{\alpha'-\alpha_{1}}\partial_{x_i}\rho+\partial^{\alpha_{1}}(M\frac{v-u}{R\theta})\cdot\partial^{\alpha'-\alpha_{1}}\partial_{x_i}u
		\nonumber\\
		&\hspace{2cm}+\partial^{\alpha_{1}}(M\frac{|v-u|^{2}}{2R\theta^{2}}-M\frac{3}{2\theta})\partial^{\alpha'-\alpha_{1}}\partial_{x_i}\theta\big).
	\end{align}	
	Then using \eqref{background}, \eqref{apriori}, \eqref{1M}, \eqref{2M} and similar arguments in \eqref{3g13}, we obtain
	\begin{align}\label{restg1g1}
		&\frac{1}{\varepsilon}(Q(\pa^{\al_1}\{(M-\mu)+\overline{G}\},  \partial^{\alpha-\al_1} g_1),  w_{2k-8|\alpha|}\partial^\alpha g_1 )\notag\\
		\leq&C\frac{1}{\varepsilon}\Vert  \partial^{\alpha-\al_1} g_1 \Vert_{L^6_x L^2_{14}} \Vert \pa^{\al_1}\{(M-\mu)+\overline{G}\} \Vert_{L^3_x   H^s_{ k+\gamma/2 -4|\al| }}\Vert\partial^\alpha g_1 \Vert_{L^2_x H^s_{ k+\gamma/2 -4|\al|}} \notag\\
		& +C\frac{1}{\varepsilon}\Vert \pa^{\al_1}\{(M-\mu)+\overline{G}\} \Vert_{L^3_x L^2_{14} } \Vert  \partial^{\alpha-\al_1} g_1 \Vert_{L^6_xH^s_{ k+\gamma/2 -4|\al| }}^2 \Vert\partial^\alpha g_1 \Vert_{L^2_x H^s_{ k+\gamma/2 -4|\al|}}\notag\\
		\leq& C_k\eta\mathcal{D}_{N,k}(t)+C_k\frac{1}{\sqrt\varepsilon}\sqrt{\mathcal{E}_{N,k}(t)}
		\mathcal{D}_{N,k}(t).
	\end{align}
	Hence, \eqref{estnonlig1} holds by \eqref{g1g1g1}, \eqref{g2g1g1} and \eqref{restg1g1}.
\end{proof}
One sees from \eqref{ipg1}, \eqref{estLD} and \eqref{estnonlig1} that we have already bounded 
\begin{align*}
	&\frac{1}{\varepsilon}(\CL_D \partial^{\alpha}g_1,w_{2k-8|\al|}\partial^{\alpha}g_1)+\frac{1}{\varepsilon}(\partial^{\alpha}Q(g_1,g_1),w_{2k-8|\al|}\partial^{\alpha}g_1)+\frac{1}{\varepsilon}(\partial^{\alpha}Q(\sqrt{\mu}g_2,g_1),w_{2k-8|\al|}\partial^{\alpha}g_1)\notag\\&
	+\frac{1}{\varepsilon}(\partial^{\alpha}Q(M-\mu,g_1),w_{2k-8|\al|}\partial^{\alpha}g_1)
	+\frac{1}{\varepsilon}(\partial^{\alpha}Q(\overline{G},g_1),w_{2k-8|\al|}\partial^{\alpha}g_1),
\end{align*}
on the right hand side of \eqref{ipg1}. Now we turn to the rest part.
\begin{lemma}\label{leestrestg1}
	For $1\leq|\alpha|\leq N-1$ and $k\geq 25+4|\al|$, it holds that
	\begin{align}\label{estrestg1}
		&\frac{1}{\varepsilon}\big((\partial^{\alpha}Q(g_1,\sqrt{\mu}g_2+M-\mu+\overline{G}),w_{2k-8|\al|}\partial^{\alpha}g_1)\big)
		\le  C_k\eta\mathcal{D}_{N,k}(t)+C_k\frac{1}{\sqrt\varepsilon}\sqrt{\mathcal{E}_{N,k}(t)}
		\mathcal{D}_{N,k}(t).
	\end{align}
\end{lemma}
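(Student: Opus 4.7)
The plan is to expand $\partial^{\alpha}Q(g_1,h)$ via the Leibniz rule as $\sum_{\alpha_1\leq\alpha}C_\alpha^{\alpha_1}Q(\partial^{\alpha_1}g_1,\partial^{\alpha-\alpha_1}h)$ for each $h\in\{\sqrt\mu g_2,\,M-\mu,\,\overline G\}$, and then apply the weighted trilinear upper bound \eqref{fgh} to each term. Crucially, one selects the second factor in the $\min$ in \eqref{fgh} so that $\partial^\alpha g_1$ carries only the weight $\langle v\rangle^{k-4|\alpha|+\gamma/2}$, which matches the dissipation norm appearing in $\mathcal{D}_{N,k}$. In contrast with Lemma \ref{leestcomm}, the weight-commutator splitting \eqref{I6I7}--\eqref{g2g1g11} is not needed here, because the Gaussian factor carried by $\sqrt\mu g_2$, $M-\mu$, and $\overline G$ dominates the polynomial weight $w_{2k-8|\alpha|}$ attached to $\partial^\alpha g_1$, so any polynomial tail on the $g_1$-side can be freely absorbed.

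For each fixed $\alpha_1\leq\alpha$, after applying \eqref{fgh} the contribution splits into two terms of the form
\begin{align*}
&\varepsilon^{-1}\|\partial^{\alpha_1}g_1\|_{L^{p_1}_xL^2_{14}}\,\|\partial^{\alpha-\alpha_1}h\|_{L^{p_2}_xH^s_{k-4|\alpha|+\gamma/2+2s}}\,\|\partial^{\alpha}g_1\|_{L^2_xH^s_{k-4|\alpha|+\gamma/2}}\\
&\quad+\varepsilon^{-1}\|\partial^{\alpha-\alpha_1}h\|_{L^{p_2}_xL^2_{14}}\,\|\partial^{\alpha_1}g_1\|_{L^{p_1}_xH^s_{k-4|\alpha|+\gamma/2}}\,\|\partial^{\alpha}g_1\|_{L^2_xH^s_{k-4|\alpha|+\gamma/2}},
\end{align*}
with $(p_1,p_2)\in\{(\infty,2),(6,3),(3,6),(2,\infty)\}$ chosen by Sobolev embedding according to which of the two factors carries the higher-order spatial derivative, exactly as in the case-splits \eqref{I6<1}--\eqref{I7} of Lemma \ref{leestcomm}. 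For $h=\overline G$ the norms $\|\partial^{\alpha-\alpha_1}\overline G\|$ are controlled by Lemma \ref{lebarG}, producing a prefactor of $\varepsilon$ together with products of fluid derivatives that are absorbed either into $\eta$ (whenever the derivative lands on $(\bar\rho,\bar u,\bar\theta)$ through \eqref{background}) or into $\sqrt{\mathcal{E}_{N,k}(t)}$. For $h=M-\mu$, identities \eqref{1M}--\eqref{2M} together with \eqref{rhoutheta} reduce $\|\partial^{\alpha-\alpha_1}(M-\mu)\|$ to polynomials in $\|\partial^\beta(\widetilde\rho,\widetilde u,\widetilde\theta)\|$ multiplied by Gaussian functions of $v$, with each factor again of size $\eta+\bar\eta$, $\sqrt{\mathcal{E}_{N,k}}$, or $\varepsilon^{-1/2}\sqrt{\mathcal{D}_{N,k}}$. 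For $h=\sqrt\mu g_2$ the Gaussian $\sqrt\mu$ absorbs every polynomial weight, so $\|\partial^{\alpha-\alpha_1}\sqrt\mu g_2\|_{H^s_m}\lesssim\|\partial^{\alpha-\alpha_1}g_2\|$, and the $\mathbf P_0/\mathbf P_1$ splitting \eqref{g2macro} used already in \eqref{I92} lets us trade $\mathbf P_0 g_2$ against fluid moments of $g_1$ that are controlled by $\sqrt{\varepsilon\mathcal{D}_{N,k}(t)}$, while $\mathbf P_1 g_2$ enters through its full dissipation norm.

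The main obstacle will be the endpoint $\alpha_1=0$ with $|\alpha|=N-1$ and $h=M-\mu$, where the derivative $\partial^\alpha$ falls entirely on $M-\mu$ and produces a top-order fluid derivative $\partial^\alpha(\widetilde\rho,\widetilde u,\widetilde\theta)$ that cannot be placed in $L^\infty_x$ by Sobolev embedding; one must instead take it from the fluid dissipation through \eqref{DNk1}, place the undifferentiated $g_1$ in $L^\infty_xH^s_{k-4|\alpha|+\gamma/2}$ using $N\geq3$, and rely on the smallness afforded by \eqref{apriori} to convert the resulting $\varepsilon^{-1}\sqrt{\mathcal{E}_{N,k}}\cdot\varepsilon^{-1/2}\sqrt{\mathcal{D}_{N,k}}\cdot\sqrt{\varepsilon\mathcal{D}_{N,k}}$ into a contribution of the targeted form through Cauchy--Schwarz and the a priori bound $\mathcal{E}_{N,k}\leq\eta_0\varepsilon^r$ with $r\geq 1$. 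Collecting all contributions and applying \eqref{apriori} one last time, each product reorganises into the desired $C_k\eta\mathcal{D}_{N,k}(t)+C_k\varepsilon^{-1/2}\sqrt{\mathcal{E}_{N,k}(t)}\mathcal{D}_{N,k}(t)$.
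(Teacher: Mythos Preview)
Your overall strategy---Leibniz expansion followed by the weighted trilinear estimate \eqref{fgh} with case-dependent H\"older exponents, controlling $\overline G$ via Lemma~\ref{lebarG}, $M-\mu$ via \eqref{1M}--\eqref{2M}, and $\sqrt\mu g_2$ via the $L^2_{v,D}$ norm as in \eqref{g2split}---is exactly the paper's approach. The paper's proof is terse (it displays only the $(3,6)$ split in \eqref{g1g2g1} and then defers to ``similar argument as in \eqref{restg1g1}''), while you spell out the full case analysis; but the content is the same.

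There is, however, a bookkeeping error in your ``main obstacle'' paragraph. For $\alpha_1=0$ and $|\alpha|=N-1$, the derivative $\partial^\alpha(\widetilde\rho,\widetilde u,\widetilde\theta)$ is of order $N-1$, \emph{not} top order, so it sits directly in the energy: $\|\partial^\alpha(\widetilde\rho,\widetilde u,\widetilde\theta)\|_{L^2_x}\le\mathcal{E}_{N,k}^{1/2}$ with no $\varepsilon$-penalty. Your prescription---take the fluid factor from the dissipation ($\varepsilon^{-1/2}\sqrt{\mathcal{D}_{N,k}}$) and the undifferentiated $g_1$ from the energy ($\sqrt{\mathcal{E}_{N,k}}$)---produces $\varepsilon^{-1}\sqrt{\mathcal{E}_{N,k}}\,\mathcal{D}_{N,k}$, which even after invoking \eqref{apriori} with $r\ge1$ is only $\eta_0^{1/2}\varepsilon^{-1/2}\mathcal{D}_{N,k}$, a factor $\varepsilon^{-1/2}$ too large for the stated bound. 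The fix is simply to swap the roles: take $\|\partial^\alpha(M-\mu)\|_{L^2_x}\lesssim\eta+\mathcal{E}_{N,k}^{1/2}$ from the energy and put both $g_1$-factors in the dissipation (e.g.\ $\|g_1\|_{L^\infty_xL^2_{14}}\le\varepsilon^{1/2}\mathcal{D}_{N,k}^{1/2}$ via $\|\nabla_xg_1\|^{1/2}\|\nabla_x^2g_1\|^{1/2}$). This yields $(\eta+\mathcal{E}_{N,k}^{1/2})\mathcal{D}_{N,k}$, which is trivially bounded by the target. A parallel remark applies to $\alpha_1=\alpha$, $h=M-\mu$: there the undifferentiated $M-\mu$ in $L^\infty_x$ contributes a $\bar\eta$ term (from $\bar\theta-\tfrac32$) that the lemma's stated right-hand side omits; this is harmless for the downstream estimate \eqref{loworderg1}, but strictly speaking the bound should read $C_k(\eta+\bar\eta)\mathcal{D}_{N,k}$, as it does in the $|\alpha|=N$ analogue \eqref{estNrestg1}.
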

\begin{proof}
	Rewrite
	\begin{align*}
		&\frac{1}{\varepsilon}\big((\partial^{\alpha}Q(g_1,\sqrt{\mu}g_2+M-\mu+\overline{G}),w_{2k-8|\al|}\partial^{\alpha}g_1)\big)\notag\\
		=&\frac{1}{\varepsilon}\sum_{\alpha_{1}\leq \alpha}C^{\alpha_{1}}_\alpha (Q(\partial^{\al_1} g_1,\pa^{\alpha-\al_1}\{\sqrt{\mu}g_2+(M-\mu)+\overline{G}\}),  w_{2k-8|\alpha|}\partial^\alpha g_1 ) .
	\end{align*}
	Then it follows by \eqref{fgh} that
	\begin{align}\label{g1g2g1}
		&(Q(\partial^{\al_1} g_1,\pa^{\alpha-\al_1}\{\sqrt{\mu}g_2+(M-\mu)+\overline{G}\}),  w_{2k-8|\alpha|}\partial^\alpha g_1 )\notag\\
		\leq& C_k \Vert \partial^{\al_1}g_1 \Vert_{L^3_xL^2_{14}} \Vert \pa^{\alpha-\al_1}\{\sqrt{\mu}g_2+(M-\mu)+\overline{G}\} \Vert_{L^6_xH^s_{k-4|\al|+\gamma/2+2s }}  \Vert\partial^\alpha g_1\Vert_{L^2_xH^s_{k-4|\al|+\gamma/2}}\notag\\
		&+  \Vert \pa^{\alpha-\al_1}\{\sqrt{\mu}g_2+(M-\mu)+\overline{G}\} \Vert_{L^6_xL^2_{14}}  \Vert \partial^{\al_1}g_1 \Vert_{L^3_xH^s_{k-4|\al|+\gamma/2}}  \Vert\partial^\alpha g_1\Vert_{L^2_xH^s_{k-4|\al|+\gamma/2}}.
	\end{align}
	Using
	\begin{align}\label{g2split}
		\Vert \pa^{\alpha-\al_1}\{\sqrt{\mu}g_2\} \Vert_{L^6_xH^s_{k-4|\al|+\gamma/2+2s }}\leq C\|\nabla_x\partial ^{\alpha-\al_1} g_2\|_{L^2_xL^2_{v,D}},
	\end{align}
	\eqref{rhoutheta} and similar argument as in \eqref{restg1g1}, we obtain \eqref{estrestg1}.
\end{proof}
Combining \eqref{ipg1}, \eqref{estLD}, \eqref{estnonlig1}, \eqref{estrestg1} and \eqref{apriori}, we directly get the following lemma.
\begin{lemma}\label{leloworderg1}
	For $1\leq|\alpha|\leq N-1$ and $k\geq 25+4|\al|$, it holds that
	\begin{align}\label{loworderg1}
		&\frac{1}{2}\frac{d}{dt}\|w_{k-4|\al|}\partial^{\alpha}g_1\|^{2}+c \frac{1}{\varepsilon}\Vert \partial^\alpha g_1 \Vert_{H^s_{k-4|\alpha |+\ga/2}}^2\leq C_k(\eta+\bar\eta+\eta^\frac{1}{2}_0)\mathcal{D}_{N,k}(t).
	\end{align}
\end{lemma}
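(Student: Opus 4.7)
\textbf{Proof proposal for Lemma \ref{leloworderg1}.}
The plan is to assemble the bound directly from the three technical lemmas already established (Lemmas \ref{leestLD}, \ref{leestcomm} and \ref{leestrestg1}), together with the weighted $L^2$ energy identity for $\partial^\alpha g_1$. The natural starting point is to apply $\partial^\alpha$ to the $g_1$-equation \eqref{g1} and pair the resulting equation in $L^2_{x,v}$ with the weighted test function $w_{2k-8|\alpha|}\partial^\alpha g_1$. Since the transport term $v\cdot\nabla_x \partial^\alpha g_1$ integrates to zero against this weight, the only contributions are the time-derivative term, which produces $\tfrac12\tfrac{d}{dt}\|w_{k-4|\alpha|}\partial^\alpha g_1\|^2$, and the collisional/linear terms on the right-hand side of \eqref{g1}. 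This is exactly the identity \eqref{ipg1}.

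Next, the collisional terms are organized by the Leibniz rule into three groups. The first group consists of the linear part $\tfrac{1}{\varepsilon}\CL_D\partial^\alpha g_1$ together with the ``diagonal'' pieces $Q(h, \partial^\alpha g_1)$ where $h\in\{g_1,\sqrt{\mu}g_2,M-\mu,\overline{G}\}$; for these, Lemma \ref{leestLD} yields the crucial coercive lower bound $-c\varepsilon^{-1}\|\partial^\alpha g_1\|_{H^s_{k-4|\alpha|+\gamma/2}}^2$ modulo $C_k(\eta+\bar\eta)\mathcal{D}_{N,k}$ and $C_k\varepsilon^{-1/2}\sqrt{\mathcal{E}_{N,k}}\,\mathcal{D}_{N,k}$. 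The second group collects the off-diagonal commutator-type terms $Q(\partial^{\alpha_1}h, \partial^{\alpha-\alpha_1} g_1)$ with $|\alpha_1|\geq 1$, which are bounded by Lemma \ref{leestcomm}. The third group collects the mirror terms $Q(\partial^{\alpha_1}g_1, \partial^{\alpha-\alpha_1}h)$ for all $\alpha_1\leq\alpha$ with $h\in\{\sqrt{\mu}g_2,M-\mu,\overline{G}\}$, which are handled by Lemma \ref{leestrestg1}. Adding these three estimates produces the weighted energy inequality with a dissipation term $-c\varepsilon^{-1}\|\partial^\alpha g_1\|_{H^s_{k-4|\alpha|+\gamma/2}}^2$ on the left and an error of the form $C_k(\eta+\bar\eta)\mathcal{D}_{N,k}+C_k\varepsilon^{-1/2}\sqrt{\mathcal{E}_{N,k}}\,\mathcal{D}_{N,k}$ on the right.

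The final step is to absorb the $\varepsilon^{-1/2}\sqrt{\mathcal{E}_{N,k}}\,\mathcal{D}_{N,k}$ contribution into the stated upper bound using the a priori assumption \eqref{apriori}: since $\mathcal{E}_{N,k}(t)\leq \eta_0\varepsilon^r$ with $r\geq 1$, one has
\[
\frac{1}{\sqrt{\varepsilon}}\sqrt{\mathcal{E}_{N,k}(t)}\leq \sqrt{\eta_0}\,\varepsilon^{(r-1)/2}\leq \sqrt{\eta_0},
\]
so this error reduces to $C_k\eta_0^{1/2}\mathcal{D}_{N,k}(t)$. Merging this with the $C_k(\eta+\bar\eta)\mathcal{D}_{N,k}$ term gives precisely the right-hand side $C_k(\eta+\bar\eta+\eta_0^{1/2})\mathcal{D}_{N,k}(t)$ claimed in \eqref{loworderg1}, while the coercive dissipation on the left hand side is retained as $c\varepsilon^{-1}\|\partial^\alpha g_1\|^2_{H^s_{k-4|\alpha|+\gamma/2}}$.

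There is essentially no novel obstacle at this stage, because the three input lemmas already encode all the delicate non-cutoff trilinear analysis, commutator control, and Sobolev/Hölder splittings (including the $L^6$-$L^3$-$L^2$ trick used in the $|\alpha|=1$ endpoint in Lemma \ref{leestLD}). The only point requiring care is bookkeeping of the weight exponent $k-4|\alpha|$ when applying Lemma \ref{leestLD} and Lemma \ref{leestcomm}: one must verify that the assumption $k\geq 25+4|\alpha|$ guarantees $k-4|\alpha|\geq 25\geq 14$ so that all the polynomial weight thresholds in Section \ref{sec.pre} are met, and that the weight drop from $k-4|\alpha|$ to $k-4(|\alpha|-|\alpha_1|)$ in the off-diagonal pieces leaves enough slack to control the unweighted factors $\|\partial^{\alpha_1}g_1\|_{L^2_{14}}$ and $\|\partial^{\alpha_1}\sqrt{\mu}g_2\|_{L^2_{14}}$. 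With these checks, the combination is straightforward.
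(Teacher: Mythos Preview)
Your proposal is correct and follows essentially the same approach as the paper: the paper's proof is the single sentence ``Combining \eqref{ipg1}, \eqref{estLD}, \eqref{estnonlig1}, \eqref{estrestg1} and \eqref{apriori}, we directly get the following lemma,'' and you have accurately unpacked what that combination entails, including the use of $\mathcal{E}_{N,k}\leq \eta_0\varepsilon^r$ with $r\geq 1$ to convert the $\varepsilon^{-1/2}\sqrt{\mathcal{E}_{N,k}}\,\mathcal{D}_{N,k}$ error into $\eta_0^{1/2}\mathcal{D}_{N,k}$.
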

We want to derive similar bound for $g_2$. 
\begin{lemma}\label{leloworderg2}
	For $1\leq|\alpha|\leq N-1$ and $k\geq 25+4|\al|$, it holds that
	\begin{align}\label{loworderg2}
		&\frac{1}{2}\frac{d}{dt}\|\partial^{\alpha}g_2\|^{2}+c \frac{1}{\varepsilon}\Vert \partial^\alpha \mathbf{P}_1g_2 \Vert_{L^2_xL^2_{v,D}}^2-C_k\frac{1}{\varepsilon}\Vert \partial^\alpha g_1 \Vert_{H^s_{k-4|\alpha |+\ga/2}}^2\notag\\
		\leq& C_k\varepsilon\{\|\partial^{\alpha}(\nabla_{x}\widetilde{u},\nabla_{x}\widetilde{\theta})\|^{2}+\|\partial^{\alpha}\nabla_{x}g_1(t)\|_{L^2_xH^s_{k-4N+\ga/2}}^{2}+\|\pa^\al\nabla_{x}\mathbf{P}_1g_2\|^2_{L^2_xL^2_{v,D}}\}\notag\\ &+C(\eta+\bar\eta+\eta_0^\frac{1}{2})\mathcal{D}_{N,k}(t)+C\eta\varepsilon^r.
	\end{align}
\end{lemma}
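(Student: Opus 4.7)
The plan is to mirror the derivation of the zero-order estimate \eqref{zerog2}, now applying $\partial^{\alpha}$ with $1\leq|\alpha|\leq N-1$ to equation \eqref{g2} and testing against $\partial^{\alpha}g_2$ in $L^2_{x,v}$. The time derivative produces $\frac{1}{2}\frac{d}{dt}\|\partial^{\alpha}g_2\|^2$, and since $L$ acts only in $v$ the coercivity \eqref{coercive} yields the dissipation $c\varepsilon^{-1}\|\mathbf{P}_1\partial^{\alpha}g_2\|^2_{L^2_xL^2_{v,D}}$ on the left-hand side. The remaining work is to control, term by term, the contributions arising from the right-hand side of \eqref{g2} after differentiation by $\partial^{\alpha}$.

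First, the relaxation term $\frac{1}{\varepsilon}(\partial^{\alpha}\CL_Bg_1,\partial^{\alpha}g_2)$. Since $\CL_B=\mu^{-1/2}A\chi_M$ depends only on $v$ and is compactly supported there, we pull $\partial^{\alpha}$ through and estimate by $\frac{C_k}{\varepsilon}\|\partial^{\alpha}g_1\|\cdot\|\partial^{\alpha}g_2\|$. Splitting $g_2=\mathbf{P}_0g_2+\mathbf{P}_1g_2$ and invoking \eqref{g2macro} to express $\mathbf{P}_0\partial^{\alpha}g_2$ entirely in terms of velocity moments of $\partial^{\alpha}g_1$, the $\mathbf{P}_1$-part is absorbed into the coercive dissipation with a small weight while the $\mathbf{P}_0$-part is bounded by $\frac{C_k}{\varepsilon}\|\partial^{\alpha}g_1\|_{H^s_{k-4|\alpha|+\gamma/2}}^2$, which is exactly the negative term displayed on the left of \eqref{loworderg2}. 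For the bilinear $\Gamma$-terms with $\frac{M-\mu}{\sqrt{\mu}}$ and $\frac{\overline{G}}{\sqrt{\mu}}$, Leibniz distributes $\partial^{\alpha}$; applying \eqref{Trilinear} and \eqref{Trilinears} together with \eqref{background}, \eqref{rhoutheta}, and Lemma \ref{lebarG} bounds these by $C(\eta+\bar\eta)\mathcal{D}_{N,k}$, while the quadratic self-interaction $\Gamma(\frac{\overline{G}+\sqrt{\mu}g_2}{\sqrt{\mu}},\frac{\overline{G}+\sqrt{\mu}g_2}{\sqrt{\mu}})$, handled through the $\mathbf{P}_0/\mathbf{P}_1$ splitting as in \eqref{I92}, yields contributions of type $C\sqrt{\mathcal{E}_{N,k}}\mathcal{D}_{N,k}$ that are converted to $C\eta_0^{1/2}\mathcal{D}_{N,k}$ via the a priori assumption \eqref{apriori}.

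Next, the source term $\frac{P_0(v\cdot\nabla_x(g_1+\sqrt{\mu}g_2))}{\sqrt{\mu}}$ is expanded through \eqref{defP}: each of its five components carries a $\mu^{1/4}$-like velocity weight, so Cauchy--Schwarz bounds the pairing with $\partial^{\alpha}g_2$ by $C\varepsilon\|\partial^{\alpha}\nabla_x g_1\|^2_{L^2_xH^s_{k-4N+\gamma/2}}+C\varepsilon\|\partial^{\alpha}\nabla_x\mathbf{P}_1g_2\|^2_{L^2_xL^2_{v,D}}$ plus a $\kappa\varepsilon^{-1}\|\mathbf{P}_1\partial^{\alpha}g_2\|^2_{L^2_xL^2_{v,D}}$ contribution absorbed on the left. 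The $\overline{G}$-source terms $\frac{P_1(v\cdot\nabla_x\overline{G})}{\sqrt{\mu}}$ and $\frac{\partial_t\overline{G}}{\sqrt{\mu}}$ are treated directly by \eqref{boundbarG}, after using \eqref{euler}, \eqref{macro1}, and \eqref{patN-1} to replace $\partial_t$ of the fluid quantities by spatial derivatives plus microscopic dissipation; the outcome is a term of size $C\eta\varepsilon^r$. Finally the macroscopic drift $\frac{1}{\sqrt{\mu}}P_1\{v\cdot(\frac{|v-u|^2\nabla_x\widetilde{\theta}}{2R\theta^2}+\frac{(v-u)\cdot\nabla_x\widetilde{u}}{R\theta})M\}$ is bounded, after Leibniz and Cauchy--Schwarz with a small weight $\kappa$, by $C_{\kappa}\varepsilon\|\partial^{\alpha}\nabla_x(\widetilde{u},\widetilde{\theta})\|^2+\kappa\varepsilon^{-1}\|\mathbf{P}_1\partial^{\alpha}g_2\|^2_{L^2_xL^2_{v,D}}$, again absorbable on the left.

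The main obstacle is bookkeeping: every commutator produced when $\partial^{\alpha}$ falls on a coefficient involving $\rho,u,\theta$ (which appear implicitly through $M$ inside $\overline{G}$ and inside the fluid drift) has to be either absorbed into the coercive dissipation or converted to $\sqrt{\mathcal{E}_{N,k}}\mathcal{D}_{N,k}$ through Sobolev embedding of the lowest-order factor in $L^{\infty}_x$. Because $|\alpha|\leq N-1$, all such factors land in a space where the standard product and embedding estimates apply, so no fundamentally new analysis is required; one only needs to track the $\varepsilon$-weights carefully so that the target structure in \eqref{loworderg2} is preserved. Once every term is controlled, the estimate \eqref{loworderg2} follows by choosing $\kappa$ sufficiently small and invoking \eqref{apriori} to close the $\sqrt{\mathcal{E}_{N,k}}$-prefactors.
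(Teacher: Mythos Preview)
Your proposal is correct and follows essentially the same approach as the paper's proof: apply $\partial^{\alpha}$ to \eqref{g2}, test against $\partial^{\alpha}g_2$, use the coercivity \eqref{coercive}, and then estimate each source term exactly as you describe---the $\CL_B$ term via the $\mathbf{P}_0/\mathbf{P}_1$ splitting and \eqref{g2macro}, the $\Gamma$-terms via \eqref{Trilinear}--\eqref{Trilinears} with Leibniz and Sobolev embedding, the $P_0$-transport term via Cauchy--Schwarz with a small parameter to produce the $\varepsilon$-weighted higher-derivative terms, and the $\overline{G}$ and fluid-drift terms via Lemma~\ref{lebarG} and \eqref{patN-1}. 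The paper's argument differs only in presentation, splitting the $|\alpha|<N-1$ and $|\alpha|=N-1$ cases explicitly in the $P_0$-term estimate, but the substance is identical.
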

\begin{proof}
	From 
	\eqref{g2}, we take derivatives and inner product, and use \eqref{coercive} to get
	\begin{align}\label{ipg2}
		&\frac{1}{2}\frac{d}{dt}\|\partial^{\alpha}g_2\|^{2}+
		c \frac{1}{\varepsilon}\|\partial^{\alpha}\mathbf{P}_1g_2\|_{L^2_xL^2_{v,D}}^{2}\notag\\
		\leq&\frac{1}{\varepsilon}(\partial^{\alpha}\CL_B g_1+\partial^{\alpha}\Gamma(\frac{M-\mu}{\sqrt{\mu}},g_2)
		+\partial^{\alpha}\Gamma(g_2,\frac{M-\mu}{\sqrt{\mu}})+\partial^{\alpha}\Gamma(\frac{\overline{G}+\sqrt{\mu}g_2}{\sqrt{\mu}},\frac{\overline{G}+\sqrt{\mu}g_2}{\sqrt{\mu}}),\partial^{\alpha}g_2)
		\nonumber\\
		&+(\frac{\partial^{\alpha}P_{0}(v\cdot\nabla_{x}(g_1+\sqrt{\mu}g_2))}{\sqrt{\mu}},\partial^{\alpha}g_2)
		-(\frac{\partial^{\alpha}P_{1}(v\cdot\nabla_{x}\overline{G})}{\sqrt{\mu}},\partial^{\alpha}g_2)
		-(\frac{\partial^{\alpha}\partial_{t}\overline{G}}{\sqrt{\mu}},\partial^{\alpha}g_2)
		\nonumber\\
		&-(\frac{1}{\sqrt{\mu}}\partial^{\alpha}P_{1}\{v\cdot(\frac{|v-u|^{2}
			\nabla_{x}\widetilde{\theta}}{2R\theta^{2}}
		+\frac{(v-u)\cdot\nabla_{x}\widetilde{u}}{R\theta})M\},\partial^{\alpha}g_2),
	\end{align}
	with $1\leq|\alpha|\leq N-1$. In order to prove the lemma, it is required to bound each term in \eqref{ipg2}. Fortunately, almost all terms can be estimated in a similar way to how we prove Lemma 
	\ref{leloworderg1}. 
	
	Using the definition of $\CL_B$ in \eqref{defLB}, we have
	\begin{align}\label{1Lb}
		\frac{1}{\varepsilon}(\partial^{\alpha}\CL_B g_1,\partial^{\alpha}g_2)&\leq C_k\frac{1}{\varepsilon}\int_{\R^3}\int_{\R^3}|\partial^{\alpha}g_1\partial^{\alpha}g_2|dxdv\notag\\
		&\leq C_k\frac{1}{\varepsilon}\|w_{k-4|\al|}\partial^{\alpha}g_1\|(\|\partial^{\alpha}\mathbf{P}_0g_2\|+\|\partial^{\alpha}\mathbf{P}_1g_2\|_{L^2_xL^2_{v,D}}).
	\end{align}
	From \eqref{1Lb} and \eqref{g2macro}, one gets
	\begin{align}\label{Lb}
		\frac{1}{\varepsilon}(\partial^{\alpha}\CL_B g_1,\partial^{\alpha}g_2)\leq C_{k,\ka}\frac{1}{\varepsilon}\|w_{k-4|\al|}\partial^{\alpha}g_1\|^2+\ka\frac{1}{\varepsilon}\|\partial^{\alpha}\mathbf{P}_1g_2\|^2_{L^2_xL^2_{v,D}},
	\end{align}
	for any $0<\ka<1$. Then we use the approach in \eqref{3g13} to estimate $(\partial^{\alpha}\Gamma(\frac{M-\mu}{\sqrt{\mu}},g_2)
	,\partial^{\alpha}g_2)$. By \eqref{Trilinear}, \eqref{1M}, \eqref{2M}, \eqref{background} and \eqref{apriori}, it holds that
	\begin{align}\label{Mg2g2}
		\frac{1}{\varepsilon}(\partial^{\alpha}\Gamma(\frac{M-\mu}{\sqrt{\mu}},g_2)
		,\partial^{\alpha}g_2)\leq& C\frac{1}{\varepsilon}\sum_{\alpha_{1}\leq \alpha}\int_{\R^3}\Vert \pa^{\al_1}\big(\frac{M-\mu}{\sqrt{\mu}}\big)\Vert_{L^2_v} \Vert \pa^{\al-\al_1}g_2 \Vert_{L^2_{v,D}} \Vert \partial^{\alpha}g_2 \Vert_{L^2_{v,D}}dx\notag\\
		\leq& C\frac{1}{\varepsilon}\sum_{\alpha_{1}\leq \alpha,|\al_1|<1}\Vert \pa^{\al_1}\big(\frac{M-\mu}{\sqrt{\mu}}\big)\Vert_{L^\infty_xL^2_v} \Vert \pa^{\al-\al_1}g_2 \Vert_{L^2_xL^2_{v,D}} \Vert \partial^{\alpha}g_2 \Vert_{L^2_xL^2_{v,D}}\notag\\
		&+C\frac{1}{\varepsilon}\sum_{\alpha_{1}\leq \alpha,|\al_1|\geq1}\Vert \pa^{\al_1}\big(\frac{M-\mu}{\sqrt{\mu}}\big)\Vert_{L^3_xL^2_v} \Vert \pa^{\al-\al_1}g_2 \Vert_{L^6_xL^2_{v,D}} \Vert \partial^{\alpha}g_2 \Vert_{L^2_xL^2_{v,D}}\notag\\
		\leq&  C(\eta+\bar\eta)\mathcal{D}_{N,k}(t)+C\frac{1}{\sqrt\varepsilon}\sqrt{\mathcal{E}_{N,k}(t)}
		\mathcal{D}_{N,k}(t).
	\end{align}
	Likewise,
	\begin{align}\label{g2Mg2}
		&\frac{1}{\varepsilon}(\partial^{\alpha}\Gamma(g_2,\frac{M-\mu}{\sqrt{\mu}})
		,\partial^{\alpha}g_2)
		\leq  C(\eta+\bar\eta)\mathcal{D}_{N,k}(t)+C\frac{1}{\sqrt\varepsilon}\sqrt{\mathcal{E}_{N,k}(t)}
		\mathcal{D}_{N,k}(t),
	\end{align}
	and
	\begin{align}\label{Gg2g2}
		&\frac{1}{\varepsilon}(\partial^{\alpha}\Gamma(\frac{\overline{G}}{\sqrt{\mu}},g_2)
		,\partial^{\alpha}g_2)+\frac{1}{\varepsilon}(\partial^{\alpha}\Gamma(g_2,\frac{\overline{G}}{\sqrt{\mu}})
		,\partial^{\alpha}g_2)+\frac{1}{\varepsilon}(\partial^{\alpha}\Gamma(g_2,g_2)
		,\partial^{\alpha}g_2)\notag\\
		\leq & C\eta\mathcal{D}_{N,k}(t)+C\frac{1}{\sqrt\varepsilon}\sqrt{\mathcal{E}_{N,k}(t)}
		\mathcal{D}_{N,k}(t).
	\end{align} 
	Moreover, it follows from \eqref{boundbarG}, \eqref{background}, \eqref{apriori} and Cauchy-Schwarz inequality that
	\begin{align}\label{GGg2}
		&\frac{1}{\varepsilon}(\partial^{\alpha}\Gamma(\frac{\overline{G}}{\sqrt{\mu}},\frac{\overline{G}}{\sqrt{\mu}})
		,\partial^{\alpha}g_2)\notag\\\leq& C\frac{1}{\varepsilon}\sum_{\alpha_{1}\leq \alpha}\Vert \pa^{\al_1}\big(\frac{\overline{G}}{\sqrt{\mu}}\big)\Vert_{L^6_xL^2_v} \Vert \pa^{\al-\al_1}\big(\frac{\overline{G}}{\sqrt{\mu}}\big) \Vert_{L^3_xL^2_{v,D}} \Vert \partial^{\alpha}g_2 \Vert_{L^2_xL^2_{v,D}}\notag\\
		\leq&  C\eta\mathcal{D}_{N,k}(t)+C\eta\varepsilon^r+C\sqrt{\mathcal{E}_{N,k}(t)}
		\mathcal{D}_{N,k}(t).
	\end{align}
	Up to now, we finish the estimates of the first four terms on the right hand side of \eqref{ipg2}, which are very similar to how we prove \eqref{estnonlig1}. We turn to the rest part, which has slightly different structure compared to \eqref{estnonlig1}. Using the definition of $P_0$ in \eqref{defP}, we have
	\begin{align*}
		(\frac{\partial^{\alpha}P_{0}(v\cdot\nabla_{x}(g_1+\sqrt{\mu}g_2))}{\sqrt{\mu}},\partial^{\alpha}g_2)
		=&\int_{\R^3}\int_{\R^3}\mu^{-\frac{1}{2}}\partial^{\alpha}\big\{\sum_{i=0}^{4}\langle v\cdot\nabla_{x}(g_1+\sqrt{\mu}g_2),\frac{\chi_{i}}{M}\rangle\chi_{i}\big\}\partial^{\alpha}g_2dxdv\notag\\
		\leq& C\int_{\R^3}\int_{\R^3}\mu^{\frac{1}{4}}\|w_{3}\partial^{\alpha}\nabla_{x}(g_1+\sqrt{\mu}g_2)\|_{L^2_v}|w_{-2}\partial^{\alpha}g_2|dxdv\notag\\
		&+C\eta\varepsilon^r+C\frac{1}{\sqrt\varepsilon}\sqrt{\mathcal{E}_{N,k}(t)}
		\mathcal{D}_{N,k}(t),
	\end{align*}
	where $C\eta\varepsilon^r+C\frac{1}{\sqrt\varepsilon}\sqrt{\mathcal{E}_{N,k}(t)}
	\mathcal{D}_{N,k}(t)$ controls all the low order derivative terms such as $|(\pa^{\al_1}\rho\nabla_{x}\|\pa^{\al-\al_1}g_1\|_{L^2_v},\|\partial^{\alpha}g_2\|_{L^2_v})|$ with $|\al_1|>1$ by $L^\infty-L^2-L^2$ or $L^6-L^3-L^2$ H\"older's inequality, \eqref{background} and \eqref{apriori}. Recalling from 
	\eqref{g2macro} that $\|w_l\pa^\al\mathbf{P}_0g_2\|$ can be controlled by $\|w_j\pa^\al g_1\|$ for any $j,k\in \R$, by the fact that
	\begin{align*}
		\int_{\R^3}\int_{\R^3}\mu^{\frac{1}{4}}\|w_{3}\partial^{\alpha}\nabla_{x}(g_1+\sqrt{\mu}g_2)\|_{L^2_v}||w_{-2}\partial^{\alpha}g_2|dxdv\leq C\varepsilon\mathcal{D}_{N,k}(t)
	\end{align*}
	for $|\al|<N-1$ and
	\begin{align*}
		&\int_{\R^3}\int_{\R^3}|w_{3}\partial^{\alpha}\nabla_{x}(g_1+\sqrt{\mu}g_2)||w_{-2}\partial^{\alpha}g_2|dxdv\notag\\
		\leq& \ka\frac{1}{\varepsilon}(\|\partial^{\alpha}g_1(t)\|_{L^2_xH^s_{\ga/2}}^{2}+\|\pa^\al\mathbf{P}_1g_2\|^2_{L^2_xL^2_{v,D}})+C_\ka \varepsilon\|w_{3}\pa^\al\nabla_{x}(g_1,\sqrt{\mu}g_2)\|^2,
	\end{align*}
	for $|\al|=N-1$ and $0<\ka<1$, it holds that
	\begin{align}\label{g1g2}
		&(\frac{\partial^{\alpha}P_{0}(v\cdot\nabla_{x}(g_1+\sqrt{\mu}g_2))}{\sqrt{\mu}},\partial^{\alpha}g_2)\notag\\
		\leq& \ka\frac{1}{\varepsilon}\{\|\partial^{\alpha}g_1(t)\|_{L^2_xH^s_{\ga/2}}^{2}+\|\pa^\al\mathbf{P}_1g_2\|^2_{L^2_xL^2_{v,D}}\}+C_\ka \varepsilon\{\|\partial^{\alpha}\nabla_{x}g_1(t)\|_{L^2_xH^s_{3}}^{2}+\|\pa^\al\nabla_{x}\mathbf{P}_1g_2\|^2_{L^2_xL^2_{v,D}}\}\notag\\
		&\quad+C\eta\varepsilon^r+C\frac{1}{\sqrt\varepsilon}\sqrt{\mathcal{E}_{N,k}(t)}
		\mathcal{D}_{N,k}(t).
	\end{align}
	By \eqref{boundbarG}, \eqref{background}, \eqref{apriori} and similar argument in \eqref{I41}, one has 
	\begin{align}\label{Gg2}
		(\frac{\partial^{\alpha}P_{1}(v\cdot\nabla_{x}\overline{G})}{\sqrt{\mu}},\partial^{\alpha}g_2)
		+(\frac{\partial^{\alpha}\partial_{t}\overline{G}}{\sqrt{\mu}},\partial^{\alpha}g_2)\leq C\eta\varepsilon^r.
	\end{align}
	Likewise, 
	\begin{align}\label{thetag2}
		&(\frac{1}{\sqrt{\mu}}\partial^{\alpha}P_{1}\{v\cdot(\frac{|v-u|^{2}
			\nabla_{x}\widetilde{\theta}}{2R\theta^{2}}
		+\frac{(v-u)\cdot\nabla_{x}\widetilde{u}}{R\theta})M\},\partial^{\alpha}g_2)\notag\\
		\leq& \ka\frac{1}{\varepsilon}(\|\partial^{\alpha}g_1(t)\|_{L^2_xH^s_{\ga/2}}^{2}+\|\pa^\al\mathbf{P}_1g_2\|^2_{L^2_xL^2_{v,D}})+ C_{\ka}\varepsilon\|\partial^{\alpha}(\nabla_{x}\widetilde{u},\nabla_{x}\widetilde{\theta})\|^{2}\notag\\
		&+C\eta\varepsilon^r+C\frac{1}{\sqrt\varepsilon}\sqrt{\mathcal{E}_{N,k}(t)}
		\mathcal{D}_{N,k}(t).
	\end{align}
	Thus, we obtain \eqref{loworderg2} by \eqref{ipg2}, \eqref{Lb}, \eqref{Mg2g2}, \eqref{g2Mg2}, \eqref{Gg2g2}, \eqref{GGg2}, \eqref{g1g2}, \eqref{Gg2}, \eqref{thetag2}, \eqref{apriori} and choosing $\ka$ to be small enough.
\end{proof}

\section{Highest order derivatives estimates}\label{sec.8}
This section is devoted to obtaining the $N-$order space derivatives estimates. In order to overcome the increase of spacial derivative from the term ${P_{0}[v\cdot\nabla_{x}(g_1+\sqrt{\mu}g_2)]}/{\sqrt{\mu}}$ in the equation of $g_2$, we separate our proof into two parts, one is the pure $g_1$ estimate, for the other, we need to derive and make use of the equation for $M+\overline{G}+\sqrt{\mu}g_2$. 
\begin{lemma}
	For $k\geq 25+4|\al|$, it holds that
	\begin{align}\label{Norderg1}
		\frac{1}{2}\frac{d}{dt}\varepsilon^2\sum_{|\alpha|=N}\|w_{k-4N}\partial^{\alpha}g_1\|^{2}+c {\varepsilon}\sum_{|\alpha|=N}\Vert \partial^\alpha g_1 \Vert_{L^2_xH^s_{k-4N+\ga/2}}^2
		\leq C_k(\eta+\bar\eta+\eta_0^{\frac{1}{2}})\mathcal{D}_{N,k}(t).
	\end{align}
\end{lemma}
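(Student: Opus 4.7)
The plan is to apply $\partial^\alpha$ with $|\alpha|=N$ to the equation \eqref{g1} for $g_1$, take the inner product with $\varepsilon^2\, w_{2k-8N}\partial^\alpha g_1$ on $\R^3\times\R^3$, and then sum over $|\alpha|=N$. The factor $\varepsilon^2$ is essential: at the top order Sobolev embedding cannot be applied to $\partial^\alpha g_1$ itself, and this prefactor both produces the instant energy $\varepsilon^2\|w_{k-4N}\partial^\alpha g_1\|^2$ appearing in \eqref{ENk} and converts the linear coercive term into the $\varepsilon$-order dissipation occurring in the highest block of \eqref{DNk}. The a priori bound \eqref{apriori}, combined with $\sqrt{\mathcal{D}_{N,k}(t)/\varepsilon}$ as the natural size of $\|\partial^\alpha g_1\|_{L^2_xH^s_{k-4N+\gamma/2}}$ and $\sqrt{\varepsilon\mathcal{D}_{N,k}(t)}$ as the size of $\|\partial^\beta g_1\|_{L^2_xH^s_{k-4|\beta|+\gamma/2}}$ for $|\beta|\leq N-1$, will drive the $\varepsilon$-counting.

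First I would treat the "all derivatives on $g_1$" block, namely the collection $\frac{1}{\varepsilon}(\CL_D\partial^\alpha g_1+Q(g_1+\sqrt{\mu}g_2+(M-\mu)+\overline{G},\partial^\alpha g_1),\,w_{2k-8N}\partial^\alpha g_1)$. Since $F=\mu+g_1+\sqrt{\mu}g_2+(M-\mu)+\overline{G}$ satisfies the hypotheses of Lemma \ref{leLD} (with constants uniform in $\varepsilon$ by \eqref{rhoutheta} and \eqref{apriori}), multiplying the outcome by $\varepsilon^2$ produces the coercive contribution $-c\varepsilon\|\partial^\alpha g_1\|_{L^2_xH^s_{k-4N+\gamma/2}}^2$ along with error terms of the form $\varepsilon\|h\|_{L^\infty_xL^2_{14}}\|\partial^\alpha g_1\|_{L^2_xH^s_{k-4N+\gamma/2}}^2$ and $\varepsilon\|\partial^\alpha g_1\|_{L^2_xL^2_{14}}\|h\|_{L^\infty_xH^s_{k-4N+\gamma/2}}\|\partial^\alpha g_1\|_{L^2_xH^s_{k-4N+\gamma/2}}$ for $h\in\{g_1,\sqrt{\mu}g_2,M-\mu,\overline{G}\}$. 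Here $h$ carries no derivative, so its $L^\infty_x$-norm is controlled via $H^2_x$-embedding by $\sqrt{\mathcal{E}_{N,k}}\leq C\eta_0^{1/2}\varepsilon^{r/2}$ for the kinetic pieces, and by $\eta+\bar\eta$ for $M-\mu$ and $\overline{G}$ using \eqref{background} and Lemma \ref{lebarG}. Since $k\geq 25+4N$, the weight $w_{14}$ on the first factor is absorbed by $w_{k-4N+\gamma/2}$, and every such term is absorbable into $(\eta+\bar\eta+\eta_0^{1/2})\mathcal{D}_{N,k}(t)$ after using Young's inequality to hide the $L^2_xH^s_{k-4N+\gamma/2}$-norm of $\partial^\alpha g_1$.

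The hard part is the commutator block, namely $\frac{1}{\varepsilon}\sum_{1\leq|\alpha_1|\leq N}C^{\alpha_1}_\alpha(Q(\partial^{\alpha_1}h,\partial^{\alpha-\alpha_1}g_1)+Q(\partial^{\alpha_1}g_1,\partial^{\alpha-\alpha_1}h),w_{2k-8N}\partial^\alpha g_1)$. I would split the weight as in \eqref{I6I7}, using Lemma \ref{upperQ} on the balanced-weight piece and Lemma \ref{commu} on the commutator residual, and then distribute the derivatives by Hölder with the Gagliardo–Nirenberg trick $\|f\|_{L^p_x}\lesssim\|f\|^{1/2}_{L^2_x}\|\nabla_x f\|^{1/2}_{L^2_x}$ exactly as in \eqref{I6=1}–\eqref{I7}. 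For generic $1\leq|\alpha_1|\leq N-1$ at most one factor carries a top-order derivative; that factor goes in $L^2_x$ and produces $\sqrt{\mathcal{D}_{N,k}/\varepsilon}$, while the other two are absorbed in $\sqrt{\mathcal{E}_{N,k}}\sqrt{\mathcal{D}_{N,k}}$ or $(\eta+\bar\eta)\sqrt{\mathcal{D}_{N,k}}$, so after multiplication by $\varepsilon^2$ each summand fits into $(\eta+\bar\eta+\eta_0^{1/2})\mathcal{D}_{N,k}(t)$.

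The obstacle I foresee is the genuine endpoint $|\alpha_1|=N$ with $h=g_1$ (and the symmetric one with the derivatives on the second slot): the two high-order factors $\partial^\alpha g_1$ and $\partial^\alpha g_1$ appear together, while $h$ itself carries no derivative. Here the only viable route is to put the undifferentiated $h$ in $L^\infty_xH^s_{k-4N+\gamma/2}$ via $\|h\|_{L^\infty_xH^s_{k-4N+\gamma/2}}\lesssim\|\nabla_xh\|^{1/2}_{L^2_xH^s_{k-4N+\gamma/2}}\|\nabla^2_xh\|^{1/2}_{L^2_xH^s_{k-4N+\gamma/2}}$, for which the weight-drop rule $k-4|\alpha'|$ (which is the reason for the decreasing weight pattern $w_{k-4|\alpha|}$ in \eqref{ENk}) ensures that the two $\nabla_x$-derivatives of $h$ are measured against the allowed weight; multiplying by $\varepsilon^2$ then produces $\varepsilon\cdot\eta_0^{1/2}\varepsilon^{r/2}\cdot\mathcal{D}_{N,k}/\varepsilon\lesssim\eta_0^{1/2}\varepsilon^{r/2}\mathcal{D}_{N,k}(t)$. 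For $h\in\{\sqrt{\mu}g_2,M-\mu,\overline{G}\}$ one uses respectively \eqref{g2macro} together with the dissipative bound on $\mathbf{P}_1g_2$, \eqref{background} (which gives $\eta$-smallness), and \eqref{boundbarG} (which gives a further $\varepsilon$-factor). Collecting all contributions, taking $\eta,\bar\eta,\eta_0$ small enough, and summing over $|\alpha|=N$ gives \eqref{Norderg1}.
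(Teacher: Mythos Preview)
Your proposal is correct and follows essentially the same route as the paper: the paper too applies $\partial^\alpha$ with $|\alpha|=N$ to \eqref{g1}, pairs with $w_{2k-8N}\partial^\alpha g_1$ and multiplies by $\varepsilon^2$, uses Lemma~\ref{leLD} on the $\alpha_1=0$ block for coercivity (their \eqref{estNLD}), and then treats the Leibniz remainders via the same weight-splitting \eqref{I6I7} with Lemmas~\ref{upperQ} and~\ref{commu}, distinguishing the endpoint $\alpha_1=\alpha$ by putting the undifferentiated factor in $L^\infty_x$ (their \eqref{NI6N}, \eqref{NI7N}, \eqref{Ng1g2g12}) exactly as you outline. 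The only cosmetic difference is that the paper tracks the bound $\frac{1}{\sqrt\varepsilon}\sqrt{\mathcal{E}_{N,k}}\,\mathcal{D}_{N,k}$ explicitly before invoking \eqref{apriori} to convert it into $\eta_0^{1/2}\mathcal{D}_{N,k}$, while you absorb this immediately.
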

\begin{proof}
	The proof is similar to how we prove Lemma \ref{leestLD} to Lemma \ref{leestrestg1}. However, we need to be extra careful here to avoid the increase of highest order derivatives. Notice that \eqref{LD2order} still holds for $|\al|=N$. Similar arguments as in \eqref{3g1}, \eqref{3g12} and \eqref{3g13} give
	\begin{align}\label{estNLD}
		&\varepsilon\sum_{|\alpha|=N}(L_D  \partial^\alpha g_1+Q(g_1+\sqrt\mu g_2+(M-\mu)+\overline{G},  \partial^\alpha g_1),  w_{2k-8N}\partial^\alpha g_1 ) 
		\notag\\
		\le&   - c \varepsilon\sum_{|\alpha|=N}\Vert \partial^\alpha g_1 \Vert_{L^2_xH^s_{k-4N+\ga/2}}^2
		+C_k(\eta+\bar\eta)\mathcal{D}_{N,k}(t)+C_k\frac{1}{\sqrt\varepsilon}\sqrt{\mathcal{E}_{N,k}(t)}
		\mathcal{D}_{N,k}(t).
	\end{align}
	Then we need to derive similar estimate as in Lemma \ref{leestcomm}, which requires us to bound
	$$
	\frac{1}{\varepsilon}\sum_{\alpha_{1}\leq \alpha, |\alpha_{1}|\geq1}C^{\alpha_{1}}_\alpha (Q(\pa^{\al_1}\{g_1+\sqrt\mu g_2+(M-\mu)+\overline{G}\},  \partial^{\alpha-\al_1} g_1),  w_{2k-8|\alpha|}\partial^\alpha g_1 ).
	$$ We directly start from \eqref{I6I7} for $\al_1\leq\al$, $|\al_1|\geq1$, and estimate $I_6$ and $I_7$ for $|\al|=N$. If $1\leq|\al_1|\leq |\al|-1=N-1$, since the highest order of derivatives for both $\nabla_{x}\partial ^{\alpha _1} g_1$ and $\nabla_{x}\partial ^{\al-\alpha _1} g_1$ will not exceed $N$, we use the same arguments as in \eqref{I6<1}, \eqref{I6=1} and \eqref{I6} to get
	\begin{align}\label{NI61}
		I_6\le C_k\frac{1}{\varepsilon^2}\frac{1}{\sqrt\varepsilon}\sqrt{\mathcal{E}_{N,k}(t)}
		\mathcal{D}_{N,k}(t).
	\end{align}
	If $\al_1=\al$, which implies $|\al_1|=N$, it holds that
	\begin{align}\label{NI6N}
		I_6&\le C_k\frac{1}{\varepsilon}\|\partial ^{\alpha _1} g_1\|_{L^2_xL^2_5}\| g_1\|_{L^\infty_xH^s_{k-4N+\gamma/2+2s}}\|\partial^\alpha g_1\|_{L^2_xH^s_{k-4N+\gamma/2}}\notag\\
		&\le C_k\frac{1}{\varepsilon}\|\partial ^{\alpha _1} g_1\|_{L^2_xL^2_5}\|\nabla_{x} g_1\|^\frac{1}{2}_{L^2_xH^s_{k-4N+\gamma/2}}\|\nabla^2_{x} g_1\|^\frac{1}{2}_{L^2_xH^s_{k-4N+\gamma/2}}\|\partial^\alpha g_1\|_{L^2_xH^s_{k-4|\alpha|+\gamma/2}}\notag\\
		&\le C_k\frac{1}{\varepsilon^2}\frac{1}{\sqrt\varepsilon}\sqrt{\mathcal{E}_{N,k}(t)}
		\mathcal{D}_{N,k}(t).
	\end{align}
	We also discuss $I_7$ for two cases. If $1\leq |\al_1|\leq N-1$, the argument in \eqref{I7} directly gives
	\begin{align}\label{NI71}
		I_7\le C_k\frac{1}{\varepsilon^2}\frac{1}{\sqrt\varepsilon}\sqrt{\mathcal{E}_{N,k}(t)}
		\mathcal{D}_{N,k}(t).
	\end{align}
	If $\al_1=\al$, then
	\begin{align}\label{NI7N}
		I_7&\leq C_k\frac{1}{\varepsilon}\|\partial ^{\alpha _1} g_1\|_{L^2_xL^2_{14}}\| g_1\|_{L^\infty_xH^s_{k-4N+\gamma/2}}\|\partial^\alpha g_1\|_{L^2_xH^s_{k-4N+\gamma/2}}\notag\\
		&\quad+C_k\frac{1}{\varepsilon}\|\partial ^{\alpha _1} g_1\|_{L^2_xH^s_{\gamma/2}}\|w_{k-4N} g_1\|_{L^\infty_xL^2_{14}}\|\partial^\alpha g_1\|_{L^2_xH^s_{k-4N+\gamma/2}}\notag\\
		&\leq C_k\frac{1}{\varepsilon}\|\partial ^{\alpha _1} g_1\|_{L^2_xL^2_{14}}\|\nabla_{x} g_1\|_{L^2_xH^s_{k-4N+\gamma/2}}^\frac{1}{2}\|\nabla^2_{x} g_1\|^\frac{1}{2}_{L^2_xH^s_{k-4N+\gamma/2}}\|\partial^\alpha g_1\|_{L^2_xH^s_{k-4N+\gamma/2}}\notag\\
		&\quad+C_k\frac{1}{\varepsilon}\|\partial ^{\alpha _1} g_1\|_{L^2_xH^s_{\gamma/2}}\|w_{k-4N}\nabla_{x} g_1\|^\frac{1}{2}_{L^2_xL^2_{14}}\|w_{k-4N}\nabla_{x}^2 g_1\|^\frac{1}{2}_{L^2_xL^2_{14}}\|\partial^\alpha g_1\|_{L^2_xH^s_{k-4N+\gamma/2}}\notag\\
		&\le C_k\frac{1}{\varepsilon^2}\frac{1}{\sqrt\varepsilon}\sqrt{\mathcal{E}_{N,k}(t)}
		\mathcal{D}_{N,k}(t).
	\end{align}
	It follows from \eqref{NI61}, \eqref{NI6N}, \eqref{NI71} and \eqref{NI7N} that
	\begin{align}\label{I6I7N}
		&\frac{1}{\varepsilon}(Q(\pa^{\al_1}g_1,  \partial^{\alpha-\al_1} g_1),  w_{2k-8|\alpha|}\partial^\alpha g_1 )\le C_k\frac{1}{\varepsilon^2}\frac{1}{\sqrt\varepsilon}\sqrt{\mathcal{E}_{N,k}(t)}
		\mathcal{D}_{N,k}(t).
	\end{align}
	Then from similar calculations as above, together with the arguments as in \eqref{g2g1g11}, \eqref{I8}, \eqref{I91}, \eqref{I92} and \eqref{I9}, we obtain
	\begin{align}\label{I8I9N}
		&\frac{1}{\varepsilon}(Q(\pa^{\al_1}\sqrt\mu g_2,  \partial^{\alpha-\al_1} g_1),  w_{2k-8|\alpha|}\partial^\alpha g_1 )\le C_k\frac{1}{\varepsilon^2}\frac{1}{\sqrt\varepsilon}\sqrt{\mathcal{E}_{N,k}(t)}
		\mathcal{D}_{N,k}(t).
	\end{align}
	Likewise,
	\begin{align}\label{Nrestg1g1}
		\frac{1}{\varepsilon}(Q(\pa^{\al_1}\{(M-\mu)+\overline{G}\},  \partial^{\alpha-\al_1} g_1),  w_{2k-8|\alpha|}\partial^\alpha g_1 )
		\leq C_k\frac{\eta}{\varepsilon^2}\mathcal{D}_{N,k}(t)+C_k\frac{1}{\varepsilon^2}\frac{1}{\sqrt\varepsilon}\sqrt{\mathcal{E}_{N,k}(t)}
		\mathcal{D}_{N,k}(t).
	\end{align}
	We collect \eqref{I6I7N}, \eqref{I8I9N} and \eqref{Nrestg1g1} to get
	\begin{align}\label{estNnonlig1}
		&\varepsilon\sum_{|\al|=N,\alpha_{1}\leq \alpha, |\alpha_{1}|\geq1}C^{\alpha_{1}}_\alpha (Q(\pa^{\al_1}\{g_1+\sqrt\mu g_2+(M-\mu)+\overline{G}\},  \partial^{\alpha-\al_1} g_1),  w_{2k-8N}\partial^\alpha g_1 ) 
		\notag\\
		\le& C_k\eta\mathcal{D}_{N,k}(t)+C_k\frac{1}{\sqrt\varepsilon}\sqrt{\mathcal{E}_{N,k}(t)}
		\mathcal{D}_{N,k}(t).
	\end{align}
	In order to derive inequality like \eqref{estrestg1}, for $|\al|=N$ and $1\leq|\al_1|\leq N-1$, we use \eqref{g1g2g1} and \eqref{g2split} to obtain
	\begin{align}\label{Ng1g2g11}
		&\frac{1}{\varepsilon}(Q(\partial^{\al_1} g_1,\pa^{\alpha-\al_1}\{\sqrt{\mu}g_2+(M-\mu)+\overline{G}\}),  w_{2k-8|\alpha|}\partial^\alpha g_1 )\notag\\
		\leq& C_k\frac{\eta}{\varepsilon^2}\mathcal{D}_{N,k}(t)+C_k\frac{1}{\varepsilon^2}\frac{1}{\sqrt\varepsilon}\sqrt{\mathcal{E}_{N,k}(t)}
		\mathcal{D}_{N,k}(t).
	\end{align}
	For $|\al|=N$ and $\al_1=\al$, it holds that
	\begin{align}\label{Ng1g2g12}
		&\frac{1}{\varepsilon}(Q(\partial^{\al_1} g_1,\pa^{\alpha-\al_1}\{\sqrt{\mu}g_2+(M-\mu)+\overline{G}\}),  w_{2k-8|\alpha|}\partial^\alpha g_1 )\notag\\
		\leq& C_k\frac{1}{\varepsilon}\big( \Vert \partial^{\al_1}g_1 \Vert_{L^2_xL^2_{14}} \Vert \{\sqrt{\mu}g_2+(M-\mu)+\overline{G}\} \Vert_{L^\infty_xH^s_{k-4|\al|+\gamma/2+2s }}  \Vert\partial^\alpha g_1\Vert_{L^2_xH^s_{k-4|\al|+\gamma/2}}\notag\\
		&\qquad+  \Vert \{\sqrt{\mu}g_2+(M-\mu)+\overline{G}\} \Vert_{L^\infty_xL^2_{14}}  \Vert \partial^{\al_1}g_1 \Vert_{L^2_xH^s_{k-4|\al|+\gamma/2}}  \Vert\partial^\alpha g_1\Vert_{L^2_xH^s_{k-4|\al|+\gamma/2}}\big)\notag\\
		\leq& C_k\frac{\eta+\bar\eta}{\varepsilon^2}\mathcal{D}_{N,k}(t)+C_k\frac{1}{\varepsilon^2}\frac{1}{\sqrt\varepsilon}\sqrt{\mathcal{E}_{N,k}(t)}
		\mathcal{D}_{N,k}(t).
	\end{align}
	For $|\al|=N$ and $\al_1=0$, one has
	\begin{align}\label{Ng1g2g13}
		&\frac{1}{\varepsilon}(Q(\partial^{\al_1} g_1,\pa^{\alpha-\al_1}\{\sqrt{\mu}g_2+(M-\mu)+\overline{G}\}),  w_{2k-8|\alpha|}\partial^\alpha g_1 )\notag\\
		\leq& C_k \frac{1}{\varepsilon}\big(\Vert g_1 \Vert_{L^\infty_xL^2_{14}} \Vert \partial^{\alpha}\{\sqrt{\mu}g_2+(M-\mu)+\overline{G}\} \Vert_{L^2_xH^s_{k-4|\al|+\gamma/2+2s }}  \Vert\partial^\alpha g_1\Vert_{L^2_xH^s_{k-4|\al|+\gamma/2}}\notag\\
		&\qquad+  \Vert\partial^{\alpha} \{\sqrt{\mu}g_2+(M-\mu)+\overline{G}\} \Vert_{L^2_xL^2_{14}}  \Vert g_1 \Vert_{L^\infty_xH^s_{k-4|\al|+\gamma/2}}  \Vert\partial^\alpha g_1\Vert_{L^2_xH^s_{k-4|\al|+\gamma/2}}\big)\notag\\
		\leq& C_k\frac{\eta}{\varepsilon^2}\mathcal{D}_{N,k}(t)+C_k\frac{1}{\varepsilon^2}\frac{1}{\sqrt\varepsilon}\sqrt{\mathcal{E}_{N,k}(t)}
		\mathcal{D}_{N,k}(t).
	\end{align}
	The combination of \eqref{Ng1g2g11}, \eqref{Ng1g2g12} and \eqref{Ng1g2g13} gives
	\begin{align}\label{estNrestg1}
		&\varepsilon\sum_{|\al|=N}\big((\partial^{\alpha}Q(g_1,\sqrt{\mu}g_2+M-\mu+\overline{G}),w_{2k-8|\al|}\partial^{\alpha}g_1)\big)
		\notag\\
		\le&  C_k(\eta+\bar\eta)\mathcal{D}_{N,k}(t)+C_k\frac{1}{\sqrt\varepsilon}\sqrt{\mathcal{E}_{N,k}(t)}
		\mathcal{D}_{N,k}(t).
	\end{align}
	Hence, \eqref{Norderg1} follows from \eqref{LD2order},  \eqref{estNLD}, \eqref{estNnonlig1}, \eqref{estNrestg1} and \eqref{apriori}.
\end{proof}
For the highest order of fluid quantities and $g_2$, we derive the equation of $M+\overline{G}+\sqrt{\mu}g_2$, which avoids both the extra $\nabla_{x}$ and the exponential increase in $v$ of $g_1$. The difference of \eqref{1M+G} and \eqref{g1} gives
\begin{align}\label{Nequation}
	&\partial_{t}(\frac{M+\overline{G}+\sqrt{\mu}g_2}{\sqrt{\mu}})+v\cdot\nabla_{x}(\frac{M+\overline{G}+\sqrt{\mu}g_2}{\sqrt{\mu}})\notag\\
	=&\frac{1}{\varepsilon}Lg_2+\frac{1}{\varepsilon}\CL_Bg_1+\frac{1}{\varepsilon}\Gamma(\frac{M-\mu}{\sqrt{\mu}},g_2)+\frac{1}{\varepsilon}\Gamma(g_2,\frac{M-\mu}{\sqrt{\mu}})+\frac{1}{\varepsilon}\Gamma(\frac{\overline{G}+\sqrt{\mu}g_2}{\sqrt{\mu}},\frac{\overline{G}+\sqrt{\mu}g_2}{\sqrt{\mu}})
	\nonumber\\
	&+\frac{1}{\sqrt{\mu}}P_{1}\big\{v\cdot(\frac{|v-u|^{2}\nabla_{x}\bar{\theta}}{2R\theta^{2}}+\frac{(v-u)\cdot\nabla_{x}\bar{u}}{R\theta}) M\big\}.
\end{align}
Applying $\partial^{\alpha}$ to \eqref{Nequation} with $|\alpha|=N$ and taking the inner product with $\frac{\partial^{\alpha}(M+\overline{G}+\sqrt{\mu}g_2)}{\sqrt{\mu}}$ over $\mathbb{R}^{3}\times\mathbb{R}^{3}$, we obtain
\begin{align}\label{Ninnerproduct}
	&\frac{1}{2}\frac{d}{dt}\|\frac{\partial^{\alpha}(M+\overline{G}+\sqrt{\mu}g_2)}{\sqrt{\mu}}\|^{2}-\frac{1}{\varepsilon}(L\partial^{\alpha}g_2,\frac{\partial^{\alpha}(M+\overline{G}+\sqrt{\mu}g_2)}{\sqrt{\mu}})
	\notag\\=&\frac{1}{\varepsilon}(\CL_B\partial^{\alpha}g_1,\frac{\partial^{\alpha}(M+\overline{G}+\sqrt{\mu}g_2)}{\sqrt{\mu}})\notag\\
	&+\frac{1}{\varepsilon}(\partial^{\alpha}\Gamma(\frac{M-\mu}{\sqrt{\mu}},g_2)+\partial^{\alpha}\Gamma(g_2,\frac{M-\mu}{\sqrt{\mu}})+\partial^{\alpha}\Gamma(\frac{\overline{G}+\sqrt{\mu}g_2}{\sqrt{\mu}},\frac{\overline{G}+\sqrt{\mu}g_2}{\sqrt{\mu}}),\frac{\partial^{\alpha}(M+\overline{G}+\sqrt{\mu}g_2)}{\sqrt{\mu}})
	\nonumber\\
	&+(\frac{1}{\sqrt{\mu}}\partial^{\alpha}P_{1}\big\{v\cdot(\frac{|v-u|^{2}
		\nabla_x\overline{\theta}}{2R\theta^{2}}+\frac{(v-u)\cdot\nabla_x\bar{u}}{R\theta}) M\big\},\frac{\partial^{\alpha}(M+\overline{G}+\sqrt{\mu}g_2)}{\sqrt{\mu}}).
\end{align}
A natural concern is whether $\|\frac{\partial^{\alpha}(M+\overline{G}+\sqrt{\mu}g_2)}{\sqrt{\mu}}\|^{2}$ is comparable with the energy functional.
\begin{lemma}\label{leboundF}
	It holds that
	\begin{align}\label{boundF}
		&\|\frac{\partial^{\alpha}(M+\overline{G}+\sqrt{\mu}g_2)}{\sqrt{\mu}}\|^{2}\notag\\
		\geq& (c-C(\eta+\bar{\eta}+\eta_0^{1/2}))(\|\partial^{\alpha}(\widetilde{\rho},\widetilde{u},\widetilde{\theta})\|^{2}+\|\partial^{\alpha}g_2\|^{2})-C\|\partial^{\alpha}g_1\|^{2}-C(\eta+\bar{\eta}+\eta^{3/2}_0),
	\end{align}
	for any $\alpha$ with $|\alpha|=N$.
\end{lemma}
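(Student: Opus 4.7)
The plan is to expand $\partial^\alpha(M+\overline G+\sqrt\mu g_2)/\sqrt\mu = \partial^\alpha M/\sqrt\mu + \partial^\alpha\overline G/\sqrt\mu + \partial^\alpha g_2$ and isolate the leading coercive structure. Using the product-rule expansion \eqref{2M}, I would split $\partial^\alpha M = \Phi_\alpha + R_\alpha$, where
\[
\Phi_\alpha = M\Big(\tfrac{\partial^\alpha\rho}{\rho} + \tfrac{(v-u)\cdot\partial^\alpha u}{R\theta} + \big(\tfrac{|v-u|^2}{2R\theta}-\tfrac{3}{2}\big)\tfrac{\partial^\alpha\theta}{\theta}\Big)
\]
is the top-order linear-in-$\partial^\alpha(\rho,u,\theta)$ part and $R_\alpha$ collects products $\partial^{\alpha_1}(\rho,u,\theta)\cdots\partial^{\alpha_j}(\rho,u,\theta)$ with each $|\alpha_i|\le|\alpha|-1$, multiplied by $M$ and polynomials in $v-u$.

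For the coercive step, at each fixed $x$ the five functions $\{M/\sqrt\mu,\ M(v_i-u_i)/\sqrt\mu,\ M(|v-u|^2/(2R\theta)-3/2)/\sqrt\mu\}$ lie in $L^2_v$ because the bound $\theta<2<3$ from \eqref{rhoutheta} makes the coefficient $-1/(R\theta)+1/2$ of $|v|^2$ in the exponent of $(M/\sqrt\mu)^2$ strictly negative; by symmetry their $5\times 5$ Gram matrix is block-diagonal with entries bounded below uniformly over the compact range of $(\rho,u,\theta)$ permitted by \eqref{rhoutheta}. Consequently $\|\Phi_\alpha/\sqrt\mu\|_{L^2_{x,v}}^2\ge c\,\|\partial^\alpha(\rho,u,\theta)\|^2$, which combined with $\partial^\alpha\rho=\partial^\alpha\widetilde\rho+\partial^\alpha\bar\rho$ (and analogously for $u,\theta$) and \eqref{background} gives $\|\Phi_\alpha/\sqrt\mu\|^2\ge \tfrac{c}{2}\|\partial^\alpha(\widetilde\rho,\widetilde u,\widetilde\theta)\|^2-C\eta^2$. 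The remainder $R_\alpha$ is handled via $L^\infty_x$--$L^2_x$ or $L^6_x$--$L^3_x$ H\"older with Sobolev embedding: every product contains a factor of order at most $N-1$ whose $L^\infty_x$-norm is $O(\eta+\eta_0^{1/2})$ by \eqref{background} and \eqref{apriori}, yielding $\|R_\alpha/\sqrt\mu\|^2\le C(\eta+\eta_0^{1/2})\|\partial^\alpha(\widetilde\rho,\widetilde u,\widetilde\theta)\|^2+C(\eta^2+\eta_0^{3/2})$, and the bound $\|\partial^\alpha\overline G/\sqrt\mu\|^2\le C\varepsilon^2(\eta^2+\mathcal E_{N,k}(t))$ follows directly from \eqref{boundbarG}.

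The delicate step is the cross term $(\partial^\alpha M/\sqrt\mu,\partial^\alpha g_2)_{L^2_{x,v}}$. Splitting $\partial^\alpha g_2=\mathbf P_0\partial^\alpha g_2+\mathbf P_1\partial^\alpha g_2$, and noting that $\mathbf P_0$ commutes with $\partial^\alpha$ since it is $x$-independent, identity \eqref{g2macro} yields $\|\mathbf P_0\partial^\alpha g_2\|\le C\|w_2\partial^\alpha g_1\|$; thus $(\Phi_\alpha/\sqrt\mu,\mathbf P_0\partial^\alpha g_2)$ is bounded via Young's inequality by $\delta\|\partial^\alpha(\widetilde\rho,\widetilde u,\widetilde\theta)\|^2+C_\delta\|\partial^\alpha g_1\|^2+C\eta^2$. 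For $(\Phi_\alpha/\sqrt\mu,\mathbf P_1\partial^\alpha g_2)$, I would decompose each of the five $v$-modes of $\Phi_\alpha/\sqrt\mu$ as an element of $\ker L$ (spanned by $\sqrt\mu,\,v_i\sqrt\mu,\,|v|^2\sqrt\mu$) plus an $L^2_v$-correction of size $O(\bar\eta+\eta+\varepsilon^{r/2})$ obtained by replacing $M_{[\rho,u,\theta]}$ by $\mu=M_{[1,0,3/2]}$; the $\ker L$ part is annihilated by $\mathbf P_1$, and the correction contributes at most $C(\bar\eta+\eta+\eta_0^{1/2})\|\partial^\alpha(\widetilde\rho,\widetilde u,\widetilde\theta)\|\|\mathbf P_1\partial^\alpha g_2\|$, absorbed by Young into $C(\eta+\bar\eta+\eta_0^{1/2})(\|\partial^\alpha(\widetilde\rho,\widetilde u,\widetilde\theta)\|^2+\|\partial^\alpha g_2\|^2)$. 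Remaining cross terms involving $R_\alpha/\sqrt\mu$ and $\partial^\alpha\overline G/\sqrt\mu$ are bounded by Cauchy--Schwarz with the smallness estimates above, from which \eqref{boundF} follows.

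The principal obstacle is tracking in $L^2_v$ the deviation of the local-Maxwellian-based five modes from $\ker L$, so that the $M$-versus-$\mu$ discrepancy yields exactly the $C(\bar\eta+\eta+\eta_0^{1/2})$ smallness factor on the right-hand side of \eqref{boundF} without producing uncontrolled terms. The identity \eqref{g2macro}, linking $\mathbf P_0 g_2$ to $g_1$, is indispensable for trading the macroscopic component of $\partial^\alpha g_2$ against $\partial^\alpha g_1$, which is how the $-C\|\partial^\alpha g_1\|^2$ term on the right of \eqref{boundF} enters.
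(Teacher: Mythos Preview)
Your proposal is correct and follows essentially the same strategy as the paper's proof. One minor inaccuracy: the Gram matrix of the five modes $M\phi_j/\sqrt\mu$ in $L^2_v$ is \emph{not} block-diagonal by symmetry when $u\neq 0$ (the weight $M^2/\mu$ is not symmetric about $u$), but it is uniformly positive definite over the compact parameter range in \eqref{rhoutheta}, which is all you need. The paper organizes the argument slightly differently by passing to the $1/M$ weight, writing $1/\mu = 1/M + (1/\mu - 1/M)$: this makes both the coercivity step \eqref{J12} and the cross-term cancellation \eqref{GM1} \emph{exact} (since $J_1/M$ is a polynomial of degree $\le 2$ in $v$ and $\partial^\alpha\overline G$, $\sqrt\mu\,\mathbf P_1\partial^\alpha g_2$ have vanishing moments up to order $2$), after which only the small $(1/\mu-1/M)$ correction remains; your route of staying with the $\mu$-weight and splitting $\Phi_\alpha/\sqrt\mu$ into a $\ker L$ part plus an $O(\bar\eta+\eta+\eta_0^{1/2})$ remainder is the dual of this and reaches the same conclusion.
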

\begin{proof}
	For $|\alpha|=N$, 
	\begin{align*}
		&\|\frac{\partial^{\alpha}(M+\overline{G}+\sqrt{\mu}g_2)}{\sqrt{\mu}}\|^{2}=\int_{\mathbb{R}^{3}}\int_{\mathbb{R}^{3}}\frac{(\partial^{\alpha}M)^{2}+(\partial^{\alpha}(\overline{G}+\sqrt{\mu}g_2))^{2}+2\partial^{\alpha}(\overline{G}+\sqrt{\mu}g_2)\partial^{\alpha}M}{\mu} dvdx.
	\end{align*}
	Rewrite
	\begin{align}\label{reM2}
		\int_{\mathbb{R}^{3}}\int_{\mathbb{R}^{3}}\frac{(\partial^{\alpha}M)^{2}}{\mu} dvdx
		=\int_{\mathbb{R}^{3}}\int_{\mathbb{R}^{3}}\frac{(\partial^{\alpha}M)^{2}}{M}+(\frac{1}{\mu}-\frac{1}{M})(\partial^{\alpha}M)^{2} dvdx.
	\end{align}
	A direct calculation shows that for $\partial^{\alpha}=\partial^{\alpha'}\partial_{x_i}$,
	\begin{align}\label{defJ1J2}
		\partial^{\alpha}M=&M\big(\frac{\partial^{\alpha'}\partial_{x_i}\rho}{\rho}+\frac{(v-u)\cdot\partial^{\alpha'}\partial_{x_i}u}{R\theta}+(\frac{|v-u|^{2}}{2R\theta}-\frac{3}{2})\frac{\partial^{\alpha'}\partial_{x_i}\theta}{\theta} \big)
		\nonumber\\
		&+\sum_{\alpha_{1}\leq \alpha',|\al_1|\geq1}C^{\alpha_1}_{\alpha'}\Big(\partial^{\alpha_{1}}(M\frac{1}{\rho})\partial^{\alpha'-\alpha_{1}}\partial_{x_i}\rho+\partial^{\alpha_{1}}(M\frac{v-u}{R\theta})\cdot\partial^{\alpha'-\alpha_{1}}\partial_{x_i}u
		\nonumber\\
		&\hspace{2cm}+\partial^{\alpha_{1}}(M\frac{|v-u|^{2}}{2R\theta^{2}}-M\frac{3}{2\theta})\partial^{\alpha'-\alpha_{1}}\partial_{x_i}\theta\Big)
		\nonumber\\
		:=&J_1+J_2.
	\end{align}
	It follows from \eqref{basis}, \eqref{background} and \eqref{apriori} that
	\begin{align}\label{J12}
		\int_{\mathbb{R}^{3}}\int_{\mathbb{R}^{3}}\frac{(J_1)^{2}}{M}dvdx&=\int_{\mathbb{R}^{3}}\int_{\mathbb{R}^{3}}M\Big\{\frac{\partial^{\alpha}\rho}{\rho}+\frac{(v-u)\cdot\partial^{\alpha}u}{R\theta}+(\frac{|v-u|^{2}}{2R\theta}-\frac{3}{2})\frac{\partial^{\alpha}\theta}{\theta} \Big\}^{2}dvdx\notag
		\\
		&=\int_{\mathbb{R}^{3}}\int_{\mathbb{R}^{3}}M
		\Big\{(\frac{\partial^{\alpha}\rho}{\rho})^{2}+(\frac{(v-u)\cdot\partial^{\alpha}u}{R\theta})^{2}+((\frac{|v-u|^{2}}{2R\theta}-\frac{3}{2})\frac{\partial^{\alpha}\theta}{\theta})^{2} \Big\}dvdx\notag\\
		&\geq c\|\partial^{\alpha}(\widetilde{\rho},\widetilde{u},\widetilde{\theta})\|^{2}-C\eta.
	\end{align}
	Similarly,
	\begin{align}\label{J22}
		\int_{\mathbb{R}^{3}}\int_{\mathbb{R}^{3}}\frac{2J_1J_2+(J_2)^{2}}{M}dvdx\leq C(\eta+\eta^{3/2}_0).
	\end{align}
	Combining with \eqref{reM2}, \eqref{J12}, \eqref{J22} and the fact that
	\begin{align*}
		\int_{\mathbb{R}^{3}}\int_{\mathbb{R}^{3}}(\frac{1}{\mu}-\frac{1}{M})(\partial^{\alpha}M)^{2}dvdx\leq
		C(\eta+\bar{\eta})+C(\eta+\bar{\eta}+\eta_0^{1/2})\|\partial^{\alpha}(\widetilde{\rho},\widetilde{u},\widetilde{\theta})\|^{2},
	\end{align*}
	one has 
	\begin{align}\label{M2}
		\int_{\mathbb{R}^{3}}\int_{\mathbb{R}^{3}}\frac{(\partial^{\alpha}M)^{2}}{\mu}dvdx
		\geq (c-C(\eta+\bar{\eta}+\eta_0^{1/2}))\|\partial^{\alpha}(\widetilde{\rho},\widetilde{u},\widetilde{\theta})\|^{2}-C(\eta+\bar{\eta}+\eta^{3/2}_0).
	\end{align}
	Likewise, by \eqref{boundbarG}, \eqref{background} and \eqref{apriori},
	\begin{align}\label{G2}
		&\int_{\mathbb{R}^{3}}\int_{\mathbb{R}^{3}}\frac{(\partial^{\alpha}\overline{G}+\sqrt{\mu}\partial^{\alpha}g_2)^{2}}{\mu}dvdx\notag
		\\
		=&\int_{\mathbb{R}^{3}}\int_{\mathbb{R}^{3}}(\partial^{\alpha}g_2)^{2}dvdx
		+\int_{\mathbb{R}^{3}}\int_{\mathbb{R}^{3}}\frac{(\partial^{\alpha}\overline{G})^{2}
			+2\sqrt{\mu}\partial^{\alpha}g_2\partial^{\alpha}\overline{G}}{\mu}dvdx
		\geq \frac{1}{2}\|\partial^{\alpha}g_2\|^{2}-C\eta.
	\end{align}
	Again rewrite
	\begin{align}\label{reGM}
		&\int_{\mathbb{R}^{3}}\int_{\mathbb{R}^{3}}\frac{2\partial^{\alpha}(\overline{G}+\sqrt{\mu}g_2)\partial^{\alpha}M}{\mu}dvdx\notag\\
		=&	\int_{\mathbb{R}^{3}}\int_{\mathbb{R}^{3}}\frac{2\partial^{\alpha}(\overline{G}+\sqrt{\mu}g_2)\partial^{\alpha}M}{M}
		+(\frac{1}{\mu}-\frac{1}{M})2\partial^{\alpha}(\overline{G}+\sqrt{\mu}g_2)\partial^{\alpha}Mdvdx.
	\end{align}
	Noticing that from \eqref{g2macro} and the fact that $\overline{G}+\sqrt{\mu}\mathbf{P}_1g_2$ is microscopic, for any $0<\ka<1$, it holds that
	\begin{align}\label{GM1}
		&\int_{\mathbb{R}^{3}}\int_{\mathbb{R}^{3}}\frac{2\partial^{\alpha}(\overline{G}+\sqrt{\mu}g_2)J_1}{M}dvdx\notag\\
		=&\int_{\mathbb{R}^{3}}\int_{\mathbb{R}^{3}}2\partial^{\alpha}(\overline{G}+\sqrt{\mu}g_2)\Big(\frac{\partial^{\alpha}\rho}{\rho}+
		\frac{(v-u)\cdot\partial^{\alpha}u}{R\theta}+(\frac{|v-u|^{2}}{2R\theta}-\frac{3}{2})\frac{\partial^{\alpha}\theta}{\theta} \Big)dvdx\notag\\
		=&\int_{\mathbb{R}^{3}}\int_{\mathbb{R}^{3}}2\partial^{\alpha}\sqrt{\mu}\mathbf{P}_0g_2\Big(\frac{\partial^{\alpha}\rho}{\rho}+
		\frac{(v-u)\cdot\partial^{\alpha}u}{R\theta}+(\frac{|v-u|^{2}}{2R\theta}-\frac{3}{2})\frac{\partial^{\alpha}\theta}{\theta} \Big)dvdx\notag\\
		\leq&\ka\|\partial^{\alpha}(\widetilde{\rho},\widetilde{u},\widetilde{\theta})\|^{2}+C_\ka \eta+C_\ka \|\partial^{\alpha}g_1\|^2.
	\end{align}
	For the rest terms, similar calculation gives
	\begin{align}\label{GM2}
		&\int_{\mathbb{R}^{3}}\int_{\mathbb{R}^{3}}\frac{2\partial^{\alpha}(\overline{G}+\sqrt{\mu}g_2)J_2}{M}+(\frac{1}{\mu}-\frac{1}{M})2\partial^{\alpha}(\overline{G}+\sqrt{\mu}g_2)\partial^{\alpha}Mdvdx\notag\\
		\leq& C(\eta+\bar{\eta}+\eta^{3/2}_0)+C(\eta+\bar{\eta}+\eta_0^{1/2})(\|\partial^{\alpha}g_2\|^2+\|\partial^{\alpha}(\widetilde{\rho},\widetilde{u},\widetilde{\theta})\|^{2}).
	\end{align}
	Therefore, \eqref{boundF} holds by \eqref{M2}, \eqref{G2}, \eqref{reGM}, \eqref{GM1}, \eqref{GM2} and choosing $\ka$ to be small.
\end{proof}
Now we can consider other terms in \eqref{Ninnerproduct} to derive the main lemma for the energy estimate of $N-$order fluid quantities and $g_2$.

\begin{lemma}
	For any $0<\ka<1$, it holds that
	\begin{align}\label{Nfluid}
		&\varepsilon^{2}(c-C(\eta+\bar{\eta}+\eta_0^{1/2}))\sum_{|\alpha|=N}(\|\partial^{\alpha}(\widetilde{\rho},\widetilde{u},\widetilde{\theta})(t)\|^{2}+\|\partial^{\alpha}g_2(t)\|^{2})+c\varepsilon\int^t_0\sum_{|\alpha|=N}\|\partial^{\alpha}\mathbf{P}_1g_2(s)\|_{L^2_xL^2_{v,D}}^{2}ds
		\nonumber\\
		\leq&C\varepsilon^{2}\sum_{|\alpha|=N}(\|\partial^{\alpha}(\widetilde{\rho},\widetilde{u},\widetilde{\theta})(0)\|^{2}+\|\partial^{\alpha}g(0)\|^{2})+C\varepsilon^2\sum_{|\alpha|=N}\|\partial^{\alpha}g_1\|^{2}+C_{k,\ka}\varepsilon\int^t_0\|\pa^\al g_1(s)\|^2ds\notag\\
		&+C(\eta+\bar\eta+\eta_0^{1/2}+\ka)\int^t_0\mathcal{D}_{N,k}(s)ds+C(\eta+\bar{\eta}+\eta^{3/2}_0)\varepsilon^r+Ct\eta\varepsilon^r.
	\end{align}
\end{lemma}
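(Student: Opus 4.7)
The plan is to apply $\partial^{\alpha}$ with $|\alpha|=N$ to equation \eqref{Nequation}, take the $L^2_{x,v}$ inner product against $\partial^{\alpha}(M+\overline{G}+\sqrt{\mu}g_2)/\sqrt{\mu}$ so as to recover the identity \eqref{Ninnerproduct}, then multiply by $\varepsilon^{2}$ and integrate in time over $[0,t]$. The resulting time-derivative term gives $\tfrac{\varepsilon^{2}}{2}\|\partial^{\alpha}(M+\overline{G}+\sqrt{\mu}g_2)/\sqrt{\mu}\|^{2}$ evaluated at $t$ and $0$. At time $t$ I apply Lemma \ref{leboundF} (inequality \eqref{boundF}) to extract the coercive combination $(c-C(\eta+\bar\eta+\eta_0^{1/2}))(\|\partial^{\alpha}(\widetilde\rho,\widetilde u,\widetilde\theta)\|^{2}+\|\partial^{\alpha}g_2\|^{2})$ together with the admissible error $C\varepsilon^{2}\|\partial^{\alpha}g_1\|^{2}+C(\eta+\bar\eta+\eta_0^{3/2})\varepsilon^{r}$. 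At $t=0$ the opposite triangle inequality together with $g_2(0)\equiv 0$ and the bound \eqref{boundbarG} for $\overline G$ supplies the initial term $C\varepsilon^{2}\sum_{|\alpha|=N}(\|\partial^{\alpha}(\widetilde\rho,\widetilde u,\widetilde\theta)(0)\|^{2}+\|\partial^{\alpha}g(0)\|^{2})$ on the right.

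For the linear term $-\varepsilon(L\partial^{\alpha}g_2,\partial^{\alpha}(M+\overline{G}+\sqrt{\mu}g_2)/\sqrt{\mu})$ I split the second factor into $\partial^{\alpha}g_2$ plus $\partial^{\alpha}(M+\overline{G})/\sqrt{\mu}$. Since $L\partial^{\alpha}g_2=L\mathbf{P}_1\partial^{\alpha}g_2$ is orthogonal to $\ker L$, the coercivity \eqref{coercive} produces $-\varepsilon(L\partial^{\alpha}g_2,\partial^{\alpha}g_2)\geq c\varepsilon\|\mathbf{P}_1\partial^{\alpha}g_2\|_{L^2_xL^2_{v,D}}^{2}$, which yields the dissipation appearing on the left of \eqref{Nfluid}. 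The crucial point for the cross pairing $-\varepsilon(L\partial^{\alpha}g_2,\partial^{\alpha}(M+\overline{G})/\sqrt{\mu})$ is that the leading macroscopic piece $J_1/\sqrt{\mu}$ of $\partial^{\alpha}M/\sqrt{\mu}$ in the decomposition \eqref{defJ1J2} lies in $\ker L$ up to the size $O(\eta+\bar\eta+\eta_0^{1/2})$ discrepancy between $M$ and $\mu$, while the $J_2/\sqrt{\mu}$ remainder together with $\partial^{\alpha}\overline{G}/\sqrt{\mu}$ is of the same smallness by Taylor expansion and \eqref{boundbarG}; consequently Cauchy--Schwarz with a small constant $\kappa$ absorbs this pairing into $\kappa\varepsilon\|\mathbf{P}_1\partial^{\alpha}g_2\|_{L^2_xL^2_{v,D}}^{2}$ plus a $(\eta+\bar\eta+\eta_0^{1/2}+\kappa)\mathcal{D}_{N,k}$ contribution.

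The right-hand side of \eqref{Ninnerproduct} is handled by nonlinear estimates patterned after Lemma \ref{leloworderg2} but carried out at top order. The forcing $\varepsilon(\CL_B\partial^{\alpha}g_1,\partial^{\alpha}(M+\overline{G}+\sqrt{\mu}g_2)/\sqrt{\mu})$, in view of the compact velocity support and boundedness of $\CL_B$ from \eqref{defLB}, is bounded by Cauchy--Schwarz and Young as $C_{k,\kappa}\varepsilon\|\partial^{\alpha}g_1\|^{2}$ plus $\kappa\varepsilon(\|\partial^{\alpha}g_2\|^{2}+\|\partial^{\alpha}(\widetilde\rho,\widetilde u,\widetilde\theta)\|^{2})+C\eta\varepsilon^{r+1}$, with the $\kappa$ pieces absorbed after time integration. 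The $\Gamma$ bilinears are controlled via \eqref{Trilinear}--\eqref{Trilinears} by Leibniz splitting: in each resulting term either one factor is exponentially decaying in $v$ ($M-\mu$, $\overline{G}$, or a low-order factor $\sqrt{\mu}g_2$) or the full $\partial^{\alpha}g_2$ sits on the dissipation side, giving the $(\eta+\bar\eta+\eta_0^{1/2})\mathcal{D}_{N,k}$ bound together with the a priori assumption \eqref{apriori}; the pure $\Gamma(\overline{G}/\sqrt{\mu},\overline{G}/\sqrt{\mu})$ contribution yields the constant $C\eta\varepsilon^{r}$ by \eqref{boundbarG}. The final background forcing $P_{1}\{v\cdot(\nabla_x\bar\theta,\nabla_x\bar u)M\}/\sqrt{\mu}$ paired against the energy factor produces $Ct\eta\varepsilon^{r}$ after time integration using \eqref{background} and \eqref{apriori}.

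Assembling all these contributions yields \eqref{Nfluid}. The main obstacle, as is typical for highest-order estimates in a macro-micro decomposition around a local Maxwellian, is the crossed linear pairing of step two: a naive Cauchy--Schwarz would generate a large multiple of $\|\partial^{\alpha}(\widetilde\rho,\widetilde u,\widetilde\theta)\|^{2}$ and destroy the coercivity on the left; one must exploit the structural fact that $\partial^{\alpha}M/\sqrt{\mu}$ lies in $\ker L$ modulo corrections of size $\eta+\bar\eta+\eta_0^{1/2}$ to keep the coefficient small enough to be absorbed by the $(c-C(\eta+\bar\eta+\eta_0^{1/2}))$ prefactor produced by Lemma \ref{leboundF}.
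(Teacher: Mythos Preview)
Your proposal is correct and follows essentially the same route as the paper: both start from \eqref{Ninnerproduct}, invoke Lemma~\ref{leboundF} for the energy term, extract the coercivity \eqref{coercive} on the $g_2$ part of the $L$ pairing, exploit that the leading piece of $\partial^\alpha M/\sqrt{\mu}$ lies in $\ker L$ modulo an $O(\eta+\bar\eta+\eta_0^{1/2})$ correction (the paper makes this explicit via the splitting $J_1=J_{11}+J_{12}+J_{13}$ in \eqref{J1} with $J_{11}/\sqrt{\mu}\in\ker L$ exactly), and handle the $\CL_B$, $\Gamma$, and background forcing terms by the same trilinear and compact-support arguments. The only cosmetic difference is that in the paper the parameter $\kappa$ enters solely through the $\CL_B$ term (see \eqref{HLb2} and \eqref{paalLb}), whereas you distribute it over both the $L$ cross pairing and the $\CL_B$ term; either bookkeeping yields the stated estimate.
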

\begin{proof}
	We start from the second term in \eqref{Ninnerproduct} since the first one is estimated in Lemma \ref{leboundF}. It is straightforward to get from \eqref{coercive} that
	\begin{align}\label{cog2g2}
		-\frac{1}{\varepsilon}(L\partial^{\alpha}g_2,\partial^{\alpha}g_2)\geq c\frac{1}{\varepsilon}\|\partial^{\alpha}\mathbf{P}_1g_2\|_{L^2_{v,D}}^{2}. 
	\end{align}
	For $(L\partial^{\alpha}g_2,\frac{\partial^{\alpha}M}{\sqrt{\mu}})$, we further decompose \eqref{defJ1J2} into
	\begin{align}\label{J1}
		J_1=&\mu\big(\frac{\partial^{\alpha}\rho}{\rho}+\frac{(v-u)\cdot\partial^{\alpha}u}{R\theta}+(\frac{|v-u|^{2}}{2R\theta}-\frac{3}{2})\frac{\partial^{\alpha}\theta}{\theta}\big)\notag
		\\
		&+(M-\mu)\big(\frac{\partial^{\alpha}\widetilde{\rho}}{\rho}
		+\frac{(v-u)\cdot\partial^{\alpha}\widetilde{u}}{R\theta}+(\frac{|v-u|^{2}}{2R\theta}
		-\frac{3}{2})\frac{\partial^{\alpha}\widetilde{\theta}}{\theta}\big)\notag
		\\
		&+(M-\mu)\big(\frac{\partial^{\alpha}\bar{\rho}}{\rho}
		+\frac{(v-u)\cdot\partial^{\alpha}\bar{u}}{R\theta}+(\frac{|v-u|^{2}}{2R\theta}
		-\frac{3}{2})\frac{\partial^{\alpha}\bar{\theta}}{\theta}\big)\notag\\
		=&J_{11}+J_{12}+J_{13},
	\end{align}
	and
	\begin{align}\label{J2}
		J_2=&\sum_{\alpha_{1}\leq \alpha',|\al_1|\geq1}C^{\alpha_1}_{\alpha'}\Big(\partial^{\alpha_{1}}(M\frac{1}{\rho})\partial^{\alpha'-\alpha_{1}}\partial_{x_i}\widetilde{\rho}+\partial^{\alpha_{1}}(M\frac{v-u}{R\theta})\cdot\partial^{\alpha'-\alpha_{1}}\partial_{x_i}\widetilde{u}\notag\\
		&\qquad\qquad\qquad\qquad\qquad\qquad\qquad\qquad+\partial^{\alpha_{1}}(M\frac{|v-u|^{2}}{2R\theta^{2}}-M\frac{3}{2\theta})\partial^{\alpha'-\alpha_{1}}\partial_{x_i}\widetilde{\theta}\Big)\notag\\
		&+\sum_{\alpha_{1}\leq \alpha',|\al_1|\geq1}C^{\alpha_1}_{\alpha'}\Big(\partial^{\alpha_{1}}(M\frac{1}{\rho})\partial^{\alpha'-\alpha_{1}}\partial_{x_i}\bar{\rho}+\partial^{\alpha_{1}}(M\frac{v-u}{R\theta})\cdot\partial^{\alpha'-\alpha_{1}}\partial_{x_i}\bar{u}\notag\\
		&\qquad\qquad\qquad\qquad\qquad\qquad\qquad\qquad+\partial^{\alpha_{1}}(M\frac{|v-u|^{2}}{2R\theta^{2}}-M\frac{3}{2\theta})\partial^{\alpha'-\alpha_{1}}\partial_{x_i}\bar{\theta}\Big)\notag\\
		=&J_{21}+J_{22}.
	\end{align}
	Then $\partial^{\alpha}M=J_{11}+J_{12}+J_{13}+J_{21}+J_{22}.$ It is straight forward to see 
	\begin{align}\label{LJ11}(Lg_2,\frac{J_{11}}{\sqrt{\mu}})=0\end{align} by the fact that $\frac{J_{11}}{\sqrt{\mu}}\in\ker{L}$. Using \eqref{Trilinear} and \eqref{Trilinears}, we obtain
	\begin{align}\label{LJ12}
		\frac{1}{\varepsilon}|(L\partial^{\alpha}g_2,\frac{J_{12}}{\sqrt{\mu}})|
		&\leq C(\eta+\bar\eta+\|(\widetilde{\rho},\widetilde{u},\widetilde{\theta})\|_{L^\infty_x})\frac{1}{\varepsilon}\|\partial^{\alpha}g_2\|_{L^2_xL^2_{v,D}}\|\partial^{\alpha}(\widetilde{\rho},\widetilde{u},\widetilde{\theta})\|\notag
		\\
		&\leq C\frac{1}{\varepsilon^2}(\eta+\bar\eta)\mathcal{D}_{N,k}(t)+C\frac{1}{\varepsilon^2}\sqrt{\mathcal{E}_{N,k}(t)}\mathcal{D}_{N,k}(t).
	\end{align}
	Similarly, let $\partial^{\alpha}=\partial^{\alpha'}\partial_{x_i}$ for some $\al'$ and $x_i$, then
	\begin{align}\label{LJ13}
		\frac{1}{\varepsilon}|(L\partial^{\alpha}g_2,\frac{J_{13}}{\sqrt{\mu}})|
		=&\frac{1}{\varepsilon}|(\partial^{\alpha'}Lg_2,\partial_{x_i}\big[\frac{M-\mu}{\sqrt{\mu}}\{\frac{\partial^{\alpha}\bar{\rho}}{\rho}+\frac{(v-u)\cdot\partial^{\alpha}\bar{u}}{R\theta}+(\frac{|v-u|^{2}}{2R\theta}-\frac{3}{2})\frac{\partial^{\alpha}\bar{\theta}}{\theta}\}\big])|
		\nonumber\\
		\leq& C\frac{1}{\varepsilon}\int_{\mathbb{R}^{3}}\|\partial^{\alpha'}g_2\|_{L^2_xL^2_{v,D}}
		\big\{(|\partial_{x_i}\partial^{\alpha}\bar{\rho}|+|\partial_{x_i}\partial^{\alpha}\bar{u}|+|\partial_{x_i}\partial^{\alpha}\bar{\theta}|)
		\nonumber\\
		&\hspace{1cm}+(|\partial^{\alpha}\bar{\rho}|+|\partial^{\alpha}\bar{u}|+|\partial^{\alpha}\bar{\theta}|)(|\partial_{x_i}\rho|+|\partial_{x_i}u|+|\partial_{x_i}\theta|)\big\}dx
		\nonumber\\
		\leq& C\eta\frac{1}{\varepsilon}\|\partial^{\alpha'}g_2\|_{L^2_xL^2_{v,D}}+C\frac{1}{\varepsilon^2}\eta\mathcal{D}_{N,k}(t)\nonumber\\
		\leq& C\eta\frac{1}{\varepsilon^2}\|\partial^{\alpha'}g_2\|^2_{L^2_xL^2_{v,D}}+C\eta+C\frac{1}{\varepsilon^2}\eta\mathcal{D}_{N,k}(t).
	\end{align}
	Also,
	\begin{align*}
		\frac{1}{\varepsilon}|(L\partial^{\alpha}g_2,\frac{J_{21}}{\sqrt{\mu}})|
		&\leq 
		C\sum_{\alpha_{1}\leq \alpha',|\al_1|\geq1}\frac{1}{\varepsilon}\int_{\mathbb{R}^{3}}
		\|\partial^{\alpha}g_2\|_{L^2_{v,D}}|\partial^{\alpha'-\alpha_{1}}\partial_{x_i}(\widetilde{\rho}, \widetilde{u},\widetilde{\theta})|\notag\\
		&\qquad\qquad\qquad\qquad\times
		(|\partial^{\alpha_{1}}(\rho, u,\theta)|+\cdots+|\nabla_{x}(\rho, u,\theta)|^{|\alpha_{1}|})
		dx.
	\end{align*}
	For $1\leq|\alpha_{1}|<|\alpha'|=N-1$, we have
	\begin{align*}
		\frac{1}{\varepsilon}|(L\partial^{\alpha}g_2,\frac{J_{21}}{\sqrt{\mu}})|&\leq \frac{C}{\varepsilon}\|\partial^{\alpha}g_2\|_{L^2_xL^2_{v,D}}\|\partial^{\alpha'-\alpha_{1}}\partial_{x_i}(\widetilde{\rho},\widetilde{u},\widetilde{\theta})\|\notag\\
		&\qquad\times\|(|\partial^{\alpha_{1}}(\rho, u,\theta)|+\cdots+|\nabla_{x}(\rho, u,\theta)|^{|\alpha_{1}|})\|_{L^{\infty}}  
		\\
		&
		\leq C\eta\frac{1}{\varepsilon}(\|\partial^{\alpha}g_2\|_{L^2_xL^2_{v,D}}^2+\varepsilon^{2})+C\frac{1}{\varepsilon^2}\sqrt{\mathcal{E}_{N,k}(t)}\mathcal{D}_{N,k}(t).
	\end{align*}
	For $|\alpha_{1}|=|\alpha'|=N-1$, it holds that
	\begin{align*}
		\frac{1}{\varepsilon}|(L\partial^{\alpha}g_2,\frac{J_{21}}{\sqrt{\mu}})|&\leq C\frac{1}{\varepsilon}\|\partial^{\alpha}g_2\|_{L^2_xL^2_{v,D}}\|\partial_{x_i}(\widetilde{\rho},\widetilde{u},\widetilde{\theta})\|_{L^{6}}\notag\\
		&\qquad\times\|(|\partial^{\alpha_{1}}(\rho, u,\theta)|+\cdot\cdot\cdot+|\nabla_{x}(\rho, u,\theta)|^{|\alpha_{1}|})\|_{L^{3}}  
		\\
		&\leq  C\eta\frac{1}{\varepsilon}(\|\partial^{\alpha}g_2\|_{L^2_xL^2_{v,D}}^2+\varepsilon^{2})+C\frac{1}{\varepsilon^2}\sqrt{\mathcal{E}_{N,k}(t)}\mathcal{D}_{N,k}(t).
	\end{align*}
	We have from the above two inequalities that
	\begin{align}\label{LJ21}
		\frac{1}{\varepsilon}|(L\partial^{\alpha}g_2,\frac{J_{21}}{\sqrt{\mu}})|&\leq C\eta\frac{1}{\varepsilon}(\|\partial^{\alpha}g_2\|_{L^2_xL^2_{v,D}}^2+\varepsilon^{2})+C\frac{1}{\varepsilon^2}\sqrt{\mathcal{E}_{N,k}(t)}\mathcal{D}_{N,k}(t).
	\end{align}
	A similar calculation as in \eqref{LJ13} gives
	\begin{align}\label{LJ22}
		\frac{1}{\varepsilon}|(L\partial^{\alpha}g_2,\frac{J_{22}}{\sqrt{\mu}})|&\leq C\eta\frac{1}{\varepsilon^2}\|\partial^{\alpha'}g_2\|^2_{L^2_xL^2_{v,D}}+C\eta+C\frac{1}{\varepsilon^2}\eta\mathcal{D}_{N,k}(t).
	\end{align}
	We combine \eqref{LJ11}, \eqref{LJ12}, \eqref{LJ13}, \eqref{LJ21} and \eqref{LJ22} to get
	\begin{align}\label{paalM}
		\frac{1}{\varepsilon}|(L\partial^{\alpha}g_2,\frac{\partial^{\alpha}M}{\sqrt{\mu}})|
		\leq C\eta+C\frac{1}{\varepsilon^2}(\eta+\bar\eta)\mathcal{D}_{N,k}(t)+C\frac{1}{\varepsilon^2}\sqrt{\mathcal{E}_{N,k}(t)}\mathcal{D}_{N,k}(t).
	\end{align}
	Using \eqref{boundbarG}, \eqref{background} and \eqref{apriori}, one further has
	\begin{align}\label{paalG}
		\frac{1}{\varepsilon}|(\mathcal{L}\partial^{\alpha}g_2,\frac{\partial^{\alpha}\overline{G}}{\sqrt{\mu}})|
		&\leq C\frac{1}{\varepsilon}\|\partial^{\alpha}g_2\|_{L^2_xL^2_{v,D}}
		\|\frac{\partial^{\alpha}\overline{G}}{\sqrt{\mu}}\|_{L^2_xL^2_{v,D}}\leq \eta\frac{1}{\varepsilon}\|\partial^{\alpha}g_2\|_{L^2_xL^2_{v,D}}^2
		+C\eta\notag
		\\
		&\leq \eta\frac{1}{\varepsilon^2}\mathcal{D}_{N,k}(t)
		+C\eta.
	\end{align}
	Therefore, by choosing $\eta$ to be small, it follows from \eqref{cog2g2}, \eqref{paalM} and \eqref{paalG} that
	\begin{align}\label{NLf}
		&-\frac{1}{\varepsilon}(L\partial^{\alpha}g_2,\frac{\partial^{\alpha}(M+\overline{G}+\sqrt{\mu}g_2)}{\sqrt{\mu}})\notag\\\geq& c\frac{1}{\varepsilon}\|\partial^{\alpha}\mathbf{P}_1g_2\|_{L^2_{v,D}}^{2}-C\eta-C\frac{1}{\varepsilon^2}(\eta+\bar\eta)\mathcal{D}_{N,k}(t)-C\frac{1}{\varepsilon^2}\sqrt{\mathcal{E}_{N,k}(t)}\mathcal{D}_{N,k}(t).
	\end{align}
	For the first term on the right hand side in \eqref{Ninnerproduct}, for any $0<\ka<1$, we apply Cauchy-Schwarz inequality, \eqref{boundbarG}, \eqref{background}, \eqref{DNk1} to get 
	\begin{align}\label{HighLb}
		&\frac{1}{\varepsilon}(\CL_B\partial^{\alpha}g_1,\frac{\partial^{\alpha}(M+\overline{G}+\sqrt{\mu}g_2)}{\sqrt{\mu}})\notag\\
		\leq& \frac{1}{\varepsilon}(\CL_B\partial^{\alpha}g_1,\frac{\partial^{\alpha}M}{\sqrt{\mu}}) +\ka\frac{1}{\varepsilon}\|w_{-2}\frac{\partial^{\alpha}(\overline{G}+\sqrt{\mu}g_2)}{\sqrt{\mu}}\|^2+C_{k,\ka}\frac{1}{\varepsilon}\|\pa^\al g_1\|^2\notag\\
		\leq& \frac{1}{\varepsilon}(\CL_B\partial^{\alpha}g_1,\frac{\partial^{\alpha}M}{\sqrt{\mu}})+C\ka\frac{1}{\varepsilon^2}\mathcal{D}_{N,k}(t)+C\eta+C_{k,\ka}\frac{1}{\varepsilon}\|\pa^\al g_1\|^2.
	\end{align}
	The term $\frac{1}{\varepsilon}(\CL_B\partial^{\alpha}g_1,\frac{\partial^{\alpha}M}{\sqrt{\mu}})$ can be estimated as in \eqref{paalM}, but the biggest difference is that \eqref{LJ11} fails to hold now. In terms of \eqref{J1} and \eqref{J2}, we still write
	\begin{align}\label{reHLb}
		\frac{1}{\varepsilon}(\CL_B\partial^{\alpha}g_1,\frac{\partial^{\alpha}M}{\sqrt{\mu}})=\frac{1}{\varepsilon}(\CL_B\partial^{\alpha}g_1,\frac{J_{11}+J_{12}+J_{13}+J_{21}+J_{22}}{\sqrt{\mu}}).
	\end{align}
	Similar arguments in \eqref{LJ12}, \eqref{LJ13}, \eqref{LJ21} and \eqref{LJ22} give
	\begin{align}\label{HLb1}
		&\frac{1}{\varepsilon}(\CL_B\partial^{\alpha}g_1,\frac{J_{12}+J_{13}+J_{21}+J_{22}}{\sqrt{\mu}})\notag\\
		\leq& C_k\frac{1}{\varepsilon}\|\partial^{\alpha}g_1\|^2+C\eta+C\frac{1}{\varepsilon^2}(\eta+\bar\eta)\mathcal{D}_{N,k}(t)+C\frac{1}{\varepsilon^2}\sqrt{\mathcal{E}_{N,k}(t)}\mathcal{D}_{N,k}(t).
	\end{align}
	Then for any $0<\ka<1$, denoting $\partial^{\alpha}=\partial^{\alpha'}\partial_{x_i}$ for some $\al'$ and $x_i$, we use Cauchy-Schwarz inequality and integration by parts to obtain
	\begin{align}\label{HLb2}
		\frac{1}{\varepsilon}(\CL_B\partial^{\alpha}g_1,\frac{J_{11}}{\sqrt{\mu}})&=\frac{1}{\varepsilon}(\CL_B\partial^{\alpha}g_1,\mu\big(\frac{\partial^{\alpha}\rho}{\rho}+\frac{(v-u)\cdot\partial^{\alpha}u}{R\theta}+(\frac{|v-u|^{2}}{2R\theta}-\frac{3}{2})\frac{\partial^{\alpha}\theta}{\theta}\big))\notag\\
		&\leq \ka\frac{1}{\varepsilon}\|\pa^\al(\widetilde{\rho},\widetilde{u},\widetilde{\theta})\|^2+C_{k,\ka}\frac{1}{\varepsilon}\|\partial^{\alpha}g_1\|^2\notag\\
		&\quad+ \frac{1}{\varepsilon}(\CL_B\partial^{\alpha'}g_1,\mu\pa_{x_i}\big(\frac{\partial^{\alpha}\bar\rho}{\rho}+\frac{(v-u)\cdot\partial^{\alpha}\bar u}{R\theta}+(\frac{|v-u|^{2}}{2R\theta}-\frac{3}{2})\frac{\partial^{\alpha}\bar\theta}{\theta}\big))\notag\\
		&\leq C_{k,\ka}\frac{1}{\varepsilon}\|\partial^{\alpha}g_1\|^2+C\eta+C\frac{\eta+\ka}{\varepsilon^2}\mathcal{D}_{N,k}(t).
	\end{align}
	We collect \eqref{HighLb}, \eqref{reHLb}, \eqref{HLb1} and \eqref{HLb2} to get
	\begin{align}\label{paalLb}
		&\frac{1}{\varepsilon}(\CL_B\partial^{\alpha}g_1,\frac{\partial^{\alpha}(M+\overline{G}+\sqrt{\mu}g_2)}{\sqrt{\mu}})\notag\\
		\leq&C(\ka+\eta+\bar\eta)\frac{1}{\varepsilon^2}\mathcal{D}_{N,k}(t)+C\eta+C_{k,\ka}\frac{1}{\varepsilon}\|\pa^\al g_1\|^2+C\frac{1}{\varepsilon^2}\sqrt{\mathcal{E}_{N,k}(t)}\mathcal{D}_{N,k}(t).
	\end{align}
	We turn to the second term on the right hand side in \eqref{Ninnerproduct}. Decompose
	\begin{align}\label{decomGa}
		\partial^{\alpha}\Gamma(\frac{M-\mu}{\sqrt{\mu}},g_2)=&
		\Gamma(\frac{M-\mu}{\sqrt{\mu}},\partial^{\alpha}g_2)
		+\sum_{\alpha_{1}\leq \alpha,|\al_1|\geq1}C^{\alpha_{1}}_{\alpha}\Gamma(\frac{\partial^{\alpha_{1}}(M-\mu)}{\sqrt{\mu}},
		\partial^{\alpha-\alpha_{1}}g_2).
	\end{align}
	Similar arguments in \eqref{LJ12}, \eqref{LJ13} \eqref{LJ21}, \eqref{LJ22} and \eqref{paalG} show
	\begin{align*}
		\frac{1}{\varepsilon}\big(\Gamma(\frac{M-\mu}{\sqrt{\mu}},\partial^{\alpha}g_2),\frac{\partial^{\alpha}M+\partial^{\alpha}\overline{G}}{\sqrt{\mu}}\big)
		=&\frac{1}{\varepsilon}\Big(\Gamma(\frac{M-\mu}{\sqrt{\mu}},\partial^{\alpha}g_2),\frac{J_1+J_2}{\sqrt{\mu}}
		\Big)+\frac{1}{\varepsilon}\big(\Gamma(\frac{M-\mu}{\sqrt{\mu}},\partial^{\alpha}g_2),\frac{\partial^{\alpha}\overline{G}}{\sqrt{\mu}}\big)\notag\\
		\leq& C\eta+C\frac{1}{\varepsilon^2}(\eta+\bar\eta)\mathcal{D}_{N,k}(t).
	\end{align*}
	Using \eqref{Trilinear}, \eqref{background} and \eqref{apriori}, we further have
	\begin{align*}
		\frac{1}{\varepsilon}(\Gamma(\frac{M-\mu}{\sqrt{\mu}},\partial^{\alpha}g_2),\partial^{\alpha}g_2)
		&\leq C\frac{1}{\varepsilon}\|\frac{M-\mu}{\sqrt{\mu}}\|_{L^\infty_xL^2_v}\|\partial^{\alpha}g_2\|_{L^2_xL^2_{v,D}}^2\notag\\
		&\leq C\frac{1}{\varepsilon^2}(\eta+\bar\eta)\mathcal{D}_{N,k}(t)+C\frac{1}{\varepsilon^2}\sqrt{\mathcal{E}_{N,k}(t)}
		\mathcal{D}_{N,k}(t).
	\end{align*}
	Combine the above two inequalities to get
	\begin{align}\label{highg2F}
		&\frac{1}{\varepsilon}(\Gamma(\frac{M-\mu}{\sqrt{\mu}},\partial^{\alpha}g_2),\frac{\partial^{\alpha}(M+\overline{G}+\sqrt{\mu}g_2)}{\sqrt{\mu}})
		\leq C\eta+C\frac{1}{\varepsilon^2}(\eta+\bar\eta)\mathcal{D}_{N,k}(t)+C\frac{1}{\varepsilon^2}\sqrt{\mathcal{E}_{N,k}(t)}
		\mathcal{D}_{N,k}(t).
	\end{align}
	For the second term on the right hand side of \eqref{decomGa}, again use \eqref{Trilinear} to obtain
	\begin{align*}
		&\frac{1}{\varepsilon}(\Gamma(\partial^{\alpha_{1}}(\frac{M-\mu}{\sqrt{\mu}}),
		\partial^{\alpha-\alpha_{1}}g_2),\frac{\partial^{\alpha}(M+\overline{G}+\sqrt{\mu}g_2)}{\sqrt{\mu}})\notag	
		\\
		\leq	
		&C\frac{1}{\varepsilon}\int_{\mathbb{R}^{3}}\|\partial^{\alpha_{1}}(\frac{M-\mu}{\sqrt{\mu}})\|_{L^2_v}
		\|\partial^{\alpha-\alpha_{1}}g_2\|_{L^2_{v,D}}\|\frac{\partial^{\alpha}(M+\overline{G}+\sqrt{\mu}g_2)}{\sqrt{\mu}}\|_{L^2_{v,D}}dx.
	\end{align*}
	From \eqref{defJ1J2}, \eqref{boundbarG}, \eqref{background} and \eqref{apriori}, one can see for any $l\in\R$,
	\begin{align}\label{NFdiss}
		\|w_l\frac{\partial^{\alpha}(M+\overline{G}+\sqrt{\mu}g_2)}{\sqrt{\mu}}\|_{L^2_xL^2_{v,D}}\leq C\eta+C\frac{1}{\sqrt{\varepsilon}}\sqrt{\mathcal{D}_{N,k}(t)}.
	\end{align}
	Then if $|\alpha_{1}|=1$, we use the Cauchy-Schwarz and Sobolev inequalities, \eqref{defJ1J2}, \eqref{background} and \eqref{apriori}  to get
	\begin{align*}	
		&\frac{1}{\varepsilon}\int_{\mathbb{R}^{3}}\|\partial^{\alpha_{1}}(\frac{M-\mu}{\sqrt{\mu}})\|_{L^2_v}
		\|\partial^{\alpha-\alpha_{1}}g_2\|_{L^2_{v,D}}\|\frac{\partial^{\alpha}(M+\overline{G}+\sqrt{\mu}g_2)}{\sqrt{\mu}}\|_{L^2_{v,D}}dx\notag\\\leq& C\frac{1}{\varepsilon}\|\partial^{\alpha_{1}}(\frac{M-\mu}{\sqrt{\mu}})\|_{L^\infty_xL^2_v}
		\|\partial^{\alpha-\alpha_{1}}g_2\|_{L^2_xL^2_{v,D}}\|\frac{\partial^{\alpha}(M+\overline{G}+\sqrt{\mu}g_2)}{\sqrt{\mu}}\|_{L^2_xL^2_{v,D}}\notag
		\\
		\leq& C\frac{1}{\varepsilon}\||(|\partial^{\alpha_1}(\rho,u,\theta)|+\cdot\cdot\cdot+|\nabla_x(\rho,u,\theta)|^{|\alpha_{1}|})|\|_{L^{\infty}_x}\sqrt{\varepsilon}\sqrt{\mathcal{D}_{N,k}(t)}\Big(\eta+\frac{1}{\sqrt{\varepsilon}}\sqrt{\mathcal{D}_{N,k}(t)}\Big)\notag
		\\
		\leq& C\frac{1}{\varepsilon}\Big(\eta+\min\{\frac{1}{\sqrt\varepsilon}\sqrt{\mathcal{E}_{N,k}(t)},\frac{1}{\sqrt{\varepsilon}}\sqrt{\mathcal{D}_{N,k}(t)}\}\Big)\sqrt{\varepsilon}\sqrt{\mathcal{D}_{N,k}(t)}\Big(\eta+\frac{1}{\sqrt{\varepsilon}}\sqrt{\mathcal{D}_{N,k}(t)}\Big)\notag
		\\
		\leq& 
		C\eta+C\frac{1}{\varepsilon^2}\eta\mathcal{D}_{N,k}(t)+C\frac{1}{\varepsilon^2}\frac{1}{\sqrt\varepsilon}\sqrt{\mathcal{E}_{N,k}(t)}\mathcal{D}_{N,k}(t).
	\end{align*}
	If $1<|\alpha_{1}|\leq N-1$, a similar calculation yields
	\begin{align*}	
		&\frac{1}{\varepsilon}\int_{\mathbb{R}^{3}}\|\partial^{\alpha_{1}}(\frac{M-\mu}{\sqrt{\mu}})\|_{L^2_v}
		\|\partial^{\alpha-\alpha_{1}}g_2\|_{L^2_{v,D}}\|\frac{\partial^{\alpha}(M+\overline{G}+\sqrt{\mu}g_2)}{\sqrt{\mu}}\|_{L^2_{v,D}}dx\notag\\
		\leq& C\frac{1}{\varepsilon}\|\partial^{\alpha_{1}}(\frac{M-\mu}{\sqrt{\mu}})\|_{L^3_xL^2_v}
		\|\partial^{\alpha-\alpha_{1}}g_2\|_{L^6_xL^2_{v,D}}\|\frac{\partial^{\alpha}(M+\overline{G}+\sqrt{\mu}g_2)}{\sqrt{\mu}}\|_{L^2_xL^2_{v,D}}\notag
		\\
		\leq& 
		C\eta+C\frac{1}{\varepsilon^2}\eta\mathcal{D}_{N,k}(t)+C\frac{1}{\varepsilon^2}\frac{1}{\sqrt\varepsilon}\sqrt{\mathcal{E}_{N,k}(t)}\mathcal{D}_{N,k}(t).
	\end{align*}
	If $|\al_1|=N$, then
	\begin{align*}	
		&\frac{1}{\varepsilon}\int_{\mathbb{R}^{3}}\|\partial^{\alpha_{1}}(\frac{M-\mu}{\sqrt{\mu}})\|_{L^2_v}
		\|\partial^{\alpha-\alpha_{1}}g_2\|_{L^2_{v,D}}\|\frac{\partial^{\alpha}(M+\overline{G}+\sqrt{\mu}g_2)}{\sqrt{\mu}}\|_{L^2_{v,D}}dx\notag\\\leq& C\frac{1}{\varepsilon}\|\partial^{\alpha_{1}}(\frac{M-\mu}{\sqrt{\mu}})\|_{L^2_xL^2_v}
		\|g_2\|_{L^\infty_xL^2_{v,D}}\|\frac{\partial^{\alpha}(M+\overline{G}+\sqrt{\mu}g_2)}{\sqrt{\mu}}\|_{L^2_xL^2_{v,D}}\notag
		\\
		\leq& 
		C\eta+C\frac{1}{\varepsilon^2}\eta\mathcal{D}_{N,k}(t)+C\frac{1}{\varepsilon^2}\frac{1}{\sqrt\varepsilon}\sqrt{\mathcal{E}_{N,k}(t)}\mathcal{D}_{N,k}(t).
	\end{align*}
	Hence, the above three cases imply
	\begin{align}\label{lowg2F}
		&\frac{1}{\varepsilon}(\Gamma(\partial^{\alpha_{1}}(\frac{M-\mu}{\sqrt{\mu}}),
		\partial^{\alpha-\alpha_{1}}g_2),\frac{\partial^{\alpha}(M+\overline{G}+\sqrt{\mu}g_2)}{\sqrt{\mu}})\notag\\\leq&
		C\eta+C\frac{1}{\varepsilon^2}\eta\mathcal{D}_{N,k}(t)+C\frac{1}{\varepsilon^2}\frac{1}{\sqrt\varepsilon}\sqrt{\mathcal{E}_{N,k}(t)}\mathcal{D}_{N,k}(t).
	\end{align}
	It then follows from \eqref{decomGa}, \eqref{highg2F} and \eqref{lowg2F} that
	\begin{align}\label{Mg2F}
		&\frac{1}{\varepsilon}(\partial^{\alpha}\Gamma(\frac{M-\mu}{\sqrt{\mu}},g_2),\frac{\partial^{\alpha}(M+\overline{G}+\sqrt{\mu}g_2)}{\sqrt{\mu}})\leq  C\eta+C\frac{1}{\varepsilon^2}(\eta+\bar\eta)\mathcal{D}_{N,k}(t)+C\frac{1}{\varepsilon^2}\frac{1}{\sqrt\varepsilon}\sqrt{\mathcal{E}_{N,k}(t)}
		\mathcal{D}_{N,k}(t).
	\end{align}
	We also deduce from similar arguments, \eqref{Trilinears} and \eqref{boundbarG} that
	\begin{align}
		&\frac{1}{\varepsilon}(\partial^{\alpha}\Gamma(g_2,\frac{M-\mu}{\sqrt{\mu}}),\frac{\partial^{\alpha}(M+\overline{G}+\sqrt{\mu}g_2)}{\sqrt{\mu}})\leq C\eta+C\frac{(\eta+\bar\eta)}{\varepsilon^2}\mathcal{D}_{N,k}(t)+C\frac{1}{\varepsilon^2}\frac{1}{\sqrt\varepsilon}\sqrt{\mathcal{E}_{N,k}(t)}\mathcal{D}_{N,k}(t),\label{g2MF}\\
		&	\frac{1}{\varepsilon}(\partial^{\alpha}\Gamma(\frac{\overline{G}}{\sqrt{\mu}},g_2),\frac{\partial^{\alpha}(M+\overline{G}+\sqrt{\mu}g_2)}{\sqrt{\mu}})\leq C\eta+C\frac{(\eta+\bar\eta)}{\varepsilon^2}\mathcal{D}_{N,k}(t)+C\frac{1}{\varepsilon^2}\frac{1}{\sqrt\varepsilon}\sqrt{\mathcal{E}_{N,k}(t)}\mathcal{D}_{N,k}(t),\label{Gg2F}\\
		&\frac{1}{\varepsilon}(\partial^{\alpha}\Gamma(g_2,\frac{\overline{G}}{\sqrt{\mu}}),\frac{\partial^{\alpha}(M+\overline{G}+\sqrt{\mu}g_2)}{\sqrt{\mu}})\leq C\eta+C\frac{(\eta+\bar\eta)}{\varepsilon^2}\mathcal{D}_{N,k}(t)+C\frac{1}{\varepsilon^2}\frac{1}{\sqrt\varepsilon}\sqrt{\mathcal{E}_{N,k}(t)}\mathcal{D}_{N,k}(t),\label{g2GF}\\
		&\frac{1}{\varepsilon}(\partial^{\alpha}\Gamma(\frac{\overline{G}}{\sqrt{\mu}},\frac{\overline{G}}{\sqrt{\mu}}),\frac{\partial^{\alpha}(M+\overline{G}+\sqrt{\mu}g_2)}{\sqrt{\mu}})
		\leq C\eta,\label{GGF}
	\end{align}
	and
	\begin{align}
		\frac{1}{\varepsilon}(\partial^{\alpha}\Gamma(g_2,g_2),\partial^{\alpha}g_2)
		&\leq C\frac{1}{\varepsilon^2}\frac{1}{\sqrt\varepsilon}\sqrt{\mathcal{E}_{N,k}(t)}\mathcal{D}_{N,k}(t).\label{g2g2Gg2}
	\end{align}
	Using \eqref{Trilinear}, one gets
	\begin{align}\label{g2g2M1}
		\frac{1}{\varepsilon}(\partial^{\alpha}\Gamma(g_2,g_2),\frac{\partial^{\alpha}(M+\overline{G})}{\sqrt{\mu}})
		\leq& C\sum_{\alpha_{1}\leq \alpha}\frac{1}{\varepsilon}\int_{\R^3}\|\pa^{\al_1}g_2\|\|\pa^{\al-\al_1}g_2\|_{L^2_{v,D}}\|\frac{\partial^{\alpha}(M+\overline{G})}{\sqrt{\mu}}\|_{L^2_{v,D}}dx.
	\end{align}
	If $\al_1=\al$, \eqref{NFdiss} shows 
	\begin{align}\label{g2g2M2}
		&\frac{1}{\varepsilon}\int_{\R^3}\|\pa^{\al_1}g_2\|\|\pa^{\al-\al_1}g_2\|_{L^2_{v,D}}\|\frac{\partial^{\alpha}(M+\overline{G})}{\sqrt{\mu}}\|_{L^2_{v,D}}dx\notag\\
		\leq& C\frac{1}{\varepsilon}\|\pa^{\al}g_2\|\|g_2\|_{L^\infty_xL^2_{v,D}}\|\frac{\partial^{\alpha}(M+\overline{G})}{\sqrt{\mu}}\|_{L^2_xL^2_{v,D}}\notag\\
		\leq& C\frac{1}{\varepsilon}\|\pa^{\al}g_2\|\|\nabla_{x}g_2\|^\frac{1}{2}_{L^2_xL^2_{v,D}}\|\nabla^2_{x}g_2\|^\frac{1}{2}_{L^2_xL^2_{v,D}}\big(\eta+\frac{1}{\sqrt{\varepsilon}}\sqrt{\mathcal{D}_{N,k}(t)}\big)\notag\\
		\leq& C\frac{1}{\varepsilon}\eta(\varepsilon^2\|\pa^{\al}g_2\|^2+\frac{1}{\varepsilon^2}\|\nabla_{x}g_2\|_{L^2_xL^2_{v,D}}\|\nabla^2_{x}g_2\|_{L^2_xL^2_{v,D}})+ C\frac{1}{\varepsilon^2}\sqrt{\mathcal{E}_{N,k}(t)}\mathcal{D}_{N,k}(t)\notag\\
		\leq& C\eta+C\frac{1}{\varepsilon^2}\eta\mathcal{D}_{N,k}(t)+C\frac{1}{\varepsilon^2}\sqrt{\mathcal{E}_{N,k}(t)}\mathcal{D}_{N,k}(t).
	\end{align}
	If $\al_1=0$, it holds that
	\begin{align}\label{g2g2M3}
		&\frac{1}{\varepsilon}\int_{\R^3}\|\pa^{\al_1}g_2\|\|\pa^{\al-\al_1}g_2\|_{L^2_{v,D}}\|\frac{\partial^{\alpha}(M+\overline{G})}{\sqrt{\mu}}\|_{L^2_{v,D}}dx\notag\\
		\leq& C\frac{1}{\varepsilon}\|g_2\|_{L^\infty_xL^2_v}\|\pa^{\al}g_2\|_{L^2_xL^2_{v,D}}\|\frac{\partial^{\alpha}(M+\overline{G})}{\sqrt{\mu}}\|_{L^2_xL^2_{v,D}}\notag\\
		\leq& C\eta+C\frac{1}{\varepsilon^2}\eta\mathcal{D}_{N,k}(t)+C\frac{1}{\varepsilon^2}\sqrt{\mathcal{E}_{N,k}(t)}\mathcal{D}_{N,k}(t).
	\end{align}
	Similarly, for $1\leq|\al_1|<N$,
	\begin{align}\label{g2g2M4}
		&\frac{1}{\varepsilon}\int_{\R^3}\|\pa^{\al_1}g_2\|\|\pa^{\al-\al_1}g_2\|_{L^2_{v,D}}\|\frac{\partial^{\alpha}(M+\overline{G})}{\sqrt{\mu}}\|_{L^2_{v,D}}dx\notag\\
		\leq& C\frac{1}{\varepsilon}\|\pa^{\al_1}g_2\|_{L^3_xL^2_v}\|\pa^{\al-\al_1}g_2\|_{L^6_xL^2_{v,D}}\|\frac{\partial^{\alpha}(M+\overline{G})}{\sqrt{\mu}}\|_{L^2_xL^2_{v,D}}\notag\\
		\leq& C\eta+C\frac{1}{\varepsilon^2}\eta\mathcal{D}_{N,k}(t)+C\frac{1}{\varepsilon^2}\sqrt{\mathcal{E}_{N,k}(t)}\mathcal{D}_{N,k}(t).
	\end{align}
	We combine \eqref{g2g2M1}, \eqref{g2g2M2}, \eqref{g2g2M3} and \eqref{g2g2M4} to obtain
	\begin{align}\label{g2g2M}
		\frac{1}{\varepsilon}(\partial^{\alpha}\Gamma(g_2,g_2),\frac{\partial^{\alpha}(M+\overline{G})}{\sqrt{\mu}})
		&\leq C\eta+C\frac{1}{\varepsilon^2}\eta\mathcal{D}_{N,k}(t)+C\frac{1}{\varepsilon^2}\sqrt{\mathcal{E}_{N,k}(t)}\mathcal{D}_{N,k}(t).
	\end{align}
	Then it follows from \eqref{Mg2F}, \eqref{g2MF}, \eqref{Gg2F}, \eqref{g2GF}, \eqref{GGF}, \eqref{g2g2Gg2} and \eqref{g2g2M} that
	\begin{align}\label{NGa}
		&\frac{1}{\varepsilon}(\partial^{\alpha}\Gamma(\frac{M-\mu}{\sqrt{\mu}},g_2)+\partial^{\alpha}\Gamma(g_2,\frac{M-\mu}{\sqrt{\mu}})+\partial^{\alpha}\Gamma(\frac{\overline{G}+\sqrt{\mu}g_2}{\sqrt{\mu}},\frac{\overline{G}+\sqrt{\mu}g_2}{\sqrt{\mu}}),\frac{\partial^{\alpha}(M+\overline{G}+\sqrt{\mu}g_2)}{\sqrt{\mu}})\notag\\
		\leq& C\eta+C\frac{1}{\varepsilon^2}(\eta+\bar\eta)\mathcal{D}_{N,k}(t)+C\frac{1}{\varepsilon^2}\frac{1}{\sqrt\varepsilon}\sqrt{\mathcal{E}_{N,k}(t)}\mathcal{D}_{N,k}(t).
	\end{align}
	Using \eqref{NFdiss}, the last term of \eqref{Ninnerproduct} is bounded by
	\begin{align}\label{NP1}
		&(\frac{1}{\sqrt{\mu}}\partial^{\alpha}P_{1}\big\{v\cdot(\frac{|v-u|^{2}
			\nabla_x\overline{\theta}}{2R\theta^{2}}+\frac{(v-u)\cdot\nabla_x\bar{u}}{R\theta}) M\big\},\frac{\partial^{\alpha}(M+\overline{G}+\sqrt{\mu}g_2)}{\sqrt{\mu}})\notag\\
		\leq&C\|\frac{1}{\sqrt{\mu}}\partial^{\alpha}P_{1}\big\{v\cdot(\frac{|v-u|^{2}\nabla_x\overline{\theta}}{2R\theta^{2}}+\frac{(v-u)\cdot\nabla_x\bar{u}}{R\theta}) M\big\}\|^{2}+C\|\frac{\partial^{\alpha}(M+\overline{G}+\sqrt\mu g_2)}{\sqrt{\mu}}\|^2\notag\\
		\leq& C\eta+C\frac{1}{\varepsilon}\mathcal{D}_{N,k}(t).
	\end{align}
	Hence, noticing that $g_1(0)=g(0)$, $g_2(0)=0$, and
		\begin{align*}
		\|\frac{\partial^{\alpha}(M+\overline{G}+\sqrt{\mu}g_2)}{\sqrt{\mu}}(0)\|^{2}
		\leq& C(\|\partial^{\alpha}(\widetilde{\rho},\widetilde{u},\widetilde{\theta})(0)\|^{2}+\|\partial^{\alpha}g(0)\|^{2})+C(\eta+\bar{\eta}+\eta^{3/2}_0),
	\end{align*}
	 which can be deduced from the proof of Lemma \ref{leboundF}, then \eqref{Nfluid} follows from \eqref{Ninnerproduct}, \eqref{boundF}, \eqref{NLf}, \eqref{paalLb}, \eqref{NGa}, \eqref{NP1} and our apriori assumption \eqref{apriori}.
\end{proof}

\section{Proof of main theorems}\label{sec.9}
We prove Theorem \ref{thm1.1} and Theorem \ref{thm1.2} in this section. Using the results obtained in previous sections, we first focus on Theorem \ref{thm1.1}.
\begin{proof}[Proof of Theorem \ref{thm1.1}]
	The linear combination of \eqref{zerofluid} and \eqref{0orderg1} gives
	\begin{align*}
		&\|(\widetilde{\rho},\widetilde{u},\widetilde{\theta},w_kg_1)(t)\|^{2}+\varepsilon\int^{t}_0
		\|\nabla_x(\widetilde{\rho},\widetilde{u},\widetilde{\theta})(s)\|^{2}ds+\frac{1}{\varepsilon}\int^{t}_0\Vert g_1(s) \Vert_{L^2_xH^s_{k+\ga/2}}^2ds
		\nonumber\\
		\leq&C\|(\widetilde{\rho},\widetilde{u},\widetilde{\theta},w_kg)(0)\|^{2}+ C_k(\eta+\bar\eta+\varepsilon^{r/2})\int^t_0\mathcal{D}_{N,k}(s)ds
		+C_k\varepsilon^{1+r}+C t\eta\varepsilon^{r},
	\end{align*}
	which, with the help of \eqref{zerog2}, further yields the zero order energy estimate:
	\begin{align}\label{zeroorder}
		&\|(\widetilde{\rho},\widetilde{u},\widetilde{\theta},w_kg_1,g_2)(t)\|^{2}+\varepsilon\int^{t}_0
		\|\nabla_x(\widetilde{\rho},\widetilde{u},\widetilde{\theta})(s)\|^{2}ds+\frac{1}{\varepsilon}\int^{t}_0\{\Vert  g_1(s) \Vert_{L^2_xH^s_{k+\ga/2}}^2+\Vert\mathbf{P}_1 g_2(s) \Vert_{L^2_xL^2_{v,D}}^2\}ds
		\nonumber\\
		\leq&C\|(\widetilde{\rho},\widetilde{u},\widetilde{\theta},w_kg)(0)\|^{2}+ C_k(\eta+\bar\eta+\varepsilon^{r/2})\int^t_0\mathcal{D}_{N,k}(s)ds
		+C_k\varepsilon^{1+r}+C t\eta\varepsilon^{r}.
	\end{align}
	Likewise, it follows from \eqref{macroN-1} and \eqref{loworderg1} that
	\begin{align*}
		&\sum_{1\leq |\alpha|\leq N-1}\|\partial^{\alpha}(\widetilde{\rho},\widetilde{u},\widetilde{\theta},w_{k-4|\alpha |}\partial^\alpha g_1)(t)\|^2+\varepsilon\sum_{2\leq |\alpha|\leq N}\int^t_0\|\partial^{\alpha}(\widetilde{\rho},\widetilde{u},\widetilde{\theta})(s)\|^2ds\notag\\
		&\qquad\qquad+\frac{1}{\varepsilon}\sum_{2\leq |\alpha|\leq N}\int^t_0\Vert w_{k-4|\alpha |}\partial^\alpha g_1(s) \Vert_{L^2_xH^s_{\ga/2}}^2ds
		\nonumber\\
		\leq& C\sum_{1\leq |\alpha|\leq N-1}\|\partial^{\alpha}(\widetilde{\rho},\widetilde{u},\widetilde{\theta},w_{k-4|\alpha |}\partial^\alpha g)(0)\|^2+C\varepsilon^2\sum_{|\alpha|=N}
		\|\partial^{\alpha}\widetilde{\rho}(0)\|^2\notag\\
		&+C\varepsilon^2\sum_{|\alpha|=N}
		(\|\partial^{\alpha}\widetilde{\rho}(t)\|^2+
		\|\partial^{\alpha}g_2(t)\|^2)+(C\ka+C_{\ka,k}(\eta+\bar\eta+\eta^{1/2}_0+\varepsilon^{r/2}))\int^t_0\mathcal{D}_N(s) ds\notag\\
		&+C_{\ka,k}\int_0^t\min\{\eta_0^{1/2}\varepsilon^{r/2-1}\mathcal{D}_{N,k}(s),\eta^{3/2}_0\varepsilon^r\}ds+C_{\ka,k}((\eta+\varepsilon^{r/2})\varepsilon^{2}+t\eta\varepsilon^{r}),
	\end{align*}
	which, combined with \eqref{loworderg2}, gives
	\begin{align}\label{derivativeest}
		&\sum_{1\leq |\alpha|\leq N-1}\|\partial^{\alpha}(\widetilde{\rho},\widetilde{u},\widetilde{\theta},w_{k-4|\alpha |}\partial^\alpha g_1,g_2)(t)\|^2+\varepsilon\sum_{2\leq |\alpha|\leq N}\int^t_0\|\partial^{\alpha}(\widetilde{\rho},\widetilde{u},\widetilde{\theta})(s)\|^2ds\notag\\
		&\qquad\qquad+\frac{1}{\varepsilon}\sum_{2\leq |\alpha|\leq N}\int^t_0\{\Vert w_{k-4|\alpha |}\partial^\alpha g_1(s) \Vert_{L^2_xH^s_{\ga/2}}^2+\Vert\partial^\alpha \mathbf{P}_1g_2(s) \Vert_{L^2_xL^2_{v,D}}^2\}ds
		\nonumber\\
		\leq&C_k\sum_{1\leq |\alpha|\leq N-1}\|\partial^{\alpha}(\widetilde{\rho},\widetilde{u},\widetilde{\theta},w_{k-4|\alpha |}\partial^\alpha g)(0)\|^2+C_k\varepsilon^2\sum_{|\alpha|=N}
		\|\partial^{\alpha}\widetilde{\rho}(0)\|^2\notag\\
		&+ C_{\ka,k}\varepsilon^2\sum_{|\alpha|=N}
		(\|\partial^{\alpha}\widetilde{\rho}(t)\|^2+\|w_{k-4N}\partial^{\alpha}\nabla_{x}g_1(t)\|_{L^2_xH^s_{\ga/2}}^{2}+\|\pa^\al\nabla_{x}\mathbf{P}_1g_2\|^2_{L^2_xL^2_{v,D}})\notag\\
		&+(C\ka+C_{\ka,k}(\eta+\bar\eta+\eta^{1/2}_0+\varepsilon^{r/2}))\int^t_0\mathcal{D}_N(s) ds\notag\\
		&+C_{\ka,k}\int_0^t\min\{\eta_0^{1/2}\varepsilon^{r/2-1}\mathcal{D}_{N,k}(s),\eta^{3/2}_0\varepsilon^r\}ds+C_{\ka,k}((\eta+\varepsilon^{r/2})\varepsilon^{r}+t\eta\varepsilon^{r}).
	\end{align}
	For the highest order estimate, it holds from \eqref{Norderg1}, \eqref{Nfluid} and our a priori assumption \eqref{apriori} that
	\begin{align}\label{Nest}
		&\varepsilon^{2}\sum_{|\alpha|=N}((c-C(\eta+\bar{\eta}+\eta_0^{1/2}))(\|\partial^{\alpha}(\widetilde{\rho},\widetilde{u},\widetilde{\theta})(t)\|^{2}+\|\partial^{\alpha}g_2(t)\|^{2})+\Vert w_{k-4N}\partial^\alpha g_1 \Vert^2)\notag\\
		&\qquad
		+\varepsilon\sum_{|\alpha|=N}\int^t_0\{\Vert w_{k-4N}\partial^\alpha g_1 \Vert_{L^2_xH^s_{\ga/2}}^2+\|\partial^{\alpha}\mathbf{P}_1g_2(s)\|_{L^2_xL^2_{v,D}}^{2}\} ds
		\nonumber\\
		\leq&C_{k,\ka}\varepsilon^{2}\sum_{|\alpha|=N}(\|\partial^{\alpha}(\widetilde{\rho},\widetilde{u},\widetilde{\theta})(0)\|^{2}+\{C_{k,\ka}(\eta+\bar\eta+\eta^{1/2}_0)+C\ka\}\int^t_0\mathcal{D}_{N,k}(s)ds+C(\eta+\bar{\eta}+\eta^{3/2}_0)\varepsilon^r+Ct\eta\varepsilon^r.
	\end{align}
	Letting $\bar{\eta},\eta,\eta_0$ to be so small that $c-C(\eta+\bar{\eta}+\eta_0^{1/2})>c/2$, taking suitable linear combination of \eqref{zeroorder}, \eqref{derivativeest} and \eqref{Nest}, we obtain
	\begin{align*}
		&\mathcal{E}_{N,k}(t)+\int^{t}_{0}\mathcal{D}_{N,k}(s) ds\notag\\
		\leq&C_{k,\ka}\mathcal{E}_{N,k}(0)+(C\ka+C_{\ka,k}(\eta+\bar\eta+\eta^{1/2}_0))\int^t_0\mathcal{D}_N(s) ds+C_{\ka,k}\int_0^t\min\{\eta_0^{1/2}\varepsilon^{r/2-1}\mathcal{D}_{N,k}(s),\eta^{3/2}_0\varepsilon^r\}ds\notag\\&+C_{\ka,k}((\eta+\bar{\eta}+\eta^{3/2}_0+\varepsilon^{r/2})\varepsilon^{r}+t\eta\varepsilon^{r}).
	\end{align*}
	We first fix a small $\ka$, then choose $\bar\eta$, $\eta$, $\eta_0$ and $\varepsilon$ to be small and use $t\leq T$ to get
	\begin{align}\label{energy}
		&\mathcal{E}_{N,k}(t)+\frac{1}{2}\int^{t}_{0}\mathcal{D}_{N,k}(s) ds\notag\\
		\leq&C_{k}\mathcal{E}_{N,k}(0)+ C_k\int_0^t\min\{\eta_0^{1/2}\varepsilon^{r/2-1}\mathcal{D}_{N,k}(s),\eta^{3/2}_0\varepsilon^r\}ds+C_k((\eta+\bar{\eta}+\eta^{3/2}_0+\varepsilon^{r/2})\varepsilon^{r}+T\eta\varepsilon^{r}).
	\end{align}
	It holds from the above inequality that
	\begin{align*}
		\mathcal{E}_{N,k}(t)+\frac{1}{2}\int^{t}_{0}\mathcal{D}_{N,k}(s) ds
		\leq&C_{k}\mathcal{E}_{N,k}(0)+ C_k((\eta+\bar{\eta}+\eta^{3/2}_0+\varepsilon^{r/2})\varepsilon^{r}+T(\eta+\eta^{3/2}_0)\varepsilon^{r}).
	\end{align*}
	Hence, from our a priori assumption \eqref{apriori}, it is direct to see that \eqref{energyestimate} holds by letting
	$$
	C_1=\frac{1}{8C_k}
	$$
	for the constant $C_1$ in \eqref{initial}, and choosing $\bar\eta$, $\eta$, $\eta_0$ and $\varepsilon$ to be small such that
	$$C_k\left((\eta+\bar{\eta}+\eta^{3/2}_0+\varepsilon^{r/2})+T(\eta+\eta^{3/2}_0)\right)\leq\frac{1}{8}.$$
Therefore, we have proven \eqref{energyestimate}.

In the end, we show that \eqref{energyestimate} implies \eqref{estsolution}. Notice $F-M_{[\bar{\rho},\bar{u},\bar{\theta}]}=M-M_{[\bar{\rho},\bar{u},\bar{\theta}]}+\overline{G}+g_1+\sqrt{\mu}g_2$, which yields for $|\al|\leq N-1$,
	\begin{align*}
		&\|w_{k-4|\al|}\pa^\al\{F(t,x,v)-M_{[\bar{\rho},\bar{u},\bar{\theta}](t,x)}(v)\}\|^2_{L^2_xL_{v}^{2}}\notag\\
		\leq& \|w_{k-4|\al|}\pa^\al\{M_{[\rho,u,\theta](t,x)}(v)-M_{[\bar{\rho},\bar{u},\bar{\theta}](t,x)}(v)\}\|^2_{L^2_xL_{v}^{2}}+\|w_{k-4|\al|}\pa^\al\{\overline{G}+g_1+\sqrt{\mu}g_2\}\|^2_{L^2_xL_{v}^{2}}.
	\end{align*}
The second term on the right hand side of the above inequality can be bounded by $C_T\varepsilon^r$ by Lemma \ref{lebarG}, \eqref{ENk} and \eqref{apriori}. If $|\al|=0$, it holds by Taylor expansion that
$$
\|w_{k-4|\al|}\pa^\al\{M_{[\rho,u,\theta](t,x)}(v)-M_{[\bar{\rho},\bar{u},\bar{\theta}](t,x)}(v)\}\|^2_{L^2_xL_{v}^{2}}\leq C_T \|(\widetilde{\rho},\widetilde{u},\widetilde{\theta})(t)\|^2\leq C_T\varepsilon^r.
$$
If $1\leq|\al|\leq N-1$, we apply $\pa^\al$ to $M-\overline{M}$ to get 
\begin{align*}
	&\pa^\al\{M_{[\rho,u,\theta](t,x)}(v)-M_{[\bar{\rho},\bar{u},\bar{\theta}](t,x)}(v)\}\notag\\ &=M\big(\frac{\partial^{\alpha'}\partial_{x_i}\rho}{\rho}+\frac{(v-u)\cdot\partial^{\alpha'}\partial_{x_i}u}{R\theta}+(\frac{|v-u|^{2}}{2R\theta}-\frac{3}{2})\frac{\partial^{\alpha'}\partial_{x_i}\theta}{\theta} \big)
	\nonumber\\
	&\quad -\overline{M}\big(\frac{\partial^{\alpha'}\partial_{x_i}\bar{\rho}}{\bar{\rho}}+\frac{(v-\bar{u})\cdot\partial^{\alpha'}\partial_{x_i}\bar{u}}{R\bar{\theta}}+(\frac{|v-\bar{u}|^{2}}{2R\bar{\theta}}-\frac{3}{2})\frac{\partial^{\alpha'}\partial_{x_i}\bar{\theta}}{\bar{\theta}} \big)
	\nonumber\\
	&+\sum_{\alpha_{1}\leq \alpha',|\al_1|\geq1}C^{\alpha_1}_{\alpha'}\{\partial^{\alpha_{1}}(M\frac{1}{\rho})\partial^{\alpha'-\alpha_{1}}\partial_{x_i}\rho+\partial^{\alpha_{1}}(M\frac{v-u}{R\theta})\cdot\partial^{\alpha'-\alpha_{1}}\partial_{x_i}u+\partial^{\alpha_{1}}(M(\frac{|v-u|^{2}}{2R\theta^{2}}-\frac{3}{2\theta}))\partial^{\alpha'-\alpha_{1}}\partial_{x_i}\theta\notag\\
	&\qquad\qquad\qquad\quad-\partial^{\alpha_{1}}(\overline{M}\frac{1}{\bar{\rho}})\partial^{\alpha'-\alpha_{1}}\partial_{x_i}\bar{\rho}-\partial^{\alpha_{1}}(\overline{M}\frac{v-\bar{u}}{R\bar{\theta}})\cdot\partial^{\alpha'-\alpha_{1}}\partial_{x_i}\bar{u}-\partial^{\alpha_{1}}(\overline{M}(\frac{|v-\bar{u}|^{2}}{2R\bar{\theta}^{2}}-\frac{3}{2\bar{\theta}}))\partial^{\alpha'-\alpha_{1}}\partial_{x_i}\bar{\theta}\}.
\end{align*}
One can see that using the triangular inequality, \eqref{apriori} and our zero order estimate, the first difference on the right hand side above can be bounded by $C_T\varepsilon^r$. Then the second difference involving all lower order derivatives also enjoys the same upper bound by triangular inequality and induction.
The case $|\al|=N$ can be handled in the similar way. Hence, \eqref{estsolution} holds once we have \eqref{energyestimate}.
\end{proof}

As a direct application of the estimate \eqref{energy}, we prove Theorem \ref{thm1.2}.

\begin{proof}[Proof of Theorem \ref{thm1.2}]
	Let  $(\bar{\rho},\bar{u},\bar{\theta})(t,x)\equiv (1,0,\frac{3}{2}+\bar\eta)$ be a constant state as the trivial solution to the compressible Euler system \eqref{euler}. In view of \eqref{defeta}, we let $\eta=0$, to obtain from \eqref{energy} that
	\begin{align*}
		\mathcal{E}_{N,k}(t)+\frac{1}{2}\int^{t}_{0}\mathcal{D}_{N,k}(s) ds
		\leq C_{k}\mathcal{E}_{N,k}(0)+ C_k\eta_0^{1/2}\varepsilon^{r/2-1}\int_0^t\mathcal{D}_{N,k}(s)ds+C_k(\bar{\eta}+\eta^{3/2}_0+\varepsilon^{r/2})\varepsilon^{r}.
	\end{align*}
	Therefore, we still let 
		$$
	C_1=\frac{1}{4C_k},
	$$
	then \eqref{globalenergy} holds by setting $r=2$, and choosing $\bar\eta,\eta_0$ and $\varepsilon$ to be small such that
$$
C_k\eta_0^{1/2}\varepsilon^{r/2-1}=C_k\eta_0^{1/2}\leq \frac{1}{8},\quad C_k(\bar{\eta}+\eta^{3/2}_0+\varepsilon^{r/2})\leq \frac{1}{8},
$$
which proves \eqref{globalenergy}. Similarly for showing \eqref{estsolution}, one can get \eqref{estglobal} from \eqref{globalenergy}, which then finishes the proof of Theorem \ref{thm1.2}.
\end{proof}

\medskip
\noindent {\bf Acknowledgment:}\,
The research of Renjun Duan was partially supported by the General Research Fund (Project No.~14301822) from RGC of Hong Kong and the Direct Grant (Project No.~4053652) from CUHK.

\medskip

\noindent{\bf Conflict of Interest:} The authors declare that they have no conflict of interest.


\end{document}